\newcommand{\myauthor}{Benjamin Antieau, Lennart Meier, and Vesna Stojanoska}
\newcommand{\mytitle}{Picard sheaves, local Brauer groups, and topological
modular forms}
\author{\myauthor}
\title{\mytitle}
\date{}
\definecolor{todo}{rgb}{1,0,0}
\definecolor{conditional}{rgb}{0,1,0}
\definecolor{e-mail}{rgb}{0,.40,.80}
\definecolor{reference}{rgb}{.20,.60,.22}
\definecolor{mrnumber}{rgb}{.80,.40,0}
\definecolor{citation}{rgb}{0,.40,.80}
\let\oldmarginpar\marginpar
\renewcommand\marginpar[1]{\-\oldmarginpar[\raggedleft\footnotesize #1]%
{\raggedright\footnotesize #1}}
\renewcommand{\mathcal}[1]{\mathscr{#1}}
\newcommand{\Ascr}{\mathcal{A}}
\newcommand{\Bscr}{\mathcal{B}}
\newcommand{\Cscr}{\mathcal{C}}
\newcommand{\Dscr}{\mathcal{D}}
\newcommand{\Escr}{\mathcal{E}}
\newcommand{\Fscr}{\mathcal{F}}
\newcommand{\Gscr}{\mathcal{G}}
\newcommand{\Hscr}{\mathcal{H}}
\newcommand{\Iscr}{\mathcal{I}}
\newcommand{\Kscr}{\mathcal{K}}
\newcommand{\Lscr}{\mathcal{L}}
\newcommand{\Mscr}{\mathcal{M}}
\newcommand{\MM}{\mathcal{M}}
\newcommand{\Oscr}{\mathcal{O}}
\newcommand{\Pscr}{\mathcal{P}}
\newcommand{\Qscr}{\mathcal{Q}}
\newcommand{\Uscr}{\mathcal{U}}
\newcommand{\Wscr}{\mathcal{W}}
\newcommand{\Xscr}{\mathcal{X}}
\newcommand{\Yscr}{\mathcal{Y}}
\newcommand{\B}{\mathrm{B}}
\newcommand{\E}{\mathrm{E}}
\newcommand{\F}{\mathrm{F}}
\renewcommand{\H}{\mathrm{H}}
\newcommand{\K}{\mathrm{K}}
\renewcommand{\L}{\mathrm{L}}
\newcommand{\R}{\mathrm{R}}
\renewcommand{\AA}{\mathbb{A}}
\newcommand{\CC}{\mathbb{C}}
\newcommand{\EE}{\mathbb{E}}
\newcommand{\FF}{\mathbb{F}}
\newcommand{\GG}{\mathbb{G}}
\newcommand{\QQ}{\mathbb{Q}}
\newcommand{\Q}{\mathbb{Q}}
\newcommand{\RR}{\mathbb{R}}
\renewcommand{\SS}{\mathbb{S}}
\newcommand{\ZZ}{\mathbb{Z}}
\newcommand{\Z}{\mathbb{Z}}
\newcommand{\WW}{\mathbb{W}}
\newcommand{\gr}{\mathrm{gr}}
\newcommand{\op}{\mathrm{op}}
\DeclareMathOperator{\Gal}{Gal}
\DeclareMathOperator{\tors}{tors}
\DeclareMathOperator{\res}{res}
\newcommand{\OO}{\mathcal{O}}
\newcommand{\Ext}{\mathrm{Ext}}
\newcommand{\Pic}{\mathrm{Pic}}
\newcommand{\Br}{\mathrm{Br}}
\newcommand{\LBr}{\mathrm{LBr}}
\newcommand{\ShLBrW}{\mathbf{BrW}}
\newcommand{\LBrW}{\mathrm{LBrW}}
\newcommand{\ShBr}{\mathbf{Br}}
\newcommand{\Shpic}{\mathbf{pic}}
\newcommand{\ShLBr}{\mathbf{LBr}}
\newcommand{\ShPic}{\mathbf{Pic}}
\newcommand{\sPic}[1]{\ShPic_{\Oscr_{#1}}}
\newcommand{\ShB}{\mathbf{B}}
\newcommand{\ShBPic}{\mathbf{BPic}}
\newcommand{\Shbr}{\mathbf{br}}
\newcommand{\Shlbr}{\mathbf{lbr}}
\newcommand{\Aut}{\mathrm{Aut}}
\newcommand{\End}{\mathrm{End}}
\newcommand{\Hom}{\mathrm{Hom}}
\newcommand{\Map}{\mathrm{Map}}
\newcommand{\ShMap}{\mathbf{Map}}
\newcommand{\ShEnd}{\mathbf{End}}
\newcommand{\QCoh}{\mathrm{QCoh}}
\newcommand{\Mod}{\mathrm{Mod}}
\newcommand{\Shv}{\mathrm{Shv}}
\newcommand{\Sp}{\mathrm{Sp}}
\newcommand{\Cat}{\mathrm{Cat}}
\newcommand{\CAlg}{\mathrm{CAlg}}
\newcommand{\PrL}{\mathrm{Pr}^\L}
\newcommand{\GL}{\mathrm{GL}}
\newcommand{\Gm}{\mathbb{G}_{m}}
\newcommand{\Tmf}{\mathrm{Tmf}}
\newcommand{\TMF}{\mathrm{TMF}}
\newcommand{\tmf}{\mathrm{tmf}}
\newcommand{\KU}{\mathrm{KU}}
\newcommand{\KO}{\mathrm{KO}}
\newcommand{\MU}{\mathrm{MU}}
\newcommand{\et}{\text{\'et}}
\newcommand{\coker}{\mathrm{coker}}
\newcommand{\tensor}{\otimes}
\DeclareMathOperator{\SpetLurie}{Sp\acute{e}t}
\DeclareMathOperator{\Spet}{Spec}
\DeclareMathOperator{\Spec}{Spec}
\DeclareMathOperator{\Spf}{Spf}
\newcommand{\we}{\simeq}
\newcommand{\iso}{\cong}
\numberwithin{equation}{section}
\theoremstyle{plain}
\newtheorem{theorem}[equation]{Theorem}
\newtheorem*{theorem*}{Theorem}
\newtheorem{lemma}[equation]{Lemma}
\newtheorem{proposition}[equation]{Proposition}
\newtheorem{corollary}[equation]{Corollary}
\newtheoremstyle{named}{}{}{\itshape}{}{\bfseries}{.}{.5em}{#1 \thmnote{#3}}
\theoremstyle{named}
\theoremstyle{definition}
\newtheorem{definition}[equation]{Definition}
\newtheorem{example}[equation]{Example}
\newtheorem{question}[equation]{Question}
\newtheorem{construction}[equation]{Construction}
\newtheorem{warning}[equation]{Warning}
\newtheorem{remark}[equation]{Remark}
\begin{document}

\maketitle

\begin{abstract}
    \noindent
    We prove that the Brauer group of $\TMF$ is isomorphic to the Brauer group
    of the derived moduli stack of elliptic curves. Then, we compute the local
    Brauer group, i.e., the subgroup of the Brauer group of elements
    trivialized by some \'etale cover of the moduli stack, up to a finite $2$-torsion group.

    \paragraph{Key Words.}
    Brauer groups, topological modular forms, moduli of elliptic curves,
    elliptic cohomology.

    \paragraph{Mathematics Subject Classification 2010:}
    \href{http://www.ams.org/mathscinet/msc/msc2010.html?t=14Fxx&btn=Current}{14F22},
    \href{http://www.ams.org/mathscinet/msc/msc2010.html?t=14Hxx&btn=Current}{14H52},
    \href{http://www.ams.org/mathscinet/msc/msc2010.html?t=14Kxx&btn=Current}{14K10},
    \href{http://www.ams.org/mathscinet/msc/msc2010.html?t=55Nxx&btn=Current}{55N34}.
\end{abstract}

\tableofcontents

\section{Introduction}

The Brauer group $\Br(R)$ of an $\EE_\infty$-ring spectrum $R$ was introduced by Baker--Richter--Szymik~\cite{baker-richter-szymik} following
previous work of Baker--Lazarev~\cite{baker-lazarev} and To\"en~\cite{toen-derived}.
The group classifies Azumaya algebras over $R$ up to Morita equivalence; 
equivalently it classifies invertible $R$-linear stable $\infty$-categories. These can be seen as twisted versions of $R$-modules and thus $\Br(R)$ classifies all possible twists of $\Mod_R$. One can actually replace $\Mod_R$ here by any symmetric monoidal $\infty$-category, like quasi-coherent sheaves on a scheme or stack. In the most classical case of vector spaces over a field $k$, Azumaya algebras are just central simple algebras (i.e.\ matrix algebras over a central division algebra) and the corresponding Brauer group was introduced by Brauer around 1930. 

Classically, Brauer groups can often be computed as \'etale cohomology groups. They thus allow
cohomological control of natural occurrences of Azumaya algebras (e.g.\ as endomorphism algebras of
representations \cite[Section 12.2]{serre-representations}) or twisted sheaves (like in the theory
of moduli of stable sheaves \cite{caldararu}). Another $\infty$-categorical example is given by the
relevance of twists of parametrized spectra in Seiberg--Witten Floer homotopy theory
\cite{douglas-twisted}. On the other hand, Brauer groups also allow algebraic or geometric
interpretations of cohomology classes, as utilized e.g.\ in the classic Artin--Mumford example of a
non-rational unirational variety \cite{artin-mumford} or the Merkurjev--Suslin theorem
\cite{gille-szamuely}. Brauer groups give also one of the approaches to class field theory
\cite{roquette,weil-basic-number-theory, milne-class} and form the basis of the Brauer--Manin
obstruction for rational points \cite{brauer-grothendieck-group}. Thus, the study of Brauer groups
of ring spectra might be interesting for possible theories of \'etale cohomology on
$\EE_{\infty}$-ring spectra, and can be seen as a contribution to the nascent subject of arithmetic
of $\EE_{\infty}$-ring spectra. Moreover, the Brauer space provides a natural delooping of the
Picard space, like the Picard space is a natural delooping of the space of units of an
$\EE_{\infty}$-ring spectrum.

When $R$ is a connective $\EE_{\infty}$-ring spectrum, $\Br(R)$ depends only on $\pi_0R$ and
\begin{equation}\label{eq:connective}\Br(R)\iso\H^1(\Spec\pi_0R,\ZZ)\times\H^2(\Spec\pi_0R,\Gm),\end{equation}
    where all cohomology is \'etale unless otherwise specified;
see~\cite{antieau-gepner,toen-derived}. For example, for a prime $p$, we have $\Br(\SS[1/p])\iso\ZZ/2$, so there is a
``twisted form'' of finite spectra after inverting $p$. These twisted forms are
$\infty$-categories of modules for spherical quaternion algebras.
In case that $R$ is a classical ring, $\Br(R)$ might actually be larger than the classical Brauer
group of $R$ because of the presence of derived, non-classical Azumaya algebras.

The role of connectivity is to ensure (by an argument of To\"en) that Brauer classes on connective $\EE_\infty$-rings are
\'etale-locally trivial. This fact enables the cohomological calculation of
the Brauer group as in~\eqref{eq:connective}. We will show in \cref{ex:quasi-affine} that this
\emph{fails} in general for nonconnective ring spectra. Thus we will differentiate between $\Br(R)$
and its subgroup $\LBr(R)$ of Brauer classes that are \emph{\'etale-locally trivial}, i.e.\ become
trivial after some faithful \'etale extension in the sense of \cite[Definition 7.5.0.4]{ha}. Two of
our main themes are that $\LBr(R)$ is quite computable (up to the general difficulty of computing
differentials), and that sometimes we may enlarge $\LBr(R)$ by allowing more general extensions to
kill Brauer classes. We can say something about the resulting subgroups of $\Br(R)$, which may or
may not coincide with $\LBr(R)$.

Our main examples are real K-theory and topological modular forms. Let us begin with the former. 

\begin{theorem}[\cref{ex:ko}]
    There is an isomorphism $\LBr(\KO)\iso\ZZ/2$.
\end{theorem}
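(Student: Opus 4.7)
The plan is to exploit that $\KO \to \KU$ is a faithful $C_2$-Galois extension in the sense of Rognes, so the local Brauer spectrum $\Shlbr$ satisfies $C_2$-Galois descent, yielding a homotopy fixed-point spectral sequence
$$E_2^{s,t} = \H^s(C_2;\,\pi_t\,\Shlbr(\KU)) \Longrightarrow \pi_{t-s}\,\Shlbr(\KO),$$
where the homotopy groups of $\Shlbr$ satisfy $\pi_0\Shlbr = \LBr$, $\pi_1\Shlbr = \Pic$, $\pi_2\Shlbr = \pi_0(-)^\times$, and $\pi_i\Shlbr = \pi_{i-2}$ for $i\geq 3$. The group $\LBr(\KO)$ is then the associated graded of the $0$-line filtration.

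The first preliminary step is to verify $\LBr(\KU) = 0$, ensuring $E_2^{0,0}=0$. Since faithful \'etale extensions of $\KU$ are induced from algebraic \'etale covers of $\pi_0\KU = \ZZ$, this reduces to the vanishing of $\H^2_{\et}(\Spec\ZZ,\Gm)$ together with the triviality of the local Picard group of $\Spec\ZZ$.

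The second step is to enumerate candidate contributions on the diagonal $s = t$. With the Galois action $\beta \mapsto -\beta$ on $\pi_*\KU$, so that $\pi_{2k}\KU = \ZZ$ with $C_2$-action by $(-1)^k$ and $\pi_{\mathrm{odd}}\KU = 0$, direct Tate cohomology computations give $\H^1(C_2;\Pic(\KU)) = \H^1(C_2;\ZZ/2) = \ZZ/2$, $\H^2(C_2;\ZZ^\times) = \ZZ/2$, and further $\ZZ/2$ summands at positions $(s,t) = (4k+2,4k+2)$ for $k\geq 1$; the other diagonal entries vanish because either the underlying homotopy group is zero (odd $t$) or the Tate cohomology of $\ZZ^-$ vanishes in the relevant even degree.

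The main obstacle is then the third step: identifying enough differentials to collapse the diagonal down to a single $\ZZ/2$. These differentials should be inherited, via the comparison of Picard/Brauer descent with the underlying module-level descent (cf.\ the approach of Mathew--Stojanoska in their computation of $\Pic(\KO) = \ZZ/8$), from the $\KO$-homotopy fixed-point spectral sequence, whose key differential $d_3(\beta^2) = \eta^3$ and its $\beta^4$-periodic propagations should successively kill all the higher Tate cohomology summands on the $0$-line. To confirm that the surviving $\ZZ/2$ really does occur on $E_\infty$ and is realized by a genuine Azumaya algebra, I would exhibit an explicit generator, a natural candidate being a quaternionic Azumaya $\KO$-algebra, i.e., the $\KO$-linear analogue of the spherical quaternion algebra generating $\Br(\SS[1/2])=\ZZ/2$ from the introduction.
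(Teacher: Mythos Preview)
Your approach is genuinely different from the paper's, and it has a real gap that you should be aware of.

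The paper does \emph{not} use $C_2$-Galois descent along $\KO\to\KU$. Instead it works on the small \'etale site of $\Spec\pi_0\KO=\Spec\ZZ$ and applies the exact sequence of \cref{prop:omni}:
\[
0\longrightarrow\H^2(\Spec\ZZ,\Gm)\longrightarrow\LBr(\KO)\longrightarrow\H^1(\Spec\ZZ,\pi_0\ShPic_{\Oscr_\KO})\longrightarrow\H^3(\Spec\ZZ,\Gm).
\]
The bulk of the work is computing the Picard \emph{sheaf} $\pi_0\ShPic_{\Oscr_\KO}$ on $\Spec\ZZ$ via a \emph{sheafified} version of the $C_2$-HFPSS (so $C_2$-descent is used, but at the level of sheaves on $\Spec\ZZ$, not globally). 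One finds an extension $0\to i_*\ZZ/2\to\pi_0\ShPic_{\Oscr_\KO}\to\ZZ/4\to 0$ with $i\colon\Spec\FF_2\hookrightarrow\Spec\ZZ$, giving $\H^1(\Spec\ZZ,\pi_0\ShPic_{\Oscr_\KO})\cong\ZZ/2$; the differential into $\H^3(\Spec\ZZ,\Gm)$ is shown to vanish by comparison with $\KU$.

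Your proposal instead runs the global $C_2$-HFPSS on $\Shlbr(\KU)$. The problem is your opening assertion that ``the local Brauer spectrum $\Shlbr$ satisfies $C_2$-Galois descent.'' The map $\KO\to\KU$ is a Galois extension but is \emph{not} \'etale in the sense relevant here (it fails the $\pi_*$-condition), and $\Shlbr$ is defined via \'etale sheafification. What your HFPSS actually computes---once you note $\LBr(\KU)=0$ so that the filtration-zero term vanishes---is the relative group $\Br(\KU|\KO)$ of Gepner--Lawson. One has an inclusion $\LBr(\KO)\subseteq\Br(\KU|\KO)$ (any $\alpha\in\LBr(\KO)$ maps into $\LBr(\KU)=0$), so your computation yields only the \emph{upper bound} $|\LBr(\KO)|\leq 2$. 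To finish you must show the nontrivial class of $\Br(\KU|\KO)$ is \'etale-locally trivial. Your proposed fix---exhibiting a quaternionic Azumaya $\KO$-algebra---does not do this: producing an Azumaya representative shows the class lies in $\Br(\KO)$, not that it dies on an \'etale cover. Indeed, the paper explicitly notes in \cref{rem:KUKO} that the equality $\LBr(\KO)=\Br(\KU|\KO)$ is a \emph{consequence} of the two independent computations rather than an input. Your differential analysis on the diagonal is correct as far as it goes (and matches Gepner--Lawson, where the surviving class sits in filtration~$6$), but without an independent argument for \'etale-local triviality the proof is incomplete.
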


The nontriviality of $\Br(\KO)$ goes back to~\cite{gepner-lawson}, where Gepner and Lawson compute the subgroup
$\Br(\KU|\KO)\subseteq\Br(\KO)$ of classes split by the faithful $\ZZ/2$-Galois extension
$\KO\rightarrow\KU$ to be $\ZZ/2$. It is not
hard to check that $\LBr(\KU)=0$ and thus we find in fact that
$\LBr(\KO)=\Br(\KU|\KO)$ as subgroups of $\Br(\KO)$ although one a priori might expect $\Br(\KU|\KO)$ to be bigger. In particular, we show that the non-trivial
class $\alpha\in\Br(\KU|\KO)$ is split by the faithful \'etale extension
$\KO\rightarrow\KO[\tfrac12,\zeta_4]\times\KO[\tfrac13,\zeta_3]$. 

Regarding the spectrum $\TMF$ of topological modular forms, we recall that Goerss, Hopkins and
Miller have defined a sheaf of $\EE_{\infty}$-ring spectra $\Oscr$ on the moduli stack $\Mscr$ of
elliptic curves \cite{TMF}. The pair $(\Mscr, \Oscr)$ defines a nonconnective spectral
Deligne--Mumford stack in the sense of \cite{sag} and
$\TMF$ is the spectrum of global sections of $\Oscr$. We may define $\Br(\Mscr, \Oscr)$ as the
Brauer group of $\QCoh(\Mscr, \Oscr)$,\footnote{Our actual definition of the Brauer group of a
nonconnective spectral DM stack in \cref{def:BrXO} is slightly different, but coincides in this
case.} which coincides with $\Mod_{\TMF}$ as $(\Mscr, \Oscr)$ is $0$-affine by \cite{mathew-meier}.
On the other hand, we may define $\LBr(\Mscr, \Oscr)$ as the subgroup of Brauer classes that become
trivial after pulling back to an \'etale cover of $\Mscr$, and this group is potentially bigger than
$\LBr(\TMF)$. As by \cite[Theorem 10.4]{mathew-galois}, all faithful Galois extensions of
localizations of $\TMF$ arise from \'etale covers of $\Mscr$, this local Brauer group $\LBr(\Mscr,
\Oscr)$ is a natural analogue of $\Br(\KU|\KO)$ above.

\begin{theorem}[\cref{thm:LBrMO1}, \cref{thm:LBrTMF},\cref{thm:LBrMO}]
After inverting $2$, the inclusion $\LBr(\TMF) \subset \LBr(\Mscr, \Oscr)$ becomes an equality and both groups are isomorphic to $\ZZ/3$. 

After localizing at $2$, the inclusion $\LBr(\TMF) \subset \LBr(\Mscr, \Oscr)$ has finite cokernel
    and both groups admit surjections to $(\ZZ/2)^{\infty}$ with kernel of order at most $8$. In
    particular, $\Br(\TMF)$ is an infinitely generated torsion abelian group. 
\end{theorem}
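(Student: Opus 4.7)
The plan is to compute $\LBr(\Mscr, \Oscr)$ from a descent spectral sequence for the Brauer sheaf $\ShBr$ on the \'etale site of $(\Mscr, \Oscr)$, then use the $0$-affineness of $(\Mscr, \Oscr)$ (Mathew--Meier) to relate it to $\LBr(\TMF)$. Since $\Omega\ShBr = \ShPic$, and on an affine \'etale neighborhood $\Oscr$ is even-periodic, the homotopy sheaves are $\pi_1\ShBr \cong \underline{\ZZ}$, $\pi_2\ShBr \cong \Gm$, and $\pi_t\ShBr \cong \pi_{t-2}\Oscr$ for $t \geq 3$. The descent spectral sequence
\[
E_2^{s,t} = H^s_{\et}(\Mscr, \pi_t\ShBr) \Longrightarrow \pi_{t-s}\,\Gamma(\Mscr, \ShBr)
\]
has $\LBr(\Mscr, \Oscr)$ assembled from the diagonal $s = t$. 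Since $H^1(\Mscr, \ZZ) = 0$ and $\pi_{\text{odd}}\Oscr = 0$, the nontrivial diagonal inputs are $H^2(\Mscr, \Gm)$ and $H^{2k}(\Mscr, \omega^{\otimes(k-1)})$ for $k \geq 2$, where $\omega$ is the Hodge line bundle. These are exactly the groups appearing on the $E_2$-page of the descent spectral sequence for $\pi_*\Tmf$.

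For the statement after inverting $2$: at primes $p \geq 5$ the relevant cohomology of $\Mscr_{(p)}$ vanishes and $\LBr(\Mscr, \Oscr)_{(p)} = 0$. At $p = 3$, the groups $H^{2k}(\Mscr_{(3)}, \omega^{\otimes(k-1)})$ contain exactly one copy of $\ZZ/3$, namely the $\alpha$-family class generating the $3$-torsion in the descent spectral sequence. This class must survive for degree reasons (no nontrivial differentials are available at $p=3$), giving $\LBr(\Mscr, \Oscr)[1/2] = \ZZ/3$. Running the parallel spectral sequence for $\TMF[1/2]$ directly (using the Galois extensions of its localizations provided by Mathew's theorem) shows the class is already detected globally, so $\LBr(\TMF)[1/2] = \LBr(\Mscr, \Oscr)[1/2] = \ZZ/3$.

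For the $2$-local statement, the spectral sequence now has infinitely many nonzero diagonal inputs: $H^{2k}(\Mscr_{(2)}, \omega^{\otimes(k-1)})$ contains infinitely many nonzero $\ZZ/2$'s in arbitrarily high bidegree, visible from the standard descent computation of $\pi_*\Tmf_{(2)}$. The crux is to show that infinitely many of these survive to $E_\infty$. For this I would pull back along the faithful \'etale cover $\Mscr_1(3)_{(2)} \to \Mscr_{(2)}$, an affine scheme where cohomology is concentrated in bounded filtration, so potential differential targets vanish after pullback; faithfulness then forces most classes downstairs to be permanent cycles. The kernel of the ensuing surjection to $(\ZZ/2)^\infty$ is controlled by the low-degree terms $H^2(\Mscr_{(2)}, \Gm)$ and the first few $H^{2k}(\Mscr_{(2)}, \omega^{\otimes(k-1)})$, yielding the bound of $8$. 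The inclusion $\LBr(\TMF) \subset \LBr(\Mscr, \Oscr)$ has finite cokernel because outside a bounded range the two spectral sequences coincide.

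The infinite-generation and torsion claim for $\Br(\TMF)$ follows at once: $\LBr(\TMF) \subset \Br(\TMF)$ surjects onto $(\ZZ/2)^\infty$, and every \'etale-locally trivial class is trivialized by a finite faithful \'etale extension, hence is torsion. The main obstacle is the $2$-primary differential analysis: controlling infinitely many diagonal classes in the Brauer spectral sequence while simultaneously bounding the kernel to $8$ requires a careful interplay between the descent computation of $\pi_*\Tmf_{(2)}$, Brauer-type differentials, and the geometry of the level cover $\Mscr_1(3)_{(2)}$.
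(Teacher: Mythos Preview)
Your framework is right---the descent spectral sequence for $\Shlbr_\Oscr$ on $(\Mscr,\Oscr)$ is exactly how the paper computes $\LBr(\Mscr,\Oscr)$, and identifying the diagonal with $\H^2(\Mscr,\Gm)$ and $\H^{2k}(\Mscr,\omega^{k-1})$ is correct (minor point: $\pi_1\ShBr_\Oscr\cong\ZZ/2$, not $\ZZ$, since $\Oscr$ is even $2$-periodic locally; this does not matter because $\H^1(\Mscr,\ZZ/2)=0$ too). But the $2$-primary argument has a genuine gap, and the structure of the answer is not what you describe.

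Your proposed mechanism for survival---pull back to the affine cover $\Mscr_1(3)_{(2)}$ and use that differential targets vanish there---is backwards. On an affine, cohomology in high degree vanishes, so the \emph{sources} of the classes you care about already restrict to zero; faithfulness of the cover then gives you no information about whether they survive on $\Mscr$. In fact, in the additive spectral sequence all of the groups $\H^{2k}(\Mscr_{(2)},\omega^{k-1})$ in the $(-2)$-column die, and by the Comparison Tool of Mathew--Stojanoska these differentials import verbatim into the Picard spectral sequence above the line $r\leq t-1$. The infinite contribution is \emph{not} spread over arbitrarily high filtration; it sits entirely in one spot, row~$6$, and arises as the cokernel of the single non-linear differential
\[
d_3\colon\H^3(\Mscr,\omega)\iso\FF_2[j]\longrightarrow\FF_2[j]\iso\H^6(\Mscr,\omega^2),\qquad f\longmapsto f+jf^2,
\]
computed via Theorem~6.1.1 of Mathew--Stojanoska. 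The powers $j^2,j^4,j^6,\ldots$ are linearly independent in this cokernel, which is the source of $(\ZZ/2)^\infty$. The finite kernel of order at most $8$ comes from \emph{higher} filtration (rows $10$, $18$, $30$), not from the low-degree terms: crucially $\H^2(\Mscr,\Gm)=\Br(\Mscr)=0$ by the Antieau--Meier computation, an input you do not mention but without which one cannot even get started.

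For $\LBr(\TMF)$ the paper takes a different route: one first computes the \'etale Picard \emph{sheaf} $\pi_0\ShPic_{\Oscr_\TMF}$ on $\Spec\ZZ[j]$ via a sheafified Picard spectral sequence (this is the content of an entire section), then feeds it into the low-degree exact sequence of Proposition~2.11. The finite-cokernel comparison with $\LBr(\Mscr,\Oscr)$ goes through a relative descent spectral sequence for $j_*\Shlbr_\Oscr$ along $j\colon\Mscr\to\AA^1$ and identifying which column of the sheafy Picard spectral sequence the extra classes live in; ``the spectral sequences agree outside a bounded range'' is not a proof. Finally, your torsion argument (``trivialized by a finite faithful \'etale extension'') is not available: \'etale covers of $\TMF$ need not be finite, and $\Mscr$ has no finite \'etale affine cover. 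Torsion follows from the explicit computation, not from a general principle.
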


For the (partial) determination of $\LBr(\KO)$ and $\LBr(\TMF)$ our most important tool is an exact sequence for $\LBr(R)$ (with mild assumptions on $\pi_0R$) of the form 
\[\Br(\pi_0R) \to \LBr(R) \to \H^1(\Spec \pi_0R, \pi_0\ShPic_R),\]
which will be proven in a more precise form in \cref{prop:omni}. Here, $\pi_0\ShPic_R$ is the \emph{Picard sheaf} of $R$. It arises as the \'etale sheafification of the presheaf sending each \'etale extension $A$ of $\pi_0R$ to $\Pic(R_A)$, where $R \to R_A$ is the unique \'etale extension realizing $\pi_0R \to A$. We determine the Picard sheaf of $\KO$ in \cref{prop:PicKO} and give a partial determination of the Picard sheaf of $\TMF$ in \cref{thm:PicTMF}. The main method is a sheafy version of the Picard spectral sequence of \cite{mathew-stojanoska}. The remaining uncertainties lie in our inability to compute long differentials in the sheafy Picard spectral sequence or, essentially equivalently, in our inability to compute $\Pic(\TMF_{(2)}[\zeta_{2^n-1}])$ for $n\geq 2$. For possible subleties arising in such computations, we refer to \cref{rem:17}.

For the (partial) determination of $\LBr(\Mscr, \Oscr)$ a crucial point is to compare the (local)
Brauer groups of a nonconnective spectral Deligne--Mumford stack $(\Xscr,\Oscr)$ with the following
variant: The \emph{cohomological} Brauer group $\Br'(\Xscr,\Oscr)$ is defined
using descent from the affine case and we also obtain a subgroup $\LBr'(\Xscr, \Oscr)$. The Brauer
group of $(\Xscr,\Oscr)$ is the subgroup $\Br(\Xscr,\Oscr)\subseteq\Br'(\Xscr,\Oscr)$ of Brauer
classes representable by Azumaya algebras.
If $(\Xscr,\Oscr)\we\Spet R$ is affine, then $\Br(R)\iso\Br(\Spet R)\iso\Br'(\Spet R)$ and likewise
for $\LBr$, but $\Br$ and $\Br'$ might be different in general. In many cases of interest we show
however (extending work of To\"en~\cite{toen-derived} and
Hall--Rydh~\cite{hall-rydh}) that the cohomological Brauer agrees with the usual Brauer group. This applies in particular to $(\Mscr, \Oscr)$.

\begin{theorem}[$\Br=\Br'$, \cref{thm:spectraldm}]
    If $(\Xscr,\Oscr_\Xscr)$ is a nonconnective spectral DM stack satisfying some
    mild conditions stated in the body of the paper, then
    $\Br(\Xscr,\Oscr_\Xscr)\we\Br'(\Xscr,\Oscr_\Xscr)$ and
    $\LBr(\Xscr,\Oscr_\Xscr)\we\LBr'(\Xscr,\Oscr_\Xscr)$.
\end{theorem}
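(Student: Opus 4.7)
The plan is to follow the strategy of To\"en and Hall--Rydh, adapted from derived schemes and classical algebraic stacks to nonconnective spectral Deligne--Mumford stacks. The inclusion $\Br \subseteq \Br'$ is automatic, since any Azumaya algebra $\Ascr$ gives a cohomological Brauer class via the invertible $\QCoh(\Xscr, \Oscr_\Xscr)$-linear $\infty$-category $\Mod_\Ascr$; the substance is the reverse inclusion. Viewing a cohomological Brauer class as an invertible $\QCoh(\Xscr, \Oscr_\Xscr)$-linear $\infty$-category $\Cscr$, the goal is to produce an Azumaya $\Oscr_\Xscr$-algebra $\Ascr$ with $\Cscr \simeq \Mod_\Ascr$.

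My first step would reduce the problem to producing a perfect compact generator $G$ of $\Cscr$. Over any affine \'etale chart $\Spet R \to \Xscr$, the known affine identity $\Br(R) = \Br'(R)$ of To\"en and Antieau--Gepner forces $\Cscr|_R \simeq \Mod_{A_R}$ for some Azumaya $R$-algebra $A_R$. If $G$ is a perfect compact generator of $\Cscr$, then $G|_R$ is a progenerator of $\Mod_{A_R}$, and $\ShEnd_{\Oscr_\Xscr}(G)$ therefore restricts to an algebra Morita equivalent to $A_R$ on every chart; hence it is an Azumaya $\Oscr_\Xscr$-algebra globally and represents the given class.

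The second and decisive step is the construction of such a global compact generator, and this is where the mild conditions on $(\Xscr, \Oscr_\Xscr)$ enter. I expect them to include some form of $0$-affineness together with a resolution or compact-generation hypothesis paralleling that of Hall--Rydh. Under $0$-affineness the problem descends to a question about invertible $\Gamma\Oscr_\Xscr$-linear stable $\infty$-categories with \'etale descent, and the local Azumaya algebras $A_R$ may then be trimmed to locally constant rank and patched into a global twisted perfect module, following the twisted-sheaf construction of Lieblich and de Jong.

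For the parallel $\LBr$ statement the argument is substantially simpler: by definition such a class is trivialized on a faithful \'etale cover $U \to \Xscr$, so $\Cscr|_U \simeq \Mod_{\Oscr_U}$ and one can work directly with a twisted line bundle rather than a higher-rank twisted module, so the descent manoeuvre collapses to an $\H^1$-type argument with line-bundle cocycles. The main obstacle is therefore the full $\Br = \Br'$ part: one must establish the appropriate spectral resolution property for the relevant stacks (notably $(\Mscr,\Oscr)$) and carry out the twisted-sheaf descent in the nonconnective spectral setting, where the affine technology provides only a Morita class and a finer structural argument is required to glue globally.
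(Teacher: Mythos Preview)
Your first step is correct and matches the paper exactly: \cref{lem:local} reduces the problem to producing a perfect local generator of $\QCoh(\Xscr,\alpha)$, whose endomorphism sheaf is then the desired Azumaya algebra.

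Your second step, however, misidentifies both the hypotheses and the method. The ``mild conditions'' are \emph{not} $0$-affineness or a resolution property. The paper assumes a finite \emph{Zariski} open cover $\{\Uscr_i\}$ such that (a) each restriction kernel $\ker(\QCoh(\Uscr_i)\to\QCoh(\Uscr_i\cap\Uscr_j))$ is generated by a single compact object $\Kscr_{i,j}$, and (b) each $\QCoh(\Uscr_i,\alpha)$ already admits a global compact generator. The proof then glues these generators inductively along the Mayer--Vietoris pullback square for $\Yscr_{k+1}=\Yscr_k\cup\Uscr_{k+1}$, in the style of B\"okstedt--Neeman and Bondal--van den Bergh. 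The compact generator of the kernel of $\QCoh(\Uscr_{k+1},\alpha)\to\QCoh(\Wscr,\alpha)$ is built as $\Fscr_{k+1}\otimes\Kscr_{k+1,1}\otimes\cdots\otimes\Kscr_{k+1,k}$, and the inductive generator $\Hscr$ on $\Yscr_k$ is lifted across the localization using Thomason--Neeman's extension result (after possibly replacing $\Hscr$ by $\Hscr\oplus\Sigma\Hscr$ to kill the $\K_0$-obstruction). There is no \'etale patching, no rank trimming, and no twisted-line-bundle descent; the Lieblich--de Jong machinery you invoke has no evident analogue in the nonconnective spectral setting, which is precisely why the paper avoids it.

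Your claim that the $\LBr$ case is ``substantially simpler'' via twisted line bundles is also off. In the paper, $\LBr=\LBr'$ is simply \cref{cor:BrauerCoincidences}: one applies the same Zariski-gluing theorem to the subset of locally trivial classes, with no separate argument. Finally, note that $0$-affineness enters the paper only later and independently (\cref{thm:0affine}), to identify $\Br(\Xscr,\Oscr)$ with $\Br(\Gamma\Oscr)$; it plays no role in the $\Br=\Br'$ theorem itself.
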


Since by definition, $\Br'$ and $\LBr'$ are approachable via descent, this result allows us to calculate $\LBr(\Mscr, \Oscr)$ via the Picard spectral sequence of \cite{mathew-stojanoska}, where it will be visible in the $(-1)$-column. Up to two differentials, we can analyze this column using \cite{mathew-stojanoska} and the vanishing of the classical Brauer group of $\Mscr$ from \cite{antieau-meier}.

\begin{question}\label{conj:a}
    Is the inclusion $\LBr(\Mscr,\Oscr)\subset \Br(\TMF)$ an equality?
\end{question} 

A similar question can be asked for every $0$-affine nonconnective spectral DM stack $(\Xscr,
\Oscr)$ where $\Oscr$ is even-periodic and the underlying stack of $\Xscr$ is regular noetherian.
Here we would replace $\LBr$ by a Brauer--Wall type extension as in \cref{rem:brw} (which
makes no difference in the case of $(\Mscr, \Oscr)$). Inspired by the case of $\KO$, we also want to
pose the question:

\begin{question}
For $(\Xscr, \Oscr)$ as above, is $\LBr(\Oscr(\Xscr))\subset \LBr(\Xscr,  \Oscr)$ always an equality?
\end{question}

\paragraph{Acknowledgments.} We would like to thank Elden Elmanto, David Gepner, Tyler Lawson, Akhil Mathew and Bertrand To{\"e}n for helpful conversations about the subject matter of this
paper over the years. 

This material is based upon work supported by the National Science Foundation
under Grant No. DMS-1440140 while the first and third authors were in residence at the
Mathematical Sciences Research Institute in Berkeley, California, during the
Spring 2019 semester. The authors would also like to thank the Isaac Newton Institute for
Mathematical Sciences for support and hospitality during the programme ``Homotopy harnessing higher
structures'' when work on this paper was undertaken. This work was supported by EPSRC grant number
EP/R014604/1. We moreover thank the Hausdorff Research Institute for Mathematics for its
hospitality.

The first author was supported by NSF Grants
DMS-2120005 and DMS-2102010 and by a Simons Fellowship. The second author was supported by the NWO grant VI.Vidi.193.111. The third author was supported by NSF Grant DMS-1812122.

\paragraph{Conventions.}
We will always use the \'etale topology. Thus, $\H^s$ means \emph{\'etale} cohomology if applied to
a scheme or Deligne--Mumford stack, and $\R^sf_*$ will like-wise refer to the $s$-th higher direct
image with respect to the \'etale topology. The notation $\Gamma$ will always refer to the global
sections of some \'etale sheaf with values in an appropriate $\infty$-category; in particular, when
applied to a sheaf of abelian groups $\Fscr$ on a site, we will view $\Fscr$ as a sheaf of spaces so
that $\pi_{-i}\Gamma(\Fscr)$ is the $i$-th cohomology group of $\Fscr$. Moreover, if $\Fscr$ is a
sheaf of spaces or spectra, $\pi_i\Fscr$ will always refer to the \'etale sheafification of the
presheaf of homotopy groups.

Generally, we will work in an $\infty$-categorical context. In particular, a \emph{commutative ring
spectrum} will mean for us a commutative algebra in the $\infty$-category of spectra, i.e.\ what is
also called an $\EE_{\infty}$-ring (spectrum). If $R$ is a commutative ring spectrum, we will always
equip $\Spec R$ with the \'etale topology. In an $\infty$-category $\Cscr$, we will write
$\Map_\Cscr(x,y)$ for the mapping space between $x,y\in\Cscr$; if $\Cscr$ is a stable
$\infty$-category or an $\infty$-category of quasi-coherent sheaves, then we will write
$\ShMap_\Cscr(x,y)$ or simply $\ShMap(x,y)$ for the mapping spectrum or the internal mapping spectrum.

The following infinity categories will be used in some of the theoretical results:
\begin{itemize}
    \item $\PrL$, the infinity category of presentable $\infty$-categories and left adjoint morphisms;
    \item $\widehat{\Cat}_\infty$, the infinity category of possibly large $\infty$-categories.
\end{itemize}
See~\cite{htt} for details.

\section{The local Brauer group in the affine case}\label{sec:affine}

After reminding the reader about the classical Brauer group of a commutative ring, we recall in this
section the definition of the Brauer group and Brauer space of a commutative ring spectrum and
introduce the notion of the local Brauer group. We will prove several basic properties (in
particular that Brauer spaces define an \'etale hypersheaf) and provide basic tools for the
computation of local Brauer groups.

\subsection{The classical Brauer group}
In this subsection, we will give a short introduction to the classical Brauer group. For more
background we refer for example to \cite{gille-szamuely}, \cite{brauer-grothendieck-group} and the
series of articles starting with \cite{grothendieck-brauer-1}.

Let $R$ be a commutative ring. An $R$-algebra $A$ is called \textbf{Azumaya} if one of the following equivalent conditions holds:
\begin{enumerate}
    \item $A$ is finitely generated, faithful, and projective as an $R$-module and the map 
    \[A \tensor_R A^{\op} \to \End_R(A), \qquad a \tensor b \,\mapsto\, (x \mapsto axb) \]
    is an isomorphism. 
    \item \'etale locally, $A$ is isomorphic to the matrix algebra $\mathrm{Mat}_{n}(R)$. 
\end{enumerate}
Two Azumaya algebras $A$ and $B$ are called \textbf{Morita equivalent} if their module categories are equivalent. 

\begin{definition}
    The \textbf{classical Brauer group} $\Br^{\mathrm{cl}}(R)$ of $R$ is the set of Azumaya algebras over $R$ up to Morita equivalence. 
\end{definition}

\begin{remark}
    Instead of working with Morita equivalence classes of Azumaya algebras, one can also directly
    define the Brauer groups via the module categories. This is the approach we will take in
    \cref{def:brauer}.
\end{remark}

In the case that $R$ is regular noetherian, $\Br^{\mathrm{cl}}(R)$ coincides with what we later
introduce as $\Br(R)$; thus we will drop the superscript in this case. Moreover, a result of Gabber
identifies $\Br(R)$ in the regular noetherian case with $\H^2(\Spec R; \GG_m)$ \cite[Corollary
3.1.4.2]{lieblich-twisted}. As $\Pic(R) \cong \H^1(\Spec R; \GG_m)$, this gives one perspective on
why Brauer groups are a higher variant of Picard groups.

If $R = k$ is a field, every finite-dimensional division $k$-algebra with center $k$ is Azumaya.
Conversely, every Azumaya $k$-algebra is Morita equivalent to a unique such. Thus, $\Br(k)$ is in
bijection with isomorphism classes of finite-dimensional division $k$-algebras with center $k$. For example,
$\Br(\mathbb{R}) \cong \{[\mathbb{R}], [\mathbb{H}]\} \cong \ZZ/2$ and $\Br(\CC) = 0$. In contrast,
the Brauer group of a non-archimedean local field $K$ (like $\QQ_p$) is isomorphic to $\QQ/\ZZ$.

It will be important for our later calculations to understand the Brauer groups of rings like $\ZZ$
or $\ZZ[\frac12, \zeta_4]$. More generally, we consider a number field $K$ and let $R$ be a
localization of the ring of
integers of $K$. In this
case, by \cite[Proposition 2.1]{grothendieck-brauer-3}, there is an exact sequence
\[0\rightarrow\Br(R)\rightarrow\Br(K)\rightarrow\bigoplus_{\mathfrak{p}\in
\Spec R^{(1)}}\Br(\Spec K_{\mathfrak{p}}),\]
where $\Spec R^{(1)}$ denotes the set of
closed points of $\Spec R$ and $K_{\mathfrak{p}}$ denotes the completion. This exact sequence is compatible with
the exact sequence
\begin{equation}\label{eq:cft}
0\rightarrow\Br(K)\rightarrow\bigoplus_{\mathfrak{p}}\Br(\Spec K_{\mathfrak{p}})\rightarrow\QQ/\ZZ\rightarrow
0\end{equation}
of class field theory (see \cite[Theorem 8.1.17]{neukirch-etal}). The sum ranges over the finite and the
infinite places of $K$, and the map $\Br(\Spec K_\mathfrak{p})\rightarrow\QQ/\ZZ$ is
the isomorphism described above when $\mathfrak{p}$ is a finite place, the
natural inclusion $\ZZ/2\rightarrow\QQ/\ZZ$ when $K_{\mathfrak{p}}\iso\RR$, and
the natural map $0\rightarrow\QQ/\ZZ$ when $K_{\mathfrak{p}}\iso\CC$.

\begin{example}\label{ex:BrauerComputations}
    One can deduce the following vanishing results from the exact sequences above:
    \begin{enumerate}
        \item[(1)] $\Br(\ZZ)=0$;
        \item[(2)] $\Br(\ZZ[\frac16]) \cong \QQ/\ZZ \oplus \ZZ/2$;
        \item[(3)] $\Br(\ZZ[\tfrac1p, \zeta_{p^n}])=0$ for a prime $p$ and a natural number $n$
    \end{enumerate}
    We will only give an argument for the last vanishing and only if $p^n\geq 3$. Let
    $R=\ZZ[\tfrac1p,\zeta_{p^n}]$. The field
    $\QQ(\zeta_{p^n})$ is totally imaginary and there is a unique prime ideal $\mathfrak{p}\subset
    \ZZ[\zeta_{p^n}]$ lying over $(p)\subset \ZZ$ (since $p$ is totally ramified). Thus for every
    place $\mathfrak{q}$ of $\QQ(\zeta_{p^n})$ we have either $\Br(K_{\mathfrak{q}}) = 0$,
    $\mathfrak{q} \in \Spec R^{(1)}$, or $\mathfrak{q}= \mathfrak{p}$. Thus, $\Br(\ZZ[\frac1p,
    \zeta_{p^n}])$ can be identified with the kernel of the map
    $\Br(\QQ(\zeta_{p^n})_{\mathfrak{p}})
    \to \QQ/\ZZ$, which is zero.
\end{example}

Brauer groups have several nice properties, three of which we will summarize in the next theorem. 

\begin{theorem}\label{thm:BrauerProperties}
Let $R$ be a regular noetherian ring. 
\begin{enumerate}
    \item[{\em (1)}] If $\Spec R[\frac1f]\subset \Spec R$ is dense, then $\Br(R) \to \Br(R[\frac1f])$ is injective. If $\frac1p \in R$ and $f$ is a non-zero divisor, we have more precisely a short exact sequence 
        \[0\to \Br(R)_{(p)} \to \Br(R[\frac1f])_{(p)} \to \H^1(R/f; \QQ/\ZZ)_{(p)} \to 0.\]
    If there is a ring homomorphism right inverse of $R \to R/f$, the sequence is split, sending $[\chi]$ to the Brauer class of the cyclic algebra $(\chi, f)$.
    \item[{\em (2)}] If $\Spec R[\frac1p]\subset \Spec R$ is dense, then $\Br(R)_{(p)} \to \Br(R[x])_{(p)}$ is an isomorphism.
\end{enumerate}
\end{theorem}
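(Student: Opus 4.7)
The plan is to leverage Gabber's identification $\Br(R) \iso \H^2(\Spec R, \Gm)$ (valid for $R$ regular noetherian) and reduce both parts to standard étale cohomology inputs. Set $X = \Spec R$, $Z = V(f) = \Spec R/f$ (of pure codimension one in $X$, since $f$ is a non-zero divisor on a regular ring), and $U = \Spec R[\tfrac1f]$. The injectivity of $\Br(R) \to \Br(R[\tfrac1f])$ in full generality is the Auslander--Goldman theorem---that $\Br$ of a regular noetherian scheme injects into the Brauer group of any dense open---which also drops out of the localization sequence for $\Gm$ together with the vanishing of $\H^0_Z(X, \Gm)$.

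For the short exact sequence under the hypothesis $\tfrac1p \in R$, the key input is Gabber's absolute cohomological purity applied to the lcc sheaves $\mu_{p^n}$: for the regular pair $(X,Z)$ of codimension one one obtains $\H^q_Z(X, \mu_{p^n}) \iso \H^{q-2}(Z, \mu_{p^n}(-1))$ via the Gysin map. Feeding this into the localization long exact sequence, passing to the colimit in $n$, trivializing the Tate twist, and invoking Kummer to translate between $\mu_{p^\infty}$ and the $p$-primary part of $\Gm$ in the relevant degree yields the claimed short exact sequence. Surjectivity onto $\H^1(Z, \QQ/\ZZ)_{(p)}$ uses Bloch--Ogus/Gersten-type vanishing to see that the next connecting map into $\H^3(X, \Gm)_{(p)}$ vanishes on the relevant term. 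When a retraction $s: R/f \to R$ is present, the splitting is classical: a character $\chi$ of $p$-power order pulls back via $s$ to a cyclic cover of $R$, and the associated cyclic algebra $(\chi,f)$ over $R[\tfrac1f]$ is Azumaya, with Gysin residue at $Z$ equal to $\chi$, so the Brauer class $[(\chi,f)]$ is a splitting of the residue map.

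For part (2), split injectivity is immediate from the retraction provided by evaluation at $x=0$. For surjectivity, take $\alpha \in \Br(R[x])_{(p)}$ and subtract the pullback of $\alpha|_{x=0} \in \Br(R)_{(p)}$ to reduce to the case $\alpha|_{x=0}=0$. Since $p$ is a non-zero divisor on the regular ring $R[x]$, part (1) applied with $f=p$ gives an injection $\Br(R[x])_{(p)} \hookrightarrow \Br(R[x,\tfrac1p])_{(p)}$. Over $R[\tfrac1p]$, where $p$ is invertible, $\AA^1$-invariance of étale cohomology with $\mu_{p^n}$-coefficients combined with Kummer yields $\Br(R[\tfrac1p])_{(p)} \iso \Br(R[x,\tfrac1p])_{(p)}$. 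Chasing $\alpha|_{R[x,1/p]}$ back through this isomorphism and pulling back once more at $x=0$ forces it to be zero; the injection then forces $\alpha$ itself to be zero.

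I expect the main technical obstacle to be the surjectivity claim in part (1): verifying that the residue map really lands onto all of $\H^1(Z, \QQ/\ZZ)_{(p)}$ rather than a proper subgroup requires careful handling of $Rj_*\Gm$ beyond its lowest terms and is where the regularity of both $R$ and $R/f$ is essential. Once this is in place, the cyclic-algebra splitting and the diagram chase for part (2) are comparatively formal.
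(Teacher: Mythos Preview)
Your argument for part~(2) is the same diagram chase the paper gives: use the injectivity from part~(1) to embed into the situation over $R[\tfrac1p]$, invoke $\AA^1$-invariance of the Brauer group there, and conclude by comparing with the section $x\mapsto 0$. For part~(1), the paper's own proof is entirely by citation to \cite{lieblich-twisted} and \cite{antieau-meier}; your outline via the localization sequence and absolute purity for $\mu_{p^n}$ is exactly the route behind those references, so the two approaches coincide in substance. You also correctly flag that the purity input requires $R/f$ to be regular, a hypothesis the theorem statement omits but which holds in every application the paper makes of it.

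The one genuine soft spot is your surjectivity argument. Saying that ``Bloch--Ogus/Gersten-type vanishing'' kills the connecting map $\H^1(Z,\QQ/\ZZ)_{(p)} \to \H^3(X,\Gm)_{(p)}$ is not a proof: the Gersten resolution controls the coniveau filtration on $\H^*(X)$, but it does not by itself force this particular Gysin pushforward to vanish, and in general there is no reason for it to. The argument in the cited reference is instead constructive---for each character $\chi$ of $Z$ one exhibits an explicit Brauer class on $U$ with residue $\chi$, via a cyclic-algebra construction that exploits the fact that $Z=V(f)$ is a \emph{principal} divisor (so the class $\{f\}\in\H^1(U,\mu_{p^n})$ is available to cup with). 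Since you already have this construction in hand for the splitting statement, you should lean on it to establish surjectivity directly rather than reaching for an unspecified vanishing theorem.
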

\begin{proof}
    The first point of the first point follows from \cite[Proposition 3.1.3.3]{lieblich-twisted}.
    The rest is contained in \cite[Propositions 2.14 and 2.16]{antieau-meier}.
    
    For the second point, a proof in the case $\frac1p\in R$ can be found e.g.\ in \cite[Proposition 2.5]{antieau-meier}. To show that $\Br(R)_{(p)} \cong \Br(R[x])_{(p)}$ in general, consider the diagram
 \[
  \xymatrix{
  \Br(R[x])_{(p)} \ar[r]\ar[d] & \Br(R[\tfrac1p][x])_{(p)} \ar[d]^{\cong} \\
  \Br(R)_{(p)} \ar[r] & \Br(R[\tfrac1p])_{(p)}
  }
 \]
 induced by the morphism $R[x] \to X$, sending $x$ to $0$.
 The right vertical morphism is an isomorphism since $\frac1p\in R[\frac1p]$. The horizontal arrows are injections by the first point. Thus, $\Br(R[x])_{(p)} \to \Br(R)_{(p)}$ must be an injection as well. On the other hand, it is a split surjection, using the map $R\to R[x]$. This implies that it is an isomorphism. 
\end{proof}

\begin{corollary}
    \label{cor:Brauerjpm}
    Let $S\subset \Q$ be a subring. Then the morphism 
    \[\Phi\colon \Br(S) \oplus \H^1(S; \QQ/\ZZ) \to \Br(S[j^{\pm 1}])\]
    sending $[\chi]\in \H^1(S; \QQ/\ZZ)$ to the cyclic algebra $[(\chi, j)]$ is an isomorphism. In particular, $\Br(\Z[j^{\pm 1}]) = 0$.
\end{corollary}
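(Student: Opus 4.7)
\textbf{The plan} is to reduce to the case $S=\QQ$, where \cref{thm:BrauerProperties} applies to every prime simultaneously, and then descend to general $S$ by naturality plus a ramification argument. First I would verify that $\Phi_\QQ$ is an isomorphism: \cref{thm:BrauerProperties}(1) applied to $R=\QQ[j]$, $f=j$ (noting $\tfrac{1}{p}\in\QQ$ for every prime $p$, and that $\QQ\hookrightarrow\QQ[j]$ is a right inverse of $\QQ[j]\twoheadrightarrow\QQ$) yields, for each $p$, a split short exact sequence
\[0\to\Br(\QQ[j])_{(p)}\to\Br(\QQ[j^{\pm1}])_{(p)}\to\H^1(\QQ;\QQ/\ZZ)_{(p)}\to0\]
with splitting $\chi\mapsto[(\chi,j)]$. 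Combined with $\Br(\QQ)_{(p)}\cong\Br(\QQ[j])_{(p)}$ from \cref{thm:BrauerProperties}(2), and summed over $p$ (valid since Brauer groups are torsion), this gives that $\Phi_\QQ$ is an isomorphism.

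Next I would exploit naturality via the square
\[\xymatrix{
\Br(S)\oplus\H^1(S;\QQ/\ZZ)\ar[r]^-{\Phi_S}\ar[d]&\Br(S[j^{\pm1}])\ar[d]^{\iota}\\
\Br(\QQ)\oplus\H^1(\QQ;\QQ/\ZZ)\ar[r]^-{\Phi_\QQ}_-{\cong}&\Br(\QQ[j^{\pm1}])
}\]
Both verticals are injective: $\iota$ by \cref{thm:BrauerProperties}(1), while on the left $\Br(S)\to\Br(\QQ)$ is also by \cref{thm:BrauerProperties}(1), and $\H^1(S;\QQ/\ZZ)\to\H^1(\QQ;\QQ/\ZZ)$ is injective because cyclic \'etale covers of the Dedekind domain $S$ are determined by their generic fibre. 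Hence $\Phi_S$ is injective. For surjectivity, take $\alpha\in\Br(S[j^{\pm1}])$ with $\iota(\alpha)=\Phi_\QQ(a,\chi)=a+[(\chi,j)]$. The component $a$ lifts to $\alpha|_{j=1}\in\Br(S)$ since the cyclic algebra $(\chi,1)$ is trivial; after subtracting $\Phi_S(\alpha|_{j=1},0)$ I may assume $\iota(\alpha)=[(\chi,j)]$, and it remains to show $\chi$ is unramified at each rational prime $\mathfrak{p}$ not inverted in $S$.

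For this ramification step, fix such a prime $\mathfrak{p}$ and consider the codimension-one prime $\mathfrak{p}S[j^{\pm1}]\subset S[j^{\pm1}]$, whose residue field is $k(\mathfrak{p})(j)$. Since $\alpha$ extends over $\Spec S[j^{\pm1}]$, Gabber's absolute purity for Brauer groups forces the residue of $\iota(\alpha)=[(\chi,j)]$ at this prime to vanish, and the tame-symbol formula identifies that residue (up to a power) with $\chi$ restricted along the Gauss extension of the $\mathfrak{p}$-adic valuation on $\QQ$, so $\chi$ is unramified at $\mathfrak{p}$ as required. The main obstacle is precisely this residue identification, since \cref{thm:BrauerProperties}(1) does not directly produce the split short exact sequence when $\tfrac{1}{p}\notin R$, and the argument must handle both the prime-to-$p$ and the $p$-power part of $\chi$ at primes of residue characteristic $p$. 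Once these steps are in hand, the ``in particular'' statement for $\Br(\ZZ[j^{\pm1}])$ is immediate from $\Br(\ZZ)=0$ (\cref{ex:BrauerComputations}(1)) and $\H^1(\ZZ;\QQ/\ZZ)=0$ (by Minkowski, $\Spec\ZZ$ has no nontrivial connected \'etale cover).
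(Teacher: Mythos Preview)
Your reduction to $S=\QQ$ and the injectivity argument via the naturality square match the paper exactly. The paper also reduces $a$ to lie in $\Br(S)$ by a specialization (though it uses $S=\ZZ_{(p)}$ first and then intersects over $p$, rather than handling general $S$ directly).

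The gap is precisely where you flagged it. Your surjectivity step relies on a residue identification at the codimension-one prime $\mathfrak{p}S[j^{\pm1}]$, but neither Gabber's absolute purity nor the tame-symbol formula controls the $p$-primary part of the Brauer group at a prime of residue characteristic $p$. The residue map $\Br(K)\to\H^1(k;\QQ/\ZZ)$ for a DVR with residue field $k$ of characteristic $p$ is only defined (and only behaves as you describe) on the prime-to-$p$ torsion; for $p$-torsion there is genuine wild ramification, and the cokernel of $\Br(R)_{(p)}\to\Br(R[\tfrac1f])_{(p)}$ is not $\H^1(R/f;\QQ/\ZZ)_{(p)}$ in general when $\tfrac1p\notin R$. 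So the claim that ``the residue of $[(\chi,j)]$ at $\mathfrak{p}$ recovers $\chi$'' is not available for the $p$-part of $\chi$, and the argument as written does not go through.

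The paper avoids this entirely by a different, elementary mechanism. Working at $S=\ZZ_{(p)}$, it specializes $j$ to units $u\in\ZZ_{(p)}^\times$: since $\alpha\in\Br(\ZZ_{(p)}[j^{\pm1}])$, each specialization $\alpha|_{j=u}$ lies in $\Br(\ZZ_{(p)})$ and hence dies in $\Br(\QQ_p)$ (as $\Br(\ZZ_p)=0$). But if $\chi$ were ramified at $p$, the corresponding cyclic extension $K/\QQ_p$ would be ramified, so by local class field theory its norm group $N_{K/\QQ_p}(K^\times)$ could not contain all of $\ZZ_p^\times$; choosing $u\in\ZZ_{(p)}^\times\setminus N_{K/\QQ_p}(K^\times)$ forces $[(\chi,u)]\neq0$ in $\Br(\QQ_p)$, a contradiction. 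This norm-group argument handles the $p$-power part of $\chi$ uniformly, with no appeal to purity or residue maps. The passage from $\ZZ_{(p)}$ to arbitrary $S$ is then by intersecting inside $\Br(\QQ)\oplus\H^1(\QQ;\QQ/\ZZ)$.
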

\begin{proof}
    Let $p$ be a prime and assume first that $S\subset \QQ$ with $\frac1p\in S$. By \cref{thm:BrauerProperties},
    we obtain a split short exact sequence
    \[0 \to \Br(S[j])_{(p)} \to \Br(S[j^{\pm1}])_{(p)} \to \H^1(S; \QQ/\ZZ)_{(p)} \to 0.\]
     By $\AA^1$-invariance, $\Br(S[j]) \cong \Br(S)$. This proves the claim $p$-locally if $\frac1p
     \in S$. Thus we obtain it for $S = \QQ$ without localization. As for a general $S\subset \QQ$
     the maps $\Br(S[j^{\pm 1}]) \to \Br(\QQ[j^{\pm 1}])$ and
     $\H^1(S;\QQ/\ZZ)\rightarrow\H^1(\QQ,\QQ/\ZZ)$ are injections, $\Phi$ is an injection in general.
     
     Next we will show the statement in the case $S = \Z_{(p)}$. Let $(a, \chi) \in \Br(\QQ) \oplus \H^1(\Q; \QQ/\ZZ)$. If $\Phi(a, \chi)$ lies in $\Br(\Z_{(p)}[j^{\pm1}])$, the image $\Phi_u(a, \chi)$ of the class $j_u^*\Phi(a, \chi) \in \Br(\Z_{(p)})$ (for an arbitrary $u\in \Z_{(p)}^\times \subset \Z_p^\times$ inducing $j_u\colon \Spec \Z_{(p)} \to \Spec \Z_{(p)}[j^{\pm1 }]$) must have image zero in $\Br(\QQ_p)$ since $\Br(\ZZ_p) =0$. 
     \[\xymatrix{
     \Br(\ZZ_{(p)}[j^{\pm 1}]) \ar[r]^{j_u^*} \ar[d]& \Br(\ZZ_{(p)}) \ar[r]\ar[d] & \Br(\ZZ_p) = 0\ar[d] \\
     \Br(\QQ[j^{\pm1}]) \ar[r] &  \Br(\QQ) \ar[r] & \Br(\QQ_p)
     }
     \]
     Assume now that $(a, \chi) \notin \Br(\ZZ_{(p)}) \oplus \H^1(\ZZ_{(p)}; \QQ/\ZZ)$. If $a \notin \Br(\ZZ_{(p)})$, then its image $b\in \Br(\QQ_p)$ is non-trivial and thus $\Phi_1(a, \chi) = b \neq 0$. Now suppose $a\in \Br(\ZZ_{(p)})$, but $\chi \notin \H^1(\ZZ_{(p)}; \QQ/\ZZ)$. Then the corresponding extension $K$ of $\QQ_p$ must be ramified. Hence the image $N(K) = N_{K|\QQ_p}(K^\times) \subset \QQ_p^\times \cong \ZZ_p^\times \times \ZZ$ cannot contain $\ZZ_p^\times$. As $N(K)$ is of finite index, the surjections $\ZZ_{(p)}^\times \to (\Z/p^n)^\times$ imply that there exists a $u\in \ZZ_{(p)}^\times$ such that $u \notin N(K)$. By \cite[Corollary 4.7.4]{gille-szamuely}, the class $\Phi_u(a, \chi) = [(\chi, u)]$ is thus nonzero in $\Br(\QQ_p)$. This shows that $\Phi$ is indeed surjective if $S =\ZZ_{(p)}$.
     
     For $S\subsetneq \QQ$ general, we have 
     \[\Br(S) \oplus \H^1(S; \QQ/\ZZ) = \left(\Br(S[\tfrac1p]) \oplus \H^1(S[\tfrac1p];
     \QQ/\ZZ)\right) \cap \left(\Br(\Z_{(p)}) \oplus \H^1(\ZZ_{(p)}; \QQ/\ZZ)\right)\] as subgroups of  $\Br(\QQ) \oplus \H^1(\QQ; \QQ/\ZZ)$.
     As we have for every $p$ with $\frac1p\neq S$ a containment
     \[\Br(S[j^{\pm 1}])_{(p)} \subset \Br(S[\tfrac1p, j^{\pm 1}])_{(p)}\cap \Br(\ZZ_{(p)}[j^{\pm 1}])_{(p)}\]
     we see that $\Phi_{(p)}$ is actually surjective for every $S\subset \QQ$ and every prime $p$, which proves the claim. 
\end{proof}

In some of our examples below, we will also use the following classical results, which will help us compute
Brauer groups of various ring spectra. The first is Grothendieck's rigidity result for the Brauer
group \cite[Corollaire 6.2]{grothendieck-brauer-1}
\begin{theorem}\label{thm:rigidity}
Suppose $R$ is
Hensel local with residue field $k$; then $\Br(R)\iso\Br(k)$. If $R$ is also regular, then
$\Br(R)\iso\H^2(\Spec R,\Gm)$ so that $\H^2(\Spec
R,\Gm)\iso\H^2(\Spec k,\Gm)$.
\end{theorem}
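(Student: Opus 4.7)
The plan is to prove the theorem following Grothendieck's original strategy in \cite{grothendieck-brauer-1}. The first statement does essentially all the work: once $\Br(R) \iso \Br(k)$ is established, the second statement is a formal consequence, since the cited theorem of Gabber identifies $\Br(R) \iso \H^2(\Spec R, \Gm)$ for regular noetherian $R$ and $\Br(k) \iso \H^2(\Spec k, \Gm)$ for the residue field $k$ (fields being automatically regular).

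For the first statement, I would show that reduction $\Br(R) \to \Br(k)$, $[A] \mapsto [A \tensor_R k]$, is a bijection. Well-definedness is immediate since being Azumaya is stable under base change and matrix algebras reduce to matrix algebras. For bijectivity, the non-abelian \v{C}ech description expresses every rank-$n^2$ Brauer class over $R$ (respectively $k$) as an element of $\H^1(\Spec R, \PGL_n)$ (respectively $\H^1(\Spec k, \PGL_n)$), and $\Br$ is the colimit over $n$. The core claim then becomes that the reduction map $\H^1(\Spec R, \PGL_n) \to \H^1(\Spec k, \PGL_n)$ is a bijection for every $n$. Given an Azumaya $k$-algebra $A_0$ split by a finite \'etale cover $k \to k'$, the topological invariance of the small \'etale site for Hensel local schemes lifts $k'$ uniquely to a finite \'etale cover $R \to R'$. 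One then lifts the defining $1$-cocycle from $\PGL_n(k' \tensor_k k')$ to $\PGL_n(R' \tensor_R R')$ using Hensel's lemma, applicable because $\PGL_n$ is smooth affine over $\ZZ$ and each factor of the \'etale $R'$-algebra $R' \tensor_R R'$ is Hensel local. The cocycle condition for the lift is automatic from the analogous lifting on the triple intersection $R' \tensor_R R' \tensor_R R'$, and injectivity follows by applying the same argument to coboundaries.

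The step I expect to be most delicate is the non-abelian bookkeeping: different lifts of a fixed cocycle must yield Morita equivalent Azumaya $R$-algebras. This reduces to showing that the kernel of $\PGL_n(R' \tensor_R R') \to \PGL_n(k' \tensor_k k')$ is absorbed by the $1$-cocycle indeterminacy, which holds because $\PGL_n(R') \to \PGL_n(k')$ is itself surjective by the same smoothness-plus-Hensel argument; hence any two lifts differ by a coboundary coming from $\PGL_n(R')$. The whole package is the content of \cite[Corollaire 6.2]{grothendieck-brauer-1}.
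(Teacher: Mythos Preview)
The paper does not prove this theorem; it is quoted with a citation to Grothendieck \cite[Corollaire~6.2]{grothendieck-brauer-1}, so there is no in-paper argument to compare against. Your sketch follows the classical line, and your deduction of the second statement from Gabber's theorem is fine (regular local rings being noetherian by definition). The surjectivity step, however, has a genuine gap.

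An arbitrary lift of a cocycle $c_0 \in \PGL_n(k' \otimes_k k')$ to an element $c \in \PGL_n(R' \otimes_R R')$ has no reason to satisfy the cocycle condition: the obstruction $p_{23}^*(c)\cdot p_{12}^*(c)\cdot p_{13}^*(c)^{-1}$ lies in the kernel of $\PGL_n(R'^{\otimes 3}) \to \PGL_n(k'^{\otimes 3})$, which is nontrivial. Your sentence ``the cocycle condition for the lift is automatic from the analogous lifting on the triple intersection'' supplies no mechanism for killing this obstruction, and the ``delicate step'' you flag afterwards addresses \emph{uniqueness} of the lift up to coboundary, not \emph{existence} of a cocycle lift in the first place. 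The classical fix bypasses cocycles entirely: for injectivity one applies Hensel's lemma for smooth schemes directly to the $\PGL_n$-torsor $\underline{\Iso}_{R\text{-alg}}(A,B)$, lifting a $k$-point to an $R$-point; for surjectivity one applies it to the smooth affine $R$-scheme of Azumaya algebra structures on the free module $\Oscr^{n^2}$ (an open $\GL_{n^2}$-orbit with reductive stabilizer $\PGL_n$), so that any Azumaya $k$-algebra lifts without any cohomological bookkeeping.
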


The next is a corollary of the affine analogue of proper base change as proved in
Gabber--Huber~\cite{gabber-affine,huber-affine}, see also~\cite[Corollary
1.18(a)]{bhatt-mathew-arc}.
\begin{theorem}\label{thm:Gabber-Huber}
If $R$ is a Hensel local ring with residue field
$k$, then $\H^*(\Spec R,\Ascr)\iso\H^*(\Spec k,i^*\Ascr)$ for all torsion \'etale sheaves
$\Ascr$ on $\Spec R$, where $i\colon\Spec k\hookrightarrow\Spec R$.
\end{theorem}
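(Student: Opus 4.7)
The plan is to obtain this as a direct application of the affine analog of proper base change, as the authors already signal when they label this a corollary. Recall that the affine proper base change theorem of Gabber and Huber asserts that if $(A,I)$ is a Hensel pair and $\Fscr$ is a torsion \'etale sheaf on $\Spec A$, then the restriction map
\[
\H^*(\Spec A,\Fscr)\longrightarrow\H^*(\Spec A/I,\Fscr|_{\Spec A/I})
\]
is an isomorphism. So my first step is to identify the hypotheses of the theorem with those of this general statement.

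For this, I would observe that if $R$ is a Hensel local ring with maximal ideal $\mathfrak{m}$ and residue field $k=R/\mathfrak{m}$, then $(R,\mathfrak{m})$ is tautologically a Hensel pair: by definition, a local ring is Hensel precisely when every monic polynomial over $R$ whose reduction over $k$ factors as a product of two coprime monic polynomials admits a corresponding factorization over $R$, and this is exactly the Hensel pair condition for $(R,\mathfrak{m})$. The closed immersion $i\colon\Spec k\hookrightarrow\Spec R$ is the one associated with the ideal $\mathfrak{m}$, and $i^*\Ascr=\Ascr|_{\Spec k}$.

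With this identification in place, the theorem is immediate from Gabber--Huber (in the form of \cite[Corollary 1.18(a)]{bhatt-mathew-arc}) applied to the Hensel pair $(R,\mathfrak{m})$ and the given torsion \'etale sheaf $\Ascr$. The main (and only) obstacle is thus external to the present paper: it lies in the proof of the affine analog of proper base change itself, which is a deep result and whose proof I would not attempt to reproduce here. Given that this theorem is black-boxed from the literature, no further work is needed on our side beyond matching conventions.
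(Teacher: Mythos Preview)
Your proposal is correct and matches the paper's treatment exactly: the paper does not supply its own proof but simply records the statement as a corollary of the affine analogue of proper base change due to Gabber and Huber, citing the same reference \cite[Corollary~1.18(a)]{bhatt-mathew-arc}. Your identification of $(R,\mathfrak{m})$ as a Hensel pair and the direct invocation of that black-boxed result is all that is required.
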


The next result can be found in \cite[Corollaire II.3.5, Proposition II.3.6]{sga4half} or
\cite[Corollary II.3.6]{milne-etale} and we will use it several times in the setting of closed
immersions.

\begin{theorem}\label{thm:closedimmersion}
Let $f\colon X \to Y$ be a finite morphism of schemes. If $\Fscr$ is an \'etale sheaf on $X$, then
    $\R^qf_* = 0$ for $q>0$ and hence $\H^i(X; \Fscr) \cong \H^i(Y; f_*\Fscr)$ for all $i\geq 0$.
\end{theorem}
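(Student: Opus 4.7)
The plan is to use the standard strategy for computing higher direct images: reduce the vanishing of $\R^qf_*\Fscr$ for $q>0$ to a stalk computation, and then deduce the cohomology identification from the Leray spectral sequence.

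First, I would check that $\R^qf_*\Fscr = 0$ for $q>0$ by computing stalks at geometric points. For a geometric point $\bar y \to Y$, the stalk $(\R^qf_*\Fscr)_{\bar y}$ is computed as $\H^q(X \times_Y \Spec \Oscr_{Y,\bar y}^{sh}; \Fscr')$, where $\Fscr'$ is the pullback of $\Fscr$ and $\Oscr_{Y,\bar y}^{sh}$ is the strict Henselization of $Y$ at $\bar y$. Since $f$ is finite, the base change $X \times_Y \Spec \Oscr_{Y,\bar y}^{sh}$ is finite over a strictly Henselian local ring, hence decomposes as a finite disjoint union $\coprod_i \Spec A_i$ of spectra of strictly Henselian local rings $A_i$ (this is the standard structural statement for finite algebras over a strictly Henselian ring).

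Next, I would invoke the classical fact that \'etale cohomology of the spectrum of a strictly Henselian local ring vanishes in positive degrees for any abelian \'etale sheaf: indeed, every \'etale cover of such a spectrum admits a section, so all higher Cech cohomology groups vanish, and the Cech-to-derived functor comparison gives the result. Combining this with the decomposition above yields $(\R^qf_*\Fscr)_{\bar y} = 0$ for $q>0$, and since \'etale cohomology commutes with stalks this gives $\R^qf_*\Fscr=0$.

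Finally, I would apply the Leray spectral sequence $\H^p(Y; \R^qf_*\Fscr) \Rightarrow \H^{p+q}(X; \Fscr)$. The vanishing $\R^qf_*\Fscr=0$ for $q>0$ collapses the spectral sequence onto the row $q=0$, yielding the desired isomorphism $\H^i(X;\Fscr)\cong\H^i(Y; f_*\Fscr)$ for all $i\geq 0$. The only nontrivial input is the local structure theorem for finite algebras over strictly Henselian rings; everything else is formal. Since this is a well-documented classical result (as indicated by the citations to \cite{sga4half} and \cite{milne-etale}), I would likely just cite those references rather than reprove it in the paper.
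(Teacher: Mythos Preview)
Your proposal is correct and matches the paper's treatment: the paper does not give a proof at all but simply cites \cite[Corollaire II.3.5, Proposition II.3.6]{sga4half} and \cite[Corollary II.3.6]{milne-etale}, and the argument you sketch (stalk computation via strict Henselization, finite algebras over strictly Henselian rings decompose into products of strictly Henselian local rings, then Leray) is exactly the standard proof found in those references. Your final remark that you would just cite the references is precisely what the paper does.
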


\subsection{Brauer groups of ring spectra}
In this subsection, we will recall the Brauer group and Brauer space of a commutative ring
spectrum, which were first introduced by \cite{baker-richter-szymik} and \cite{szymik}. For our
purposes, an approach will be convenient that sees Brauer groups as categorified Picard groups. Let
us thus first recall the definition of the Picard group and Picard space.

If $\Cscr$ is a symmetric monoidal $\infty$-category, its underlying
$\infty$-groupoid $\iota\Cscr$ naturally admits the structure of an
$\EE_\infty$-space and the counit map $\iota\Cscr\rightarrow\Cscr$ is symmetric
monoidal. We define the {\bf Picard space} $\ShPic(\Cscr)$ to be the maximal
grouplike $\EE_\infty$-groupoid in $\iota\Cscr$. In other words,
$\ShPic(\Cscr)$ is the space of $\otimes$-invertible objects of $\Cscr$ and
equivalences. The {\bf Picard group} of $\Cscr$ is $\Pic(\Cscr)=\pi_0\ShPic(\Cscr)$.
We refer to \cite{mathew-stojanoska} for more background on Picard groups and spaces. 

\begin{example}
    If $R$ is a commutative ring spectrum, its {\bf Picard space} $\ShPic(R)$ is
     $\ShPic(\Mod_R)$ and its {\bf Picard group} is $\Pic(R)=\pi_0\ShPic(R)$.
\end{example}

Next we introduce the Brauer group $\Br(R)$ of a commutative ring spectrum $R$ as a categorification
of the Picard group. In the case that $R$ is a regular noetherian ring, this will agree with the
classical Brauer group (see \cref{rem:classical}).

\begin{definition}\label{def:brauer}
    Let $R$ be a commutative ring spectrum and let $\Cat_R$ denote the
    presentably symmetric monoidal $\infty$-category of compactly generated $R$-linear stable
    $\infty$-categories and compact object-preserving left adjoint functors.\footnote{Background on
    this can for example be found in \cite[Section 3.1]{antieau-gepner}, where the notation
    $\Cat_{R, \omega}$ is used.}
    \begin{enumerate}
        \item[(a)]
            We let $\ShBr(R)=\ShPic(\Cat_R)$ denote the {\bf Brauer space of
            $R$}. The {\bf
            Brauer group} of a commutative ring spectrum $R$ is
            $\Br(R)=\pi_0\ShBr(R)$. 
        \item[(b)] If $A$ is an $R$-algebra, we say that $A$ is an {\bf Azumaya algebra
            over $R$} if $\Mod_A$ defines a point of $\ShBr(R)$.
        \item[(c)] An Azumaya $R$-algebra $A$ is {\bf trivial} if
            $\Mod_A\we\Mod_R$, i.e., if $A$ is $R$-linearly (derived) Morita equivalent to
            $R$.
        \item[(d)] An Azumaya $R$ algebra $A$ is {\bf \'etale-locally trivial}
            if there is an 
            \'etale cover $R\rightarrow S$ such that $S\otimes_RA$ is trivial. 
    \end{enumerate}
\end{definition}

This definition of an Azumaya algebra is due to To\"en~\cite{toen-derived}. It agrees with the
original definition of an Azumaya algebra in this setting due
to~\cite{baker-richter-szymik}; see~\cite{antieau-gepner} for more details.

\begin{lemma}\label{lem:loops}
    If $R$ is a commutative ring spectrum, then there is a natural equivalence $\ShPic(R)\we\Omega\ShBr(R)$, where $\Omega\ShBr(R)$ is
    computed via loops based at the trivial Brauer class.
\end{lemma}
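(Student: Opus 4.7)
The plan is to unpack $\ShBr(R)=\ShPic(\Cat_R)$ and identify the loop space at the unit $\Mod_R$ via Morita theory.

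First, for any symmetric monoidal $\infty$-category $\Cscr$ with unit $\mathbf{1}$, the Picard space $\ShPic(\Cscr)$ sits as a full subgroupoid of $\iota\Cscr$, so its loop space at $\mathbf{1}$ agrees with $\Aut_\Cscr(\mathbf{1})$, the space of self-equivalences of the unit inside $\Cscr$. Applied to $\Cscr=\Cat_R$, whose unit is $\Mod_R$, we get
\[\Omega\ShBr(R)\we\Aut_{\Cat_R}(\Mod_R).\]

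Second, I would identify $\Aut_{\Cat_R}(\Mod_R)$ with $\ShPic(R)$. For this, I use the standard fact that the endomorphism object of the unit $\Mod_R$ in $\Cat_R$ is $\Mod_R$ itself as a presentably symmetric monoidal $R$-linear category: a compact-preserving left adjoint $F\colon\Mod_R\to\Mod_R$ is determined up to contractible choice by $F(R)\in\Mod_R$, and composition corresponds to $\otimes_R$. Consequently the space $\ShMap_{\Cat_R}(\Mod_R,\Mod_R)$ is the underlying space of $\Mod_R$, and its subspace of invertible elements (i.e., the self-equivalences) is precisely $\ShPic(R)=\ShPic(\Mod_R)$. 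This identification is natural in $R$, so we obtain the claimed equivalence $\ShPic(R)\we\Omega\ShBr(R)$.

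The main thing to be careful about is the Morita step: one needs that ``$\Mod_R$ is the unit'' yields its endomorphisms as an object of $\Cat_R$, rather than merely its endomorphisms as an $R$-linear stable $\infty$-category. This is a consequence of the fact that $\Cat_R$ is itself presentably symmetric monoidal with unit $\Mod_R$, so the canonical map $\Mod_R\to\ShEnd_{\Cat_R}(\Mod_R)$ adjoint to the unit constraint is an equivalence. This is already implicit in~\cite{antieau-gepner,toen-derived}, and the present lemma is the Picard-level consequence. No further computation is needed beyond invoking this identification of the endomorphism object of the unit.
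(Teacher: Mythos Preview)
Your proof is correct and follows essentially the same approach as the paper: identify $\Omega\ShBr(R)$ with the autoequivalences of the unit $\Mod_R$ in $\Cat_R$, then invoke Morita theory to identify these with $\ShPic(R)$. Your version simply makes the Morita step more explicit by first identifying all of $\ShEnd_{\Cat_R}(\Mod_R)$ with $\Mod_R$ and then passing to invertibles, whereas the paper compresses this into one sentence.
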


\begin{proof}
    By construction, $\Omega\ShBr(R)$ is the space of autoequivalences of the
    unit object of $\Cat_R$. The unit object is $\Mod_R$ and the
    autoequivalences must be $R$-linear, so they correspond to tensoring with
    invertible $R$-modules.
\end{proof}

We will prove in the next section that $R\mapsto\ShBr(R)$ is a Postnikov complete \'etale sheaf. To do
so, we first establish that $R\mapsto\Cat_R$ is an \'etale sheaf (with values in
$\PrL$). The result was discovered in the context of the present project,
but appeared first in~\cite[Thm.~2.16]{antieau-elmanto}.

\begin{proposition}\label{prop:sheaf}
    The presheaf $R\mapsto\Cat_R$ is an \'etale sheaf with values in $\PrL$.
\end{proposition}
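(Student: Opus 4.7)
The goal is to verify the étale sheaf axiom for $R\mapsto\Cat_R$. By standard arguments it suffices to check descent along a single faithful étale morphism $R\to S$, i.e., that base change induces an equivalence
\[
\Cat_R \xrightarrow{\we} \lim_{[n]\in\Delta}\Cat_{S^{\otimes_R(n+1)}}
\]
in $\PrL$, where the cosimplicial object is the Čech nerve of $R\to S$. The plan is to factor this through the larger $\infty$-category of presentable stable modules, where descent is known, and then cut back down to the compactly generated part.

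First I would use the tautological identification
\[
\Cat_R \we \Mod^{\cg}_{\Mod_R}(\PrL_{\st})
\]
of $R$-linear compactly generated stable $\infty$-categories with compactly generated $\Mod_R$-modules in $\PrL_{\st}$. Then I would invoke Lurie's faithfully flat descent theorem for modules (HA, Thm.~4.8.5.8, or SAG, Prop.~D.6.3.5), which yields $\Mod_R \we \lim \Mod_{S^{\otimes_R(\bullet+1)}}$ as an equivalence in $\CAlg(\PrL_{\st})$. Passing to module categories via the functor $A\mapsto\Mod_A(\PrL_{\st})$, which sends limits in $\CAlg(\PrL_{\st})$ to the corresponding limits in $\widehat{\Cat}_\infty$, gives descent for $\Mod_{\Mod_R}(\PrL_{\st})$ without any compactness assumption.

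The remaining and most subtle step is to restrict this equivalence to the compactly generated subcategories on both sides. In one direction, base change along the étale morphism $R\to S$ preserves compact objects: since $R\to S$ is étale, $\Mod_S$ is a dualizable (in fact proper) $\Mod_R$-module, so $S\otimes_R-$ has a colimit-preserving right adjoint and therefore preserves compact objects. In the other direction, given descent data $(\Cscr_\bullet,\varphi_\bullet)$ with each $\Cscr_n$ compactly generated, I would produce compact generators of $\lim\Cscr_\bullet$ by starting from compact generators of $\Cscr_0$ and gluing them using the descent isomorphisms, exploiting that the cosimplicial structure maps are étale base changes and hence preserve compactness. A clean packaging uses the characterization of $\Cat_R$ as the full subcategory of $\Mod_{\Mod_R}(\PrL_{\st})$ on dualizable objects, noting that dualizability is preserved and detected by faithful étale base change.

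The main obstacle is precisely this last reduction to the compactly generated subcategory: the formal descent statement in $\PrL_{\st}$ is straightforward once flat descent for modules is invoked, but preservation of compact generation under the limit is what gives the proposition its content and confines it to the étale (rather than a more general fpqc) setting. Everything else is essentially assembling already-available descent results for modules and for commutative algebras in $\PrL_{\st}$.
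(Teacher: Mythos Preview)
Your overall strategy---embed $\Cat_R$ into the larger category $\Mod_{\Mod_R}(\PrL_{\st})$, establish descent there, then cut back to the compactly generated part---is the right shape, and is in fact close to how the argument runs in the reference the paper cites (the paper itself simply invokes \cite[Thm.~2.16]{antieau-elmanto} after checking that the presheaf lands in $\PrL$). But two of your justifications do not hold as stated.

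First, the sentence ``the functor $A\mapsto\Mod_A(\PrL_{\st})$ sends limits in $\CAlg(\PrL_{\st})$ to the corresponding limits in $\widehat{\Cat}_\infty$'' is false in general: $\Mod_{(-)}$ does not preserve arbitrary limits, even for ordinary rings. What is true is a \emph{descent} statement for the specific cosimplicial diagram arising as the \v{C}ech nerve of $\Mod_R\rightarrow\Mod_S$, and that requires its own argument---one checks that base change along $\Mod_R\rightarrow\Mod_S$ is comonadic (or that $R\rightarrow S$ is descendable in Mathew's sense, which suffices), invoking Barr--Beck--Lurie. This is the content of results such as~\cite[Proposition~6.5]{lurie-dag11}, not a formality. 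You cannot bootstrap from descent for $\Mod_R$ to descent for $\Mod_{\Mod_R}(\PrL_{\st})$ by limit-preservation alone.

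Second, the proposed ``clean packaging'' via dualizability does not work: in $\Mod_{\Mod_R}(\PrL_{\st})$ the dualizable objects are the compactly assembled $R$-linear categories, a strictly larger class than the compactly generated ones. Moreover, $\Cat_R$ is not even a full subcategory of $\Mod_{\Mod_R}(\PrL_{\st})$, since by definition its morphisms are required to preserve compact objects. The honest argument for the restriction step is the one you gesture at earlier: for an \'etale cover $R\rightarrow S$, the forgetful functor $\Mod_S\rightarrow\Mod_R$ preserves compact objects (because $S$ is perfect over $R$), so pushforward along the cover sends compacts to compacts; one then shows that pushing forward a compact generator of $\Cscr_0$ yields a compact generator of the limit. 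This is where \'etaleness genuinely enters, and it is not captured by dualizability.
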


\begin{proof}
    The fact that each $\Cat_R$ is presentable follows from~\cite[Corollary~4.25]{bgt1}. That for any map $R \to S$ of commutative ring spectra, the induced functor
    $\Cat_R\rightarrow\Cat_S$ is a left adjoint follows because
    $\Cat_S\we\Mod_{\Mod_S}(\Cat_R)$. Thus, since the forgetful functor
    $\PrL\rightarrow\widehat{\Cat}_\infty$ preserves limits
    by~\cite[Prop.~5.5.3.13]{htt}, it suffices to see that
    $R\mapsto\Cat_R$ is an \'etale sheaf with values in
    $\widehat{\Cat}_\infty$. This is part of \cite[Thm.~2.16]{antieau-elmanto}.
\end{proof}

\begin{corollary}
    The construction $R\mapsto\ShBr(R)$ is an \'etale sheaf on $\CAlg^{\op}$.
\end{corollary}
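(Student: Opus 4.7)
The plan is to deduce the sheaf property for $\ShBr$ from the preceding \cref{prop:sheaf} by observing that the Picard space construction commutes with the relevant limits.

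First I would promote the conclusion of \cref{prop:sheaf} to a sheaf with values in symmetric monoidal presentable $\infty$-categories. The assignment $R \mapsto \Cat_R$ naturally takes values in $\CAlg(\PrL)$, since $\Cat_R \simeq \Mod_{\Mod_R}(\PrL)$ is presentably symmetric monoidal with respect to the relative tensor product over $\Mod_R$, and pullback along $R \to S$ is a symmetric monoidal left adjoint. Since the forgetful functor $\CAlg(\PrL) \to \PrL$ is conservative and preserves limits, the sheaf property in $\PrL$ established in \cref{prop:sheaf} upgrades to one in $\CAlg(\PrL)$ (alternatively, one checks directly that the comparison maps in \cite[Thm.~2.16]{antieau-elmanto} are symmetric monoidal).

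Next I would observe that the Picard space functor $\ShPic \colon \CAlg(\widehat{\Cat}_\infty) \to \mathcal{S}$ preserves limits. Indeed, $\iota$ (the maximal $\infty$-subgroupoid) preserves limits of $\infty$-categories, and an object $(x_i)$ in a limit $\lim \Cscr_i$ of symmetric monoidal $\infty$-categories is $\otimes$-invertible if and only if each component $x_i$ is $\otimes$-invertible in $\Cscr_i$, since invertibility is detected and preserved by symmetric monoidal functors. Thus $\ShPic(\lim \Cscr_i) \simeq \lim \ShPic(\Cscr_i)$.

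Composing the sheaf $R \mapsto \Cat_R$ with the limit-preserving functor $\ShPic$ then yields that $R \mapsto \ShBr(R) = \ShPic(\Cat_R)$ is an \'etale sheaf of spaces on $\CAlg^{\op}$. The only mildly subtle point is checking that the sheaf condition is really obtained after applying $\ShPic$ componentwise — which reduces to the limit-preservation above — while the rest is just a formal composition of known sheaf-theoretic facts.
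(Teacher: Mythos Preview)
Your proposal is correct and follows essentially the same approach as the paper: use that $\ShPic$ preserves limits as a functor from symmetric monoidal $\infty$-categories to spaces, together with the fact that limits of such are computed on underlying $\infty$-categories, to deduce the sheaf property for $\ShBr = \ShPic(\Cat_{(-)})$ from \cref{prop:sheaf}. The only cosmetic difference is that the paper cites \cite[Proposition~2.2.3]{mathew-stojanoska} for the limit-preservation of $\ShPic$ rather than arguing it directly as you do.
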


\begin{proof}
    The construction $\Cscr\mapsto\ShPic(\Cscr)$ preserves limits as a functor from symmetric monoidal $\infty$-categories to spaces \cite[Proposition 2.2.3]{mathew-stojanoska}. As
    limits of symmetric monoidal $\infty$-categories are computed on the level of underlying
    $\infty$-categories, the result follows from \cref{prop:sheaf}.
\end{proof}

\subsection{The local Brauer group}
While in classical algebras, Azumaya algebras are always \'etale-locally Morita equivalent to the
ground ring, this is no longer true in the spectral setting. In this subsection (and actually the
whole article), we will concentrate on those which \emph{are} \'etale-locally trivial.

\begin{definition}
    Let $\pi_0\ShBr$ denote the \'etale sheaf of connected components of
    $\ShBr$. We let $\ShLBr$ be the fiber of the natural map
    $\ShBr\rightarrow\pi_0\ShBr$ in \'etale sheaves. The space $\ShLBr(R)$ is the {\bf local
    Brauer space} of $R$ and $\LBr(R)=\pi_0(\ShLBr(R))$ is the {\bf local Brauer
    group} of $R$.
\end{definition}

\begin{remark}
    Thanks to \cref{lem:loops},
    we could equivalently have defined $\ShLBr$ as $\ShBPic$, the \'etale
    classifying space of $\ShPic$, computed in \'etale sheaves. However, note that the
    functor $R \mapsto\B\ShPic (R)$, sending $R$ to the classifying space of its Picard space, is
    not a sheaf, and $\ShBPic$ is its sheafification.
\end{remark}

The name `local Brauer group' is short-hand for `locally-trivial Brauer group',
which is justified by the following lemma.

\begin{lemma} Let $R$ be a commutative ring spectrum.
    \begin{enumerate}
        \item[{\rm (a)}]   The natural map $\LBr(R)\rightarrow\Br(R)$ is an
            injection and hence $\ShLBr(R)\rightarrow\ShBr(R)$ is the inclusion
            of a subspace of connected components.
        \item[{\rm (b)}]   An element $\alpha\in\Br(R)$ is contained in $\LBr(R)$ if and only if there
            is a faithful \'etale map $R\rightarrow S$ such that $\alpha$ maps to zero in
            $\Br(S)$.
    \end{enumerate}
\end{lemma}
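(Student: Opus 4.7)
My plan is to derive both statements directly from the defining fiber sequence
\[
\ShLBr \longrightarrow \ShBr \longrightarrow \pi_0\ShBr
\]
of pointed \'etale sheaves of spaces. Evaluating at $R$ gives a fiber sequence of spaces, and I will read off both parts from its long exact sequence on homotopy groups.

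For (a), I first observe that, by the conventions of this paper, $\pi_0\ShBr$ is the \'etale sheafification of the presheaf of sets $R \mapsto \Br(R)$; in particular it takes values in discrete spaces, so $\pi_1((\pi_0\ShBr)(R)) = 0$. The long exact sequence then becomes
\[
0 \longrightarrow \pi_0\ShLBr(R) \longrightarrow \Br(R) \longrightarrow \pi_0\bigl((\pi_0\ShBr)(R)\bigr),
\]
giving the injectivity $\LBr(R) \hookrightarrow \Br(R)$. Since $\ShLBr(R)$ is by definition the homotopy fiber of a map from $\ShBr(R)$ into a discrete space, its points are precisely those components of $\ShBr(R)$ mapping to the basepoint, which yields the second half of (a).

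For (b), the same exact sequence identifies $\LBr(R)$ with the kernel of the sheafification unit $\Br(R) \to \pi_0\bigl((\pi_0\ShBr)(R)\bigr)$. The next step is to invoke the standard pointwise description of sheafification of presheaves of sets: a section $\alpha$ vanishes in the sheafification on $R$ if and only if there is an \'etale cover $\{R \to S_i\}$ on which each restriction of $\alpha$ vanishes. On the \'etale site of $\CAlg^{\op}$ such a cover may be replaced by the single faithful \'etale extension $R \to \prod_i S_i$, so that $\alpha \in \LBr(R)$ if and only if there is a faithful \'etale extension $R \to S$ with $\alpha|_S = 0$ in $\Br(S)$, which is exactly (b).

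The argument is essentially formal once one trusts the definitions, so there is no serious obstacle. The only minor point I expect to have to spell out is the equivalence between vanishing on an \'etale covering family and vanishing on a single faithful \'etale extension; this reduces to the facts that a finite product of \'etale $R$-algebras is \'etale and that joint faithfulness of a finite \'etale family is equivalent to faithfulness of the product extension.
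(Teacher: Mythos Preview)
Your proof is correct and follows essentially the same route as the paper: both extract (a) from the long exact sequence of the fiber sequence $\ShLBr(R)\to\ShBr(R)\to\Gamma(\Spec R,\pi_0\ShBr)$ using that the target has no positive homotopy, and both deduce (b) from the identification of $\LBr(R)$ with the kernel of the unit map into the sheafification $\pi_0\ShBr$. The only small point you leave implicit is that an arbitrary \'etale covering family of $\Spec R$ admits a finite subcover by quasi-compactness of affines, so that the product $\prod_i S_i$ really is \'etale; this is harmless but worth stating when you say a cover ``may be replaced by the single faithful \'etale extension $R\to\prod_i S_i$.''
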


\begin{proof}
    For (a), use the fiber sequence 
    $$\ShLBr(R)\rightarrow\ShBr(R)\rightarrow\Gamma(\Spec
    R,\pi_0\ShBr)$$
    of spaces. Recall here that $\Gamma$ denotes the \emph{space} (as opposed to the set) of global
    sections of an \'etale sheaf and $\pi_0\ShBr$ denotes the \'etale-sheafified homotopy group.
    Since $\pi_i\Gamma(\Spec R,\pi_0\ShBr)=0$ for
    $i>0$, the first claim follows from the long exact sequence in homotopy. Thus we can identify $\LBr(R)$ as a subgroup of $\Br(R)$. 
    
    If
    $\alpha\in\LBr(R)$, then $\alpha$ is \'etale-locally trivial since
    $\pi_0\ShLBr=0$. Conversely, if
    $\alpha\in\Br(R)$ is such that there exists a faithful \'etale map $R\rightarrow S$ such
    that $\alpha_S=0\in\Br(S)$, then the image of $\alpha$ in $\pi_0\R\Gamma(\Spec
    R,\pi_0\ShBr)$ is zero. Thus, $\alpha\in\LBr(R)$. This proves (b).
\end{proof}

If $R$ is a commutative ring spectrum, we will always equip $\Spec R$ with the \'etale topology. The small \'etale sites of $\Spec R$ and $\Spec \pi_0R$ agree, so we
can compute cohomology of sheaves of abelian groups on $\Spet R$ via 
\'etale cohomology on $\Spec\pi_0R$: given a sheaf of abelian groups $\Ascr\in\Shv_{\Sp}(\Spet R)^\heartsuit$, we
have 
$$\Gamma(\Spet
R,\Ascr)\we\Gamma(\Spet\pi_0R,\Ascr)$$ 
and thus
\[\pi_{-i}\Gamma(\Spet
R,\Ascr)\cong \pi_{-i}\Gamma(\Spet\pi_0R,\Ascr) \cong \H^i(\Spec \pi_0R; \Ascr)\]
This will be used constantly below.

Specifically, to compute $\ShLBr(R)$, we can restrict $\ShLBr$ and $\ShBr$ to \'etale
sheaves $\ShLBr_{\Oscr}$ and $\ShBr_\Oscr$ on the small \'etale site of $R$. Note that these are
already sheaves and no additional sheafification is necessary. While we may and will view the
$\ShLBr_\Oscr$ and $\ShBr_\Oscr$  as sheaves on $\Spec \pi_0 R$, they certainly depend crucially on
$R$, not only on $\pi_0R$. We use the notation
$\ShPic_\Oscr$ also for the restriction of $\ShPic$ to $\Spet R$. 

\begin{lemma}\label{lem:hosheaves}
    Let $\ShLBr_\Oscr$ be the local Brauer space sheaf constructed above on
    $\Spet R$ for a commutative ring spectrum $R$. The homotopy sheaves of
    $\ShLBr_\Oscr$ are given by
    $$\pi_t\ShLBr_\Oscr\iso\begin{cases}
        0&\text{if $t=0$,}\\
        \pi_0\ShPic_\Oscr&\text{if $t=1$,}\\
        \Gm&\text{if $t=2$, and}\\
        \pi_{t-2}\Oscr&\text{if $t\geq 3$,}
    \end{cases}$$ where $\Gm$ is the \'etale sheaf $\Gm(S)\iso(\pi_0S)^\times$
    for an \'etale commutative $R$-algebra $S$. In particular, $\pi_t\ShLBr_\Oscr$ is quasi-coherent for $t\geq 3$.
\end{lemma}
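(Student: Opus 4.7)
The plan is to string together the two structural inputs already available in the paper: the defining fiber sequence $\ShLBr_\Oscr \to \ShBr_\Oscr \to \pi_0\ShBr_\Oscr$ and the loop equivalence $\Omega\ShBr \simeq \ShPic$ from \cref{lem:loops}. Working on the small \'etale site of $R$, the sheaf $\pi_0\ShBr_\Oscr$ is $0$-truncated, so the long exact sequence in sheaf homotopy groups immediately yields $\pi_0\ShLBr_\Oscr = 0$ and $\pi_t\ShLBr_\Oscr \iso \pi_t\ShBr_\Oscr$ for all $t\geq 1$. Applying $\Omega$ (which commutes with the limits defining sheafification and restriction) to \cref{lem:loops} at the trivial Brauer class then gives $\pi_t\ShBr_\Oscr \iso \pi_{t-1}\ShPic_\Oscr$ for $t\geq 1$, reducing the problem to computing the sheafified homotopy groups of $\ShPic_\Oscr$ at the unit.

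For an \'etale commutative $R$-algebra $S$, the connected component of $S \in \ShPic(S)$ is the classifying space $\B\Aut_{\Mod_S}(S)$, and $\Aut_{\Mod_S}(S)$ is the space of units $\GL_1(S) = \Omega^\infty \mathrm{gl}_1(S)$. Hence $\pi_1(\ShPic(S), S) = \pi_0\GL_1(S) = (\pi_0 S)^\times$, while for $n\geq 2$ we have $\pi_n(\ShPic(S), S) = \pi_{n-1}\GL_1(S) \iso \pi_{n-1} S$, the last identification because $\GL_1(S) \hookrightarrow \Omega^\infty S$ is the inclusion of a union of connected components. Sheafifying gives $\pi_1\ShPic_\Oscr \iso \Gm$ and $\pi_n\ShPic_\Oscr \iso \pi_{n-1}\Oscr$ for $n\geq 2$, while $\pi_0\ShPic_\Oscr$ is by definition the Picard sheaf.

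Combining the shifts $\pi_t\ShLBr_\Oscr \iso \pi_{t-1}\ShPic_\Oscr$ for $t\geq 1$ yields exactly the table: $t = 1$ gives $\pi_0\ShPic_\Oscr$; $t = 2$ gives $\pi_1\ShPic_\Oscr \iso \Gm$; and $t\geq 3$ gives $\pi_{t-2}\Oscr$. Quasi-coherence of $\pi_{t-2}\Oscr$ on the small \'etale site of $R$ is standard, since for an \'etale extension $\pi_0 R \to A$ lifting to $R \to R_A$ one has $\pi_n R_A \iso \pi_n R \otimes_{\pi_0 R} A$ by flatness.

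There is no serious obstacle here; the argument is essentially a bookkeeping of the long exact sequence together with the standard identification of the homotopy groups of $\ShPic$. The only real subtlety is the sheaf-theoretic framework: one must pass from the sectionwise computations of $(\pi_0 S)^\times$ and $\pi_{n-1} S$ to their \'etale sheafifications $\Gm$ and $\pi_{n-1}\Oscr$, and one must verify that $\Omega$ commutes with restriction to the small \'etale site so that \cref{lem:loops} propagates to an equivalence $\Omega\ShBr_\Oscr \simeq \ShPic_\Oscr$ of sheaves.
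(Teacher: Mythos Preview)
Your proposal is correct and follows essentially the same approach as the paper: both derive the result from \cref{lem:loops} together with the compatibility of sheafification with restriction to the small \'etale site. The paper compresses this into a single sentence, while you have carefully unpacked the long exact sequence of the defining fiber sequence and the standard identification of the homotopy groups of $\ShPic$ via $\GL_1$; your added details are accurate and the only substantive point beyond bookkeeping---that $\Omega$ and restriction/sheafification interact well so that $\Omega\ShBr_\Oscr \simeq \ShPic_\Oscr$ as sheaves---is exactly the one the paper singles out.
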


\begin{proof}
    This follows from \Cref{lem:loops}, using that \'etale sheafification commutes with restriction
    along the morphism $\CAlg_R^{\et} \to \CAlg$ from commutative \'etale $R$-algebras to all
    commutative ring spectra.
\end{proof}

\begin{remark}
    Analysis of $\pi_1\ShLBr_{\Oscr}\iso\pi_0\ShPic_{\Oscr}$ is often the most difficult part
    of local Brauer group computations.
\end{remark}

\begin{definition}
    Let $\Cscr$ be a prestable $\infty$-category in the sense of \cite[Appendix C]{sag} having all
    limits, which is automatically the nonnegative part of a t-structure on a stable
    $\infty$-category. We say that $X\in\Cscr$
    is {\bf $\infty$-connective} if $\Map(X,Y)\we 0$ for every truncated
    object $Y$. An object $Y$ of $\Cscr$ is {\bf hypercomplete} if
    $\Map(X,Y)\we 0$ for every $\infty$-connective object $X$. Finally,
    $Y$ is {\bf Postnikov complete} if the natural map
    $Y\rightarrow\lim_n\tau_{\leq n}Y$ is an equivalence; this occurs if and only if 
    $\lim_n\tau_{\geq n+1}Y\we 0$ as fiber sequences are closed under limits.
\end{definition}

Postnikov complete objects are hypercomplete, but the converse is not always
true. The significance of Postnikov completeness is that it allows us to compute
global sections by using  descent spectral sequences. As our prestable $\Cscr$ we will use sheaves
with values in grouplike $\EE_{\infty}$-spaces (i.e., connective spectra). Note that the forgetful functor from grouplike
$\EE_{\infty}$-spaces to spaces preserves and detects limits.

\begin{proposition}\label{prop:hypersheaf}
    The assignments $R\mapsto\ShLBr(R)$ and $R\mapsto\ShBr(R)$ define Postnikov
    complete \'etale sheaves of grouplike $\EE_{\infty}$-spaces on $\CAlg^\op$. Similarly, $\ShLBr_\Oscr$ and
    $\ShBr_\Oscr$ are Postnikov complete \'etale sheaves on $\Spet R$ for any
    commutative ring spectrum $R$.
\end{proposition}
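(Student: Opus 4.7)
The plan is to reduce everything to showing that $\ShLBr_\Oscr$ on the small \'etale site $\Spet R$ is Postnikov complete, and then to deduce all the other statements. Postnikov completeness of a sheaf $\Fscr$ on $\CAlg^\op$ amounts to having $\Fscr(R)\we\lim_n\tau_{\leq n}\Fscr(R)$ for each commutative ring spectrum $R$, and since the small \'etale sites of $\Spec R$ and $\Spec\pi_0 R$ coincide and compute the same global sections as the ambient big-site sheaf, it suffices to work on the small site.

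The reduction from $\ShBr_\Oscr$ to $\ShLBr_\Oscr$ proceeds via the defining fiber sequence
\[\ShLBr_\Oscr\rightarrow\ShBr_\Oscr\rightarrow\pi_0\ShBr_\Oscr.\]
The target is $0$-truncated and thus trivially Postnikov complete. Applying $\tau_{\geq n+1}$ (which commutes with limits, hence with fiber sequences of sheaves of connective spectra) and then taking limits over $n$ gives a fiber sequence $\lim_n\tau_{\geq n+1}\ShLBr_\Oscr\rightarrow\lim_n\tau_{\geq n+1}\ShBr_\Oscr\rightarrow 0$, so Postnikov completeness of $\ShLBr_\Oscr$ implies the same for $\ShBr_\Oscr$.

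To prove that $\ShLBr_\Oscr$ itself is Postnikov complete, I would use the descent spectral sequence
\[E_2^{s,t}=\H^s(\Spec\pi_0R,\,\pi_t\ShLBr_\Oscr)\Rightarrow\pi_{t-s}\Gamma(\Spet R,\ShLBr_\Oscr),\]
and show it converges strongly by identifying a large vanishing region at $E_2$. By \cref{lem:hosheaves}, for $t\geq 3$ the homotopy sheaf $\pi_t\ShLBr_\Oscr\iso\pi_{t-2}\Oscr$ is quasi-coherent, and quasi-coherent sheaves on an affine scheme have vanishing higher \'etale cohomology. Consequently $E_2^{s,t}=0$ for $s\geq 1$ and $t\geq 3$. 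Combined with the vanishing $\pi_0\ShLBr_\Oscr=0$, this leaves in each fixed total degree $n=t-s$ at most the three terms indexed by $(0,n)$, $(1-n,1)$, $(2-n,2)$ with nonnegative coordinates, so the filtration on each $\pi_n\Gamma$ is finite. Equivalently, $\lim_n\Gamma(\Spet R,\tau_{\geq n+1}\ShLBr_\Oscr)\we 0$, which, since $\Gamma$ commutes with limits, says that $\ShLBr_\Oscr$ is Postnikov complete.

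The main obstacle is to rigorously justify the convergence of the descent spectral sequence and the interplay between Postnikov truncation and the global sections functor on the \'etale topos. Once one knows the spectral sequence converges conditionally with bounded (in fact uniformly finite) filtration in each degree, Postnikov completeness follows formally. All other ingredients---the computation of the homotopy sheaves from \cref{lem:hosheaves}, the quasi-coherence of $\pi_{t-2}\Oscr$, and the vanishing of higher \'etale cohomology of quasi-coherent sheaves on an affine---are standard and available in this framework.
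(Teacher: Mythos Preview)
Your argument has a genuine gap, and it is precisely the ``main obstacle'' you flag at the end: the bounded-filtration argument is circular as written. The descent spectral sequence associated to the Postnikov tower of $\ShLBr_\Oscr$ always converges to $\pi_*\lim_n\Gamma(\tau_{\leq n}\ShLBr_\Oscr)$; the question of whether this agrees with $\pi_*\Gamma(\ShLBr_\Oscr)$ is exactly the Postnikov completeness you are trying to prove. Concretely, your argument shows that for $n\geq 2$ the map $\pi_k\Gamma(\tau_{\geq n+2}\ShLBr_\Oscr)\to\pi_k\Gamma(\tau_{\geq n+1}\ShLBr_\Oscr)$ is an isomorphism for $k\leq n$, so the inverse system $\{\pi_k\Gamma(\tau_{\geq n+1}\ShLBr_\Oscr)\}_n$ stabilizes. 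But the stable value is $\pi_k\Gamma(\tau_{\geq N+1}\ShLBr_\Oscr)$ for $N\gg k$, and nothing you have said forces this to vanish: an $(N+1)$-connective sheaf of spectra on an \'etale site which is not hypercomplete can have global sections with nonzero $\pi_k$ for $k\leq N$ (this is exactly what an $\infty$-connective but nonzero sheaf does). The quasi-coherence of the homotopy sheaves only tells you that the \emph{Postnikov completion} of $\tau_{\geq N+1}\ShLBr_\Oscr$ has connective global sections, which is not enough.

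The paper supplies the missing ingredient by first establishing \emph{hypercompleteness}. It uses that $R\mapsto\Mod_R$ is an \'etale hypersheaf (\cite[Corollary~D.6.3.3]{sag}), whence $\ShPic$ is hypercomplete since $\ShPic$ preserves limits; then $\ShLBr\simeq\ShBPic$ is hypercomplete because $\Omega$ is fully faithful on $1$-connective sheaves, so $\Map(X,\ShLBr)\simeq\Map(\Omega X,\ShPic)\simeq 0$ for $X$ $\infty$-connective. Only after this does the quasi-coherence of $\pi_t\ShLBr_\Oscr$ for $t\geq 3$ enter: hypercompleteness passes to $\tau_{\geq 3}\ShLBr$, and then the Clausen--Mathew criterion \cite[Proposition~2.10]{clausen-mathew} (enough objects of cohomological dimension $\leq 0$ with respect to these coefficients, namely affines) upgrades hypercompleteness to Postnikov completeness. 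Your reduction from $\ShBr$ to $\ShLBr$ via the fiber sequence is fine, and your identification of the vanishing region is correct and is indeed what drives the Clausen--Mathew step; what you are missing is the hyperdescent input for $\Mod_R$, without which the argument does not close.
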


\begin{proof}
    The proofs of all four cases are the same, so we give it only for $\ShBr$
    on $\CAlg^\op$. As $\ShBr$ is Postnikov complete if and only if $\lim_n \tau_{\geq n+1}\ShBr \simeq
    \ast$, it is enough to prove Postnikov completeness for any sufficiently
    connective cover of $\ShBr$. We prove that $\tau_{\geq 3}\ShBr$ is Postnikov complete in two steps.
    First, $R\mapsto\Mod_R$ satisfies hyperdescent (see~\cite[Corollary
    D.6.3.3]{sag}), so $\Pic$ preserving limits as a functor from symmetric monoidal $\infty$-categories implies $\ShPic$ being hypercomplete. 
    
    This implies that
    $\ShLBr\we\ShBPic$ is
    hypercomplete: On $1$-connective \'etale sheaves, $\Omega$ is fully faithful. If $X$ is an $\infty$-connective \'etale sheaf, then
    $$\Map(X,\ShLBr)\we\Map(\Omega X,\ShPic)\we 0,$$ 
    since $\Omega X$ is still $\infty$-connective.
    
    Second, the fact that $\ShLBr$ is hypercomplete implies that its
    $3$-connective cover $\tau_{\geq 3}\ShLBr$ is hypercomplete. However, the homotopy
    sheaves $\pi_i\tau_{\geq 3}\ShLBr$ are all quasi-coherent by
    \cref{lem:hosheaves}. Therefore, there are enough objects (affines for example) of cohomological dimension $\leq 0$ with
    $\{\pi_*\tau_{\geq 3}\ShLBr\}$-coefficients in the sense
    of~\cite[Def.~2.8]{clausen-mathew}. By~\cite[Prop.~2.10]{clausen-mathew},
    it follows that $\tau_{\geq 3}\ShLBr$ is in fact Postnikov complete, which is what we wanted to show.
\end{proof}

\begin{remark}
    A form of \Cref{prop:hypersheaf} was proved in~\cite[Section
    7]{antieau-gepner} in the special case of connective
    commutative ring spectra using a different argument, although the proof in the
    connective case and of~\cite[Proposition~6.5]{lurie-dag11}, which is used
    in the proof of \Cref{prop:sheaf}, are closely related under the
    hood. The main point
    of~\cite{antieau-gepner} is that when $R$ is connective, every Azumaya
    $R$-algebra is \'etale locally trivial. This is not true in general, as
    \cref{ex:quasi-affine} below shows.
\end{remark}

As the sheaves $\ShPic$, $\ShLBr$ and $\ShBr$ take values in grouplike $\EE_{\infty}$-spaces, we can
deloop them to presheaves of spectra. Sheafifying these results in sheaves $\Shpic$, $\mathbf{lbr}$
and $\mathbf{br}$ and the restrictions $\Shpic_{\Oscr}$, $\mathbf{lbr}_{\Oscr}$ and
$\mathbf{br}_{\Oscr}$ when working on the \'etale site of $\Spec R$. Note that by
construction, $\mathbf{lbr}_{\Oscr} \simeq \Shpic[1]$. Note moreover that $\pi_n\mathbf{lbr} \cong
\pi_n\ShLBr$, but the global sections can acquire additional negative homotopy groups.

\begin{corollary}
    There is a convergent spectral sequence
    \begin{equation}\label{eq:descentss}\E_2^{s,t}\iso\H^s(\Spec\pi_0R,\pi_t\ShLBr_\Oscr)
    \quad\mathlarger{\Longrightarrow}\quad \pi_{t-s}\mathbf{lbr}_\Oscr(\Spet R)\underset{t-s\geq
    0}{\iso}\pi_{t-s}\ShLBr(R)
    \end{equation}
    with differentials $d_r\colon \E_r^{s,t} \to E_r^{s+r, t+r-1}$, which degenerates at the $\E_3$-page. 
\end{corollary}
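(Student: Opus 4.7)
The plan is to extract the spectral sequence from the Postnikov tower of $\mathbf{lbr}_\Oscr$, viewed as a sheaf of spectra on the small \'etale site of $\Spet R$, identify its $\E_2$-page and abutment via \cref{lem:hosheaves}, and then establish $\E_3$-degeneration by a sparsity argument based on affine vanishing of quasi-coherent \'etale cohomology.

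First I would invoke \cref{prop:hypersheaf}: the sheaf $\ShLBr_\Oscr$ is Postnikov complete, so its spectrum-valued counterpart $\mathbf{lbr}_\Oscr$ satisfies $\mathbf{lbr}_\Oscr \simeq \lim_n \tau_{\leq n}\mathbf{lbr}_\Oscr$. The associated tower of fiber sequences on global sections then yields a convergent descent spectral sequence with
\[\E_2^{s,t} = \H^s(\Spec\pi_0R, \pi_t\mathbf{lbr}_\Oscr) = \H^s(\Spec\pi_0R, \pi_t\ShLBr_\Oscr),\]
differentials $d_r\colon\E_r^{s,t}\to\E_r^{s+r,t+r-1}$, and abutment $\pi_{t-s}\mathbf{lbr}_\Oscr(\Spet R)$. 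The identification with $\pi_{t-s}\ShLBr(R)$ for $t - s \geq 0$ holds because $\ShLBr$ is valued in connective spectra, so its global sections recover the connective cover of $\mathbf{lbr}_\Oscr(\Spet R)$.

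For the degeneration, \cref{lem:hosheaves} gives $\pi_t\ShLBr_\Oscr \cong \pi_{t-2}\Oscr$ for $t \geq 3$, and these sheaves are quasi-coherent. Since $\Spec\pi_0R$ is affine, this forces
\[\H^s(\Spec\pi_0R, \pi_t\ShLBr_\Oscr) = 0 \qquad \text{for all } s > 0 \text{ and } t \geq 3.\]
Combined with $\pi_0\ShLBr_\Oscr = 0$, the potentially nonzero part of the $\E_2$-page is confined to the two rows $t = 1, 2$ (for all $s \geq 0$) together with the column $s = 0$ in each row $t \geq 3$. For any $r \geq 3$, a differential $d_r$ out of a potentially nonzero source (necessarily with $t \geq 1$) lands at bidegree $(s+r, t+r-1)$ satisfying $s + r \geq r \geq 3 > 0$ and $t + r - 1 \geq r \geq 3$, hence in the quasi-coherent vanishing region. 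Therefore $d_r = 0$ for all $r \geq 3$, giving $\E_3 = \E_\infty$.

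The substantive ingredients are already packaged in \cref{prop:hypersheaf} and \cref{lem:hosheaves}; the only step I expect to have to treat with care is the abutment identification, comparing the spectrum-valued global sections $\mathbf{lbr}_\Oscr(\Spet R)$ (which may have negative homotopy) with the connective $\ShLBr(R)$. After that bookkeeping the $\E_3$-degeneration is an elementary bidegree count.
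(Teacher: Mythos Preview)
Your proposal is correct and follows essentially the same approach as the paper: both obtain the spectral sequence from the Postnikov tower of $\mathbf{lbr}_\Oscr$, invoke \cref{lem:hosheaves} to see that $\pi_t\ShLBr_\Oscr$ is quasi-coherent for $t\geq 3$, and use affine vanishing to conclude $\E_2^{s,t}=0$ for $s\geq 1$ and $t\geq 3$, from which $\E_3$-degeneration follows by the bidegree count you carry out explicitly. The paper's proof is more terse but has the same content; your explicit invocation of Postnikov completeness from \cref{prop:hypersheaf} for convergence is the right justification (the paper phrases this slightly loosely).
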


\begin{proof}
    This spectral sequence is the descent spectral sequence for the sheaf $\mathbf{lbr}_{\Oscr}$ of
    spectra, associated to the tower of global sections of the truncations of
    $\mathbf{lbr}_{\Oscr}$. Convergence follows from the fact that $\pi_t\ShLBr_\Oscr$ is
    quasi-coherent for $t\geq 3$ by \cref{lem:hosheaves} so that $\E_2^{s,t}=0$ for $t\geq 3$ and $s\geq 1$ and hence the
    spectral sequence degenerates at the $\E_3$-page.
\end{proof}

The next proposition is our main tool to attack the local Brauer group of a commutative ring
spectrum. Recall to that purpose that a commutative ring spectrum $R$ is \textbf{weakly $2k$-periodic}
if $\pi_{2k}R$ is an invertible $\pi_0R$-module and $\pi_{2k}R\tensor_{\pi_0R}\pi_nR \to
\pi_{2k+n}R$ is an isomorphism for all $n\in\ZZ$.
\begin{proposition}\label{prop:omni} Let $R$ be a commutative ring spectrum.
    \begin{enumerate}
        \item[{\rm (i)}] There is a natural exact sequence
            \begin{gather*}
            0\rightarrow\H^1(\Spec\pi_0R,\Gm)\rightarrow\Pic(R)\rightarrow\H^0(\Spec\pi_0R,\pi_1\ShLBr_\Oscr)\rightarrow\\
                \H^2(\Spec\pi_0R,\Gm)\rightarrow\LBr(R)\rightarrow\H^1(\Spec\pi_0R,\pi_1\ShLBr_\Oscr)\rightarrow
            \H^3(\Spec\pi_0R,\Gm).
            \end{gather*}
        \item[{\rm (ii)}]   If $R$ is connective, then there are natural
            identifications
        $\LBr(R)=\Br(R)\iso\H^1(\Spec\pi_0R,\ZZ)\times\H^2(\Spec\pi_0R,\Gm)$.
        \item[{\rm (iii)}]   Fix $k\geq 1$. If $R$ is weakly $2k$-periodic, with $\pi_iR=0$ for $i$ not divisible by $2k$, and such that $\pi_0R$ is regular noetherian, then there is a
            natural exact sequence
            $$0\rightarrow\H^2(\Spec\pi_0R,\Gm)\rightarrow\LBr(R)\rightarrow\H^1(\Spec\pi_0R,\ZZ/2k)\rightarrow
            \H^3(\Spec\pi_0R,\Gm).$$
    \end{enumerate}
\end{proposition}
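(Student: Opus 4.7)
For (i), my plan is to exploit the descent spectral sequence from the preceding corollary together with the vanishing of higher \'etale cohomology of quasi-coherent sheaves on the affine scheme $\Spec\pi_0R$. By \cref{lem:hosheaves}, $\pi_t\ShLBr_\Oscr$ is quasi-coherent for $t\geq 3$, so $\E_2^{s,t}=\H^s(\Spec\pi_0R,\pi_{t-2}\Oscr)=0$ whenever $s\geq 1$ and $t\geq 3$. Together with the vanishing $\pi_0\ShLBr_\Oscr=0$, this reduces the effective $\E_2$-page to two active rows: $t=1$ giving $\H^s(\pi_0\ShPic_\Oscr)=\H^s(\pi_1\ShLBr_\Oscr)$, and $t=2$ giving $\H^s(\Gm)$; the surviving $s=0$ column with $t\geq 3$ contributes only to $\pi_n\ShLBr(R)$ for $n\geq 3$. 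The only $d_2$ differentials between the two active rows carry $\E_2^{s,1}$ to $\E_2^{s+2,2}$, so reading off the two-step filtrations on $\pi_1\ShLBr(R)=\Pic(R)$ and $\pi_0\ShLBr(R)=\LBr(R)$ and splicing them through these connecting maps assembles the claimed 6-term exact sequence.

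For (ii), I will invoke two established facts. First, by the descent theorem recalled in the remark following \cref{prop:hypersheaf}, Azumaya algebras over a connective $\EE_\infty$-ring are \'etale-locally trivial, so $\LBr(R)=\Br(R)$. Second, the isomorphism $\Br(R)\cong\H^1(\Spec\pi_0R,\ZZ)\times\H^2(\Spec\pi_0R,\Gm)$ is already recorded in the introduction as a consequence of \cite{antieau-gepner,toen-derived}. Alternatively, one can extract this from part (i) by identifying $\pi_0\ShPic_\Oscr$ with the constant \'etale sheaf $\ZZ$ (since classical Picard vanishes on strictly Henselian stalks and the shift class $[\Sigma R]$ generates freely there) and checking that the $d_2$ from $\H^0(\ZZ)$ to $\H^2(\Gm)$ vanishes because $[\Sigma R]\in\Pic(R)$ lifts the generator.

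For (iii), the crucial new step is to identify $\pi_0\ShPic_\Oscr$ with the constant \'etale sheaf $\ZZ/2k$. On a strictly Henselian stalk, classical Picard vanishes, so all invertibles arise from shifts. The weak $2k$-periodicity makes $\Sigma^{2k}R$ equivalent to $R$ tensored with a classical invertible $\pi_0R$-module (namely $\pi_{-2k}R$), which becomes trivial on the stalk; conversely $\Sigma^iR\not\simeq R$ for $0<i<2k$ because, using $\pi_jR=0$ for $j$ not divisible by $2k$, one computes $\pi_0(\Sigma^iR)=0\ne\pi_0R$. Hence $[\Sigma R]$ has order exactly $2k$ on stalks, yielding $\pi_0\ShPic_\Oscr\cong\ZZ/2k$. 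Substituting into (i) produces the 6-term sequence $0\to\H^1(\Gm)\to\Pic(R)\to\H^0(\ZZ/2k)\to\H^2(\Gm)\to\LBr(R)\to\H^1(\ZZ/2k)\to\H^3(\Gm)$. Since $[\Sigma R]$, together with idempotents for disconnected $\Spec\pi_0R$, lifts every element of $\H^0(\ZZ/2k)$ to $\Pic(R)$, the map $\H^0(\ZZ/2k)\to\H^2(\Gm)$ vanishes by exactness; this forces $\H^2(\Gm)\to\LBr(R)$ to be injective and yields the asserted 5-term exact sequence.

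The main obstacle I anticipate is the precise identification of $\pi_0\ShPic_\Oscr$ in (iii) and, to a lesser extent, (ii): one must combine the periodicity hypothesis with the vanishing of classical Picard on strict Henselizations to pin down the Picard sheaf exactly, and verify that the generator is lifted by a shift class in $\Pic(R)$ so that the obstruction $d_2$ vanishes. Everything else is careful bookkeeping with the descent spectral sequence, and the regular noetherian hypothesis on $\pi_0R$ is used precisely to ensure that classical Picard behaves well under Henselization.
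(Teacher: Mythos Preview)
Your approach is essentially the same as the paper's: derive (i) from the low-degree exact sequence of the descent spectral sequence (using $\pi_0\ShLBr_\Oscr=0$ and the quasi-coherence of $\pi_t\ShLBr_\Oscr$ for $t\geq 3$, which forces $\E_3$-degeneration), cite~\cite{antieau-gepner} for (ii), and for (iii) identify $\pi_0\ShPic_\Oscr\cong\ZZ/2k$ and observe that the surjectivity of $\Pic(R)\to\H^0(\ZZ/2k)$ kills the connecting map.

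There is one gap in your argument for (iii), and your closing paragraph mislocates where the regular noetherian hypothesis is used. On a strictly Henselian stalk you assert that ``classical Picard vanishes, so all invertibles arise from shifts.'' The vanishing of $\Pic(\pi_0R)$ on local rings is automatic and has nothing to do with regularity; what is \emph{not} automatic is the implication that every invertible $R$-module is then a shift. That requires knowing that $\Pic(R)\cong\Pic(\pi_*R)$ (the graded Picard group), which is the content of~\cite[Theorem~37]{baker-richter} for $k=1$ and~\cite[Corollary~2.4.7]{mathew-stojanoska} in general, and this is precisely where regularity of $\pi_0R$ enters. The paper phrases the same step slightly differently: \'etale-locally one can trivialize the periodicity class to make $R$ honestly $2k$-periodic, then invoke these theorems to get $\Pic(R)\cong\Pic(\pi_0R)\times\ZZ/2k$, which sheafifies to $\ZZ/2k$. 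Once you insert the correct citation, your argument goes through.
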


\begin{proof}
    Part (i) is the exact sequence of low-degree terms of the spectral
    sequence~\eqref{eq:descentss} using that $\pi_0\ShLBr_\Oscr=0$ and that the spectral sequence degenerates at the $\E_3$-page.

    Part (ii) is the content of~\cite[Theorem~5.11 and Corollary~7.13]{antieau-gepner}.
    Note that the exact sequence from part (i) splits in the case as $\ZZ$ is the free grouplike
    $\EE_1$-space, which lets us split the map
    $\ShLBr_\Oscr\rightarrow\ShB\pi_1\ShLBr_\Oscr\we\ShB\ZZ$ in the connective case. Here we use
    that $\Pic(R) \cong \Pic(\pi_*R) \cong \Pic(\pi_0R) \times \ZZ$ by a result of \cite[Theorem
    21]{baker-richter} and \cite[Theorem 2.4.4]{mathew-stojanoska} and that the functor $R \mapsto
    \Pic(\pi_0R) \times \ZZ$ sheafifies in the \'etale topology to the constant sheaf $\ZZ$ since
    every Picard group element is \'etale locally trivial.
    
    For Part (iii), we first claim  that $\pi_1\ShLBr_\Oscr\iso\pi_0\ShPic_\Oscr\iso\ZZ/2k$ when $R$
    satisfies the conditions of (iii). Indeed, \'etale-locally we can assume that $R$ is
    $2k$-periodic and thus we obtain $\Pic(R) \cong \Pic(\pi_0R) \times \ZZ/2k$ by
    \cite[Theorem~37]{baker-richter} when $k=1$ and~\cite[Corollary 2.4.7]{mathew-stojanoska} in
    general. As above, this sheafifies to $\ZZ/2k$. Hence
    $\Pic(R)\rightarrow\H^0(\Spec\pi_0R,\ZZ/2k)$ is
    surjective. 
\end{proof}

\begin{remark}\label{rem:classical}
    If $R$ is a classical commutative ring, then $\LBr(R)=\Br(R)$ differs from the
    classical notion of the Brauer group, because we allow derived Azumaya
    algebras. In this case, $$\Br(R)\iso\H^1(\Spec R,\ZZ)\times\H^2(\Spec
    R,\Gm)$$ by part (ii) of \Cref{prop:omni} or by~\cite[Theorem 1.1]{toen-derived}.
    The Brauer group of ordinary Azumaya algebras is given by $\H^2(\Spec
    R,\Gm)_{\tors}$, by Gabber~\cite{gabber}. As before, we write
    $\Br^{\mathrm{cl}}(R)$ for the {\bf classical Brauer group} of ordinary Azumaya algebras
    when $R$ is a commutative ring. If $R$ is regular noetherian, then
    $\Br^{\mathrm{cl}}(R) = \Br(R)$ since in this case the $\H^1(\Spec
    R,\ZZ)$ term vanishes because $R$ is normal (see~\cite[2.1]{deninger}) and since $\H^2(\Spec
    R,\Gm)$ is all torsion (see~\cite[Cor.~1.8]{grothendieck-brauer-2}).
\end{remark}

\begin{example}
    When $\pi_0R=\ZZ$ and $R$ is either connective or satisfies the conditions
    of (iii) in \cref{prop:omni}, then the lemma implies that $\LBr(R)=0$. Indeed,
    $\H^1(\Spec\ZZ,\ZZ)=0=\H^1(\Spec\ZZ,\ZZ/2k)$ and
    Grothendieck proved that $\H^2(\Spec\ZZ,\Gm)=0$ in~\cite{grothendieck-brauer-3}.
    This covers the sphere spectrum $\SS$, the complex cobordism ring $\MU$, the periodic complex
    $K$-theory spectrum, as well as connective complex or real $K$-theory and connective topological
    modular forms.
\end{example}

\begin{example}\label{ex:en}
    Let $k$ be a perfect field of positive characteristic $p$ and let $\GG$ be a $1$-dimensional
    formal group law of height $n$ on $k$. Let $E_n(\GG,k)$ be the Lubin--Tate spectrum associated
    to $\GG$ so that $\pi_*E_n(\GG,k)\iso \WW(k)\lBrack
    u_1,\ldots,u_{n-1}\rBrack[u^{\pm 1}]$, where $\WW(k)$ denotes the ring of
    $p$-typical Witt
    vectors of $k$, each $u_i$ has degree $0$, and $u$
    has degree $2$. We want to show that the local Brauer group $\LBr(E_n(\GG,k))$ is typically
    non-zero. Note that this is related to but different than the results of Hopkins and Lurie in
    \cite{hopkins-lurie}, who look at the Brauer group of \emph{$K(n)$-local} $E_n(\GG,k)$-modules,
    which is different from that of $E_n(\GG,k)$-modules. Moreover, they study the Brauer
    group and not just the local Brauer group.
    
    Since $E_n(\GG,k)$ is $2$-periodic, part (iii) of \Cref{prop:omni} applies. To compute the groups that build $\LBr(E_n(\GG,k))$, note first that
    $\H^2(\Spec \pi_0E_n(\GG,k),\Gm)\iso\H^2(\Spec k,\Gm)$ by \Cref{thm:rigidity} and $\H^2(\Spec k,\Gm)\iso\Br(k)$ is typically
    non-zero. Moreover, $\H^1(\Spec \pi_0E_n(\GG,k),\ZZ/2)\iso\H^1(\Spec
    k,\ZZ/2)$ by \Cref{thm:Gabber-Huber}. 
    
    If $k=\FF_{p^r}$ is finite, then the contribution from
    $\H^2(\Spec \FF_{p^r},\Gm) \cong \Br(\FF_{p^r})$ vanishes by Wedderburn's theorem, while
    the group $\H^1(\Spec \FF_{p^r},\ZZ/2) $ equals $\ZZ/2$, as there is a unique $\ZZ/2$-Galois
    extension of $\FF_{p^r}$. Thus, we obtain an exact sequence
    \[0 \to \LBr(E_n(\GG,\FF_{p^r})) \to \ZZ/2 \to \H^3(\Spec \pi_0E_n(\GG,\FF_{p^r}), \Gm),\]
    which implies that $\LBr(E_n(\GG,\FF_{p^r}))$ is either zero or $\ZZ/2$.

    We can be more specific if we assume that $n=1$, since in that case we have
    $\pi_0E_1(\GG,\FF_{p^r}) \cong\WW(\FF_{p^r})$ and $\H^3(\Spec \WW(\FF_{p^r}),\Gm)=0$ by
    \cite[1.7a]{Mazur-notes}. Thus $\LBr(E_1(\GG,\FF_{p^r})) $ must be $\ZZ/2$. Note that
    $E_1(\GG,\FF_{p^r})$ is closely related to periodic complex $K$-theory.
    
    For a general height $n$, we claim that $\H^3(\Spec \pi_0E_n(\GG,\FF_{p^r}), \Gm)$ is $p$-local. Indeed, for $l$ prime to $p$, we have a short exact sequence
    \[0 \to \mu_l \to \Gm \xrightarrow{l} \Gm \to 0\]
    of \'etale sheaves. By \Cref{thm:Gabber-Huber} we have $\H^i(\Spec \pi_0E_n(\GG,\FF_{p^r});
    \mu_l) \cong \H^i(\Spec \FF_{p^r}; \mu_l)$. The latter group is zero for $i\geq 2$ since the
    absolute Galois group $\widehat{\ZZ}$ of $\FF_{p^r}$ has cohomological dimension $1$. Thus,
    multiplication by $l$ is an isomorphism on $\H^3(\Spec \pi_0E_n,\Gm)$, proving the claim. As a
    consequence, the map
    \[\ZZ/2 =\H^1(\Spec \pi_0E_n(\GG,\FF_{p^r}), \ZZ/2) \to \H^3(\Spec \pi_0E_n(\GG,\FF_{p^r}), \Gm)  \]
    must be zero in the case of odd primes $p$, and again we obtain that $\LBr(E_n(\GG,\FF_{p^r})) \cong \ZZ/2$ at all heights if $p$ is odd.

    On the other hand, if we work over a separably closed (rather than finite) field
    $k^{sep}$, then $\H^2(\Spec k^{sep},\Gm)$ is again zero, but so is $\H^1(\Spec k^{sep}, \ZZ/2)$.
    This results in the fact that $\LBr(E_n(\GG,k^{sep}))$ is zero, and in particular, it implies
    that any non-trivial Brauer class in $\LBr(E_n(\GG,k)$ is necessarily split by $E_n(\GG,k) \to
    E_n(\GG,k^{sep})$.
    \end{example}

\begin{remark}
    We do not in fact know an example where the differential 
    \[\H^1(\Spec
    \pi_0R,\pi_1\ShLBr_\Oscr)\rightarrow\H^3(\Spec\pi_0R,\Gm)\]
    from \cref{prop:omni}(iii) is non-zero.
    Similarly, it would be informative to know if there is a commutative ring spectrum $R$ such that
    \[\Pic(R)\rightarrow\H^0(\Spec\pi_0R,\pi_1\ShLBr_\Oscr)\]
    is not surjective.
\end{remark}

\begin{remark}\label{rem:brw}
	We are primarily interested in integral results as we want to understand
	contributions to the Brauer group for commutative ring spectra such as the various forms of topological modular forms.
	Nevertheless, when $R$ is even and weakly $2$-periodic and if
	additionally $2$ is a unit on $R$, then there is an identifiable non-\'etale-locally trivial
	contribution to the Brauer group in general. If $R$ is actually $2$-periodic and $u\in\pi_2R$ a unit, let $A_u$ be the
	Azumaya algebra constructed in~\cite[Example 7.2]{gepner-lawson}: it is an
    $R$-algebra with $\pi_*A_u=\pi_*R[x]$ where $|x|=1$.
	We let $\ShLBrW_\Oscr$ be the sheafification of the components of
    $\ShBr_\Oscr$
	containing $0$ and the $[A_u]$ for units $u\in\pi_2S$ for \'etale extensions $S$ of $R$. Using that $[A_u] + [A_v]$ lies in $\LBr(S)$ for any units $u,v\in\pi_2S$, there is a natural fiber sequence
	$$\ShLBr_\Oscr\rightarrow\ShLBrW_\Oscr\rightarrow\ZZ/2$$ of sheaves on $\CAlg_R^{\et}$.
	More generally, if $2$ is not a unit
	on $R$ (but $R$ is still even and weakly periodic), then we can construct
	an extension $$\ShLBr_\Oscr\rightarrow\ShLBrW_\Oscr\rightarrow j_!\ZZ/2,$$ where
	$j\colon\Spec\pi_0R[\tfrac{1}{2}]\rightarrow\Spec\pi_0R$. An easy check
    using \cite[Proposition 7.6]{gepner-lawson} verifies that
    the algebraic Brauer group of $R$, as defined in~\cite{gepner-lawson}, is a subgroup of
    $\LBrW(R)=\pi_0(\ShLBrW_{\Oscr}(R))$.
\end{remark}

\section{The Picard sheaf and local Brauer group of KO}\label{sec:PicKO}

\newcommand{\Escru}{\underline{\Escr}}
\newcommand{\Hscru}{\underline{\Hscr}}

The aim of this section is to show that the local Brauer group of $\KO$ is $\ZZ/2$. By the previous section, the key is to understand the \'etale Picard sheaf $\pi_0\ShPic_{\Oscr_\KO}$ on
$\Spet\KO$. 
To achieve that, we essentially re-run the calculations of $\ShPic(\KO)$ from Gepner--Lawson and Mathew--Stojanoska, but this time
in sheaves of spaces on $\Spet\KO$. As an aside we will also compute $\Pic(\KO_R)$ for any \'etale extension $R$ of $\ZZ$, where $\KO_R$ denotes the \'etale extension of $\KO$ lifting $R$. (We will use similar notation for other ring spectra as well.) 

Recall that $\KO \to \KU$ is a $C_2$-Galois extension, and consequently $\Shpic(\KO)\we\tau_{\geq 0}\left(\Shpic(\KU)^{hC_2}\right)$ by Galois descent. 
Similarly, if $R$ is an \'etale $\ZZ$-algebra, then
$$\Shpic(\KO_R)\we\tau_{\geq 0}\left(\Shpic(\KU_R)^{hC_2}\right).$$  
Thus, there is an equivalence $$\Shpic_{\Oscr_{\KO}}\we\tau_{\geq
0}(\Shpic_{\Oscr_\KU}^{hC_2})$$ of sheaves of connective spectra on $\Spet\KO$, which results in a homotopy fixed point descent spectral sequence with signature
\begin{equation}\label{ss:picKO}
    \Escr_2^{s,t}=\Hscr^s(C_2,\pi_t\Shpic_{\Oscr_\KU})\Rightarrow\pi_{t-s}\Shpic_{\Oscr_\KU}^{hC_2}.
\end{equation}
The notation
$\Escr_2^{s,t} \iso \Hscr^s(C_2,\pi_t\Shpic_{\Oscr_\KU})$ means that the $C_2$-cohomology is taken
in \'etale sheaves, and the differentials are
\[d_r^{s,t}\colon\Escr_r^{s,t} \to \Escr_r^{s+r,t+r-1}. \]
Note that in the figures below, we will depict this spectral sequence with the Adams indexing convention, i.e. in the $(t-s,s)$-plane. 

The action of $C_2$ on the homotopy sheaves of $\Shpic_{\Oscr_\KU}$ is as follows
\[
\pi_i \Shpic_{\Oscr_\KU} = \begin{cases} 
\Z/2, \text{ with trivial action, when } i=0,\\
\Oscr^\times, \text{ with trivial action, when } i=1,  \\
\Oscr, \text{ with trivial action, when } i>1,\, i\equiv 1 \mod 4 ,\\
\Oscr, \text{ with sign action, when } i>1,\, i\equiv 3 \mod 4.
\end{cases}
\] This allows us to compute $C_2$-cohomology and hence the $\Escr_2$-page of \eqref{ss:picKO}.

\begin{example}
    The action of $C_2$ on $\pi_0\Oscr^\times$ is trivial, so the cohomology
    sheaves are $$\Hscr^s(C_2,\Oscr^\times)\iso\begin{cases}
        \Oscr^\times&\text{if $s=0$,}\\
        \mu_2&\text{if $s>0$ is odd, and}\\
        \omega_2&\text{if $s>0$ is even,}
    \end{cases}$$
    where $\mu_2$ and $\omega_2$ fit into the exact sequence
    $$0\rightarrow\mu_2\rightarrow\Oscr^\times\xrightarrow{x\mapsto
    x^2}\Oscr^\times\rightarrow\omega_2\rightarrow 0.$$
    Note that on $\Spet \ZZ$, the sheaf $\mu_2$ is isomorphic to the constant sheaf $\ZZ/2$; indeed, every \'etale extension of $\ZZ$ is a product of integral domains with $2\neq 0$. 
\end{example}

The following identification will not be necessary for our computation of $\LBr(\KO)$, but we add it for completeness. 
\begin{lemma}
    The sheaf $\omega_2$ is isomorphic to $\Oscr/2$ on $\Spet \ZZ$.
\end{lemma}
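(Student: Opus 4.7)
The plan is to reduce the claim to a stalk computation at the unique geometric point $\bar s$ over the prime $2$. The first step is to observe that both sheaves are supported on the closed subscheme $\Spec\FF_2\hookrightarrow \Spec\ZZ$: for $\Oscr/2$ this is immediate, while for $\omega_2$ it follows from the classical Kummer sequence $0\to\mu_2\to\Gm\xrightarrow{[2]}\Gm\to 0$, which is exact on schemes on which $2$ is invertible. Since the small \'etale topos of $\Spec\FF_2$ is equivalent to the topos of continuous $\Gal(\overline{\FF_2}/\FF_2)$-sets, it suffices to produce a Galois-equivariant isomorphism of the stalks at $\bar s$.

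Write $R = \ZZ_{(2)}^{\mathrm{sh}}$ for the strict Henselization. The stalk of $\Oscr/2$ at $\bar s$ is $R/2R=\overline{\FF_2}$, while the stalk of $\omega_2$ is $R^\times/(R^\times)^2$. The natural candidate isomorphism is
\[
\bar a \;\longmapsto\; [1+2a],
\]
which is well-defined because $1+2a$ reduces to $1\in\overline{\FF_2}^\times$ and is therefore a unit in $R$; it is Galois-equivariant by construction, and a group homomorphism because the cross term in $(1+2a)(1+2b)=1+2(a+b)+4ab$ lies in $1+4R$, which will turn out to consist of squares.

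Surjectivity follows from the decomposition $R^\times\cong\mu'(R)\cdot(1+2R)$, where $\mu'(R)$ denotes the Teichm\"uller lifts of the prime-to-$2$ roots of unity in $\overline{\FF_2}^\times$. Because $\overline{\FF_2}^\times$ is uniquely $2$-divisible, one has $\mu'(R)\subseteq (R^\times)^2$, so the image of $1+2R$ already surjects onto $R^\times/(R^\times)^2$.

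The main content is injectivity, which reduces to the containment $1+4R\subseteq (1+2R)^2$. After the substitution $y=2z$ in the equation $(1+y)^2=1+4c$, this becomes the Artin--Schreier equation $z^2+z=c$ in $R$; its reduction modulo $2$ is a separable polynomial over $\overline{\FF_2}$ with a root (since $\overline{\FF_2}$ is separably closed), and this root lifts to $R$ by Hensel's lemma. This Artin--Schreier lift is the technical heart of the argument, though it is itself a standard consequence of Hensel.
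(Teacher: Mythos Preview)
Your argument is essentially correct and in fact more direct than the paper's, but there is a small misstatement in the injectivity step. The containment $1+4R\subseteq(1+2R)^2$ that you establish via Hensel is what is needed for \emph{well-definedness} and the homomorphism property of $\bar a\mapsto[1+2a]$; it does not by itself give injectivity of the induced map $R/2R\to R^\times/(R^\times)^2$. For injectivity you must also observe that if $1+2a=u^2$ with $u\in R^\times$, then reducing modulo $2$ gives $\bar u^2=1$ in $\overline{\FF_2}$, hence $\bar u=1$ (characteristic $2$), so $u\in 1+2R$ and thus $1+2a\in(1+2R)^2=1+4R$, forcing $a\in 2R$. This is a one-line fix; once inserted, your proof is complete.

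By contrast, the paper's proof passes through the $2$-adic completion $\WW=\WW(\overline{\FF_2})$: it first argues that $A^\times/(A^\times)^2\to\WW^\times/(\WW^\times)^2$ is an isomorphism (injectivity via a Hensel argument counting local factors of $A[x]/(x^2-u)$, surjectivity by lifting residues), and then computes $\WW^\times/(\WW^\times)^2\cong\overline{\FF_2}$ using the filtration $U_n=1+2^n\WW$, the logarithm isomorphism $U_2\cong\WW$, and two applications of the snake lemma. Your approach avoids the completion and the logarithm altogether by applying Hensel's lemma directly in the strict Henselization to the Artin--Schreier equation $z^2+z=c$; this is genuinely more elementary. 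The paper does acknowledge in a subsequent remark that alternative routes exist (via the Clausen--Mathew--Morrow comparison of $\K/2$ with $\mathrm{TC}/2$), but yours is simpler still.
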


\begin{proof}
    Since $\omega_2$ is supported only at $2$ with stalk given by
    $A^\times/(A^\times)^2$ where $A=\ZZ_{(2)}^{sh}$ is the strict Henselization, it is enough to compute the value of this group with its structure as a module over the absolute Galois group  $\widehat{\ZZ}$ of $\FF_2$ (cf.\ \cite[Corollary II.3.11]{milne-etale}).
    Let $\WW=\WW(\overline{\FF_2})$ be the ring of Witt vectors. There is an injection $A\hookrightarrow \WW$ and $\WW$ is the $2$-adic completion of $A$. We will see that the induced map $A^\times/(A^\times)^2 \to \WW^\times/(\WW^\times)^2$ will turn out to be an isomorphism.
    
    To prove that this map is injective, it suffices to show that if $u\in A^\times$ is a square in $\WW^\times$, then it is already a square in $A^\times$. To see this, let $R=A[x]/(x^2-u)$.
    This is a finite $A$-algebra with $2$-adic completion
    $R_2^\wedge\iso \WW[x]/(x^2-u)$. By the Hensel property for $A$ and $\WW$, the ring $R$ is a product of either $1$ or $2$ local rings
    (see for example~\cite[\href{https://stacks.math.columbia.edu/tag/04GG}{Tag~04GG}]{stacks-project})
    and $R_2^\wedge$ is a product of the same number by looking at fraction fields.
    If $u$ is a square in $\WW$, then $R_2^\wedge$ is a product of $2$ local rings,
    but then the same is true of $R$.
    
    Next, we explicitly describe $\WW^\times/(\WW^\times)^2$, which will help us prove surjectivity of the above quotient map.
    Let $U_n=\{u\in \WW^\times:v_2(u-1)\geq n\}$, where $v_2$ denotes the 2-adic valuation. One has $\WW^\times/U_1\iso\overline{\FF}_2^\times$ and $U_n/U_{n+1}\iso\overline{\FF}_2$ for $n\geq 2$. The snake lemma for the diagram
    \[\xymatrix{
    0 \ar[r] &U_1 \ar[r]\ar[d] &\WW^\times \ar[r]\ar[d] &\overline{\FF}_2^\times \ar[r]\ar[d] & 0\\
    0 \ar[r] &U_1 \ar[r] &\WW^\times \ar[r] &\overline{\FF}_2^\times \ar[r] & 0,
    }
    \]
    where the vertical maps are all given by squaring, gives an isomorphism $U_1/U_1^2 \iso \WW^\times/(\WW^\times)^2$, as squaring is an isomorphism on $\overline{\FF}_2^\times$. This also shows that the kernel of squaring on $U_1$ is isomorphic to $\ZZ/2$, identified as $\{\pm 1\}\subset \WW^\times$. 
    
    Classical results imply that there is a logarithmic isomorphism $U_2\iso \WW$ (see for example the argument in~\cite[Sec.~II.3]{serre-course}). 
    Thus, there is an exact sequence $$0\rightarrow \WW\iso U_2 \rightarrow U_1\rightarrow\overline{\FF}_2\rightarrow 0.$$
    Using the snake lemma for the squaring map for this sequence as above, gives an exact sequence
    $$0\rightarrow \ZZ/2\rightarrow\overline{\FF}_2
    \xrightarrow{\partial}\overline{\FF}_2\rightarrow U_1/U_1^2\rightarrow \overline{\FF}_2\rightarrow 0,$$
    where we have used that $\overline{\FF}_2$ is $2$ torsion.

    The boundary map $\partial$ is computed by lifting $x\in \overline{\FF}_2$ to $U_1$ as $1+2\tilde{x}$ for some
    $\tilde{x}$ lifting $x$ to $\WW$ and then squaring, to find $1+4\tilde{x}+4\tilde{x}^2=1+4(\tilde{x}+\tilde{x}^2)$,
    which is in $U_2$ with residue modulo squares given by $x+x^2$. We see that $\partial$ is surjective and that $\WW^\times/(\WW^\times)^2 \iso U_1/U_1^2\iso\overline{\FF}_2$. Explicitly, this isomorphism sends $1 + 2\tilde{x}$ to $\tilde{x}\! \mod 2$.
    
    Returning to the question of surjectivity of $A^\times/(A^\times)^2\to \WW^\times/(\WW^\times)^2$, since the residue fields of $A$ and $\WW$ agree, we can lift any element of $\overline{\FF}_2$ in the map above to an element of the form $1+2\tilde{x}$ with $\tilde{x}\in A \subset \WW$; moreover, $1+2\tilde{x}$ will be in $A^\times$ by the Hensel property.
    It follows that $A^\times/(A^\times)^2\rightarrow \WW^\times/(\WW^\times)^2$ is surjective as well and that both groups are
    Galois-equivariantly isomorphic to $\overline{\FF}_2$.
\end{proof}

\newcommand{\TC}{\mathrm{TC}}

\begin{remark}
    The above result can also be read off from a much more sophisticated result due to Clausen, Mathew, and Morrow.
    They show in~\cite[Thm.~A]{cmm} that if $R$ is $p$-torsion free, henselian along $p$, and $R/p$ is perfect, then $\K(R)/p\we\TC(R)/p$.
    Let $A=\ZZ_{(2)}^{sh}$ and $\WW=\WW(\overline{\FF_2})$, the $2$-completion of $A$.
    Applying the Clausen--Mathew--Morrow result to $A$ and $\WW$, one obtains
    $$A^\times/(A^\times)^2\iso\K_1(A)/2\iso\TC_1(A)/2\iso\TC_1(\WW)/2\iso\K_1(\WW)/2\iso \WW^\times/(\WW^\times)^2,$$
    using that for any local ring $R$ we have an isomorphism $\K_1(R)\iso R^\times$ and that $\TC(R)/p\we\TC(R_p^\wedge)/p$ for any $R$ and any prime $p$, for
    example by~\cite[Lem.~5.3]{cmm}.
\end{remark}

To depict the spectral sequence \eqref{ss:picKO}, we will use symbols to denote the various sheaves
and \cref{fig:c2mackey} can be used as a legend.

\begin{table}[h] 
\begin{center}
    \begin{tabular}{|p{1.5cm}|c|c|c|c|c|c|c|}
\hline 
Symbol
    &$\Box$
    &$\Box^\times$
    &$\bullet$
    &$\circ$\\
\hline 
Sheaf
& $\Oscr$ 
& $\Oscr^\times$ 
& $\Oscr/2$ 
& $\Z/2$

\\
\hline 
\end{tabular}
\end{center}
\caption{An assortment of \'etale sheaves.}
\label{fig:c2mackey}
\end{table}   

\begin{sseqdata}[name = koss, Adams grading, classes = {draw = none } ]
    \class["\circ"](0,0)
    \foreach \x in {1,...,11} {
        \class["\circ"](-\x,\x)
    }

    \class["\Box^\times"](1,0)
    \foreach \y in {1,3,5,7,9,11} {
        \FPeval{\x}{clip(1-\y)}
        \class["\circ"](\x,\y)
    }
    \foreach \y in {4,6,8,10,12,14} {
        \FPeval{\x}{clip(1-\y)}
        \class["\bullet"](\x,\y)
    }
    \class["\bullet"](-1,2)

    \foreach \y in {1,7,9,11,13} {
        \FPeval{\x}{clip(3-\y)}
        \class["\bullet"](\x,\y) }
    \class["\bullet",red](0,3)
    \class["\bullet"](-2,5)

    \class["\Box"](5,0)
    \foreach \y in {2,4,8,10,12,14} {
        \FPeval{\x}{clip(5-\y)}
        \class["\bullet"](\x,\y)
    }
    \class["\bullet",red](-1,6)

    \foreach \y in {1,3,5,7,9,11,13} {
        \FPeval{\x}{clip(7-\y)}
        \class["\bullet"](\x,\y)
    }

    \class["\Box"](9,0)
    \foreach \y in {2,4,6,8,10,12,14} {
        \FPeval{\x}{clip(9-\y)}
        \class["\bullet"](\x,\y)
    }

    \foreach \y in {1,3,5,7,9,11,13} {
        \FPeval{\x}{clip(11-\y)}
        \class["\bullet"](\x,\y)
    }

    \class["\Box"](13,0)
    \foreach \y in {2,4,6,8,10,12,14} {
        \FPeval{\x}{clip(13-\y)}
        \class["\bullet"](\x,\y)
    }

    \foreach \y in {1,3,5,7,9,11,13,15} {
        \FPeval{\x}{clip(15-\y)}
        \class["\bullet"](\x,\y)
    }

    \foreach \y in {4,6,8,10,12,14} {
        \FPeval{\x}{clip(17-\y)}
        \class["\bullet"](\x,\y)
    }

    \foreach \y in {5,7,9,11,13,15} {
        \FPeval{\x}{clip(19-\y)}
        \class["\bullet"](\x,\y)
    }

    \foreach \y in {8,10,12,14} {
        \FPeval{\x}{clip(21-\y)}
        \class["\bullet"](\x,\y)
    }
   
    \foreach \y in {9,11,13,15} {
        \FPeval{\x}{clip(23-\y)}
        \class["\bullet"](\x,\y)
    }
    \foreach \y in {12,14} {
        \FPeval{\x}{clip(25-\y)}
        \class["\bullet"](\x,\y)
    }

    \foreach \z in {1,2,3,4,5} {
        \FPeval{\x}{clip(-4+\z)}
        \FPeval{\y}{clip(7+\z)}
        \d3(\x,\y)
        \replacesource[""]
        \replacetarget[""]
    }

    \d["",dashed]3(-4,7)
    \replacesource[""]
    \replacetarget[""]

    \foreach \z in {0,1,2,3,4,5,6,7,8} {
        \FPeval{\x}{clip(1+\z)}
        \FPeval{\y}{clip(4+\z)}
        \d3(\x,\y)
        \replacesource[""]
        \replacetarget[""]
    }

    \class["\bullet"](15,2)

    \d3(15,2)
    \replacetarget[""]
    \replacesource[""]

    \d3(5,0)
    \replacetarget[""]
    \replacesource["2\Box"]

    \foreach \z in {1,2,3,4,5,6,7,8,9} {
        \FPeval{\x}{clip(5+\z)}
        \FPeval{\y}{clip(0+\z)}
        \d3(\x,\y)
        \replacetarget[""]
        \replacesource[""]
    }

    \d3(13,0)
    \replacesource["2\Box"]
    \d3(14,1)

    \class["\bullet"](-10,13)
    \class["\bullet"](-8,15)

    \d["",dashed]3(-8,11)
    \replacesource[""]
    \replacetarget[""]

    \d3(-7,12)
    \replacesource[""]
    \replacetarget[""]

    \d[""]2(-1,1)
    \replacesource[""]
    \d[""]2(-2,2)
    \d[dashed]3(-1,2)
    \replacesource["?",red]

    \d[red,dash dot]3(0,3)
    \replacesource["i_*\circ",red]

    \d[dashed]3(-5,6)
    \replacesource["?"]
    \replacetarget["?"]

    \structline(0,1)(0,0)
    \structline(0,1)(0,3)
\end{sseqdata}

\begin{figure}[h]
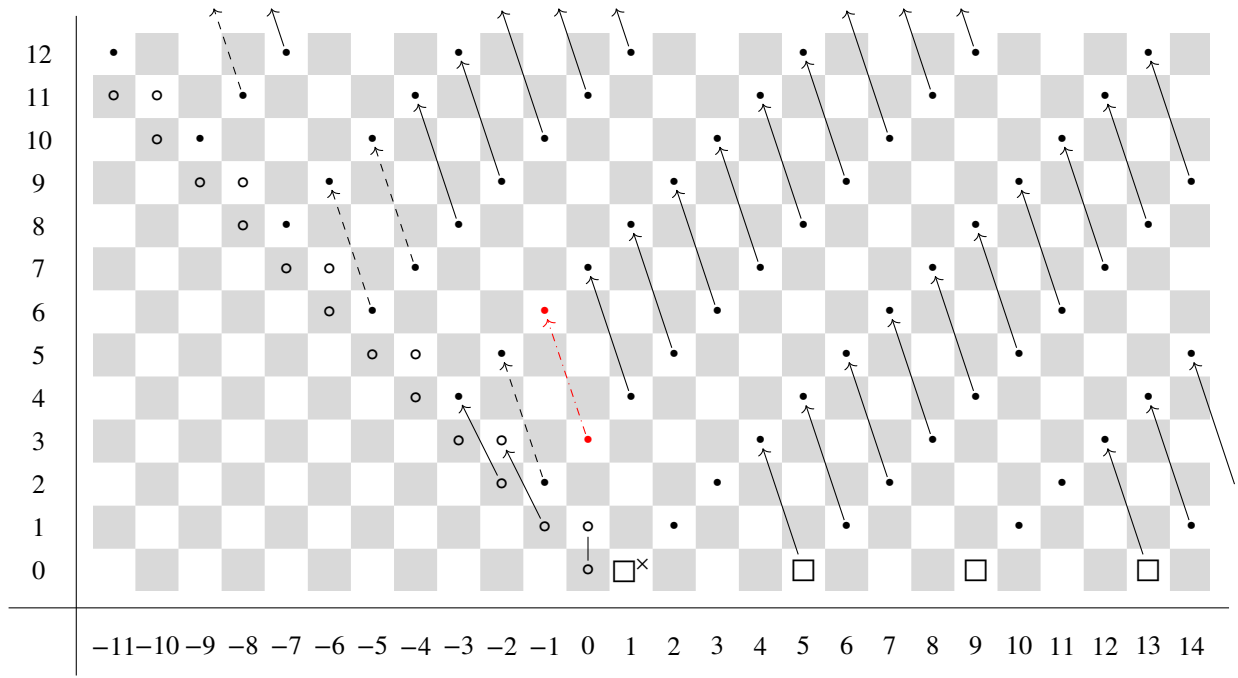

\printpage[ name = koss, grid=chess, page =
0,xscale=0.573,yscale=0.573,x range = {-11}{14},y range = {0}{12}]
    \caption{The $\Escr_2$-page of the spectral sequence~\eqref{ss:picKO}.
    All differentials on all pages above the anti-diagonal line $x+y=4$ agree
    with their linear counterparts by~\cite{mathew-stojanoska}. Not all
    information is shown in degrees $\leq -2$. Dashed black arrows potentially
    differ from their linear partners, but they do not figure into the
    calculation of $\pi_0\Shpic_{\Oscr_\KO}$. The dashed and dotted red arrow
    is non-linear and figures into the calculation of
    $\pi_0\Shpic_{\Oscr_\KO}$.}
    \label{fig:e2pickoss}
\end{figure}
\begin{figure}[h]
\printpage[ name = koss, grid=chess, page = 4,xscale=1,yscale=1,x range = {-1}{13},y range = {0}{3}]
    \caption{A part of the $\Escru_4$-page of the spectral
    sequence~\eqref{ss:picKO}.}
    \label{fig:e4pickoss}
\end{figure}

\cref{fig:e2pickoss} and \cref{fig:e4pickoss} show the spectral
sequence~\eqref{ss:picKO}. Several lemmas explain the nature of the
differentials and the calculation of the $\Escr_4$-page.

\begin{lemma}
    The $\Escru_4$-page is zero in column 0 above row 3. 
\end{lemma}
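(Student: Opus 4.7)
The plan is to enumerate the non-zero entries of $\Escr_2^{s,s}$ for $s \geq 4$, check that the $d_2$-differentials touching them vanish, and then show that the surviving $d_3$-differentials are isomorphisms by invoking the comparison with the sheafified homotopy fixed point spectral sequence for $\Oscr_\KO = \Oscr_\KU^{hC_2}$ from~\cite{mathew-stojanoska}.

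For the enumeration step, at bidegree $(s,t)=(s,s)$ with $s\geq 4$, the homotopy sheaf $\pi_s\Shpic_{\Oscr_\KU}$ vanishes when $s$ is even, and otherwise equals $\Oscr$ with trivial or sign $C_2$-action according to whether $s\equiv 1$ or $3\pmod 4$. Since $\Oscr$ is torsion-free on the small \'etale site of $\Spec\ZZ$, the $C_2$-cohomology computations recalled earlier give
\[\Escr_2^{s,s} \iso \begin{cases} \Oscr/2 & \text{if } s=4k+3 \text{ with } k\geq 1, \\ 0 & \text{otherwise.} \end{cases}\]
A parity check, using $\pi_t\Shpic_{\Oscr_\KU}=0$ for even $t\geq 2$, shows that both the incoming and outgoing $d_2$-differentials at every $(4k+3,4k+3)$ land on trivial groups, and likewise that $\Escr_3^{4k,4k+1} \iso \Escr_2^{4k,4k+1} \iso \Oscr/2$. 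The lemma thus reduces to showing that
\[d_3\colon \Escr_3^{4k,4k+1} \longrightarrow \Escr_3^{4k+3,4k+3}\]
is an isomorphism $\Oscr/2 \we \Oscr/2$ for every $k\geq 1$.

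Both positions lie above the anti-diagonal line $x+y=4$ in Adams coordinates, so by~\cite{mathew-stojanoska} this Picard $d_3$ agrees with the corresponding differential in the sheafified HFPSS for $\Oscr_\KO$. Under the shift $\pi_t\Shpic_{\Oscr_\KU} \iso \pi_{t-1}\Oscr_\KU$ valid for $t\geq 2$, the linear counterpart is
\[d_3\colon \Hscr^{4k}(C_2,\pi_{4k}\Oscr_\KU) \longrightarrow \Hscr^{4k+3}(C_2,\pi_{4k+2}\Oscr_\KU),\]
which is the sheafification of the classical $d_3$ in the HFPSS for $\pi_*\KO=(\pi_*\KU)^{hC_2}$. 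In that spectral sequence the source contributes to $\pi_0\KO=\ZZ$, which is detected only at Adams bidegree $(0,0)$, while the target contributes to $\pi_{-1}\KO=0$; hence both classes must vanish on the $E_\infty$-page. Since the classical HFPSS for $\KO$ collapses at $E_4$, no higher differentials are available, and the $d_3$ above is forced to be a nonzero map between cyclic groups of order two, hence an isomorphism.

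The main subtlety is the index bookkeeping: tracking the $(-1)$-shift in the $x$-coordinate that relates the Picard spectral sequence to the HFPSS for $\Oscr_\KO$, and verifying that both endpoints of the relevant $d_3$ lie above the anti-diagonal line $x+y=4$ where the Mathew--Stojanoska comparison applies. Once those identifications are in place, the result is immediate from the standard pattern of $d_3$-differentials in the $\KO$ HFPSS.
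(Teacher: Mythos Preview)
Your approach is the same as the paper's --- reduce to the additive $\KO$ HFPSS via the Comparison Tool --- but the execution contains a genuine error. You claim that the incoming $d_3\colon\Escr_3^{4k,4k+1}\to\Escr_3^{4k+3,4k+3}$ is an isomorphism for every $k\geq 1$, and you justify this by saying that both source and target vanish on $E_\infty$ and the $\KO$ HFPSS collapses at $E_4$. That deduction is invalid: the source might instead be the \emph{target} of a $d_3$ from Adams bidegree $(2,4k-3)$, and the target might instead \emph{support} a $d_3$ to $(-1,4k+6)$. In fact this is exactly what happens. Writing $\beta=u^2$ and using $d_3(\beta^m)=m\eta^3\beta^{m-1}$, the additive class at $(0,4k)$ is $\eta^{4k}\beta^{-k}$, and $d_3(\eta^{4k}\beta^{-k})=k\,\eta^{4k+3}\beta^{-k-1}$ is nonzero only when $k$ is odd. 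For $k$ even (e.g.\ $k=2$, Picard row $11$) your incoming $d_3$ is zero; the class at $(0,4k+3)$ is killed instead by the \emph{outgoing} $d_3$, since $d_3(\eta^{4k+3}\beta^{-k-1})=(k+1)\eta^{4k+6}\beta^{-k-2}\neq 0$.

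The repair is easy and brings you in line with the paper's argument: rather than singling out one differential, observe that for $s\geq 6$ every $d_2$ and $d_3$ with source or target on the Picard antidiagonal $t=s$ lies in the range $r\leq t-1$ where the Comparison Tool applies. Hence $\Escr_4^{s,s}$ agrees with the additive $E_4^{s,s-1}$, which is a contribution to $\pi_{-1}\KO=0$ and therefore vanishes (as $E_4=E_\infty$). Together with $\Escr_2^{4,4}=\Escr_2^{5,5}=0$, this gives the lemma.
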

\begin{proof}
    Note that our spectral sequence consists on the $\Escru_2$-page of quasi-coherent sheaves above
    the antidiagonal $x+y = t = 1$. We will identify quasi-coherent sheaves on $\Spec \pi_0\KO$ with
    their abelian groups of global sections. 
    
    Since our spectral sequence can be seen as the sheafification of a presheaf of Picard
    homotopy fixed point spectral sequences, we can freely use the tools from
    \cite{mathew-stojanoska}. In particular, \cite[Comparison Tool 5.2.4]{mathew-stojanoska}
    implies that any $d_3$-differential originating from above the $x+y = t=3$ antidiagonal can
    be directly read off its counterpart in the homotopy fixed point spectral sequence for
    $\KU^{hC_2}\simeq \KO$. As in \cite[Example 7.1.1]{mathew-stojanoska}, the claim follows.
\end{proof}

\begin{lemma}[$d_3^{3,3}$]
    The differential $d_3^{3,3}\colon\bullet\rightarrow\bullet$ is given by $x\mapsto
    x+x^2$. In particular, it is a surjective map of sheaves and the kernel is
    $i_*\circ$, where $i\colon\Spet\FF_2\rightarrow\Spet\ZZ$ is the closed
    inclusion.
\end{lemma}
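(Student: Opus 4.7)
The plan is to first identify the formula for $d_3^{3,3}$ and then invoke Artin--Schreier theory on the resulting map of \'etale sheaves.

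The spectral sequence~\eqref{ss:picKO} is the \'etale sheafification of the presheaf of homotopy fixed point spectral sequences for $\Shpic(\KO_R) \simeq \tau_{\geq 0}\left(\Shpic(\KU_R)^{hC_2}\right)$, as $R$ ranges over \'etale $\ZZ$-algebras. Since the source and target of $d_3^{3,3}$ are already \'etale sheaves (both isomorphic to $\Oscr/2$), the differential is determined by a natural-in-$R$ map of abelian groups $R/2 \to R/2$. Note that $(x,y) = (0,3)$ lies exactly on the antidiagonal $x + y = 3$, which is precisely the locus where the direct comparison \cite[Comparison Tool 5.2.4]{mathew-stojanoska} of the Picard and additive $\mathrm{gl}_1$-spectral sequences first breaks down, so a quadratic correction is expected. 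I would invoke the Mathew--Stojanoska analysis of the first non-linear correction in the Picard spectral sequence to identify $d_3^{3,3}$ pointwise as $x \mapsto x + x^2$: the linear part $x\mapsto x$ is the would-be $d_3$ in the additive HFPSS of $\mathrm{gl}_1(\KO_R)$, and the quadratic correction $x \mapsto x^2$ encodes the failure of $\Shpic$ to coincide with $\B\mathrm{gl}_1$ in this stem and is ultimately tracked to the Hopf class $\eta$. Naturality of the polynomial $x + x^2$ in ring maps then yields the formula at the level of \'etale sheaves.

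With the formula in hand, the kernel and surjectivity statements reduce to classical Artin--Schreier theory. Every \'etale $\ZZ$-algebra $R$ is flat, so $R/2 = 0$ unless $R$ lies over $\FF_2$; thus $\Oscr/2$ on $\Spet\ZZ$ equals $i_*(\Oscr_{\Spet\FF_2}/2)$, where $i\colon \Spet\FF_2 \hookrightarrow \Spet\ZZ$ is the closed immersion. In characteristic $2$ we have $x + x^2 = x^2 - x = (F-1)(x)$, where $F$ is the Frobenius, so $d_3^{3,3}$ realizes the Artin--Schreier map, fitting into the short exact sequence
\[
0 \to \ZZ/2 \to \Oscr \xrightarrow{F-1} \Oscr \to 0
\]
of \'etale sheaves on $\Spet\FF_2$. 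Pushforward by $i$ is exact by \cref{thm:closedimmersion}, which gives the analogous short exact sequence on $\Spet\ZZ$, identifying $\ker(d_3^{3,3}) \cong i_*\circ$ and confirming surjectivity.

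The main obstacle is isolating the non-linear term $x \mapsto x^2$: the direct comparison with the additive spectral sequence only controls differentials strictly above the antidiagonal, so on the antidiagonal itself one must carry out the first non-linear analysis of \cite{mathew-stojanoska}. Once this formula is in place, sheafification is automatic from naturality, and the remaining assertions are standard Artin--Schreier.
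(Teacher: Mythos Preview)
Your proposal is correct and matches the paper's proof: both invoke \cite[Theorem~6.1.1]{mathew-stojanoska} (the first non-linear correction in the Picard spectral sequence) to obtain the formula $x\mapsto x+x^2$, and then deduce surjectivity and the kernel from Artin--Schreier theory. Your packaging via the pushed-forward Artin--Schreier short exact sequence is slightly cleaner than the paper's stalkwise check, and one terminological quibble: the linear part comes from the additive HFPSS for $\KU_R^{hC_2}$, not from ``the additive HFPSS of $\mathrm{gl}_1(\KO_R)$''---the quadratic correction reflects the failure of $\mathrm{gl}_1$ to agree with the additive spectrum, not a discrepancy between $\Shpic$ and $\B\mathrm{gl}_1$.
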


\begin{proof}
    The first claim follows from~\cite[Theorem 6.1.1]{mathew-stojanoska}, see also Example 7.1.1 in loc.cit. for the worked example in the case of the abelian group version of the spectral sequence \eqref{ss:picKO}. The map is surjective away from $2$ since
    both sides vanish in that case. At $2$, the map is surjective because
    $\Oscr/2$ has stalks given by separably closed fields. The identification
    of the kernel is similar.
\end{proof}

\begin{remark}
    By \cite[Proposition 7.15]{gepner-lawson}, the differentials $d_2^{1,0}, d_2^{2,0}$ and $d_3^{2,1}$
    are nonzero on global sections (where our spectral sequence is isomorphic, at least before
    differentials, to the usual Picard spectral sequence for $\KO$). The first two differentials
    have $\ZZ/2$ as source and are thus determined by global sections: $d_2^{1,0}$ is an isomorphism
    and $d_2^{2,0}$ is the unique injection $\Z/2 \to \Oscr/2$. The differential $d_3^{2,1}\colon
    \Oscr/2 \to \Oscr/2$ is not determined by global sections, however, and thus remains unresolved.
    None of these differentials are needed for our computation of the Picard sheaf and hence of
    $\LBr(\KO)$, though their result on global sections is used in the Gepner--Lawson computation of
    $\Br(\KU|\KO)$, which we will come back to in \cref{rem:KUKO}..
\end{remark}

These computations determine the associated graded of $\pi_0{\Shpic}_{\Oscr_\KO}$, but we can also resolve the extension problems as follows.

\begin{proposition}\label{prop:PicKO}
    There is a filtration on $\pi_0{\Shpic}_{\Oscr_\KO}$ with associated graded
    pieces ${\ZZ}/2$, $\ZZ/2$, and $i_*\Z/2$, where $i$ is the closed inclusion $\Spec \FF_2 \to \Spec \ZZ$. There is a surjective map from the constant sheaf $\ZZ/8 $ to $\pi_0\Shpic_{\Oscr_\KO}$, resulting in a non-trivial extension
    \begin{equation}\label{eq:extensionPicKO}
        0 \to i_*\ZZ/2 \to \pi_0\Shpic_{\Oscr_\KO} \to \ZZ/4 \to 0.
    \end{equation}
\end{proposition}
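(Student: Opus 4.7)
The plan is to read off the associated graded from the $\Escr_\infty$-page of the descent spectral sequence~\eqref{ss:picKO} and then resolve the two extension problems by a stalkwise analysis leveraging the Gepner--Lawson identification $\Pic(\KO)=\ZZ/8$. The associated graded follows from~\cref{fig:e4pickoss}: in the column $t-s=0$, the classes surviving to $\Escr_\infty$ are $\Hscr^0(C_2,\ZZ/2)\cong\ZZ/2$ in filtration $s=0$, $\Hscr^1(C_2,\Oscr^\times)\cong\mu_2\cong\ZZ/2$ in filtration $s=1$, and the kernel $i_*\ZZ/2$ of $d_3^{3,3}$ in filtration $s=3$. The potential higher contributions at $(s,t)=(7,7),(11,11),\ldots$ are killed by the ``linear'' $d_3$ differentials originating at $(s,t)=(4,5),(8,9),\ldots$, as determined by Comparison Tool 5.2.4 of~\cite{mathew-stojanoska}.

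To construct the surjection from the constant sheaf $\ZZ/8$, I would use that the Gepner--Lawson generator $[\Sigma\KO]\in\Pic(\KO)=\ZZ/8$ defines a global section of $\pi_0\Shpic_{\Oscr_\KO}$ of order dividing $8$, and hence a morphism of sheaves $\ZZ/8\to\pi_0\Shpic_{\Oscr_\KO}$ sending $1\mapsto[\Sigma\KO]$. Surjectivity is a stalkwise assertion; the stalk at a geometric point over a prime $p$ equals $\Pic(\KO_A)$ for $A$ the strict Henselization. For $p\neq 2$ we have $\tfrac12\in A$, so $\KO_A$ is an \'etale extension of the $4$-periodic spectrum $\KO[\tfrac12]$, and~\cite[Corollary 2.4.7]{mathew-stojanoska} gives $\Pic(\KO_A)\cong\ZZ/4$ generated by $[\Sigma\KO_A]$. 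For $p=2$, the strict Henselization $A$ has vanishing higher \'etale cohomology for torsion sheaves, so the sheafy spectral sequence at $A$ reduces to the local Mathew--Stojanoska Picard spectral sequence; the Gepner--Lawson extension analysis then adapts to give $\Pic(\KO_A)=\ZZ/8$ generated by $[\Sigma\KO_A]$. In either case $\ZZ/8\to\Pic(\KO_A)$ is surjective.

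Finally, to identify the extension~\eqref{eq:extensionPicKO}, I would observe that $\pi_0\Shpic_{\Oscr_\KO}/i_*\ZZ/2$ has all stalks of order $4$ ($\ZZ/4$ at $p\neq 2$, and $\ZZ/8/\ZZ/2\cong\ZZ/4$ at $p=2$, where the unique order-$2$ subgroup of $\ZZ/8$ coincides with the stalk of $i_*\ZZ/2$), that the image of $[\Sigma\KO]$ generates these stalks, and that $4[\Sigma\KO]$ lies in $i_*\ZZ/2$; together these identify the quotient as the constant sheaf $\ZZ/4$. Non-triviality of the extension is witnessed at the stalk over $p=2$, which is cyclic of order $8$ rather than $\ZZ/4\oplus\ZZ/2$. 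The main obstacle is the stalkwise analysis at $p=2$: the spectral sequence immediately gives the upper bound $|\Pic(\KO_A)|\leq 8$, but establishing equality with $\ZZ/8$ requires importing the full Gepner--Lawson extension argument into the strictly Henselian setting.
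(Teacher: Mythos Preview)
Your approach is stalkwise while the paper's is via global sections, and your acknowledged obstacle at $p=2$ is a genuine gap. Two issues compound there: first, the stalk of the sheafification of $R\mapsto\Pic(\KO_R)$ is the filtered colimit $\mathrm{colim}_R\Pic(\KO_R)$ over \'etale neighborhoods, and it is not obvious that this agrees with $\Pic(\KO_A)$ for $A$ the strict Henselization (Picard groups need not commute with filtered colimits of rings); second, even granting that identification, transporting the Gepner--Lawson extension argument to $\KO_A$ is genuine extra work, not a formality.

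The paper avoids stalks entirely. Since $\H^1(\Spec\ZZ,\Gm)=\Pic(\ZZ)=0$, the exact sequence of \cref{prop:omni}(i) gives an injection $\Pic(\KO)\hookrightarrow\H^0(\Spec\ZZ,\pi_0\Shpic_{\Oscr_\KO})$; the filtration bounds the cardinality of the target by $8$, so the known $\Pic(\KO)\cong\ZZ/8$ forces this to be an isomorphism on global sections. The induced map $\ZZ/8\to Q:=\pi_0\Shpic_{\Oscr_\KO}/i_*\ZZ/2$ is then the surjection $\ZZ/8\to\ZZ/4$ on global sections (using $\H^0(\Spec\ZZ,i_*\ZZ/2)=\ZZ/2$ and left exactness of $\H^0$), which pins down $Q$ as the unique extension of $\ZZ/2$ by $\ZZ/2$ on $\Spec\ZZ$ with $\H^0=\ZZ/4$, namely the constant sheaf $\ZZ/4$; non-triviality of~\eqref{eq:extensionPicKO} and surjectivity of $\ZZ/8\to\pi_0\Shpic_{\Oscr_\KO}$ then follow from the same global-sections count. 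In fact your desired conclusion that the stalk at $2$ is cyclic of order $8$ becomes a \emph{consequence} of this argument (the global section $4\cdot[\Sigma\KO]$ is the generator of $\H^0(\Spec\ZZ,i_*\ZZ/2)$, hence has nonzero stalk at $2$), rather than an input to it.
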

\begin{proof}
    The first statement was proved in the lemmas above, namely we get a filtration on the $\E_\infty$-page of the spectral sequence \eqref{ss:picKO} with
    \[ \xymatrix{ 
    \F^2\ar@{^{(}->}[r] \ar[d]^{\simeq} &\F^1 \ar@{^{(}->}[r] \ar@{->>}[d] & \pi_0 \Shpic_{\Oscr_\KO}\ar@{->>}[d] \\
    i_* \ZZ/2 & \ZZ/2 & \ZZ/2.
    } \]
    This filtration gives an inclusion $  i_*\ZZ/2\cong\F^2 \to \pi_0\Shpic_{\Oscr_\KO}$, and we need to identify the quotient $Q$ with $\ZZ/4$. This quotient sits in an extension 
    \begin{equation}\label{eq:extenstionQ}
    0 \to \ZZ/2 \to Q \to \ZZ/2 \to 0.
    \end{equation}
    
    The filtration implies that the group of global sections $\H^0(\Spet \ZZ, \pi_0
    \Shpic_{\Oscr_\KO})$ is a
    finite group of cardinality at most $8$. On the other hand, note that since $\H^1(\Spet \ZZ,
    \Gm) =\Pic(\ZZ)=0$, \cref{prop:omni} implies that the homomorphism $\Pic(\KO) \to \H^0(\Spec \Z, \pi_0\Shpic_{\Oscr_\KO})$ is an injection. Composing with the isomorphism
    \[\ZZ/8 \to \Pic(\KO), \qquad [1] \to \Sigma \KO,\]
    we obtain a map of sheaves $\ZZ/8\to\pi_0\Shpic_{\Oscr_\KO}$, which must be an isomorphism on global sections.
    
    The above also gives a map $\ZZ/8\to Q$ that is the surjection $\ZZ/8 \to \ZZ/4$ on global sections, implying that the extension \eqref{eq:extenstionQ} is non-trivial.
   But the only non-trivial extension of $\ZZ/2$ by $\ZZ/2$ on $\Spec \ZZ$, which has $\ZZ/4$ as global sections, is the constant sheaf $\ZZ/4$.\footnote{Indeed, $\Ext_{\Spec \Z}(\Z, \Z/2) \cong \H^1(\Spec \Z;\Z/2) =0$ and thus the short exact sequence \[
   0 \to \Z \xrightarrow{2}\Z \to \Z/2 \to 0\]
   implies that $\Ext_{\Spec \Z}(\Z/2, \Z/2) \cong \coker(\Z/2 \xrightarrow{2}\Z/2) \cong \Z/2$.}
    This identifies the quotient in \eqref{eq:extensionPicKO}, and to see that this extension is also not split, we again compare with the global sections.
\end{proof}

\begin{corollary}\label{cor:PicKOR}
Let $R$ be an \'etale extension of $\ZZ$. Then there is a short exact sequence
\[ 0\to \Pic(R) \to \Pic(\KO_R) \to (\pi_0\Shpic_{\Oscr_\KO})(R) \to 0.\]
If $\Spec R$ is connected, the last term sits in an extension of the form
\[0 \to (\ZZ/2)^d \to (\pi_0\Shpic_{\Oscr_\KO})(R) \to \ZZ/4 \to 0, \]
where $d$ is the number of factors when decomposing $R/2$ as a product of fields. 
\end{corollary}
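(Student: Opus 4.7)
The plan is to apply \cref{prop:omni}(i) to the commutative ring spectrum $\KO_R$. Since $\pi_0\KO_R = R$ is regular noetherian we have $\H^1(\Spec R,\Gm)\cong \Pic(R)$ and $\H^2(\Spec R,\Gm)\cong \Br(R)$, while $\pi_1\ShLBr_{\Oscr_\KO}\cong \pi_0\Shpic_{\Oscr_\KO}$ by \cref{lem:hosheaves}. The initial segment of the resulting six-term exact sequence reads
\[0 \to \Pic(R) \to \Pic(\KO_R) \to \pi_0\Shpic_{\Oscr_\KO}(\Spec R) \xrightarrow{\partial} \Br(R),\]
so the first claimed short exact sequence amounts to showing $\partial=0$, equivalently to surjectivity of the map $\Pic(\KO_R) \to \pi_0\Shpic_{\Oscr_\KO}(\Spec R)$.

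For the second short exact sequence, assuming $\Spec R$ is connected, I would apply $\Gamma(\Spec R,-)$ to the sheaf extension $0\to i_*\ZZ/2\to \pi_0\Shpic_{\Oscr_\KO}\to \ZZ/4\to 0$ of \cref{prop:PicKO}. The first term evaluates to $(\ZZ/2)^d$ because $\Spec R/2$ is \'etale over $\FF_2$ and hence decomposes as a disjoint union of $d$ copies $\Spec k_i$, one per field factor of $R/2$. Surjectivity onto $\ZZ/4$, together with the vanishing of the boundary into $\H^1(\Spec R, i_*\ZZ/2)$, follows by naturality from the surjective sheaf map $\ZZ/8 \to \pi_0\Shpic_{\Oscr_\KO}$ constructed in \cref{prop:PicKO}: the composite $\Pic(\KO)=\ZZ/8\to \Pic(\KO_R)\to \pi_0\Shpic_{\Oscr_\KO}(\Spec R)\to \ZZ/4$ must agree with the mod-$2$ reduction of constant sheaves and is therefore surjective on sections.

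The main obstacle is proving $\partial=0$: the $\ZZ/4$-quotient is handled by the image of $\Sigma\KO_R$, but hitting the $(\ZZ/2)^d$-subgroup demands more. I would attack this by running the sheaf descent spectral sequence~\eqref{ss:picKO} restricted to the small \'etale site of $\Spet R$ rather than $\Spet \ZZ$, so that it converges to the nonnegative homotopy of $\Shpic_{\Oscr_\KO}(\Spet R)$, and in particular to $\Pic(\KO_R)$ in total degree zero. By \cite[Comparison Tool 5.2.4]{mathew-stojanoska}, every differential in the relevant range over $\Spet R$ is determined by its counterpart over $\Spet\ZZ$ already analyzed in the lemmas preceding \cref{prop:PicKO}; the essential input, the differential $x\mapsto x+x^2$ on $\Oscr/2$, remains surjective with the same kernel $i_*\ZZ/2$ after restriction. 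The resulting $\E_\infty$-page identifies a filtration on $\Pic(\KO_R)$ whose associated graded is compatible with $\Pic(R)\hookrightarrow \Pic(\KO_R)\twoheadrightarrow \pi_0\Shpic_{\Oscr_\KO}(\Spec R)$, which forces the required surjection and hence $\partial=0$.
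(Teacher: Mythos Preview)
Your setup via \cref{prop:omni}(i) and the handling of the second short exact sequence are correct and match the paper. The gap is in your argument for $\partial=0$.

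You write that the sheaf spectral sequence~\eqref{ss:picKO}, restricted to the small \'etale site of $\Spet R$, ``converges to the nonnegative homotopy of $\Shpic_{\Oscr_\KO}(\Spet R)$, and in particular to $\Pic(\KO_R)$''. This is false: that spectral sequence lives in \'etale sheaves and converges to the homotopy \emph{sheaves} $\pi_*\Shpic_{\Oscr_\KO}$, so its output in degree zero is the sheaf $\pi_0\Shpic_{\Oscr_\KO}$ restricted to $\Spet R$, whose global sections are $(\pi_0\Shpic_{\Oscr_\KO})(R)$ and not $\Pic(\KO_R)$. The discrepancy between these two groups is precisely the content of the corollary, so your argument is circular as stated. (One can instead compare the non-sheafified $C_2$-HFPSS for $\Shpic(\KU_R)$ with the global sections of the sheafy one and check surjectivity on each graded piece of the zero column, but that requires additional work you do not indicate, in particular identifying $\ker(x\mapsto x+x^2)$ on $R/2$ with the idempotents of $R/2$.)

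The paper avoids this entirely. It first proves the second sequence, using surjectivity of $\Pic(\KO_R)\to\ZZ/4$ (your argument for this is fine). This already shows that the image of $\partial$ equals the image of its restriction $\partial'_R\colon(\ZZ/2)^d\to\Br(R)$. Then one localizes: the restriction of $i_*\ZZ/2$ to $\Spec R[\tfrac12]$ vanishes, so naturality gives a square
\[
\xymatrix{
(\ZZ/2)^d \ar[r]^-{\partial'_R}\ar[d] & \Br(R)\ar[d]\\
0 \ar[r] & \Br(R[\tfrac12]).
}
\]
Since $R$ is a regular integral domain, $\Spec R[\tfrac12]\subset\Spec R$ is dense and $\Br(R)\to\Br(R[\tfrac12])$ is injective by \cref{thm:BrauerProperties}; hence $\partial'_R=0$. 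This is the missing idea.
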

\begin{proof}
    We first show the second part. The long exact sequence in cohomology associated to the extension in  \cref{prop:PicKO} takes the form
   \[0 \to (\ZZ/2)^d \to (\pi_0\Shpic_{\Oscr_\KO})(R) \to \ZZ/4 \to \H^1(R; i_*\ZZ/2) \to \cdots \]
   The composite $\Pic(\KO_R) \to (\pi_0\Shpic_{\Oscr_\KO})(R) \to \ZZ/4$ is a surjection and thus we obtain the second claim. 
   
   For the first part, we can assume that $\Spec R$ is connected and thus $R$ a regular integral
    domain. From \cref{prop:omni}, we have a natural exact sequence 
   \[ 0\to \Pic(R) \to \Pic(\KO_R) \to (\pi_0\Shpic_{\Oscr_\KO})(R) \xrightarrow{\partial_R} \Br(R).\]
   Since $\Pic(\KO_R)$ maps surjectively onto $\ZZ/4$, the image of $\partial_R$ is the image of the restriction $\partial_R'\colon (\ZZ/2)^d \to \Br(R)$. The map $R \to R[\tfrac12]$ induces a commutative square
   \[
   \xymatrix{
   (\ZZ/2)^d\ar[d]\ar[r] & \Br(R) \ar[d] \\
   0 \ar[r] & \Br(R[\textstyle{\tfrac12}]),
   }
   \]
   in which the horizontal arrows are the restricted boundaries $\partial'$ for $R$ and $R[\tfrac12]$ respectively.
   The right-hand vertical map is an injection by \cref{thm:BrauerProperties} since $\Spec R[\tfrac12] \subset \Spec R$ is dense. Thus $\partial_R' =0$.
\end{proof}

\begin{remark}\label{rem:17}
	As a consequence of the preceding corollary, we see that it is not true that for 
	every \'etale extension $\Z \subset R$ with $\Spec R$ connected, we have 
	$\Pic(\KO_R)\cong \Pic(R) \times \Z/8$ or $\Pic(R) \times \Z/4$. For example take the field
	$K = \Q(\sqrt{17})$, whose ring of integers is $\Z[\omega]$, where $\omega = \frac{1+\sqrt{17}}2$, and set
	$R = \Z[\omega][\frac1{17}]$. Here we have $2 = -(1+\omega)(2-\omega)$ and thus 
	$R/2 \cong \FF_2\times \FF_2$. We obtain $\Pic(\KO_R) \cong \Z/8 \times \Z/2$. In the Picard spectral sequence for $\KO_R$, the ``exotic'' elements arise as the kernel of the $d_3$-differential
	\[d_3\colon R/2 \cong\H^3(C_2; \pi_2\KU_R) \to\H^6(C_2; \pi_4\KU_R) \cong R/2, \quad x\mapsto x + x^2\]
	is bigger than $\Z/2$, namely $(\ZZ/2)^2$ in our example.  
	
	How can we understand these additional classes? Let us sketch a conjectural general picture of 
	the filtration on $\Pic(A)$ from the Picard spectral sequence for a faithful $G$-Galois extension 
	$A \to B$. Let $M\in \Pic(A)$. The $0$-line detects the image $M\tensor_A B \in \Pic(B)$. If 
	$M\tensor_AB\simeq B$ (and such an equivalence is chosen), the $1$-line $\H^1(G;\pi_0B)$ describes 
    how the $G$-action on $\pi_*(M\tensor_A B)$ is twisted in comparison to that on $\pi_*B$. Thus, the 
	$\E_2$-term of the homotopy fixed point spectral sequence for $(M\tensor_A B)^{hC_2} \simeq M$ 
	is isomorphic to that for $B^{hC_2} \simeq A$ if $M$ has filtration at least $2$, which we will 
	assume now. We fix such an isomorphism. We conjecture that if $M$ has filtration $i\geq 2$, its reduction to 
    $\H^i(G,\pi_i\Shpic(B))\iso\H^i(G; \pi_{i-1}B)$ equals $d_i(1)$ in the homotopy fixed point spectral sequence 
	for $(M\tensor_A B)^{hG} \simeq M$. 
	
	Back to our example, this means that the three non-trivial classes in
    $\Pic(\KO_R)$ of filtration 
	$3$ correspond conjecturally to invertible $\KO_R$-modules $M$ such that $d_3^M(1)$ is $1$, $\omega$ and 
	$1+\omega$ respectively. 
\end{remark}  
    
The identification of $\sPic{\KO}$ allows us to compute the local Brauer group of $\KO$. Recall in this context that Gepner and Lawson proved in~\cite[Proposition~7.17]{gepner-lawson} that the subgroup
$\Br(\KU|\KO)\subseteq\Br(\KO)$ of classes killed by the extension
$\KO\rightarrow\KU$ is isomorphic to $\ZZ/2$. We will show that $\LBr(\KO)$ is also $\ZZ/2$, and in fact it will be isomorphic to $\Br(\KU|\KO)$.

\begin{theorem}
    \label{ex:ko}
	There is an isomorphism $\LBr(\KO) \cong \ZZ/2$. The unique non-trivial
    class is killed by the \'etale cover
    \[\KO\rightarrow\KO[\tfrac{1}{2},\zeta_4]\times\KO[\tfrac{1}{3},\zeta_3].\]
\end{theorem}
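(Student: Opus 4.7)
The plan is to apply \cref{prop:omni}(i) to $R=\KO$ (so $\pi_0 R = \ZZ$). Since $\H^2(\Spec\ZZ,\Gm) = \Br(\ZZ) = 0$ by \cref{ex:BrauerComputations}(1), the exact sequence reduces to an injection
\[ \LBr(\KO) \hookrightarrow \H^1(\Spec\ZZ, \pi_0\Shpic_{\Oscr_\KO}).\]
Thus it suffices to compute this $\H^1$ and to exhibit a faithful \'etale cover killing the generator.

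For the cohomology, I would apply the long exact sequence to
\[ 0 \to i_*\ZZ/2 \to \pi_0\Shpic_{\Oscr_\KO} \to \ZZ/4 \to 0 \]
from \cref{prop:PicKO}. By \cref{thm:closedimmersion}, $\H^1(\Spec\ZZ, i_*\ZZ/2) \cong \H^1(\Spec\FF_2, \ZZ/2) \cong \ZZ/2$, with generator corresponding to the extension $\FF_4/\FF_2$. Minkowski's theorem gives $\pi_1^{\et}(\Spec\ZZ) = 1$, so $\H^1(\Spec\ZZ, \ZZ/4) = 0$. Finally, \cref{prop:PicKO} identifies $\H^0(\Spec\ZZ,\pi_0\Shpic_{\Oscr_\KO})$ with $\Pic(\KO) = \ZZ/8$; since the inclusion $i_*\ZZ/2 \hookrightarrow \pi_0\Shpic_{\Oscr_\KO}$ on global sections must be the unique embedding $\ZZ/2 \hookrightarrow \ZZ/8$, the induced map $\H^0(\pi_0\Shpic_{\Oscr_\KO}) \to \H^0(\ZZ/4)$ is just the standard quotient $\ZZ/8 \twoheadrightarrow \ZZ/4$. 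Hence the connecting map vanishes and $\H^1(\Spec\ZZ,\pi_0\Shpic_{\Oscr_\KO}) \cong \ZZ/2$, giving $\LBr(\KO) \hookrightarrow \ZZ/2$. To upgrade this injection to an equality, I would invoke Gepner--Lawson's $\Br(\KU|\KO) \cong \ZZ/2$: since $\KO \to \KU$ is a faithful $C_2$-Galois extension, $\Br(\KU|\KO) \subseteq \LBr(\KO)$.

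For the explicit cover, faithful \'etaleness of $\KO \to \KO[\tfrac12,\zeta_4] \times \KO[\tfrac13,\zeta_3]$ is immediate from $\Spec\ZZ[\tfrac12] \cup \Spec\ZZ[\tfrac13] = \Spec\ZZ$. On the first factor, $\KO[\tfrac12,\zeta_4] \simeq \KU[\tfrac12]$ since the $C_2$-Galois extension $\ZZ[\tfrac12] \to \ZZ[\tfrac12, i]$ realizes $\KO[\tfrac12] \to \KU[\tfrac12]$, so the class dies because by Gepner--Lawson it is already killed by $\KO \to \KU$. On the second factor, I would use naturality of the injection $\LBr(-) \hookrightarrow \H^1(-, \pi_0\Shpic)$: the generator of $\LBr(\KO)$ comes from $\H^1(\Spec\ZZ, i_*\ZZ/2)$, and under pullback along $\Spec\ZZ[\tfrac13,\zeta_3] \to \Spec\ZZ$ the sheaf $i_*\ZZ/2$ becomes $i'_*\ZZ/2$ for $i'\colon\Spec\FF_4 \hookrightarrow \Spec\ZZ[\tfrac13,\zeta_3]$, because $\ZZ[\zeta_3]/(2) = \FF_2(\zeta_3) = \FF_4$. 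The induced map $\H^1(\Spec\FF_2, \ZZ/2) \to \H^1(\Spec\FF_4, \ZZ/2)$ is the restriction of continuous $\ZZ/2$-characters of $\hat\ZZ = \Gal(\overline{\FF_2}/\FF_2)$ to the index-$2$ subgroup $2\hat\ZZ = \Gal(\overline{\FF_2}/\FF_4)$, and this sends the unique nontrivial character to zero.

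The main obstacle is verifying that the class dies on the second factor, since this is the only step that genuinely uses a Frobenius computation rather than the Gepner--Lawson splitting; the rest of the argument is assembly of \cref{prop:omni}, \cref{prop:PicKO}, and standard facts about \'etale cohomology on $\Spec\ZZ$.
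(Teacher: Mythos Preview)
Your computation of $\H^1(\Spec\ZZ,\pi_0\Shpic_{\Oscr_\KO})\cong\ZZ/2$ is correct and matches the paper. The gap is in how you upgrade the injection $\LBr(\KO)\hookrightarrow\ZZ/2$ to an equality. You write that $\Br(\KU|\KO)\subseteq\LBr(\KO)$ because $\KO\to\KU$ is faithful $C_2$-Galois, but this is the wrong direction: $\KO\to\KU$ is \emph{not} \'etale in the sense used here (\'etale extensions of $\KO$ correspond to \'etale extensions of $\pi_0\KO=\ZZ$, and $\pi_0\KU=\ZZ$ with the identity map on $\pi_0$, while the higher homotopy changes). A priori one only has $\LBr(\KO)\subseteq\Br(\KU|\KO)$, since $\LBr(\KU)=0$ forces any \'etale-locally-trivial class to die over $\KU$; see \cref{rem:KUKO}. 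So Gepner--Lawson cannot be invoked to produce a nonzero class in $\LBr(\KO)$. The paper closes the gap differently: it shows the differential $\H^1(\Spec\ZZ,\pi_0\Shpic_{\Oscr_\KO})\to\H^3(\Spec\ZZ,\Gm)$ vanishes by comparing with the corresponding differential for $\KU$, where the source $\H^1(\Spec\ZZ,\ZZ/2)$ is zero and the target is the same.

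There is a second error in your treatment of the first factor of the cover: $\KO[\tfrac12,\zeta_4]$ is \emph{not} equivalent to $\KU[\tfrac12]$, since $\pi_0\KO[\tfrac12,\zeta_4]=\ZZ[\tfrac12,i]$ while $\pi_0\KU[\tfrac12]=\ZZ[\tfrac12]$. The correct (and simpler) argument, once one knows $\LBr(\KO)\cong\ZZ/2$ with the generator detected in $\H^1(\Spec\ZZ,i_*\ZZ/2)$, is that $i_*\ZZ/2$ restricts to the zero sheaf on $\Spec\ZZ[\tfrac12]$, so the class dies there for trivial reasons; one also needs $\Br(\ZZ[\tfrac12,\zeta_4])=0$ from \cref{ex:BrauerComputations} to ensure the map $\LBr\to\H^1$ remains injective over that base. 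Your argument for the second factor is essentially correct and matches the paper.
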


Here, we use that the cyclotomic fields $\QQ(\zeta_4)$ and $\QQ(\zeta_3)$ are
ramified only at the primes $(2)$ and $(3)$, respectively,
to produce $\KO[\tfrac{1}{2},\zeta_4]$ and $\KO[\tfrac{1}{3},\zeta_3]$ as commutative
ring spectra.

\begin{proof}
    To use the exact sequence in \Cref{prop:omni}, we first need to compute $\H^1(\Spet \ZZ, \pi_0\Shpic_{\Oscr_\KO} )$, which we will do using \Cref{prop:PicKO}.
    Since there is a unique $\ZZ/2$-Galois extension of $\Spec\FF_2$, \cref{thm:closedimmersion} implies
    $$\H^1(\Spec\ZZ,j_*\ZZ/2)\iso\H^1(\Spec\FF_2,\ZZ/2)\iso\ZZ/2.$$
	Moreover $\H^1(\Spec\ZZ,\ZZ/4)=0$ as there are no unramified $\ZZ/4$-Galois extensions of
	$\QQ$. Since furthermore $$\H^0(\Spec\ZZ, \sPic{\KO}) \cong \Pic(\KO) \to \H^0(\Spec\ZZ, \ZZ/4)$$ is surjective, the long exact cohomology sequence associated with the short exact
    sequence of sheaves in \eqref{eq:extensionPicKO} implies that $\H^1_{\et}(\Spec \ZZ, \pi_0\sPic{\KO})$ is isomorphic to $\ZZ/2$.

    To conclude $\LBr(\KO) \cong \ZZ/2$ using \cref{prop:omni}, it remains to show the vanishing of the differential
    \[\H^1(\Spec\ZZ,\pi_0\sPic{\KO})\xrightarrow{d_2}\H^3_{\et}(\Spec\ZZ,\Gm).\]
    We show this by comparison to $\KU$: 
    The map $\KO\rightarrow\KU$ induces a map
    of presheaves
    $\Shlbr_{\KO}\rightarrow\Shlbr_{\KU}$ on the \'etale site of $\Spec\ZZ$, which we identify with either of the \'etale sites of $\KO$ and $\KU$ using the isomorphism $\pi_0\KO \cong \ZZ \cong \pi_0\KU$. 
  Thus, we get an induced map of
    descent spectral sequences and in particular a commutative diagram
    $$\xymatrix{
    \H^1(\Spec\ZZ,\sPic{\KO})\ar[r]^-{d_2}\ar[d]&\H^3(\Spec\ZZ,\Gm)\ar[d]\\
    \H^1(\Spec\ZZ,\sPic{\KU})\ar[r]^-{d_2}&\H^3(\Spec\ZZ,\Gm),
    }$$
    where the right vertical map is an equality. Since
    $\H^1(\Spec\ZZ,\pi_0\sPic{\KU})\iso\H^1(\Spec\ZZ,\ZZ/2)=0$, we
    see that the top differential must vanish. Therefore,
    $\LBr(\KO)\iso\H^1(\Spec\ZZ,\pi_1\ShBPic_{\Oscr_{\KO}})\iso\ZZ/2$.
	
	For the second part of our claim, note first that $\Br(\ZZ[\frac12, \zeta_4])$
    and $\Br(\ZZ[\frac13, \zeta_3])$ vanish by \cref{ex:BrauerComputations}.     
    The Brauer groups of $\ZZ[\frac12, \zeta_4]$ and $\ZZ[\frac13, \zeta_3]$ agree with
    the second \'etale cohomology with $\Gm$-coefficients since the rings are
    regular and noetherian. Using \cref{prop:omni} again, we thus see that the non-trivial class in $\LBr(\KO)$ must be killed by the extension $\KO\rightarrow\KO[\tfrac{1}{2},\zeta_4]\times\KO[\tfrac{1}{3},\zeta_3]$ if the image of the non-trivial element of
    $\H^1(\Spec\ZZ,j_*\ZZ/2)$ vanishes in $\H^1(\Spec \ZZ[\frac12,
    \zeta_4],j_*\ZZ/2)$
    and $\H^1(\Spec \ZZ[\frac13, \zeta_3],j_*\ZZ/2)$. This is clear in the first
    case as $j_*\ZZ/2$ restricted to $\Spec \ZZ[\frac12]$ vanishes. In the second
    case, we use that the extension $\FF_2 \subset \FF_4 \iso \FF_2[\zeta_3]$ kills
    the non-trivial element of $\H^1(\Spec\FF_2,\ZZ/2)$.
\end{proof}
  
\begin{remark}\label{rem:KUKO}  Note that since
    $\LBr(\KU)=0$, functoriality of the local Brauer group implies that the non-zero class $\alpha\in\LBr(\KO)\iso\ZZ/2$ is killed by the
    $\ZZ/2$-Galois extension $\KO\rightarrow\KU$, i.e.\ lies in the relative Brauer group $\Br(\KU|\KO)$. By the main result of \cite{gepner-lawson}, $\LBr(\KO)$ thus agrees with $\Br(\KU|\KO)$ though a priori we only get an inclusion. This gives a
    new proof of that the Galois-cohomological Brauer class found
    in~\cite[Proposition~7.15]{gepner-lawson} is representable by an Azumaya
    algebra, which Gepner and Lawson prove instead with an unstable descent
    spectral sequence. See also \cref{ex:KTMFGalois} for another perspective.
    
    We urge the reader to consider  the analogue of the descent spectral sequence computation of
    $\Br(\KU|\KO)$ as in \cite[Figure 7.2]{gepner-lawson} in the case of the relative Brauer group
    of $\KO[\tfrac13, \zeta_3]$ with respect to  $\KU[\tfrac13, \zeta_3]$. As the class in
    filtration six contributing to $\Br(\KU|\KO)$ has to die in $\Br(\KO[\tfrac13, \zeta_3])$, there
    must be a new $d_3$ killing it. This $d_3$ is given by the formula in \cite[Theorem
    6.1.1]{mathew-stojanoska}, the point being that the image of $x\mapsto x +x^2$ on $\ZZ[\tfrac13,
    \zeta_3]/2 \cong \FF_4$ is $\ZZ/2 =\FF_2 \subset \FF_4$.
\end{remark}

\section{Brauer groups of nonconnective spectral DM stacks}\label{sec:dm}

In this section, we turn to Brauer groups of nonconnective spectral Deligne--Mumford (DM) stacks. A
significant difference will be that the Brauer group is in general no longer $\pi_0$ of the global
sections of the Brauer sheaf, yielding to a distinction between Brauer group and cohomological
Brauer group, which we will explain below.

To fix notation, we recall the following definition from Lurie~\cite{sag}.

\begin{definition}
    A {\bf nonconnective spectral DM stack} is a spectrally ringed $\infty$-topos
    $(\Xscr,\Oscr)$ such that there exists a covering $\coprod_{i\in I}
    U_i\rightarrow\ast$ of the final object where for each $i$ there is an
    equivalence
    $(\Xscr_{/U_i},\Oscr|_{\Xscr_{U_i}})\we\Spet R_i$ for some
    commutative ring spectrum $R_i$.\footnote{Lurie writes $\SpetLurie R$ for what we write as
    $\Spec R$.} If $\Oscr$ is connective, we say that
    $(\Xscr,\Oscr)$ is a {\bf connective spectral DM stack}; if $\Oscr$ is
    discrete, we say that $(\Xscr,\Oscr)$ is a {\bf classical DM stack}.
\end{definition}

\begin{remark}
    \begin{enumerate}
        \item[(a)] In~\cite{sag}, Lurie calls connective spectral DM stacks simply spectral DM
            stacks.
        \item[(b)] Given a nonconnective spectral DM stack $(\Xscr,\Oscr)$,
            there is a diagram $(\Xscr,\Oscr)\rightarrow(\Xscr,\tau_{\geq
            0}\Xscr)\leftarrow(\Xscr,\pi_0\Xscr)$ of nonconnective spectral DM
            stacks. The right arrow is the inclusion of the classical locus, at least if $\Xscr$ arises from a $1$-topos.
    \end{enumerate}
\end{remark}

\begin{construction}
For a nonconnective spectral DM stack, \'etale sheaves on $\Xscr$ are equivalent to \'etale sheaves
    on the site $\mathrm{Aff}^{\et}_{/(\Xscr, \Oscr)}$ of \'etale maps $\Spec R \to (\Xscr, \Oscr)$ for some
    commutative ring spectrum $R$. Restricting the sheaves $\ShPic$, $\ShBr$, and $\ShLBr$ on
    $\CAlg_{\mathbb{S}}^{\op} \simeq \mathrm{Aff}$ from \cref{sec:affine}, we obtain sheaves
    $\ShPic_\Oscr$, $\ShBr_\Oscr$, and $\ShLBr_\Oscr$ on $\mathrm{Aff}^{\et}_{/(\Xscr, \Oscr)}$ or, equivalently, on $\Xscr$.
\end{construction}

\begin{remark}
    There is a natural map $\ShBPic_\Oscr\rightarrow\ShBr_\Oscr$ which induces
    an equivalence $\ShBPic_\Oscr\we\ShLBr_\Oscr$, since again this can be
    checked locally. The computation of the homotopy sheaves of $\ShLBr_\Oscr$
    given in \cref{lem:hosheaves} goes through verbatim here.
\end{remark}

\begin{example}
    In general, $\pi_0\ShBr_{\tau_{\geq 0}\Oscr}=\pi_0\ShBr_{\pi_0\Oscr}=0$ since Brauer classes on
    connective commutative ring spectra are \'etale-locally trivial by \cite[Theorem
    5.11]{antieau-gepner}. We also
    have $\pi_1\ShBr_{\tau_{\geq
    0}\Oscr}\iso\pi_1\ShBr_{\pi_0\Oscr}\iso\ZZ$ by the computation of Picard groups of connective
    commutative ring spectra. On the other hand,
    $\pi_0\ShBr_\Oscr$ and $\pi_1\ShBr_\Oscr$ are highly dependent on the
    nature of $\Oscr$ itself.
\end{example}

\begin{definition}
    We let $\Br'(\Xscr,\Oscr)=\pi_0\Gamma(\Xscr,\ShBr_\Oscr)=\pi_0(\ShBr_\Oscr(\Xscr))$.
    This is the {\bf cohomological Brauer group} of $\Xscr$. Similarly, the
    {\bf cohomological local Brauer group} of $(\Xscr,\Oscr)$ is
    $$\LBr'(\Xscr,\Oscr)=\pi_0\ShB\ShPic_\Oscr(\Xscr).$$
    We call the space of global sections $\ShBr_\Oscr(\Xscr)$ the Brauer space
    and similarly for the local Brauer space $\ShBPic_\Oscr(\Xscr)\we\ShLBr_\Oscr(\Xscr)$.
\end{definition}

\begin{remark}
    The subgroup $\LBr'(\Xscr,\Oscr)\subseteq\Br'(\Xscr,\Oscr)$ consists of
    those cohomological Brauer classes which are \'etale locally trivial on
    $\Xscr$. Since $(\Xscr,\Oscr)$ is a nonconnective spectral DM stack
    this means that for $\alpha\in\LBr'(\Xscr,\Oscr)$, there is a surjective family of \'etale maps $\{p_i\colon\Spet
    R_i\rightarrow(\Xscr,\Oscr)\}_{i\in I}$ such that $p_i^*\alpha=0$ for all
    $i$.
\end{remark}

\begin{construction}\label{constr:spectral-sequence}
In order to compute $\Br'(\Xscr,\Oscr)$ and $\LBr'(\Xscr,\Oscr)$, it is
convenient to deloop $\ShBr_\Oscr$ and $\ShLBr_\Oscr$ and view them as
presheaves of spectra; \'etale sheafification yields sheaves
$\Shbr_\Oscr$ and $\Shlbr_\Oscr$. As such we have $\pi_t\Shbr_\Oscr\iso\pi_t\ShBr_\Oscr$ for all
$t\in\ZZ$ and similarly for $\Shlbr_\Oscr$; in particular, the homotopy sheaves vanish for $t<0$.
We have $\Omega^\infty\Shbr_\Oscr(\Xscr)\we\ShBr_\Oscr(\Xscr)$. Analogously to
    \cref{prop:hypersheaf}, we argue that $\Shbr_\Oscr$ and $\Shlbr_\Oscr$ are Postnikov complete.
    Thus, we obtain a descent spectral sequence
$$\E_2^{s,t}=\H^s(\Xscr,\pi_{t}\Shbr_\Oscr)\quad\mathlarger{\Longrightarrow}\quad \pi_{t-s}\Shbr_\Oscr(\Xscr)\underset{t-s\geq
0}\iso\pi_{t-s}\ShBr_\Oscr(\Xscr)$$
and similarly for $\Shlbr_\Oscr(\Xscr)$.
\end{construction}

For the following definition, recall that a quasi-coherent sheaf is perfect if it is dualizable or,
equivalently, if it becomes a compact object when restricted to an affine.
\begin{definition}\label{def:azumaya}
    A quasi-coherent sheaf $\Ascr$ of $\Oscr$-algebras on a nonconnective
    spectral DM stack $(\Xscr,\Oscr)$ is an {\bf Azumaya algebra} if the
    following equivalent conditions hold:
    \begin{enumerate}
        \item[(i)] $\Ascr$ is perfect, locally generates
            $\QCoh(\Xscr,\Oscr)$, and the natural map
            $\Ascr^\op\otimes_\Oscr\Ascr\rightarrow\ShEnd_\Oscr(\Ascr)$ is an
            equivalence;
        \item[(ii)] there is an \'etale cover $\{\Spet
            R_i\xrightarrow{p_i}(\Xscr,\Oscr)\}_{i\in I}$ such that $p_i^*\Ascr$ is an
            Azumaya $R_i$-algebra for all $i$.
    \end{enumerate}
\end{definition}

\begin{definition}\label{def:BrXO}
    Any Azumaya algebra $\Ascr$ on $(\Xscr,\Oscr)$ defines a point of $\ShBr_\Oscr$ and
    hence an element $[\Ascr]$ of $\Br'(\Xscr,\Oscr)$, called the class of
    $\Ascr$. If $\Ascr$ is an Azumaya algebra, then so is the opposite algebra
    $\Ascr^\op$ and we have $[\Ascr^\op]=-[\Ascr]$; if $\Bscr$ is a second Azumaya
    algebra, then $\Ascr\otimes_{\Oscr}\Bscr$ is Azumaya and
    $[\Ascr\otimes_{\Oscr}\Bscr]=[\Ascr]+[\Bscr]$. These assertions may be
    verified locally using \cref{def:azumaya}(ii) and
    \cref{def:brauer}(b). Let
    $\Br(\Xscr,\Oscr)\subseteq\Br'(\Xscr,\Oscr)$ be the subgroup consisting of
    the classes of Azumaya algebras.
    Let
    $\LBr(\Xscr,\Oscr)=\LBr'(\Xscr,\Oscr)\cap\Br(\Xscr,\Oscr)$ inside
    $\Br'(\Xscr,\Oscr)$. We call these the {\bf Brauer} and {\bf local Brauer
    groups} of $(\Xscr,\Oscr)$.
\end{definition}

\begin{example}\label{ex:affine-cohomologicalBrauer}
    For any commutative ring spectrum $R$, \cref{prop:hypersheaf} implies $\Br'(\Spec R) = \Br(\Spec R)$. 
\end{example}

\begin{definition}
    Let $(\Xscr,\Oscr)$ be a nonconnective spectral DM stack and let $\alpha\in\Br'(\Xscr,\Oscr)$.
    Using the inclusion $\ShBr_\Oscr\rightarrow\Cat_\Oscr$, the section
    $\alpha\in\Br'(\Xscr,\Oscr)$ defines a section of $\Cat_\Oscr$ and hence a
    stack of stable presentable $\infty$-categories, $\QCoh_{\Oscr,\alpha}$.
    This is the stack of \textbf{$\alpha$-twisted quasi-coherent sheaves} on
    $(\Xscr,\Oscr)$. The stable $\infty$-category of global sections will be
    denoted by $\QCoh(\Xscr,\alpha)$.
    An object $\Fscr\in\QCoh(\Xscr,\alpha)$ is {\bf perfect} if for every \'etale
    $p\colon\Spet R\rightarrow(\Xscr,\Oscr)$ the complex $p^*\Fscr$ is a
    compact object of $\QCoh(\Spet R,p^*\alpha)$. Note that the latter stable $\infty$-category is
    equivalent to $\Mod_A$ where $A$ is any Azumaya $R$-algebra with Brauer class
    $p^*\alpha$. We say that $\Fscr$ is a {\bf perfect local generator} if it is perfect and $p^*\Fscr$
    generates $\QCoh(\Spet R,p^*\alpha)$ for any $\Spet
    R\rightarrow(\Xscr,\Oscr)$.
\end{definition}

\begin{lemma}\label{lem:local}
    Let $(\Xscr,\Oscr)$ be a nonconnective spectral DM stack. If
    $\alpha\in\Br'(\Xscr,\Oscr)$, then
    $\alpha\in\Br(\Xscr,\Oscr)\subseteq\Br'(\Xscr,\Oscr)$ if and only if there
    exists a perfect local generator of $\QCoh(\Xscr,\alpha)$.
\end{lemma}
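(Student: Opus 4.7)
My strategy is to verify both directions by reducing to the affine case via an \'etale cover trivializing $\alpha$ and appealing to \cref{def:azumaya}(ii) together with the Schwede--Shipley-type recognition theorem for compactly generated module categories.

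For the forward implication, suppose $\alpha = [\Ascr]$ for an Azumaya algebra $\Ascr$ on $(\Xscr,\Oscr)$. By the very construction of the stack $\QCoh_{\Oscr,\alpha}$, there is an $\Oscr$-linear equivalence $\QCoh(\Xscr,\alpha) \simeq \Mod_\Ascr$ under which $\Ascr$ itself corresponds to a section $\Fscr \in \QCoh(\Xscr,\alpha)$. For any \'etale $p\colon \Spet R \to (\Xscr,\Oscr)$, the pullback $p^*\Fscr$ becomes the free rank-one right module over the Azumaya $R$-algebra $p^*\Ascr$, which is automatically a compact generator of $\Mod_{p^*\Ascr}$. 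Hence $\Fscr$ is a perfect local generator.

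For the reverse implication, suppose $\Fscr$ is a perfect local generator of $\QCoh(\Xscr,\alpha)$ and set $\Ascr = \ShEnd(\Fscr)^{\op}$, where the internal endomorphism spectrum is computed in the $\Oscr$-linear $\infty$-category $\QCoh(\Xscr,\alpha)$; this is naturally an $\EE_1$-$\Oscr$-algebra in $\QCoh(\Xscr,\Oscr)$. To verify that $\Ascr$ is Azumaya via \cref{def:azumaya}(ii), choose an \'etale cover $\{p_i\colon \Spet R_i \to (\Xscr,\Oscr)\}_{i\in I}$ with $p_i^*\alpha = 0$, so that $\QCoh(\Spet R_i, p_i^*\alpha) \simeq \Mod_{R_i}$. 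Since $\Fscr$ is perfect, internal Hom commutes with the \'etale pullback, so $p_i^*\Ascr \simeq \End_{R_i}(p_i^*\Fscr)^{\op}$, and $p_i^*\Fscr$ is a compact generator of $\Mod_{R_i}$. By Schwede--Shipley (\cite[Theorem 7.1.2.1]{ha}), the $p_i^*\Ascr$-action on $p_i^*\Fscr$ yields an $R_i$-linear equivalence $\Mod_{p_i^*\Ascr} \simeq \Mod_{R_i}$, exhibiting $p_i^*\Ascr$ as Morita equivalent to $R_i$ and hence Azumaya. Thus $\Ascr$ is Azumaya on $(\Xscr,\Oscr)$. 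The same Schwede--Shipley input upgrades the global $\Ascr$-action on $\Fscr$ to an $\Oscr$-linear functor $\Mod_\Ascr \to \QCoh(\Xscr,\alpha)$ that becomes an equivalence after pullback along each $p_i$; by the sheaf condition for $\Cat_\Oscr$ from \cref{prop:sheaf}, it is an equivalence of $\Oscr$-linear stacks, and therefore $[\Ascr] = \alpha$ in $\Br'(\Xscr,\Oscr)$.

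The main obstacle is the final identification of $[\Ascr]$ with $\alpha$, rather than with some twist of it: one must argue that the local equivalences $\Mod_{p_i^*\Ascr} \simeq \QCoh(\Spet R_i, p_i^*\alpha)$ assemble into a genuine equivalence of $\Oscr$-linear stacks over the whole of $(\Xscr,\Oscr)$. This is precisely where hyperdescent for $\Cat_\Oscr$ from \cref{prop:sheaf} is indispensable, since without it one would only recover that $\Ascr$ and $\alpha$ agree locally and not as global Brauer classes.
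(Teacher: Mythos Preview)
Your overall strategy matches the paper's: for the forward direction you exhibit $\Ascr$ itself as the perfect local generator, and for the converse you take $\Ascr=\ShEnd_\Oscr(\Fscr)$ and check locally that it is Azumaya with class $\alpha$. The paper's proof is more terse but follows the same line.

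There is, however, a genuine gap in your reverse implication. You write: ``choose an \'etale cover $\{p_i\colon \Spet R_i \to (\Xscr,\Oscr)\}$ with $p_i^*\alpha = 0$.'' Such a cover need not exist: the lemma is stated for an arbitrary $\alpha\in\Br'(\Xscr,\Oscr)$, not for $\alpha\in\LBr'(\Xscr,\Oscr)$, and \cref{ex:quasi-affine} shows that Brauer classes on nonconnective ring spectra are not \'etale-locally trivial in general. Consequently your identification $\QCoh(\Spet R_i,p_i^*\alpha)\simeq\Mod_{R_i}$ and the conclusion that $p_i^*\Ascr$ is Morita trivial over $R_i$ are unjustified.

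The fix is immediate. On any affine $\Spet R_i$ we have $\Br(\Spet R_i)=\Br'(\Spet R_i)$ by \cref{ex:affine-cohomologicalBrauer}, so $p_i^*\alpha$ is represented by some Azumaya $R_i$-algebra $A_i$ and $\QCoh(\Spet R_i,p_i^*\alpha)\simeq\Mod_{A_i}$. Your hypothesis gives $p_i^*\Fscr$ as a compact generator of $\Mod_{A_i}$; Schwede--Shipley then yields an $R_i$-linear equivalence $\Mod_{p_i^*\Ascr}\simeq\Mod_{A_i}$, so $p_i^*\Ascr$ is Morita equivalent to the Azumaya algebra $A_i$ and hence is itself Azumaya with class $p_i^*\alpha$. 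With this correction the rest of your descent argument goes through and indeed identifies $[\Ascr]$ with $\alpha$ rather than merely with an Azumaya class.
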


\begin{proof}
    If $\Ascr$ is an Azumaya algebra representing $\alpha$, define $\QCoh(\Xscr,\Ascr)$ as the limit
    of $\Mod_{\Ascr(R)}$ over all \'etale maps $\Spec R \to (\Xscr, \Oscr)$; this can be identified
    with a full subcategory of $\Mod_{\Ascr}(\Shv_{\Sp}(\Xscr))$. We have 
    $\QCoh(\Xscr,\Ascr)\we\QCoh(\Xscr,\alpha)$ and under this equivalence $\Ascr$
    corresponds to a perfect local generator. Conversely, given a perfect local
    generator $\Fscr$ of $\QCoh(\Xscr,\alpha)$, the sheaf of endomorphisms
    $\ShEnd_{\Oscr}(\Fscr)$ is an Azumaya algebra with class $\alpha$.
\end{proof}

Here is one example where every cohomological Brauer class is representable by
an Azumaya algebra.

\begin{proposition}\label{prop:etalecover}
    Let $(\Xscr,\Oscr)$ be a nonconnective spectral DM stack.
    If $(\Xscr,\Oscr)$ admits a finite \'etale cover $\pi\colon\Spet
    R\rightarrow(\Xscr,\Oscr)$, then
    $\Br(\Xscr,\Oscr)=\Br'(\Xscr,\Oscr)$.
\end{proposition}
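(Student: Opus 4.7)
The plan is to invoke \Cref{lem:local}: given any $\alpha \in \Br'(\Xscr, \Oscr)$, it suffices to produce a perfect local generator of $\QCoh(\Xscr, \alpha)$. Since the affine case of $\Br = \Br'$ is already settled by \Cref{ex:affine-cohomologicalBrauer}, I would exploit the global cover by pulling $\alpha$ back to $\Spet R$, building a perfect local generator there, and then pushing forward along $\pi$.

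Concretely, set $\pi^*\alpha \in \Br'(\Spet R) = \Br(R)$, choose an Azumaya $R$-algebra $A$ with class $\pi^*\alpha$, and let $\Fscr \in \QCoh(\Spet R, \pi^*\alpha)$ be the perfect local generator corresponding to the free rank-one $A$-module under the equivalence $\QCoh(\Spet R, \pi^*\alpha) \simeq \Mod_A$. My candidate on $(\Xscr, \Oscr)$ is $\Gscr := \pi_*\Fscr$, where $\pi_*$ denotes the right adjoint of pullback in $\alpha$-twisted quasi-coherent sheaves. To verify that $\Gscr$ is perfect and a local generator, I would use two features of finite \'etale morphisms. First, finite \'etale maps are ambidextrous: $\pi_*$ also serves as a left adjoint to $\pi^*$ (already visible affinely, where restriction of scalars along a finite projective extension admits both adjoints, canonically identified in the \'etale case), so $\pi_*$ preserves colimits and compact objects, giving perfectness of $\Gscr$. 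Second, for any \'etale chart $p\colon \Spet T \to (\Xscr, \Oscr)$, base change in the Cartesian square yields $p^*\Gscr \simeq \pi'_*(p')^*\Fscr$, where $\pi'\colon \Spet U \to \Spet T$ is the finite \'etale base change of $\pi$ and $p'\colon \Spet U \to \Spet R$. Here $(p')^*\Fscr$ is still a perfect compact generator of $\QCoh(\Spet U, (p')^*\pi^*\alpha)$, and local generation for $p^*\Gscr$ reduces to the affine statement that for a finite \'etale cover of affines the pushforward of a compact generator is a compact generator; this follows from the adjunction $\Map(\pi'_*(p')^*\Fscr, \Hscr) \simeq \Map((p')^*\Fscr, (\pi')^*\Hscr)$ together with conservativity of $(\pi')^*$ on the faithfully flat cover $\pi'$.

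The main obstacle I anticipate is verifying that base change and ambidexterity transport cleanly to the stacks of twisted quasi-coherent sheaves $\QCoh_{\Oscr, \alpha}$; since these are defined as \'etale-local sections of the sheaf $\Cat_\Oscr$ from \Cref{prop:sheaf}, the requisite formalism should be inherited from the untwisted case, but one must check that the pushforward $\pi_*$ genuinely exists in the twisted setting and satisfies the base change formula along \'etale maps. With this in hand, \Cref{lem:local} produces an Azumaya algebra (namely $\ShEnd_\Oscr(\Gscr)$) representing $\alpha$, which gives the desired equality $\Br(\Xscr, \Oscr) = \Br'(\Xscr, \Oscr)$.
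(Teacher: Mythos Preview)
Your proposal is correct and follows essentially the same route as the paper: pull $\alpha$ back to the affine cover, take a perfect local generator there (via \Cref{ex:affine-cohomologicalBrauer} and \Cref{lem:local}), and push forward along the finite \'etale $\pi$, checking perfectness and local generation \'etale locally. The paper's proof is the terse two-sentence version of exactly this argument; your additional details on base change, ambidexterity, and conservativity of $(\pi')^*$ are precisely what one would supply when unpacking the phrase ``as one can check \'etale locally.''
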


\begin{proof}
    There is a compact generator $\Fscr$ of
    $\QCoh(\Spet R,\pi^*\alpha)$ by \cref{ex:affine-cohomologicalBrauer} and \cref{lem:local}.
    The pushforward
    $\pi_*\Fscr$ is a perfect local generator of $\QCoh(\Xscr,\alpha)$, as
    one can check \'etale locally.
\end{proof}

\begin{example}\label{ex:KTMFGalois}
    If a finite group $G$ acts on a commutative ring spectrum $R$, we obtain a finite \'etale map
    $\Spec R \to [\Spec R/G]$ to the stack quotient. In particular, $\Br([\Spec R/G]) \cong
    \Br'([\Spec R/G])$ by the preceding proposition. This is especially interesting if $R^{hG} \to
    R$ is a faithful $G$-Galois extension, when Galois descent implies that $\Mod_{R^{hG}} \simeq
    \QCoh([\Spec R/G])$. Examples include $\KO \to \KU$, $\TMF[\frac12] \to \TMF(2)$, and
    $\TMF[\frac13] \to \TMF(3)$ (see \cite{rognes} and \cite{mathew-meier}).
\end{example}

\cref{prop:etalecover} will not be enough to show the agreement of $\Br'$ and $\Br$ for the derived
moduli stack of elliptic curve since the moduli stack of elliptic curves does not have a finite
\'etale cover by an affine scheme \cite{mell-1connected}. This issue will be solved by
\cref{thm:spectraldm} below. Before we state it, we introduce the following definition needed for
its proof.

\begin{definition}
    Let $(\Xscr,\Oscr)$ be a nonconnective spectral DM stack. Let
    $\alpha\in\Br'(\Xscr,\Oscr)$ be a Brauer class and let
    $\Fscr\in\QCoh(\Xscr,\alpha)$ be a perfect local generator. We say that
    $\Fscr$ is a {\bf global generator} if $\Fscr$ is compact and if $\QCoh(\Xscr,\alpha)$ is compactly 
    generated by $\Fscr$.
\end{definition}

\begin{theorem}\label{thm:spectraldm}
    Let $(\Xscr,\Oscr)$ be a nonconnective spectral DM stack
    and fix $\alpha\in\Br'(\Xscr,\Oscr)$. If $\Xscr$ admits
    a Zariski open cover $\{\Uscr_i\}_{i=1}^n$ such that
    \begin{enumerate}
        \item[{\rm (a)}] for each $1\leq i,j\leq n$, the kernel of
            $\QCoh(\Uscr_i,\Oscr)\rightarrow\QCoh(\Uscr_i\cap\Uscr_j,\Oscr)$
            is generated by a single compact object $\Kscr_{i,j}$, and
        \item[{\rm (b)}] there is a global generator $\Fscr_i$ of
            $\QCoh(\Uscr_i,\alpha)$ for each $i=1,\ldots,n$,
    \end{enumerate}
    then $\alpha\in\Br(\Xscr,\Oscr)$ and there is a global generator of
    $\QCoh(\Xscr,\alpha)$.
\end{theorem}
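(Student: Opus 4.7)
The plan is to proceed by induction on $n$. The base case $n=1$ is trivial: $\Xscr = \Uscr_1$ and $\Fscr_1$ is already a global generator. For the inductive step from $n-1$ to $n$, I would apply the inductive hypothesis to $\Wscr = \Uscr_1 \cup \cdots \cup \Uscr_{n-1}$ to obtain a global generator of $\QCoh(\Wscr, \alpha)$, and then apply the case $n=2$ to the two-open cover $\{\Wscr, \Uscr_n\}$ of $\Xscr$. This requires verifying hypothesis (a) for this cover, i.e., that the kernel of $\QCoh(\Uscr_n, \Oscr) \to \QCoh(\Uscr_n \cap \Wscr, \Oscr)$ is compactly generated by a single object. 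Since $\Uscr_n \cap \Wscr = \bigcup_{j<n}(\Uscr_n \cap \Uscr_j)$, a natural candidate is $\Kscr_{n,1} \otimes_{\Oscr} \cdots \otimes_{\Oscr} \Kscr_{n,n-1}$, whose support lies in the closed subset $\bigcap_{j<n}(\Uscr_n \setminus \Uscr_j) = \Uscr_n \setminus \Wscr$; one then checks generation by a standard support argument.

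The heart of the proof is the case $n=2$. Writing $Z = \Xscr \setminus \Uscr_2$, which equals $\Uscr_1 \setminus \Uscr_2$ (since $\Uscr_1 \cup \Uscr_2 = \Xscr$) and is closed in both $\Uscr_1$ and $\Xscr$, I would consider the localization fiber sequence
\[ \QCoh_Z(\Xscr, \alpha) \longrightarrow \QCoh(\Xscr, \alpha) \xrightarrow{\;j_2^*\;} \QCoh(\Uscr_2, \alpha) \]
of presentable stable $\infty$-categories, where $j_2 \colon \Uscr_2 \hookrightarrow \Xscr$. By excision along the open $\Uscr_1 \supseteq Z$, the fiber identifies with the kernel of $\QCoh(\Uscr_1, \alpha) \to \QCoh(\Uscr_1 \cap \Uscr_2, \alpha)$. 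Since $\QCoh(\Uscr_1, \alpha)$ is a module category over $\QCoh(\Uscr_1, \Oscr)$ and $\Fscr_1$ is a compact generator, hypothesis (a) produces the compact generator $\Kscr_{12} \otimes_{\Oscr} \Fscr_1$ of $\QCoh_Z(\Xscr, \alpha)$.

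It then suffices to exhibit a compact object $P \in \QCoh(\Xscr, \alpha)$ such that $\Fscr_2$ is a retract of $j_2^* P$: given such a $P$, the direct sum $(\Kscr_{12} \otimes_{\Oscr} \Fscr_1) \oplus P$ is a compact global generator of $\QCoh(\Xscr, \alpha)$, since any object with vanishing mapping spectra from this sum must be $Z$-supported by the $P$-condition and hence zero by the $(\Kscr_{12} \otimes_{\Oscr} \Fscr_1)$-condition, and then \cref{lem:local} gives $\alpha \in \Br(\Xscr, \Oscr)$. To construct $P$, the crucial input is that $j_2^*$ preserves compact objects, equivalently that $j_{2,*}$ preserves filtered colimits; this follows from the quasi-compactness of $\Uscr_1 \cap \Uscr_2 \hookrightarrow \Uscr_2$ implicit in hypothesis (a). Given this, $j_{2,*} \Fscr_2$ can be written as a filtered colimit of compact objects of $\QCoh(\Xscr, \alpha)$ (which is compactly generated because both endpoints of the fiber sequence are), and compactness of $\Fscr_2 \simeq j_2^* j_{2,*} \Fscr_2$ forces $\Fscr_2$ to be a retract of $j_2^* P$ for some such compact $P$. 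The main obstacle is verifying this Thomason--Trobaugh--Neeman style lifting step cleanly in the twisted $\infty$-categorical setting; a secondary obstacle is confirming the support and generation argument for the iterated tensor product in the inductive step.
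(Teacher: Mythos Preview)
Your inductive structure and the identification of $\Kscr_{12}\otimes_{\Oscr}\Fscr_1$ as a compact generator of the supported subcategory are correct and match the paper's argument. The genuine gap is in the lifting step. Your claim that $\QCoh(\Xscr,\alpha)$ is compactly generated ``because both endpoints of the fiber sequence are'' is not a general fact and is in effect what the theorem establishes: given a localization sequence $\Ascr\to\Bscr\to\Cscr$, knowing that $\Ascr$ and $\Cscr$ are compactly generated and that $\Ascr\hookrightarrow\Bscr$ preserves compacts does \emph{not} force $\Bscr$ to be compactly generated---that would require lifting compact generators of $\Cscr$ to compact objects of $\Bscr$, which is exactly the problem at hand. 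So ``write $j_{2,*}\Fscr_2$ as a filtered colimit of compacts of $\QCoh(\Xscr,\alpha)$'' is circular.

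The paper avoids this by lifting in the opposite direction. One restricts the inductively constructed global generator $\Hscr$ of $\QCoh(\Yscr_k,\alpha)$ to $\Wscr=\Yscr_k\cap\Uscr_{k+1}$ and then lifts $\Hscr|_{\Wscr}$ to a compact object of $\QCoh(\Uscr_{k+1},\alpha)$, a category that \emph{is} compactly generated by hypothesis~(b). Here Thomason--Neeman applies cleanly to the Verdier sequence on compact objects arising from $\QCoh(\Uscr_{k+1},\alpha)\to\QCoh(\Wscr,\alpha)$ (whose kernel is generated by your tensor product $\Gscr=\Fscr_{k+1}\otimes\Kscr_{k+1,1}\otimes\cdots\otimes\Kscr_{k+1,k}$); the only price is replacing $\Hscr$ by $\Hscr\oplus\Sigma\Hscr$ to kill the $\K_0$-obstruction. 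One then glues $\Hscr$ and its lift via the pullback square
\[
\QCoh(\Yscr_{k+1},\alpha)\;\simeq\;\QCoh(\Yscr_k,\alpha)\times_{\QCoh(\Wscr,\alpha)}\QCoh(\Uscr_{k+1},\alpha),
\]
and compactness of the glued object follows directly from compactness of its components (filtered colimits in the pullback are computed componentwise). Adding $\Gscr$ then gives the global generator. In short, the obstacle you flag is not the twisted $\infty$-categorical setting; it is that you are trying to lift into the unknown category $\QCoh(\Xscr,\alpha)$ rather than into the known one $\QCoh(\Uscr_{k+1},\alpha)$.
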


The proof follows the work of~\cite{toen-derived} and~\cite{antieau-gepner}
which uses older arguments of B\"okstedt--Neeman~\cite{bokstedt-neeman} and Bondal--van
den Bergh~\cite{bondal-vdb} who showed that for a quasi-compact and
quasi-separated scheme $X$, the derived category of complexes of
$\Oscr_X$-modules with quasi-coherent cohomology sheaves admits a single compact generator, which is
global in the sense above.
Other important examples of $\Br=\Br'$ in the non-derived and derived context have been established
in~\cite{gabber,dejong-gabber,hall-rydh,chough}.

\begin{proof}
    Note first that each $\Uscr_i\subseteq\Xscr$ is relatively scalloped
    in the sense of~\cite[2.5.4.1]{sag}.\footnote{Quasi-affine morphisms are
    relatively scalloped; these will be enough for our
    applications.}

    We glue local perfect generators as
    in~\cite[Theorem~6.11]{antieau-gepner}
    or~\cite[Proposition~5.9]{toen-derived}, taking care in each step to
    produce a global generator. Let $\Yscr_k$ be the union
    $\Uscr_1\cup\cdots\cup\Uscr_k$ in $\Xscr$. It is enough to prove
    that there is a global generator
    of $\QCoh(\Yscr_k,\alpha)$ for each $k=1,\ldots,n$ and hence $\alpha|_{\Yscr_k}$ is in
    $\Br(\Yscr_k,\Oscr)$ for each $k$. The base case follows
    from assumption (b). Suppose the conclusion holds for some $1\leq k<n$.
    Set $\Wscr=\Yscr_k\cap\Uscr_{k+1}$ and consider the pullback square
    \begin{equation}\label{eq:a}\begin{gathered}\xymatrix{\QCoh(\Yscr_{k+1},\alpha)\ar[r]\ar[d]&\QCoh(\Uscr_{k+1},\alpha)\ar[d]\\
    \QCoh(\Yscr_k,\alpha)\ar[r]&\QCoh(\Wscr,\alpha)}\end{gathered}\end{equation}
    of stable presentable $\infty$-categories. As each
    inclusion $\Wscr\subseteq\Uscr_{k+1}$, $\Wscr\subseteq\Yscr_k$,
    $\Yscr_k\subseteq\Yscr_{k+1}$, and $\Uscr_{k+1}\subseteq\Yscr_{k+1}$ is
    quasi-affine and hence relatively scalloped, it follows from~\cite[2.5.4.3]{sag}, or rather its
    proof,\footnote{One just has to repeat the proof in the twisted setting and use that a left adjoint preserves compact objects if its right adjoint preserves filtered colimits.} that the functors
    in~\eqref{eq:a} preserve compact objects.

    We want to show that the kernel of
    $\QCoh(\Uscr_{k+1},\alpha)\rightarrow\QCoh(\Wscr,\alpha)$ is generated by
    the compact object
    $$\Gscr:=\Fscr_{k+1}\otimes_{\Oscr}\Kscr_{k+1,1}\otimes_{\Oscr}\cdots\otimes_{\Oscr}\Kscr_{k+1,k}.$$
    Compactness follows by construction. For generation,
    suppose that $\Mscr$ is an object of $\QCoh(\Uscr_{k+1},\alpha)$ which restricts to zero
    on $\QCoh(\Wscr,\alpha)$ and suppose additionally that the mapping spectrum $\Map(\Gscr,\Mscr)$ is
    zero. We want to show that $\Mscr\we 0$. But,
    $$0\we\Map(\Gscr,\Mscr)\we\Map(\Kscr_{k+1,1}\otimes_\Oscr\cdots\otimes_\Oscr\Kscr_{k+1,k},\ShMap(\Fscr_{k+1},\Mscr))$$
    by adjunction,
    where $\ShMap(\Fscr_{k+1},\Mscr)$ denotes the internal mapping spectrum, a quasi-coherent
    sheaf on $\Uscr_{k+1}$.
    Since the $\Kscr_{k+1,j}$ are compact generators of the kernels of
    $\QCoh(\Uscr_{k+1},\Oscr)\rightarrow\QCoh(\Uscr_{k+1}\cap\Uscr_j,\Oscr)$, 
    their tensor product is a compact generator of the kernel of $\QCoh(\Uscr_{k+1}) \to \QCoh(\Wscr)$. Denoting the inclusion $\Wscr \to \Uscr_{k+1}$ by $i$, it follows that
    $\ShMap(\Fscr_{k+1},\Mscr) \to i_*i^*\ShMap(\Fscr_{k+1},\Mscr)$ is an equivalence since its fiber lies in the kernel of $\QCoh(\Uscr_{k+1}) \to \QCoh(\Wscr)$.
    But, this implies that
    $\ShMap(\Fscr_{k+1},\Mscr)\we\ShMap(\Fscr_{k+1}|_{\Wscr},\Mscr|_\Wscr)$. The latter is zero as
    $\Mscr|_\Wscr\we 0$, so
    $\ShMap(\Fscr_{k+1},\Mscr)\we 0$ and hence the mapping spectrum $\Map(\Fscr_{k+1},\Mscr)$ is zero, which in
    turn implies that $\Mscr\we 0$ since $\Fscr_{k+1}$ is a compact generator of
    $\QCoh(\Uscr_{k+1},\alpha)$.

    Using that the square~\eqref{eq:a} is a pullback, the vertical fibers are
    equivalent stable $\infty$-categories. Thus, $\Gscr$ corresponds to a
    compact object of $\QCoh(\Yscr_{k+1},\alpha)$ which vanishes on $\Yscr_k$.
    On the other hand, by induction there is a global generator $\Hscr$ of
    $\QCoh(\Yscr_k,\alpha)$. Our goal will be to lift $\Hscr$ to $\Yscr_{k+1}$. The fact that
    $\QCoh(\Uscr_{k+1},\alpha)\rightarrow\QCoh(\Wscr,\alpha)$ is a localization
    and preserves compact objects implies that $\QCoh(\Wscr,\alpha)$ is
    generated by the image of $\Fscr_{k+1}$. Since the kernel is compactly
    generated by a compact object of $\QCoh(\Uscr_{k+1},\alpha)$ we are in the
    setting of Thomason's
    extension proposition~\cite[5.2.2]{thomason-trobaugh}
    (see~\cite[Corollary~0.9]{neeman-localization} for the generality needed
    here), which says that if $\Bscr\rightarrow\Cscr\rightarrow\Dscr$ is a Verdier sequence of
    idempotent complete
    stable $\infty$-categories, then an object $\Mscr\in\Dscr$ lifts to $\Cscr$ if and
    only if its class $[\Mscr]\in\K_0(\Cscr)$ lifts to $\K_0(\Dscr)$. Thus, possibly by replacing $\Hscr$ by
    $\Hscr\oplus\Sigma\Hscr$ (which always has vanishing class in $\K_0$), we see that the restriction of $\Hscr$ to
    $\QCoh(\Wscr,\alpha)$ lifts to a compact object $\Hscr_{k+1}$ of $\QCoh(\Uscr_{k+1},\alpha)$.
    Gluing $\Hscr$ and $\Hscr_{k+1}$ via the pullback~\eqref{eq:a}, we obtain a
    compact object $\Escr$ of $\QCoh(X,\alpha)$. Let $\Dscr=\Escr\oplus\Gscr$.
    We claim that $\Dscr$ is a global generator of $\QCoh(\Yscr_{k+1},\alpha)$.
    Verification is standard and left to the reader.
\end{proof}

\begin{corollary}\label{cor:BrauerCoincidences}
    If a nonconnective spectral DM stack $(\Xscr, \Oscr)$ satisfies the assumptions of
    \cref{thm:spectraldm} for every $\alpha \in S \subset \Br'(\Xscr, \Oscr)$, we have
    \begin{enumerate}
        \item[{\em (1)}] $\Br(\Xscr, \Oscr) = \Br'(\Xscr, \Oscr)$ if $S = \Br'(\Xscr, \Oscr)$, 
        \item[{\em (2)}] $\LBr(\Xscr,\Oscr)=\LBr'(\Xscr,\Oscr)$ if $S = \LBr'(\Xscr, \Oscr)$, and
        \item[{\em (3)}] $\LBrW(\Xscr,\Oscr)=\LBrW'(\Xscr,\Oscr)$ if $\Oscr$ is weakly $2$-periodic and $S= \LBrW'(\Xscr, \Oscr)$. 
    \end{enumerate}
\end{corollary}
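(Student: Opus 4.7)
The plan is to deduce all three equalities as essentially immediate consequences of \cref{thm:spectraldm} combined with the definitions of the various flavors of Brauer group. The key observation is that \cref{thm:spectraldm} gives, for each class satisfying the listed hypotheses, representability by an Azumaya algebra, i.e.\ membership in $\Br(\Xscr,\Oscr)$. Thus the assumption that every $\alpha \in S$ satisfies the hypotheses of \cref{thm:spectraldm} translates directly to the inclusion $S \subseteq \Br(\Xscr,\Oscr) \subseteq \Br'(\Xscr,\Oscr)$.

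For (1), taking $S = \Br'(\Xscr,\Oscr)$ yields $\Br'(\Xscr,\Oscr) \subseteq \Br(\Xscr,\Oscr)$, and the reverse inclusion is the content of \cref{def:BrXO}, so we obtain equality.

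For (2), recall from \cref{def:BrXO} that $\LBr(\Xscr,\Oscr) = \LBr'(\Xscr,\Oscr) \cap \Br(\Xscr,\Oscr)$ by definition. Taking $S = \LBr'(\Xscr,\Oscr)$ and applying \cref{thm:spectraldm} to each such $\alpha$ yields $\LBr'(\Xscr,\Oscr) \subseteq \Br(\Xscr,\Oscr)$, whence $\LBr'(\Xscr,\Oscr) = \LBr'(\Xscr,\Oscr) \cap \Br(\Xscr,\Oscr) = \LBr(\Xscr,\Oscr)$.

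For (3), the analogous argument applies once we spell out the stacky version of \cref{rem:brw}: when $\Oscr$ is weakly $2$-periodic, we define $\LBrW'(\Xscr,\Oscr) = \pi_0 \ShLBrW_\Oscr(\Xscr)$ and correspondingly $\LBrW(\Xscr,\Oscr) = \LBrW'(\Xscr,\Oscr) \cap \Br(\Xscr,\Oscr)$, exactly mirroring the affine case. The same reasoning as in (2) then gives $\LBrW'(\Xscr,\Oscr) \subseteq \Br(\Xscr,\Oscr)$ and hence equality. There is no real obstacle here; the corollary is bookkeeping relative to \cref{thm:spectraldm}, and the only thing to check is that the intersection-with-$\Br$ definitions of $\LBr$ and $\LBrW$ make the argument collapse. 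It may be worth remarking that in practice (as in the application to $(\Mscr,\Oscr)$) one verifies the hypotheses of \cref{thm:spectraldm} uniformly for all cohomological Brauer classes, at which point all three equalities follow simultaneously.
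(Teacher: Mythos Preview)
Your proposal is correct and matches the paper's approach: the paper states this corollary without proof, treating it as immediate bookkeeping from \cref{thm:spectraldm} and the definitions, which is exactly what you have spelled out.
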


This corollary will be applied in \cref{prop:BrIdentifications} to the derived moduli stack of elliptic curves.

\section{The $0$-affine case}

Let $(\Xscr,\Oscr)$ be a nonconnective spectral DM stack.

\begin{definition}
    We say that $(\Xscr,\Oscr)$ is \textbf{$0$-affine} if the global sections functor
    $$\Gamma\colon\QCoh(\Xscr,\Oscr)\rightarrow\Mod_{\Gamma(\Xscr,\Oscr)}$$
    is an equivalence; equivalently, $(\Xscr,\Oscr)$ is $0$-affine if $\Oscr$
    is a compact generator of $\QCoh(\Xscr,\Oscr)$.
\end{definition}

In classical algebraic geometry, there are few $0$-affine DM stacks. If $X$ is
a scheme, $X$ is $0$-affine if and only if it is
quasi-affine, which is to say quasi-compact and can be embedded as an open
subscheme of $\Spec A$ for some $A$. In this case, one can take
$A=\H^0(X,\Oscr)$. More generally, quasi-affine connective spectral DM stacks
are $0$-affine.

Remarkably, in the theory nonconnective spectral DM stacks, there is an
additional wealth of non-classical examples, as supplied by the following theorem
of~\cite{mathew-meier}.

\begin{theorem}[\cite{mathew-meier}]\label{thm:0affine-general}
    Let $(\Xscr, \Oscr)$ be a nonconnective spectral DM stack such that $\Oscr$ is weakly $2$-periodic. 
    Suppose that $(\Xscr, \pi_0\Oscr)$ is separated and noetherian and that the associated map
    $(\Xscr, \pi_0\Oscr) \rightarrow\Mscr_{\mathrm{FG}}$ to the moduli of formal groups is
    quasi-affine and flat. 
    Then $(\Xscr,\Oscr)$ is $0$-affine.
\end{theorem}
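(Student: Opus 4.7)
The plan is to verify directly that $\Oscr$ is a compact generator of $\QCoh(\Xscr, \Oscr)$, which by definition is the criterion for $0$-affineness and by Barr--Beck--Lurie yields the equivalence $\QCoh(\Xscr, \Oscr) \simeq \Mod_{\Gamma(\Xscr, \Oscr)}$. The argument splits into generation and compactness.

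For generation, let $\Fscr \in \QCoh(\Xscr, \Oscr)$ with $\Map_{\Oscr}(\Oscr, \Fscr[n]) \simeq 0$ for all $n \in \ZZ$. Weak $2$-periodicity implies that the homotopy sheaves $\pi_n \Fscr$ repeat with period $2$ up to the invertible twist by $\pi_2 \Oscr$, so it suffices to show $\pi_0 \Fscr = \pi_1 \Fscr = 0$ as quasi-coherent sheaves on $(\Xscr, \pi_0 \Oscr)$. The flatness of the map $p \colon (\Xscr, \pi_0 \Oscr) \to \Mscr_{\mathrm{FG}}$ ensures that $\pi_* \Oscr$ is Landweber-exact over the Hopf algebroid presenting $\Mscr_{\mathrm{FG}}$; in particular, $\QCoh(\Xscr, \Oscr)^{\heartsuit}$ is identified with genuine quasi-coherent modules over $\pi_0 \Oscr$. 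The vanishing of derived global sections at all internal degrees then descends, via the descent spectral sequence along an affine \'etale cover of $\Mscr_{\mathrm{FG}}$ pulled back to $\Xscr$, to vanishing at the classical level, where standard quasi-affine descent applies.

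For compactness, I would produce a finite bound on the \'etale cohomological dimension of $\QCoh(\Xscr, \Oscr)$. Classically, $(\Xscr, \pi_0\Oscr)$ is separated noetherian and quasi-affine over $\Mscr_{\mathrm{FG}}$, and since quasi-affine morphisms have $\mathrm{R}^i p_* = 0$ for $i > 0$ on quasi-coherent sheaves, the Leray argument reduces cohomology computations on $\Xscr$ to those on the schematic image of $p$ in $\Mscr_{\mathrm{FG}}$. Weak $2$-periodicity leaves only two internal degrees to track, and the descent spectral sequence lifts the classical cohomological dimension bound to a derived one, so that $\Gamma(\Xscr, -)$ preserves filtered colimits, whence $\Oscr$ is compact.

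The hardest step is the cohomological dimension bound underlying compactness. \emph{A priori} $\Mscr_{\mathrm{FG}}$ itself has unbounded cohomological dimension, as visible through the chromatic height filtration, so naive attempts to bound a Leray spectral sequence for $p$ fail. The key technical point is to use the quasi-affineness of $p$ to convert the problem into one on an effectively finite-dimensional open piece of $\Mscr_{\mathrm{FG}}$ (namely the schematic image), where classical methods give the needed finite bound. This is exactly the point at which the geometric hypotheses on $\Xscr$ and its map to $\Mscr_{\mathrm{FG}}$ enter in a nontrivial way, and it is where a detailed proof would have to appeal carefully to the presentation of $\Mscr_{\mathrm{FG}}$ by $\MU$-type Hopf algebroids.
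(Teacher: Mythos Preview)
The paper does not give its own proof of this theorem: it is quoted verbatim from \cite{mathew-meier} and used as a black box. So there is no in-paper argument to compare against, only the original Mathew--Meier proof.

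Your outline has a genuine gap in the compactness step. The idea of bounding cohomological dimension by passing to the schematic image of $p$ in $\Mscr_{\mathrm{FG}}$ does not succeed: even an open substack of $\Mscr_{\mathrm{FG}}$ of bounded height does not have finite cohomological dimension for quasi-coherent sheaves (the relevant $\Ext$ groups in $\MU_*\MU$- or $\BP_*\BP$-comodules are unbounded in $s$), so the Leray-style reduction you sketch does not terminate. You correctly flag this as the hard point, but the proposed resolution---geometric restriction to a finite-dimensional piece of $\Mscr_{\mathrm{FG}}$---is not how the obstacle is overcome.

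The actual argument in \cite{mathew-meier} does not go through a classical cohomological-dimension bound at all. Instead, for an affine \'etale cover $U\to\Xscr$ one shows that the commutative algebra $\Oscr(U)$ is \emph{descendable} in $\Mod_{\Gamma(\Xscr,\Oscr)}$: the thick $\otimes$-ideal it generates contains the unit. Descendability simultaneously yields conservativity of $\Gamma$ and preservation of filtered colimits (hence compactness of $\Oscr$). The proof of descendability is where the hypotheses enter: weak $2$-periodicity plus flatness and quasi-affineness over $\Mscr_{\mathrm{FG}}$ allow one to invoke chromatic nilpotence technology (ultimately the Devinatz--Hopkins--Smith nilpotence theorem, via the Hopkins--Ravenel smash product argument) to show that the relevant cosimplicial Tot tower is pro-constant. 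This is a genuinely different mechanism from the one you propose, and it is not clear how to make your approach work without it.
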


Our main example will be $(\Mscr, \Oscr)$, where $\Mscr$ is the moduli stack of elliptic curve and
$\Oscr$ is the weakly $2$-periodic sheaf of $\EE_{\infty}$-ring spectra defined by Goerss, Hopkins
and Miller \cite{TMF}. Later, the nonconnective spectral DM stack $(\Mscr, \Oscr)$ was reinterpreted
and reconstructed by Lurie to classify oriented spectral elliptic curves \cite{lurie-survey} and we
will refer to it as the \emph{derived moduli stack of elliptic curves}.

\begin{corollary}[\cite{mathew-meier}]\label{cor:Mell0-affine}
    The derived moduli stack $(\Mscr,\Oscr)$ of elliptic curves is $0$-affine, i.e.\
    $\Gamma\colon\QCoh(\Mscr,\Oscr)\xrightarrow{\we}\Mod_{\TMF}$ is an equivalence.
\end{corollary}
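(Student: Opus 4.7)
The plan is simply to verify that $(\Mscr,\Oscr)$ satisfies the hypotheses of \cref{thm:0affine-general} and then invoke that theorem. There are three things to check: that $\Oscr$ is weakly $2$-periodic, that $(\Mscr,\pi_0\Oscr)$ is separated and noetherian, and that the induced map $(\Mscr,\pi_0\Oscr)\rightarrow\Mscr_{\mathrm{FG}}$ is quasi-affine and flat.

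First, weak $2$-periodicity of $\Oscr$ is built into the Goerss--Hopkins--Miller construction: for each affine \'etale $\Spec R \to \Mscr$ classifying an elliptic curve $E/R$, the corresponding $\EE_\infty$-ring $\Oscr(R)$ has $\pi_0 \Oscr(R) = R$ and $\pi_2\Oscr(R) = \omega_E$, the (invertible) module of invariant differentials, with $\pi_{2k}\Oscr(R) \cong \omega_E^{\otimes k}$, so the weak periodicity map is an isomorphism.

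Second, the classical moduli stack of elliptic curves $\Mscr$ (in the sense of its underlying classical DM stack $(\Mscr,\pi_0\Oscr)$) is well known to be separated, noetherian, and smooth of relative dimension $1$ over $\Spec\ZZ$. The associated map to the moduli stack of one-dimensional commutative formal groups, which sends an elliptic curve to its formal group at the identity section, is the standard one: it is flat because it is (Zariski locally) the composition of the smooth map $\Mscr\to\Spec\ZZ$ with the classifying map for the formal group, and more precisely one verifies that the Landweber exact functor theorem applies, which is equivalent to the flatness of $\Mscr\to\Mscr_{\mathrm{FG}}$. Quasi-affineness is the content of the observation (used throughout the theory of $\TMF$) that the formal group of an elliptic curve rigidifies the curve up to a finite amount of data, i.e., the fiber over a given formal group is an affine scheme classifying Weierstrass lifts.

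With the hypotheses in place, \cref{thm:0affine-general} immediately yields $0$-affineness of $(\Mscr,\Oscr)$, and hence the equivalence $\Gamma\colon\QCoh(\Mscr,\Oscr)\xrightarrow{\we}\Mod_{\TMF}$ using $\Gamma(\Mscr,\Oscr)\simeq\TMF$ by definition. The only potentially non-obvious step is the verification that the map to $\Mscr_{\mathrm{FG}}$ is quasi-affine and flat, but this is a classical input from the construction of $\TMF$ rather than something genuinely at stake here; everything else is immediate.
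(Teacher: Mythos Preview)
Your proposal is correct and matches the paper's approach exactly: the paper presents this as an immediate corollary of \cref{thm:0affine-general} (both results being cited from \cite{mathew-meier}), with no proof given beyond the implicit verification of hypotheses that you have spelled out. The only remark is that the paper takes weak $2$-periodicity of $\Oscr$ as part of the setup (stated just before the corollary) rather than something to be checked, but your explanation of why it holds is fine.
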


The main point of this section is to show that for a $0$-affine spectral DM stack,
the canonical map
$p\colon(\Xscr,\Oscr)\rightarrow\Spet\Gamma(\Xscr,\Oscr)$
induces an isomorphism $p^*\colon\Br(\Spet\Gamma(\Xscr,\Oscr))\iso\Br(\Xscr,\Oscr)$.

\begin{theorem}\label{thm:0affine}
    If $(\Xscr,\Oscr)$ is a $0$-affine nonconnective spectral DM
    stack with $R=\Gamma(\Xscr,\Oscr)$ and
    $p\colon(\Xscr,\Oscr)\rightarrow\Spet R$, then
    $p^*\colon\Br(R)\rightarrow\Br(\Xscr,\Oscr)$ is an isomorphism.
\end{theorem}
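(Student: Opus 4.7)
The strategy is to exploit the symmetric monoidal equivalence $\Gamma\colon\QCoh(\Xscr,\Oscr)\xrightarrow{\simeq}\Mod_R$ coming from $0$-affineness to transport the theory of Azumaya algebras between the two sides. Since $\Gamma$ is a symmetric monoidal equivalence with inverse $p^*$, it induces an equivalence of $\infty$-categories of algebra objects $\Alg(\QCoh(\Xscr,\Oscr))\simeq\Alg(\Mod_R)$, and for each algebra $\Ascr$ on $(\Xscr,\Oscr)$ with global sections $A=\Gamma(\Ascr)$, an equivalence of module categories $\Mod_\Ascr(\QCoh(\Xscr,\Oscr))\simeq\Mod_A$.

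To compare with the descent-theoretic definition of $\QCoh(\Xscr,\alpha)$ used in the paper, I would first verify that the étale descent definition of $\QCoh(\Xscr,\Ascr)$ agrees with $\Mod_\Ascr(\QCoh(\Xscr,\Oscr))$ as $\Mod_R$-linear categories. Granting this, the Azumaya conditions of \cref{def:azumaya} --- being perfect, being a local generator, and the map $\Ascr^\op\otimes_\Oscr\Ascr\to\ShEnd_\Oscr(\Ascr)$ being an equivalence --- are intrinsic to the symmetric monoidal structure and so are preserved and reflected by $\Gamma$. Hence Azumaya algebras on $(\Xscr,\Oscr)$ correspond bijectively under $\Gamma$ and $p^*$ to Azumaya $R$-algebras.

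From this correspondence, the main statement becomes essentially formal. For injectivity of $p^*\colon\Br(R)\to\Br(\Xscr,\Oscr)$, if $[p^*A]=0$ then $\QCoh(\Xscr,p^*A)\simeq\QCoh(\Xscr,\Oscr)$ as $\QCoh(\Xscr,\Oscr)$-linear categories; applying $\Gamma$ gives $\Mod_A\simeq\Mod_R$, i.e., $[A]=0$. For surjectivity, given $\alpha\in\Br(\Xscr,\Oscr)$ represented by some Azumaya $\Ascr$, set $A=\Gamma(\Ascr)$; by the correspondence above $A$ is an Azumaya $R$-algebra, and the counit $p^*A=p^*\Gamma(\Ascr)\xrightarrow{\simeq}\Ascr$ of the equivalence shows $p^*[A]=\alpha$.

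The main obstacle lies in the first step: rigorously justifying the identification $\Mod_\Ascr(\QCoh(\Xscr,\Oscr))\simeq\QCoh(\Xscr,\Ascr)$ as $\Mod_R$-linear categories. This is a categorified form of $0$-affineness, asserting that global sections of sheaves of (twisted) module categories on $(\Xscr,\Oscr)$ are compatible with base change across the symmetric monoidal equivalence $\Gamma$. In spirit this should follow from the argument behind \cref{thm:0affine-general} together with the étale-sheaf property of $R\mapsto\Cat_R$ established in \cref{prop:sheaf}, but it is the substantive input; once granted, everything else is bookkeeping with the symmetric monoidal equivalence.
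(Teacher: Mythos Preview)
Your overall strategy---transport Azumaya algebras across the symmetric monoidal equivalence $p^*\colon\Mod_R\xrightarrow{\simeq}\QCoh(\Xscr,\Oscr)$---is exactly the paper's. But you have mislocated the actual difficulty.

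The claim that the Azumaya conditions of \cref{def:azumaya} ``are intrinsic to the symmetric monoidal structure'' is not correct for the condition \emph{locally generates $\QCoh(\Xscr,\Oscr)$}. That condition refers to the \'etale site of $\Xscr$: it asks that $\Ascr|_U$ generate $\Mod_{\Oscr(U)}$ for each affine \'etale $U\to\Xscr$. This is not a property of the abstract symmetric monoidal $\infty$-category $\QCoh(\Xscr,\Oscr)$, so it cannot simply be transported along $\Gamma$. The paper's substantive step is precisely to bridge this: one must show that under $0$-affineness a perfect \emph{local} generator $\Ascr$ is in fact a \emph{compact generator} of $\QCoh(\Xscr,\Oscr)$ (and hence $p_*\Ascr$ is a compact generator of $\Mod_R$). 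The paper does this in two moves: (a) $\Oscr$ is compact by the definition of $0$-affineness, so perfect objects are compact; (b) if $\ShMap_R(p_*\Ascr,M)\simeq 0$, then the internal hom $\ShMap_\Oscr(\Ascr,p^*M)$ vanishes as a quasi-coherent sheaf, hence its restriction to every affine \'etale $U$ vanishes, and local generation of $\Ascr|_U$ forces $(p^*M)|_U\simeq 0$, whence $p^*M\simeq 0$ and $M\simeq p_*p^*M\simeq 0$. This internal-hom argument is the missing ingredient in your proof.

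Conversely, the obstacle you single out---identifying $\Mod_\Ascr(\QCoh(\Xscr,\Oscr))$ with the descent-defined $\QCoh(\Xscr,\Ascr)$---is not needed. The paper never passes through twisted module categories; it works directly with Azumaya algebras as objects of $\QCoh(\Xscr,\Oscr)$ and shows that $p^*$ and $p_*$ both preserve the Azumaya property. Once that is done, the bijection on Brauer classes follows because $p^*$ and $p_*$ are inverse symmetric monoidal equivalences.
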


\begin{proof}
    By hypothesis, the functors $p^*\colon \Mod_R\rightarrow\QCoh(\Xscr,\Oscr)$ and
    $p_*\colon \QCoh(\Xscr,\Oscr)\rightarrow\Mod_R$ are symmetric monoidal adjoint equivalences.
    The functor $p^*$ preserves Azumaya algebras. It is enough to show
    that $p_*$ preserves Azumaya algebras.
    The condition that for an $\Oscr$-algebra $\Ascr$ we have
    $\Ascr^{\op}\otimes_\Oscr\Ascr\we\ShEnd(\Ascr)$ is preserved by $p_*$ since
    it is symmetric monoidal and hence also preserves internal mapping objects. We must see that if $\Ascr$ is a perfect local
    generator of $\QCoh(\Xscr,\Oscr)$, then $p_*\Ascr$ is a compact generator
    of $\Mod_R$. However, since $\Oscr$ is compact by the definition of
    $0$-affineness, it follows that every perfect object is compact. In
    particular, $\Ascr$ is compact in $\QCoh(\Xscr,\Oscr)$ and hence $p_*\Ascr$ is
    compact in $\Mod_R$. Now, we need to see that $p_*\Ascr$ generates
    $\Mod_R$. But, if $M\in\Mod_R$ is such that $\ShMap_R(p_*\Ascr,M)\we 0$, then
    $\ShMap_\Oscr(\Ascr,p^*M)\we 0$ and hence $p^*M\we 0$ (since
    $\Ascr$ is a perfect local generator). But,
    $M\we p_*(p^*M)$ so finally $M\we 0$. Thus, $p_*\Ascr$ is
    a compact generator.
\end{proof}

\begin{corollary}\label{cor:LBrAffine}
    If $(\Xscr,\Oscr)$ is a $0$-affine nonconnective spectral DM stack with $R=\Gamma(\Xscr,\Oscr)$,
    then the isomorphism $\Br(R)\iso\Br(\Xscr,\Oscr)$ restricts to an injection
    $\LBr(R)\subseteq\LBr(\Xscr,\Oscr)$.
\end{corollary}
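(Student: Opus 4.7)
The plan is to reduce the claim to an unwinding of definitions, using that \cref{thm:0affine} already provides an isomorphism $p^*\colon \Br(R) \xrightarrow{\cong} \Br(\Xscr,\Oscr)$. Injectivity of $\LBr(R) \to \LBr(\Xscr,\Oscr)$ is then immediate from injectivity of $p^*$; the only real content is to verify that $p^*$ sends $\LBr(R)$ into $\LBr(\Xscr,\Oscr) = \LBr'(\Xscr,\Oscr) \cap \Br(\Xscr,\Oscr)$. Since $p^*\alpha$ automatically lies in $\Br(\Xscr,\Oscr)$, it suffices to show that $p^*\alpha \in \LBr'(\Xscr,\Oscr)$, i.e.\ that $p^*\alpha$ is killed by some surjective family of \'etale maps from affines.

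First I would fix $\alpha \in \LBr(R)$ and, using the definition of the local Brauer group, pick a faithful \'etale extension $R \to S$ with $\alpha|_S = 0 \in \Br(S)$. Next I would form the pullback
\[
(\Yscr,\Oscr_\Yscr) := (\Xscr,\Oscr) \times_{\Spet R} \Spet S
\]
in nonconnective spectral DM stacks, obtaining a commutative square with projections $q\colon (\Yscr,\Oscr_\Yscr) \to (\Xscr,\Oscr)$ and $p'\colon (\Yscr,\Oscr_\Yscr) \to \Spet S$. Since \'etale morphisms are stable under base change, $q$ is \'etale; and since $\Spet S \to \Spet R$ is surjective (because $R \to S$ is faithfully flat on $\pi_0$), its base change $q$ is surjective as well.

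Now I would choose an \'etale cover of $(\Yscr,\Oscr_\Yscr)$ by affines $\{\Spet T_i \to (\Yscr,\Oscr_\Yscr)\}_{i \in I}$; this exists by definition of a nonconnective spectral DM stack. The composites $\Spet T_i \to (\Xscr,\Oscr)$ form a surjective \'etale family from affines. By commutativity of the pullback square, the restriction of $p^*\alpha$ to each $\Spet T_i$ factors through the pullback of $\alpha$ to $\Spet S$, which is zero. Hence $p^*\alpha$ lies in $\LBr'(\Xscr,\Oscr)$, completing the proof.

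There is essentially no hard step here; the main thing to be careful about is that base change of nonconnective spectral DM stacks preserves \'etale and surjective morphisms, and that every nonconnective spectral DM stack admits an \'etale cover by affines (both built into the formalism of \cite{sag}). All else is definition-chasing.
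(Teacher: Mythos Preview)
Your proposal is correct and is exactly the natural argument the paper intends: the paper states this corollary with no proof, treating it as immediate from \cref{thm:0affine}, and your write-up is simply the straightforward unwinding of definitions that justifies this. There is nothing to add or correct.
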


\begin{remark}
    The proof of \cref{thm:0affine} uses Azumaya algebras and does not
    say anything about cohomological Brauer classes.
\end{remark}

Suppose that $(\Xscr,\Oscr)$ is a quasi-affine nonconnective spectral scheme. Thus,
$(\Xscr,\Oscr)$ is a quasi-compact open inside $\Spet S$ for some commutative ring
spectrum $S$. Let $R=\Gamma(\Xscr,\Oscr)$. Then, $(\Xscr,\Oscr)$ is $0$-affine
by~\cite[Proposition~2.4.1.4]{sag} so we
see that $\Br(\Xscr,\Oscr)\iso\Br(R)$. Moreover, in this case we have
$\Br(\Xscr,\Oscr)\iso\Br'(\Xscr,\Oscr)$ by \cref{thm:spectraldm}, which
applies because $(\Xscr,\Oscr)$ has a finite cover by affine schemes.
In the next example, we use this to completely compute the Brauer group of a
nonconnective $\EE_\infty$-ring.

\begin{example}\label{ex:quasi-affine}
    Let $(\Xscr,\Oscr_0)$ be the classical quasi-affine scheme given by the complement of $0$ inside the affine space 
    $\AA^4_k=\Spec k[x_1,x_2,x_3,x_4]$ where $k$ is some algebraically
    closed field. Let $\Oscr = \Oscr_0[S^1] = \Oscr_0\tensor \Sigma^{\infty}_+S^1$ be the sheaf of $\EE_\infty$-rings on $\Xscr$ given by
    $S^1$-chains on $\Oscr_0$. Thus, $\pi_0\Oscr\iso\Oscr_0$, 
    $\pi_1\Oscr\iso\Oscr_0$, and all other homotopy sheaves vanish. Since $(\Xscr,\Oscr_0)$ is normal,
    $\H^1(\Xscr,\ZZ)=0$. By purity for the Brauer group,
    $\H^2(\Xscr,\Gm)\iso\H^2(\AA^4_k,\Gm)=0$ \cite{cesnavicius-purity}. Since $\Oscr$ is connective,
    \cite{antieau-gepner} gives that $\Br(\Xscr, \Oscr) \cong \LBr(\Xscr, \Oscr)$ (cf.\ \cref{prop:omni}). Thus, the only
    contribution to $\Br(\Xscr, \Oscr)$ in the descent spectral sequence in
    \cref{constr:spectral-sequence} comes from
    $\H^3(\Xscr,\pi_1\Oscr)\iso\H^3(\Xscr,\Oscr_0)\iso k[x_1^{-1},x_2^{-1},x_3^{-1},x_4^{-1}]\cdot (x_1\cdots x_4)^{-1}$.\footnote{Use
    the fact that $\H^3(\Xscr,\Oscr)$ is isomorphic to the local cohomology module
    $\H^4_{\{0\}}(\AA^4_k,\Oscr)$; see~\cite[Example 7.16]{twenty-four}. Alternatively one can use
    \v{C}ech cohomology as in the computation of the cohomology of $\mathbb{P}^3_k$.}
    All differentials out must vanish and the only thing that can hit this is
    $\H^1(\Xscr,\Gm)=\Pic(\Xscr)$, which vanishes as $\Pic(\AA^4_k) = 0$ and every line bundle extends (cf.\ e.g.\ the argument after (5.6) in \cite{antieau-meier}). Thus, with $R = \Gamma(\Oscr_\Xscr)$ we obtain
    \[\Br(R)\iso\Br(\Xscr,\Oscr)\iso\Br'(\Xscr,\Oscr)\iso k[x_1^{-1},x_2^{-1},x_3^{-1},x_4^{-1}]\cdot (x_1\cdots x_4)^{-1}.\]
    Note that the descent sequence computing $\pi_*R$ degenerates so that
    $$\pi_nR\iso\begin{cases}k[x_1,x_2,x_3,x_4]&\text{if $n=0,1$,}\\k[x_1^{-1},x_2^{-1},x_3^{-1},x_4^{-1}]\cdot (x_1\cdots x_4)^{-1}&\text{if $n=-3,-2$,
    and}\\0&\text{otherwise.}\end{cases}$$ 
    Our computation gives examples of Brauer classes on
    a commutative ring spectrum $R$ which are {\em not} killed by an \'etale cover.
    To see this, pick $\alpha\in\Br(R)$ and suppose that $\alpha$ is killed by
    an \'etale cover $R\rightarrow S$. Then, $S\otimes_R\Oscr$ defines a new
    quasi-affine connective spectral DM stack $(\Yscr,\Oscr)$. (The underlying
    $\infty$-topos is naturally equivalent to $\Xscr\times_{\AA^4_k}\Spec\pi_0S$.)
    By quasi-affineness, it follows that $\alpha$ restricts to $0$ on
    $(\Yscr,\Oscr)$. However, the induced map on the Brauer group is
    $$\Br(\Xscr,\Oscr)\iso\H^3(\Xscr,\Oscr_0)\rightarrow\H^3(\Yscr,\Oscr_0)\iso\Br(\Yscr,\Oscr).$$
    This map is equivalent to $$k[y_1,y_2,y_3,y_4]\rightarrow
    k[y_1,y_2,y_3,y_4]\otimes_{k[x_1,x_2,x_3,x_4]}\pi_0S,$$ which is injective as $R\rightarrow S$ is
    faithfully flat.
    Thus, $\alpha=0$ and so no nonzero class in the Brauer group can be killed by an \'etale cover. 
\end{example}

\section{The Picard sheaf of $\TMF$}\label{sec:PicSheafTMF}
To compute the local Brauer group of $\TMF$, it is necessary to first compute
the Picard sheaf of $\TMF$, which we will attack in this section.
Our key tool is a sheafy version of the
Picard spectral sequence used in \cite{mathew-stojanoska}, which we will
introduce next. 

Let $(\MM, \OO)$ be the derived moduli stack of elliptic curves, where $\OO$ denotes the
Goerss--Hopkins--Miller--Lurie sheaf of $\EE_{\infty}$-ring spectra. By \cref{prop:LongDiff}, the
descent spectral sequence identifies $\pi_0\TMF$ with $\H^0(\MM, \pi_0\OO)$ and the latter one
computes to be $\ZZ[j]$. Thus, the underlying classical morphism of $(\MM, \OO) \to \Spet \TMF$ is
the map $j\colon \MM
\to \AA^1 = \Spec \ZZ[j]$; we will denote $(\MM, \OO) \to
\Spec \TMF$ by $j$ as well

For every \'etale map $f\colon \Spec R \to \AA^1$, we obtain an induced sheaf of $\EE_{\infty}$-ring
spectra $\Oscr_R$ on the base change $\Mscr_R = \Mscr\times_{\AA^1}\Spec R$. Let $\ShPic_{\Oscr_R}$
denote the Picard sheaf corresponding to $\Oscr_R$ on $\Mscr_R$ (with subscript left out if $\Spec R
= \AA^1$). We obtain a Picard spectral sequence $\H^s(\MM_R;
\pi_t\ShPic_{\Oscr_R}) \Rightarrow \pi_{t-s}\ShPic(\MM_R, 
\Oscr_R)$. Sheafification thus yields a spectral sequence
\begin{equation}\label{eq:tmfss}
    \E_2^{s,t}=\R^sj_*\pi_t\Shpic_{\Oscr}\qquad\mathlarger{\Rightarrow}\qquad\pi_{t-s}j_*\Shpic_{\Oscr}\underset{t-s\geq
    0}{\iso}\pi_{t-s}j_*\ShPic_{\Oscr}
\end{equation}
in the abelian category of \'etale sheaves of abelian groups on $\Spet\ZZ[j] =
\AA^1$. We note that $\ShPic(\MM_R, \Oscr_R) \simeq
\ShPic(\TMF_R)$ where $\TMF_R$ is the \'etale extension of $\TMF$ realizing
$f$. Indeed: the natural map 
\[\TMF_R \to \OO_R(\MM_R) \simeq (\OO
\tensor_{\TMF}\TMF_R)(\MM)\] 
is an equivalence since taking global sections and $\Oscr \tensor_{\TMF} -$ are inverse equivalences
between $\QCoh(\Mscr, \Oscr)$ and $\Mod_{\TMF}$ by \cref{cor:Mell0-affine}. Moreover, $(\MM_R,
\Oscr)$ is $0$-affine by \cref{thm:0affine-general} and thus $\QCoh(\Mscr_R, \Oscr_R) \simeq
\Mod_{\TMF_R}$. It follows that
\begin{equation}\label{eq:picpush}
    j_*\ShPic_{\Oscr_\Mscr}\we\ShPic_{\Oscr_\TMF}
\end{equation}
as sheaves of grouplike $\EE_\infty$-spaces on $\Spec\ZZ[j]$.
    
As \eqref{eq:tmfss} arises as the sheafification of Picard spectral sequences,
we can freely apply the tools from \cite{mathew-stojanoska} for Picard spectral
sequences. More precisely, these apply to the comparison to the analogous
spectral sequence $\R^sj_*\pi_t\Oscr \Rightarrow \pi_{t-s}\Oscr_{\Spec \TMF}$.
Viewing a quasi-coherent sheaf on $\Spec \pi_0\TMF \cong \Spec \ZZ[j]$ as a
$\ZZ[j]$-module, this agrees with the usual descent spectral sequence for
computing $\pi_*\TMF$, but remembering the $\ZZ[j]$-module structure. See in particular \cref{comp-tool} for a precise statement we will be using. 

\begin{warning}
    In contrast to the descent spectral sequence for $\pi_*\TMF$, the Picard spectral sequence will
    in general not be $\ZZ[j]$-linear even in the range where its $\E_2$-term agrees with a shift of
    the descent spectral sequence (i.e.\ for $t\geq 2$). We do, however, have $\ZZ[j]$-linearity in
    the range specified by \cref{comp-tool} below. This should be seen in light of (a sheafy
    analogue of) \cite[Corollary 5.2.3]{mathew-stojanoska}.
\end{warning}

\begin{remark}\label{rem:relative-descent}
    Alternatively, the sheafy Picard spectral sequence can be constructed as the relative descent
    spectral sequence for $j_*\Shpic_{\Oscr}$, i.e.\ the spectral sequence associated to applying
    (sheafy) $\pi_*$ to the tower $j_*\tau_{\leq \star}\Shpic_{\Oscr}$. Indeed: the presheaf of
    Picard spectral sequence considered above is obtained by applying presheaf homotopy groups
    $\pi_*^{\mathrm{pre}}$ to the tower $j_*\tau_{\leq \star}\Shpic_{\Oscr}$, and thus its
    sheafification agrees with the relative descent spectral sequence.
\end{remark}

We will not compute the whole spectral sequence \eqref{eq:tmfss}, but obtain the following result
about the $0$-stem, which will be crucial to our results about the local Brauer group.
  
\begin{theorem}\label{thm:PicTMF}
    The spectral sequence~\eqref{eq:tmfss} induces a complete decreasing
    filtration $\F^\star\pi_0j_*\Shpic_{\Oscr_\Mscr}$ on $\pi_0j_*\Shpic_{\Oscr_\Mscr}$ with
    \begin{enumerate}
        \item[{\rm (0)}]   $\gr^0\pi_0j_*\Shpic_{\Oscr_\Mscr}\iso\ZZ/2$,
        \item[{\rm (1)}]   $\gr^1\pi_0j_*\Shpic_{\Oscr_\Mscr}\iso\R^1j_*\Gm$, which sits
            in an exact sequence $$0\rightarrow (i_0)_*\ZZ/3\oplus
            (i_{1728})_*\ZZ/2\rightarrow\R^1j_*\Gm\rightarrow\ZZ/2\rightarrow 0$$
            as established in \cref{lem:R1Gm},
        \item[{\rm (3)}]   $\gr^3\pi_0j_*\Shpic_{\Oscr_\Mscr}\iso k_*v_!\ZZ/2$, where $k$ and $v$ denote
            the inclusions $\Spet\FF_2[j]\hookrightarrow\Spet\ZZ[j]$ and $\Spet \FF_2[j^{\pm1}] \hookrightarrow \Spet \FF_2[j]$,
            respectively,
        \item[{\rm (5)}]   $\gr^5\pi_0j_*\Shpic_{\Oscr_\Mscr}$ is a sum of $b_*\ZZ/3$ and a subsheaf
            of an abelian sheaf $\Ascr$, where $\Ascr$ sits in a non-trivial extension
            $$0\rightarrow\Oscr/(2,j)\rightarrow \Ascr \rightarrow
            a_*\ZZ/2\rightarrow 0,$$
            $a$ is the closed inclusion of $\Spet\FF_2$ into
            $\Spet\ZZ[j]$ at $j=2=0$ and $b$ is the closed inclusion of
            $\Spet\FF_3$ into $\Spet\ZZ[j]$ at $j=3=0$,
        \item[{\rm (7)}]    $\gr^7\pi_0j_*\Shpic_{\Oscr_\Mscr}$ is a subsheaf of $\Oscr/(2,j)$;
    \end{enumerate}
    all other graded pieces vanish.
\end{theorem}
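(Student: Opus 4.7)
The plan is to analyze the $0$-stem of the spectral sequence~\eqref{eq:tmfss}, which carries a complete decreasing filtration with associated graded pieces $\gr^s=\E_\infty^{s,s}$. The first step is to identify the $\E_2^{s,s}$ using the sheafy form of \cref{lem:hosheaves}: since $\Oscr$ is weakly $2$-periodic with $\pi_{2k+1}\Oscr=0$ and $\pi_{2k}\Oscr=\omega^k$, one has $\pi_0\Shpic_\Oscr=\ZZ/2$, $\pi_1\Shpic_\Oscr=\Gm$, $\pi_t\Shpic_\Oscr=0$ for even $t\geq 2$, and $\pi_t\Shpic_\Oscr=\omega^{(t-1)/2}$ for odd $t\geq 3$. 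Hence the only potentially non-vanishing diagonal terms are $\R^0 j_*(\ZZ/2)=\ZZ/2$ at $s=0$, $\R^1 j_*\Gm$ at $s=1$ (handled by \cref{lem:R1Gm}), and $\R^s j_*\omega^{(s-1)/2}$ at odd $s\geq 3$.

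Next, I would narrow the range of contributing $s$ via cohomological dimension bounds. Away from the primes $2$ and $3$, the sheaf cohomology $\H^*(\Mscr,\omega^k)$ is concentrated in degrees $\leq 1$, so the higher $\R^s j_*\omega^{(s-1)/2}$ must be supported on $\Spec\FF_2[j]\cup\Spec\FF_3[j]$. Transporting the explicit $2$- and $3$-local calculations of $\H^*(\Mscr,\omega^k)$ into sheaves on $\Spec\ZZ[j]$, I would verify that $\R^s j_*\omega^{(s-1)/2}=0$ for $s\geq 9$ and identify the candidate $\E_2^{s,s}$ for $s\in\{3,5,7\}$ as supported on the claimed loci: the generic locus of $\Spec\FF_2[j]$ (whence the factor $v_!$) at $s=3$; the closed points $(j=3=0)$ and $(j=2=0)$ at $s=5$; and a neighborhood of $(j=2=0)$ at $s=7$.

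The key third step is to analyze the differentials entering and leaving the diagonal. The sheafy Mathew--Stojanoska comparison tool \cref{comp-tool} asserts that for $r$ small relative to the source filtration $s$, the $d_r$ in~\eqref{eq:tmfss} agrees with the corresponding differential in the descent spectral sequence for $\Oscr$; this removes the need to track long non-linear differentials from the top. Differentials entering $\E^{s,s}$ originate from the $1$-stem $\E^{s-r,s-r+1}$, and those leaving land in the $(-1)$-stem $\E^{s+r,s+r-1}$. The class at $s=0$ is a permanent cycle because it is realized globally by $\Sigma\TMF$, and no further differential can hit it. The subsheaves described in items $(5)$ and $(7)$ should arise precisely as the kernels of outgoing $d_r$'s, which are controlled by the descent differentials together with the exotic $d_3$ formula $x\mapsto x+x^2$ from~\cite[Theorem 6.1.1]{mathew-stojanoska}. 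The extensions in $(1)$ and $(5)$ are then resolved by comparing to $\H^0$-information and applying a Hensel-local analysis in the spirit of \cref{prop:PicKO}.

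The main obstacle will be the detailed combinatorics at the cusp $(j=2=0)$, where several $\E_2$-contributions interact and where the non-trivial extension between $\Oscr/(2,j)$ and $a_*\ZZ/2$ must be pinned down. Computing the kernel and cokernel of the sheafified map $x\mapsto x+x^2$ on $\Spec\FF_2[j]$ near the cusp, together with a Hensel-local Picard computation over the strict Henselization of $\ZZ[j]$ at $(2,j)$, is what should resolve this and also determine the exact subsheaves in items $(5)$ and $(7)$.
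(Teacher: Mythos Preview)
Your overall architecture is right, but two of the load-bearing steps are incorrect as stated.

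First, the vanishing claim in your second step fails: the sheaves $\R^sj_*\omega^{(s-1)/2}$ do \emph{not} vanish for $s\geq 9$ on the $\E_2$-page. The descent spectral sequence for $\TMF$ has classes in the $(-1)$-column up to filtration roughly $30$ (and the sheafy Picard spectral sequence inherits these for $t\geq 2$), so cohomological-dimension arguments cannot empty out the diagonal. What the paper actually does (\cref{prop:thingsvanishhighabove}) is an $\E_\infty$ argument: every class in column $0$ above row $7$ is killed by a differential that lies in the range where the Comparison Tool applies, and one checks this by observing that in the \emph{additive} spectral sequence the $(-1)$-column has vanishing $\E_\infty$, the longest differential is a $d_{23}$, and by inspection everything above row $7$ there is killed by or supports a $d_r$ with $r\leq t$. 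This is a differential-by-differential bound, not a cohomological-dimension bound.

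Second, you have conflated the non-linear differential formulas. At row $3$ the relevant $d_3$ is $x\mapsto x+jx^2$ on $\Oscr/2$ (the $j$-coefficient is essential: it is what forces the kernel to be $k_*v_!\ZZ/2$, supported exactly where $j$ is invertible). At row $5$, $2$-locally, the first-outside-range differential is a $d_5$ on $\Oscr/(2,j)$ given by $x\mapsto x+x^2$ (Artin--Schreier, kernel $a_*\ZZ/2$); at row $5$, $3$-locally, it is a $d_9$ on $\Oscr/(3,j)$ given by $x\mapsto x-x^3$ (kernel $b_*\ZZ/3$, via~\cite[Theorem 7.1]{heard-mathew-stojanoska}). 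These three formulas are distinct and each produces a different kernel sheaf; mixing them up will give the wrong answer in items (3) and (5). The subsheaf statements in (5) and (7) then record that further $d_{13}$, $d_{23}$, $d_{25}$ differentials may cut down the $\E_6$-terms, which the paper does not fully resolve.
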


In fact, in the last two items we describe the graded pieces as subsheaves of what we see on the
$\E_6$-page, but there are (at most) $2$ more potential differentials originating from these spots.

The rest of this section will be devoted to the proof of the theorem.
We will use \cref{fig:sheaves} for notation for sheaves on $\Spet\ZZ[j]$
appearing in the spectral sequence~\eqref{eq:tmfss}.
\cref{fig:e3tmfss} on \cpageref{fig:e3tmfss} shows the $\E_2$-page of
the spectral
sequence~\eqref{eq:tmfss}. The general pattern follows from the work of
Mathew--Stojanoska~\cite{mathew-stojanoska} and the computations of the
homotopy groups of $\TMF$, as in Bauer~\cite{bauer}.

To prove \cref{thm:PicTMF}, we show in the next subsection that there
are no contributions in filtration
degrees above $7$. Then, we analyze each remaining filtration in turn.

\begin{table}[h] 
\begin{center}
    \begin{tabular}{|p{1.5cm}|c|c|c|c|c|c|c|c|c|}
\hline 
Symbol
&$\Oscr$
&$\Oscr^\times$
&$\smblkcircle$
&$\circ$
        &$\circledbullet$
        &$\bullseye$
        &$\circledast$
        &$\smblkdiamond$
        &$\smwhtdiamond$\\
\hline 
Sheaf
&Structure sheaf&Units in $\Oscr$&
$
        \Oscr/(2,j)
$         &$
\ZZ/2
$         &
        $\Oscr/2$&
        $i_{1728,*}\ZZ/2$&
        $k_*v_!\ZZ/2$&
        $\Oscr/(3,j)$&
        $i_{0,*}\ZZ/3$\\
\hline 
\end{tabular}
\end{center}
    \caption{An assortment of sheaves on $\Spet\ZZ[j]$.}
\label{fig:sheaves}
\end{table}   

\begin{sseqdata}[name = tmfss, Adams grading, classes = {draw = none } ]
    \class["\mdwhtcircle"](0,0)
    \class["\Oscr^\times"](1,0)
    \class["\mdwhtdiamond"](0,1)
    \class["\bullseye"](0,1)
    \class["\mdwhtcircle"](0,1)
    \structline
    \foreach \x in {1,...,11} {
        \class["?"](-\x,\x)
        \class["?"](-\x,\x+1)
    }

    \foreach \x in {2,...,8} {
        \class["\circledbullet"](\x,\x-1)
    }

    \foreach \x in {-1,...,8} {
        \class["\circledbullet"](\x,\x+7)
    }

    \class["\smblkcircle"](-2,15)
    \class["\smblkcircle"](-2,5)
    \class["\smblkcircle"](-1,6)
    
    \foreach \x in {1,...,8} {
        \class["\smblkcircle"](\x,\x+7)
    }
    
    \class["\smblkcircle"](-1,18)
    \class["\smblkcircle"](0,7)
    
    \class["\smblkcircle"](-1,30)
    
    \foreach \x in {1,...,8} {
        \class["\circledbullet"](\x,\x+3)
        \d3
    }
    
    \class["\circledbullet"](0,3)
    \d[dash dot]3
    \replacesource["\circledast"]

    \foreach \x in {-5,...,8} {
        \class["\circledbullet"](\x,\x+15)
    }

    \foreach \x in {-3,...,8} {
        \class["\circledbullet"](\x,\x+11)
        \d3
    }
    
    \class["\circledbullet"](-4,7)
    \d[dash dot]3
    \replacesource["?"]

    \foreach \x in {2,...,8} {
        \class["\smblkcircle"](\x,\x+3)
        \d3(\x,\x+3,-1,-1)
    }

    \foreach \x in {-5,...,8} {
        \class["\smblkcircle"](\x,\x+15)
    }
        
    \foreach \x in {-2,...,8} {
        \class["\smblkcircle"](\x,\x+11)
        \d3(\x,\x+11,-1,-1)
    }

    \class["\smblkcircle"](4,1)
    \class["\smblkcircle"](4,1)
    \structline(4,1,-1)(4,1,-2)
    \structline[dashed](4,1,-2)(4,3)

    \class["\smblkcircle"](5,6)
    \class["\smblkcircle"](6,7)
    
    \class["\smblkcircle"](0,19)
    
    \class["\smblkcircle"](1,10)
    \d9
    \class["\smblkcircle"](2,11)

    \class["\smblkcircle"](3,6)
    \d5

    \replaceclass["\smblkcircle"](-1,10)

    \class["\smblkcircle"](0,5)
    \class["\smblkcircle"](0,5)
    \structline
    \d[dash dot]5
    \replacesource["a_*\circ"]
    
    \class["\smblkcircle"](-4,9)
    \class["\smblkcircle"](-4,9)
    \structline
    
    \class["\smblkcircle"](-5,34)
    \class["\smblkcircle"](-4,23)
    \d11
    
    \class["\smblkcircle"](-3,14)
    
    \class["\smblkcircle"](-3,28)
    \class["\smblkcircle"](-3,28)
    \structline
    \class["\smblkcircle"](-3,28)
    \structline
    
    \class["\smblkcircle"](-2,15)
    \class["\smblkcircle"](-3,28)
    \class["\smblkcircle"](-2,23)
    \d5
    \class["\smblkcircle"](-3,38)
    \class["\smblkcircle"](-2,29)
    \d9
    
    \class["\smblkcircle"](0,29)
    \class["\smblkcircle"](0,29)
    \structline
    
    \class["\smblkcircle"](0,31)
    
    \class["\smblkcircle"](1,24)
    \class["\smblkcircle"](1,24)
    \structline
    \class["\smblkcircle"](1,24)
    \structline
    \d5(1,24, 1, 1)
    \d5(1,24,2,2)
    \d7
    
    \class["\smblkcircle"](2,11)
    \class["\smblkcircle"](2,19)
    
    \class["\smblkcircle"](1,34)
    \class["\smblkcircle"](2,25)
    \d9
\end{sseqdata}

\begin{figure}[p!]
\printpage[ name = tmfss, grid=chess, page =
3,xscale=1.6,yscale=1.3,x range = {-4}{4},y range = {0}{14},
class pattern = linear, class placement transform = { rotate = 90 }]
    \caption{The $\E_3$-page of the sheafy spectral sequence~\eqref{eq:tmfss} for $\Pic$ of the
    moduli stack. Above the $i+j=1$ diagonal, only $2$-primary torsion information is shown.}
    \label{fig:e3tmfss}
\end{figure}

\begin{sseqdata}[name = tmfss3, Adams grading, classes = {draw = none } ]
    \class["\mdwhtcircle"](0,0)
    \class["\Oscr^\times"](1,0)
    \class["\mdwhtdiamond"](0,1)
    \class["\bullseye"](0,1)
    \class["\mdwhtcircle"](0,1)
    \structline
    \foreach \x in {1,...,11} {
        \class["?"](-\x,\x)
        \class["?"](-\x,\x+1)
    }
    
    \class["\smblkdiamond"](-4,9)
    \class["\smblkdiamond"](-1,14)
    \class["\smblkdiamond"](0,5)
    \d[dash dot]9

    \class["\smblkdiamond"](2,15)
    \class["\smblkdiamond"](3,10)
    \d5
    
    \class["\smblkdiamond"](4,1)
    
    \class["\smblkdiamond"](6,11)
    
    \class["\smblkdiamond"](7,6)
    \d5
\end{sseqdata}

\subsection{High filtrations}

In this section, we use the comparison tool of~\cite{mathew-stojanoska} to
narrow down the possible filtration degrees computing to $\pi_0
j_*\Shpic_{\Oscr_\Mscr}$.

We use the following facts about the large-scale structure of the spectral sequence $$\E_2^{s,t}=\H^s(\Mscr,\pi_t\Oscr_\Mscr)\Rightarrow\pi_{t-s}\TMF,$$
which can be read off from the charts in \cite{bauer} for $\tmf$ or \cite{konter} for $\Tmf$ by
inverting the discriminant modular form $\Delta$ (or rather $\Delta^{24}$ since only this is a permanent cycle).

\begin{proposition}\label{prop:LongDiff}
    \begin{enumerate}
        \item[{\em (1)}] The $\E_\infty$-page of the additive spectral sequence 
        \begin{enumerate}
            \item[{\em (a)}] vanishes in columns $-1$ and $-2$ and
            \item[{\em (b)}] vanishes above row $0$ in column $0$.
        \end{enumerate}
        \item[{\em (2)}] The longest differential in the additive spectral sequence is a $d_{23}$. 
    \end{enumerate}
\end{proposition}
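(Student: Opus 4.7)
The plan is to deduce all three assertions by inspection of published charts of the descent spectral sequence. The $\TMF$ version is obtained from the $\tmf$ version by inverting the permanent cycle $\Delta^{24}$, and at the primes $2$ and $3$ these spectral sequences have been computed in full by Bauer~\cite{bauer} and Konter~\cite{konter}. Away from $6$ the cohomology sheaves $\pi_t\Oscr_\Mscr$ have vanishing higher cohomology (as $\Mscr[\tfrac{1}{6}]$ is a weighted affine), so the spectral sequence is concentrated in row $0$ there and contributes nothing to any of the three claims. It thus suffices to work $2$- and $3$-locally.

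With Bauer's and Konter's charts in hand, (1)(a) amounts to verifying that after inverting $\Delta^{24}$, every $\E_2$-class in columns $-1$ and $-2$ is the source or target of a nontrivial differential; this is a direct check at both primes, relying on the standard $d_3, d_5, d_7, d_9, d_{11}$, and $d_{23}$ differentials at $p=2$ and the $d_5$ and $d_9$ at $p=3$. The same inspection restricted to positive filtration in column $0$ yields (1)(b): the only surviving class in total degree $0$ sits in filtration zero, consistent with $\pi_0\TMF = \ZZ[j]$. For (2), the longest nontrivial differential appearing in Bauer's $2$-primary chart is a $d_{23}$, while at $p=3$ the longest is a $d_9$; the longest overall is therefore a $d_{23}$.

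The one substantive point is to argue that inverting $\Delta^{24}$ does not introduce longer differentials. This holds because $\Delta^{24}$ is a permanent cycle and a unit on the inverted $\E_\infty$-page, so the $\TMF$ spectral sequence is $\Delta^{24}$-periodic; since $\Delta^{24}$ commutes with all differentials, any $d_r$ in the $\tmf$ chart yields a $\Delta^{24}$-periodic family of $d_r$'s after inversion, but no new differentials of greater length. I expect this periodicity argument to be the only step requiring care — the rest is inspection — and with it the three assertions of the proposition follow directly from the published charts.
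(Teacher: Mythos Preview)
Your proposal is correct and matches the paper's approach exactly: the paper does not give a formal proof but simply states that these facts ``can be read off from the charts in~\cite{bauer} for $\tmf$ or~\cite{konter} for $\Tmf$ by inverting the discriminant modular form $\Delta$ (or rather $\Delta^{24}$ since only this is a permanent cycle).'' Your elaboration of the periodicity argument for why inverting $\Delta^{24}$ introduces no longer differentials is a useful addition that the paper leaves implicit.
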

Here, column $n$ always refers to $t-s = n$, i.e.\ to the column if drawn in Adams grading.
We recall the following key tool from \cite{mathew-stojanoska}.

\begin{proposition}[Comparison Tool]\label{comp-tool} 
   For $2\leq r\leq t-1$, $d_r^{s,t}$ in the Pic spectral sequence
    ``is'' $d_r^{s,t-1}$ in the additive spectral sequence.
\end{proposition}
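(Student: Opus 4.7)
The plan is to reduce the assertion to the affine Comparison Tool of Mathew--Stojanoska~\cite[Thm.~5.2.4]{mathew-stojanoska} by sheafifying. By \cref{rem:relative-descent}, the sheafy Picard spectral sequence~\eqref{eq:tmfss} is the sheafification of the presheaf of Picard descent spectral sequences indexed by \'etale maps $\Spec R \to \AA^1$, each computing $\pi_*\ShPic(\TMF_R)$. Likewise, the additive spectral sequence $\R^sj_*\pi_t\Oscr_\Mscr \Rightarrow \pi_{t-s}\Oscr_\Mscr(\Mscr_R)$ is obtained by sheafifying the analogous presheaf of descent spectral sequences computing $\pi_*\TMF_R$. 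Thus it suffices to produce the identification of differentials over each affine $R$ and verify that it is natural in $R$.

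For fixed $R$, the affine Comparison Tool states that for $2\leq r\leq t-1$, the differential $d_r^{s,t}$ in the Picard descent spectral sequence for $\ShPic(\TMF_R)$ agrees with $d_r^{s,t-1}$ in the additive descent spectral sequence for $\pi_*\TMF_R$. The proof has two essential ingredients: first, the natural equivalence $\tau_{\geq 1}\Shpic_{\Oscr_R}\we\Sigma\,\mathrm{gl}_1(\Oscr_R)$ of connective sheaves of spectra on $\Spet\pi_0\TMF_R$, which follows from $\Omega\Shpic\we\mathrm{gl}_1$ (\cref{lem:loops}); second, a range-identification of the form $\tau_{[n,2n-1]}\mathrm{gl}_1(\Oscr_R)\we \tau_{[n,2n-1]}\Oscr_R$ showing that, in the range of homotopy degrees relevant to a $d_r$-differential originating from row $t$ with $r\leq t-1$, the Postnikov $k$-invariants of $\mathrm{gl}_1(\Oscr_R)$ coincide with those of $\Oscr_R$. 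Combining these two facts identifies the $k$-invariants of $\Shpic_{\Oscr_R}$ with the shifted $k$-invariants of $\Oscr_R$ in the required range, which by the standard description of descent differentials in terms of $k$-invariants yields the identification of $d_r^{s,t}$ with $d_r^{s,t-1}$.

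The final step is to observe that all of the above identifications are natural in $R$, being built from Postnikov truncations and units spectra, each of which is canonically functorial in commutative ring spectra. Consequently the entire chain of identifications between differentials sheafifies along \'etale maps $\Spec R \to \AA^1$, producing the claimed comparison on $\Spet\ZZ[j]$. Here we use that sheafification is an exact functor on presheaves of abelian groups and commutes with the extraction of differentials from a presheaf of spectral sequences.

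The hard part is the second ingredient above---the range-identification of Postnikov sections of $\mathrm{gl}_1(\Oscr_R)$ and $\Oscr_R$. This is the technical heart of the Comparison Tool and requires a delicate analysis reconciling the multiplicative $\EE_\infty$-structure on units with the additive $\EE_\infty$-structure on the underlying ring in a narrow range of degrees. I would import this statement from \cite[Thm.~5.2.4]{mathew-stojanoska} rather than reprove it; given that input, the passage from the affine to the sheafy setting is formal.
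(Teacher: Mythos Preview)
Your proposal is correct and takes essentially the same approach as the paper: reduce to the affine Comparison Tool~\cite[5.2.4]{mathew-stojanoska} applied to each term of the presheaf of Picard spectral sequences, then sheafify. The paper's proof is a one-line invocation of exactly this, while you have additionally unpacked the ingredients of the Mathew--Stojanoska argument and spelled out the naturality needed for sheafification; this extra exposition is fine but not required.
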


\begin{proof}
    By \cite[Comparison Tool 5.2.4]{mathew-stojanoska}, this is true for each term in the presheaf
    of Picard spectral sequence and is thus also true after sheafification.
\end{proof}

Using these results, we can indeed show that the Picard spectral sequence eventually vanishes in high enough degrees. 

\begin{proposition}\label{prop:thingsvanishhighabove}
    Everything above row $7$ in column $0$ vanishes in the $\E_{\infty}$-page of the Picard spectral
    sequence; likewise above row $30$ in column $-1$. After inverting $2$, the latter vanishing
    holds already above row $14$.
\end{proposition}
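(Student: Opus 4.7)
The plan is to reduce to known vanishing in the additive descent spectral sequence $\H^s(\Mscr, \pi_t\Oscr) \Rightarrow \pi_{t-s}\TMF$ via the Comparison Tool (\cref{comp-tool}). Since $\pi_t\Shpic_\Oscr \cong \pi_{t-1}\Oscr$ for $t \ge 2$, the $\E_2$-term of \eqref{eq:tmfss} at position $(s, s)$ in Pic column $0$ (for $s \ge 2$) coincides with the additive $\E_2$-term at $(s, s-1)$ in column $-1$; analogously, Pic position $(s, s+1)$ in column $-1$ corresponds to additive $(s, s)$ in column $-2$. By \cref{prop:LongDiff}(1), both additive columns $-1$ and $-2$ vanish on $\E_\infty$, and by part (2) the longest additive differential is a $d_{23}$.

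For outgoing differentials, the Comparison Tool identifies $d_r^{s, s}$ in Pic with $d_r^{s, s-1}$ in additive provided $2 \le r \le s-1$; for incoming, $d_r^{s-r, s-r+1}$ in Pic matches $d_r^{s-r, s-r}$ in additive provided $2 \le r \le s-r$, i.e., $r \le s/2$. Hence for $s \ge 46$, both ranges cover all additive differentials of length $\le 23$, so every additive killing differential for the class at $(s, s-1)$ in column $-1$ lifts to a Pic differential killing the corresponding class at $(s, s)$ in column $0$; this settles the claim for such large $s$ immediately.

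The intermediate range $8 \le s \le 45$ must be handled by inspection, using the explicit charts of the $\Tmf$ descent spectral sequence from \cite{bauer, konter}. One verifies that every class in additive column $-1$ at such a row dies via a differential whose length falls within the range where the Comparison Tool applies. The argument for column $-1$ of Pic is parallel: we appeal to the vanishing of additive column $-2$ on $\E_\infty$ and obtain the threshold row $30$ at the prime $2$. After inverting $2$, the additive spectral sequence is far sparser, the longest differential is a $d_9$ at $p=3$ (and trivial at primes $\ge 7$), shortening the analogous threshold to row $14$.

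The main obstacle is the hands-on verification in the intermediate range $8 \le s \le 45$: the large-$s$ bound is purely formal from the Comparison Tool and \cref{prop:LongDiff}, but closing the gap down to $s \ge 8$ requires tracking which specific additive differentials kill each class in columns $-1$ and $-2$, and confirming that their length fits within the comparison range. This is a finite but intricate check against the known additive chart structure.
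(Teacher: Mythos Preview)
Your proposal is correct and takes essentially the same approach as the paper: reduce to the additive descent spectral sequence via the Comparison Tool, handle large $s$ formally using the $d_{23}$ bound from \cref{prop:LongDiff}, and check the remaining finite intermediate range by inspection against the known charts. The paper organizes the inspection slightly more efficiently, shrinking your range $8 \le s \le 45$ down to $8 \le s \le 23$ via the additional observation that no class in additive column $0$ below row $23$ supports a differential hitting column $-1$ above row $23$; this forces any incoming differential to a class at row $s \ge 24$ to have source row $\ge 23$, whence $r \le 23 \le s-r$ and the Comparison Tool hypothesis holds automatically.
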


\begin{proof}
    The claim about column $0$ follows from the Comparison Tool (\cref{comp-tool}) and the further claim that in the additive spectral sequence 
$$\E_2^{s,t}=\H^s(\Mscr,\pi_t\Oscr_\Mscr)\Rightarrow\pi_{t-s}\TMF$$
every spot in the $(-1)$-column above row $7$ is killed by or supports a $d_r$-differential, which is an
    isomorphism and satisfies $r\leq t$ for $t = x+y$ being the antidiagonal of origin. Indeed, the corresponding
    differential also has to occur in the Picard spectral sequence and the isomorphism of groups
    becomes an isomorphism of quasi-coherent sheaves.

By inspection the further claim is true up to row $23$ (on the $\E_5$-page there is only one class
    in column $-1$ between row $7$ and $24$, namely in row $19$ and this is killed by a $d_9$). As
    noted in \cref{prop:LongDiff}, the longest possible differential is a $d_{23}$ and the
    $\E_{\infty}$-term vanishes; thus everything above row $23$ is killed by or supports a
    differential, which is at most a $d_{23}$. Moreover, by inspection, nothing in the additive
    spectral sequence in column $0$ below row $23$ supports a differential killing a class above row
    $23$ in column $-1$.

The proof for column $-1$ of the Picard spectral sequence is analogous. 
\end{proof}

The \cref{prop:thingsvanishhighabove} implies that to prove \cref{thm:PicTMF} it is enough to analyze $\gr^n\pi_0j_*\Shpic_{\Oscr_{\Mscr}}$ for $0\leq n\leq 7$. 

\subsection{Row 0}

Since the geometric fibers of $j\colon\Mscr\rightarrow\Spet\ZZ[j]$ are
connected and $\pi_0\Shpic_{\Oscr_\Mscr}\iso\ZZ/2$, we have
$\R^0j_*\ZZ/2\iso\ZZ/2$. This term does not support any differentials since
$\TMF[1]$ is a global section of the Picard sheaf which restricts to a
generator of $\ZZ/2$ everywhere; this proves part (0) of
\cref{thm:PicTMF}.

\subsection{Row 1 and the algebraic Picard sheaf}

The next term to understand is $\R^1j_*\Gm$, which appears on the $\E_2$-page at
$(s,t)=(1,1)$. This calculation is done on the classical moduli stack.
The sheaf $\R^1j_*\Gm$ is the sheafification of the presheaf that sends every
\'etale $U \to \Spec \ZZ[j]$ to $\Pic(\MM\times_{\Spec \ZZ[j]} U)$. Thus our
next lemma can be seen as a sheafy analogue of the classical computation that
$\Pic(\MM) \cong \ZZ/12$ (see~\cite{fulton-olsson}), where a generator is given
by the Hodge bundle $\lambda$ that arises as the pushforward of the sheaf of
differentials of the universal elliptic curve. We will indeed use the stronger
result from \cite{fulton-olsson} that the same isomorphism holds over any
reduced and normal base ring $R$ with vanishing Picard group. Moreover, we will use that
$\Pic(\AA^1_R) \cong \Pic(R)$ for any regular noetherian $R$, where $\AA^1_R =\AA^1\times \Spec R$;
this follows e.g.\ from the $\AA^1$-invariance of the divisor class group as in \cite[Proposition
6.6, Corollary 6.16]{hartshorne}.

\begin{proposition}
    \label{lem:R1Gm}
    Denote by $i_t\colon \Spec \ZZ \to \Spec \Z[j]$ the inclusion corresponding to the value $t$ of the
    function $j$ on $\Spec \Z[j]$ and by $u_t$ the inclusion of its complement.

    The morphism $\Pic(\MM) \cong \ZZ/12 \to \R^1q_*\GG_m$ is surjective with kernel $(u_0)_!\ZZ/3
    \oplus (u_{1728})_!\ZZ/2$. Thus $\R^1j_*\GG_m$ sits in the extension
    \[0 \to (i_0)_*\Z/3\oplus (i_{1728})_*\Z/2 \to\R^1j_*\GG_m \to \Z/2 \to 0 \]
    that is pushed forward from the non-trivial extension of $\ZZ/3\oplus \ZZ/2$
    and $\ZZ/2$ of constant sheaves along the unit map $\ZZ/3\oplus \ZZ/2 \to (i_0)_*\ZZ/3\oplus (i_{1728})_*\ZZ/2$. 
\end{proposition}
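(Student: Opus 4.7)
My plan is to compute $\R^1 j_*\GG_m$ stalk-by-stalk on $\Spec\ZZ[j]$. The Hodge bundle $\lambda$, which generates $\Pic(\Mscr)\cong\ZZ/12$ by Fulton--Olsson, sheafifies to a natural morphism $\ZZ/12\to\R^1 j_*\GG_m$ of \'etale sheaves; I will show this map is surjective with kernel $(u_0)_!\ZZ/3\oplus (u_{1728})_!\ZZ/2$, then identify the resulting extension as the claimed pushforward.

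For a geometric point $\bar x$ of $\Spec\ZZ[j]$ with strict henselization $R$, the stalk at $\bar x$ is $\Pic(\Mscr\times_{\AA^1}\Spec R)$. I would compute it via the Weierstrass presentation $\Mscr[\tfrac{1}{6}]\cong[(\Spec\ZZ[\tfrac{1}{6}][A,B]\setminus V(\Delta))/\GG_m]$ with weights $(4,6)$, splitting into four cases according to the residue characteristic and the image $a$ of $j$ in the residue field. When the residue characteristic is not $2$ or $3$ and $a\neq 0, 1728$, $\Mscr\times_{\AA^1}\Spec R$ is a trivial $\mu_2$-gerbe over $\Spec R$ with Picard group $\ZZ/2$; the cases $a=0$ and $a=1728$ give trivial $\mu_6$- and $\mu_4$-gerbes with Picard $\ZZ/6$ and $\ZZ/4$; and in residue characteristics $2$ and $3$ (where $0\equiv 1728$), a careful analysis using the generalized Weierstrass form yields Picard $\ZZ/12$. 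In every case $\lambda$ corresponds to a weight-$1$ character of the stabilizer and thus generates the Picard group, establishing surjectivity.

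The kernel of $\ZZ/12\to\R^1 j_*\GG_m$ therefore has stalks $\ZZ/6$, $\ZZ/2$, $\ZZ/3$, $0$ in the four cases above, matching those of $(u_0)_!\ZZ/3\oplus(u_{1728})_!\ZZ/2$; since $\ZZ/3$ and $\ZZ/2$ have unique subgroup inclusions into $\ZZ/12$, this yields a canonical identification of the kernel. The resulting extension is then recognized via the commutative diagram comparing with the non-trivial constant-sheaf extension $0\to\ZZ/3\oplus\ZZ/2\to\ZZ/12\to\ZZ/2\to 0$, pushed forward along the unit map $\ZZ/3\oplus\ZZ/2\to (i_0)_*\ZZ/3\oplus(i_{1728})_*\ZZ/2$. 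The main technical obstacle is the stalk computation in residue characteristics $2$ and $3$: the automorphism group scheme of the universal elliptic curve is non-reduced and, over the mixed-characteristic strictly henselian $R$, not flat, so neither the Cartier-dual formalism nor the simplified Weierstrass form applies directly. Resolving this case requires a careful analysis of the generalized Weierstrass equation $y^2+a_1 xy+a_3 y = x^3+a_2 x^2+a_4 x+a_6$ together with the change-of-variables action, or a Fulton--Olsson-style descent adapted to the fiber product $\Mscr\times_{\AA^1}\Spec R$.
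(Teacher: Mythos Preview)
Your overall strategy---computing $\R^1 j_*\GG_m$ stalk-by-stalk and showing the Hodge bundle generates each stalk---matches the paper exactly, as does your treatment of the cases where the residue characteristic is prime to $6$ (the paper phrases this via $\H^1(\Aut(x);\GG_m(S))$ rather than gerbe language, but the content is the same). Your identification of the kernel and of the extension also agrees with the paper.

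The divergence is precisely at the point you flag as the main obstacle: the stalk at $(p,j)$ for $p\in\{2,3\}$. You propose either a direct analysis of the generalized Weierstrass equation or a Fulton--Olsson-style descent on $\Mscr\times_{\AA^1}\Spec R$, but do not carry either out, so as written the proof has a gap. The paper sidesteps this direct computation entirely. It argues \emph{indirectly} via the Leray spectral sequence for $j^R\colon\Mscr_R\to\AA^1_R$, where $R$ is the strict henselization of $\ZZ$ at $p$ (and separately $R=\overline{\FF}_p$). Since $\Pic(\AA^1_R)=0$ and $\H^2(\AA^1_R,\GG_m)\cong\Br(R)=0$, the edge map $\Pic(\Mscr_R)\to\H^0(\AA^1_R;\R^1 j^R_*\GG_m)$ is an isomorphism; combined with Fulton--Olsson's $\Pic(\Mscr_R)\cong\ZZ/12$ (valid over any normal reduced base with trivial Picard group), this determines the global sections of $\R^1 j_*\GG_m$ over $\AA^1_R$. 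Injectivity of $\varphi_{\bar x}\colon\ZZ/12\to(\R^1 j_*\GG_m)_{\bar x}$ then follows by comparison with the $\overline{\FF}_p$ case, and surjectivity by showing the cokernel sheaf (supported at the single point $\bar x$) has vanishing $\H^0$ over $\AA^1_R$, which uses that $\H^1(\AA^1_R;\Fscr)=0$ for the image sheaf $\Fscr$.

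This is a genuinely different and more economical route than the one you outline: rather than confronting the non-reduced, non-flat automorphism scheme head-on, the paper leverages the \emph{global} Fulton--Olsson computation over carefully chosen base rings to deduce the local statement. Your direct approach could in principle be completed, but the paper's method simply avoids the difficulty.
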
 

\begin{proof}
    We will explain first why it suffices to show the surjectivity of $\ZZ/12 \to \R^1q_*\GG_m$ and identify its kernel. Note that there is an exact sequence
    \[0 \to (u_t)_!u_t^*\Fscr \to \Fscr \to (i_t)_*i_t^*\Fscr \to 0\]
    for any $t$ and any \'etale sheaf $\Fscr$. Thus we obtain the claimed
    extension from the proposition by quotienting the first two terms of the exact sequence
    \[0 \to \ZZ/3\oplus \ZZ/2 \to \ZZ/12 \to \ZZ/2 \to 0\]
    by $(u_0)_!\ZZ/3 \oplus (u_{1728})_!\ZZ/2$ and using the snake lemma if indeed
    \[0\to (u_0)_!\ZZ/3 \oplus (u_{1728})_!\ZZ/2 \to \ZZ/12 \to \R^1j_*\GG_m \to 0\]
    is exact. This exactness can be checked on the level of stalks, which is the content of the rest of the argument. 

    Let $\overline{x}\colon \Spec k \to \AA^1$ be a geometric point (corresponding to some $j\in k$)
    and $x\colon \Spec k \to \MM$ its unique lift. We will show that $\ZZ/12 \to
    (\R^1j_*\GG_m)_{\overline{x}}$ is surjective with the prescribed kernel.     
    One can deduce from  \cite[Lemma 2.2.3]{abramovich-vistoli} that the
    base change of $\MM$ to the \'etale stalk of $\AA^1$ at $\overline{x}$ is equivalent to the quotient stack
    $[\Spec S/\Aut(x)]$, where $S$ is strictly Henselian with
    residue field $k$ and $\Aut(x)$ is acting trivially on $k$ (cf.\ \cite[Proposition 8]{meier}).
    We can compute the stalk $(\R^1j_*\GG_m)_{\overline{x}}$ as $\Pic([\Spec S/\Aut(x)]) \cong
    \H^1(\Aut(x); \GG_m(S))$. For the values of $\Aut(x)$ we refer to \cite[Section
    III.10]{silverman}. We will proceed with a case distinction based on $j$ and the characteristic
    of $k$. \\
    
    \emph{Case 1: $j\neq 0, 1728$:}
    
    If $j$ is neither $0$ nor $1728$ in $k$, we have $\Aut(x) \cong C_2$, generated by $[-1]$. As
    the $[-1]$-automorphism is defined on all elliptic curves, $[\Spec S/\Aut(x)] \simeq \B C_{2,
    S}$, i.e.\ the $C_2$-action on $S$ is trivial (cf.\ \cite[Lemma 3.2]{shin-brauer}). Hence
    $(\R^1j_*\GG_m)_{\overline{x}} \cong \H^1(C_2;\GG_m(S)) \cong \mu_2(S) = \ZZ/2$.\\
    
    \emph{Case 2: $\mathrm{char}(k) \neq 2,3$ and $j = 0$ or $1728$:}

    In general, if $k$ is of characteristic $p \geq 0$, the group $\GG_m(S)[\frac1p]$ is divisible
    (where $[\frac10]$ is understood not to have any effect). Using the structure theory of
    divisible abelian groups and \cite[Tag 06RR]{stacks-project}, one can show that $\GG_m(S)[\frac1p]$
    decomposes into a $\QQ$-vector space and a torsion group, which maps isomorphically to
    $\GG_m(k)[\frac1p] \cong \QQ/\ZZ[\frac1p]$ (cf.\ the proof of \cite[Lemma 9]{meier}). We obtain
    \[\H^1(\Aut(x); \GG_m(S))[\tfrac1p] \cong  \H^1(\Aut(x); \QQ/\ZZ[\tfrac1p]) \cong \Hom(\Aut(x); \QQ/\ZZ[\tfrac1p]).\]
    If $k$ is of characteristic not $2$ or $3$, we have $\Aut(x)\cong \ZZ/4$ if
    $j=1728$ and $\Aut(x)\cong \ZZ/6$ if $j=0$, which implies that the
    corresponding stalks of $\R^1j_*\GG_m$ are the Pontryagin duals of $\ZZ/4$ and $\ZZ/6$, i.e.\ isomorphic to $\ZZ/4$ and $\ZZ/6$ as well.
    (Note that in these cases $\H^1(\Aut(x); \GG_m(S))$ is $12$-torsion, so inverting $p$ changes nothing.)
    
    Concretely, the map $\ZZ/12 \cong \Pic(\Mscr) \to \Hom(\Aut(x); \QQ/\ZZ[\tfrac1p])$ sends a line
    bundle $\Lscr$ to the action of $\Aut(x)$ on $\Lscr_x$ by the roots of unity $\QQ/\ZZ[\tfrac1p]
    \cong \mu_{\infty} \subset \GG_m(k)$. By the proof of \cite[Theorem III.10.1]{silverman}, in our
    case a generator of $\Aut(x)$ acts by a fourth respectively a sixth root of unity on the
    invariant differential and thus on $\lambda_x$ (for $\lambda$ the standard generator of
    $\Pic(\Mscr)$ as above). Thus summarizing, we see that the map $\ZZ/12 \to
    (\R^1j_*\GG_m)_{\overline{x}}$ is surjective with the prescribed kernel
    unless $\mathrm{char}(k) = 2,3$ and $\overline{x}$ corresponds to $j=0\equiv 1728$.
    In particular, we see that $\varphi\colon \ZZ/12 \to \R^1q_*\GG_m$ factors through $\Fscr = (\ZZ/12)/(u_0)_!\ZZ/3 \oplus (u_{1728})_!\ZZ/2$.  \\
    
    \emph{Case 3: $\mathrm{char}(k) = 2$ or $3$ and $j = 0 =1728$:}
    
    From now on let $\overline{x}\colon k \to \mathbb{A}^1$ be a geometric point with
    $\mathrm{char}(k) = p$ for $p=2,3$ corresponding to $j=0$. For a base ring $R$, denote by
    $\MM_R$ the base change $\MM\times \Spec R$. We will show that
    $\varphi_{\overline{x}}\colon\ZZ/12 \to (\R^1j_*\GG_m)_{\overline{x}}$ is an isomorphism by
    comparison with the known computation of the Picard group of $\Pic(\MM_R)$ for certain $R$. To
    that purpose we will use the Leray spectral sequence
    \[\E_2^{s,t} = \H^s(\AA^1_R; \R^tj_*^R\Gm) \Rightarrow \H^{s+t}(\MM_R; \Gm) \]
    for the map $j^R\colon \MM_R \to \AA^1_R$. Let us display the part relevant for the computation of $\Pic$. 
    
    \begin{figure}[H]
    	\centering
    	\begin{equation*}
    	\xymatrix@R=3pt{
    		\H^0(\AA^1_R; \R^1j^R_*\Gm)\ar[rrd] & \H^1(\AA^1_R; \R^1j^R_*\Gm) &\\
    		\Gm(\AA^1_R) & \Pic(\AA^1_R) & \H^2(\AA^1_R,\Gm) 
    	}
    	\end{equation*}
    \end{figure}
    
    Denoting by $R$ the strict Henselization of the image of $\overline{x}$ in $\Spec \ZZ$, the
    spectral sequence implies that the map $\Phi_R\colon \ZZ/12 \cong \Pic(\MM_R) \to \H^0(\AA^1_R;
    \R^1j^R_*\GG_m)$ is an isomorphism. Here we use that as $\ZZ$ is regular noetherian, $R$ is so
    as well by \cite[Tags 06LJ, 06LN]{stacks-project} and thus $\Pic(\AA^1_R) \cong \Pic(R) = 0$ and

    \[\H^2(\AA^1_R,\Gm) \cong \H^2(R, \Gm) \cong \H^2(\Spec \FF_p, \Gm) =0\]
    by \cref{thm:BrauerProperties} and \cref{thm:rigidity}. The same argument shows that the map
    $\Phi_{\overline{\FF}_p}\colon \ZZ/12 \cong \Pic(\MM_{\overline{\FF}_p}) \to
    \H^0(\AA^1_{\overline{\FF}_p}; \R^1j^{\overline{\FF}_p}_*\GG_m)$ is an isomorphism.
    
    If $\varphi_{\overline{x}}$ were not injective, there would be some $[m]\in\ZZ/12$ (namely $[4]$
    or $[6]$) such that $\varphi([m])$ is zero in every \'etale stalk in characteristic $p$ and
    hence $\Phi_{\overline{\FF}_p}$ would not be injective either. We see that
    $\varphi_{\overline{x}}$ is thus indeed injective and thus $\Fscr$ agrees with the image of
    $\varphi$.
    
    As above we use the notation $\Fscr = (\ZZ/12)/(u_0)_!\ZZ/3 \oplus (u_{1728})_!\ZZ/2$. Denote
    the cokernel of $\ZZ/12 \to \Fscr \to \R^1j_*\GG_m$ by $\Cscr$. Its base change $\Cscr_R$ agrees
    with the cokernel of $\Fscr_R \to \R^1j_*^R\GG_m$ and it suffices to show its vanishing. By the
    arguments of the first paragraph, we have a short exact sequence
    \[0 \to (i_0)_*\Z/3\oplus (i_{1728})_*\Z/2 \to \Fscr \to \Z/2 \to 0. \]
    Moreover, \cref{thm:closedimmersion} implies $\H^1(\AA^1_R; (i_0)_*\Z/3\oplus (i_{1728})_*\Z/2)
    \cong \H^1(R; \Z/3\oplus \Z/2) = 0$ since the \'etale cohomology of anything on $R$ vanishes.
    Thus $\ZZ/12 \to \H^0(\AA^1_R; \Fscr_R)$ is an isomorphism.
    
    Moreover, $\H^1(\AA^1_R; \ZZ/2) \cong \H^1(\AA^1_R; \mu_2)$ sits in a short exact sequence with
    $\H^1(\AA^1_R; \GG_m)/2 =0$ and $\H^2(\AA^1_R; \GG_m)[2] = 0$ and thus has to vanish as well. We
    conclude that $\H^1(\AA^1_R; \Fscr_R) = 0$. Summarizing, we have an exact sequence
    \[ 0 \to \H^0(\AA^1_R; \Fscr_R) \to \H^0(\AA^1_R; \R^1j_*^R\GG_m) \to \H^0(\AA^1_R; \Cscr_R) \to 0 = \H^1(\AA^1_R, \Fscr_R).\]
    We have seen above that the natural morphisms from $\ZZ/12$ to the first two non-trivial groups
    are isomorphisms and thus the map between them is an isomorphism. Thus $\H^0(\AA^1_R; \Cscr_R)$
    vanishes. As $\Cscr_R$ is supported at $\overline{x}$, we see that its stalk at $\overline{x}$
    vanishes and thus that $\varphi_{\overline{x}}$ is also surjective.
\end{proof}

We claim that there are no differentials out of $\E_2^{1,1} = \R^1j_*\Gm$. Indeed, by the preceding proposition
\[\Pic(\Mscr,\pi_0\Oscr_\Mscr)\iso\ZZ/12\rightarrow\R^1j_*\Gm\]
is a
surjective map of sheaves on $\AA^1 = \Spet\ZZ[j]$. Thus, as long as the classes of
$\ZZ/12$ lift to invertible sheaves on the derived moduli stack,
surjectivity of the map means there cannot be differentials. But, this
$\ZZ/12$ is generated by $\TMF[2]$, which gives part (1) of
\cref{thm:PicTMF}.

\subsection{$3$-torsion in Row 5}
    
For higher filtrations it is necessary to compute differentials. Differentials $d_r^{t,s}$
in the sheafified $\Pic$ spectral sequence where
$r\leq t-1$ (i.e., where the ``length'' of the differential is smaller than the coordinate $t= x+y$ of the antidiagonal of origin) can be directly read off the descent spectral sequence computing $\pi_*\TMF$
by \cref{comp-tool}. We will use this fact without further comment.

For the rest of the analysis, we will work separately with $2$ and $3$ inverted, to analyze the $3$
and $2$-torsion, respectively. The only possible contribution to
$3$-torsion in $\pi_0j_*\Shpic_{\Oscr_\Mscr}$ is the kernel of the $d_9$ differential on
$\E^{5,5}_9$. We will implicitly invert $2$ throughout this section.

\begin{lemma}\label{lem:3d9}
    The differential
    $$d_9\colon\R^5j_*\pi_5\Shpic_{\Oscr_\Mscr}\iso\R^5j_*\pi_4\Oscr_\Mscr\iso\Oscr/(3,j)\rightarrow\R^{14}j_*\pi_{13}\Shpic_{\Oscr_\Mscr}\iso\R^{14}j_*\pi_{12}\Oscr_\Mscr\iso\Oscr/(3,j).$$
    is surjective and the kernel is
    $b_*\ZZ/3$, where $b$ is the closed inclusion of
    $\Spet\FF_3$ into $\Spet\ZZ[j]$ at $j=3=0$.
\end{lemma}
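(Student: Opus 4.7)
The plan is to reduce the computation to geometric stalks and identify $d_9$ there as an Artin--Schreier-type map. Since $\Oscr/(3, j) = b_*\FF_3$ is supported on $V(3, j) \cong \Spec \FF_3 \subset \Spet \ZZ[j]$, any additive map $\Oscr/(3, j) \to \Oscr/(3, j)$ of \'etale sheaves is determined by its geometric stalk $\overline{\FF_3} \to \overline{\FF_3}$, which must be $\FF_3$-linear and Galois-equivariant. Up to a unit, the only such map whose kernel is a subgroup of order $3$ with trivial Galois action and which is surjective is the Artin--Schreier map $x \mapsto x - x^3$: its kernel is $\FF_3 = \{x : x^3 = x\}$ and it is surjective because $\overline{\FF_3}$ is separably closed and $x^3 - x$ is separable. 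So it will suffice to show that the stalk of $d_9$ has this form. A candidate like $x \mapsto x + x^3$ is ruled out because its kernel $\{x : x^2 = -1\} \cup \{0\}$ is an $\FF_3$-line through a square root of $-1 \in \FF_9$, on which Frobenius acts non-trivially; this would force the kernel sheaf to be $\mu_3$ rather than $b_*\ZZ/3$.

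To identify $d_9$ in the claimed form, I would combine two ingredients. First, the comparison tool \cref{comp-tool} applies only in the range $r \leq t - 1 = 4$, so to handle $r = 9$ I would invoke the sheafy analogue of Mathew--Stojanoska's Theorem 6.1.1 \cite{mathew-stojanoska}, which extends the comparison beyond the boundary of the comparison range by adding a Frobenius-type correction. In exact analogy with the $x \mapsto x + x^2$ correction governing $d_3^{3,3}$ in the $\KO$ computation of \cref{sec:PicKO}, the correction at the prime $3$ takes the form $x \mapsto -x^3$. Second, the ``linear'' (additive) part of $d_9$ is read off from Bauer's $3$-local differential $d_9(\alpha \Delta^2) = \beta^5$ in the additive descent spectral sequence for $\TMF$ \cite{bauer} (and its $\Delta^{\pm 3}$-translates via the Leibniz rule). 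After sheafifying over $\Spet \ZZ[j]$ and using the identifications $\R^5 j_* \omega^2 \cong \Oscr/(3, j) \cong \R^{14} j_* \omega^6$, this additive part becomes multiplication by a unit on the skyscraper $\Oscr/(3, j)$. Composing the linear isomorphism with the cubic correction gives (a unit multiple of) the Artin--Schreier map, yielding the claimed kernel $b_*\ZZ/3$ and surjectivity.

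The main obstacle is twofold. First, one must extract the precise form of the Mathew--Stojanoska correction at this specific page $r = 9$, which lies beyond the first non-linear page (expected to be $r = 5 = 2 \cdot 3 - 1$ at the prime $3$), and verify that no additional corrections of the form $x^{3^n}$ are introduced at intermediate pages. Second, one must verify that the intervening Picard differentials $d_5, d_6, d_7, d_8$ vanish on the source $\Oscr/(3, j)$, so that the entire source survives to the $\E_9$-page and the formula applies cleanly. Both verifications can be read off \cref{fig:e3tmfss} together with Bauer's chart for the additive spectral sequence: the $\E_r$-targets of the intermediate differentials either vanish or are quasi-coherent sheaves that receive only zero maps from the skyscraper $\Oscr/(3, j)$ at the closed point $V(3, j)$, and the non-linear corrections at $r = 5, 6, 7, 8$ vanish for the same reason.
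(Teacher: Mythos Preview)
Your overall shape is right: once one knows $d_9(x)=x-x^3$, Artin--Schreier theory gives surjectivity and identifies the kernel as $b_*\ZZ/3$. The gap you flagged is real, and the paper closes it in a way you did not anticipate.

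First, the correct input is not \cite[Theorem~6.1.1]{mathew-stojanoska}; that result only handles the first differential past the comparison range, i.e.\ $r=t$, whereas here $(s,t)=(5,5)$ and $r=9$. The paper instead invokes \cite[Theorem~7.1]{heard-mathew-stojanoska}, which for this particular $d_9$ gives the closed form
\[
d_9(x)=x+\zeta\beta\Pscr^2(x)
\]
for some unit $\zeta\in\FF_3$, where $x\mapsto\beta\Pscr^2(x)$ is Frobenius-semilinear: $\beta\Pscr^2(zx)=z^3\beta\Pscr^2(x)$. This collapses your worry about higher corrections $x^{3^n}$: the power-operation formula shows there is exactly one nonlinear term, and it is cubic.

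Second, to pin down the sign you tried to use a classification of additive Galois-equivariant endomorphisms of $\overline{\FF}_3$ together with constraints on the kernel---but the kernel is what you are trying to compute, so this is circular. The paper's trick is cleaner: evaluate on global sections. From \cite[Sec.~8.1]{mathew-stojanoska} one knows that the $3$-local Picard group $\Pic(\TMF)_{(3)}$ is $\ZZ/72$, which forces the global-sections $d_9$ to vanish (otherwise the filtration-$5$ contribution to $\Pic(\TMF)_{(3)}$ would die). Hence $d_9(1)=1+\zeta\beta\Pscr^2(1)=0$, giving $\zeta\beta\Pscr^2(1)=-1$. Frobenius-semilinearity then forces $d_9(z)=z+z^3\cdot\zeta\beta\Pscr^2(1)=z-z^3$ for all $z$.

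Your discussion of intermediate differentials $d_5,\ldots,d_8$ is unnecessary once one has the Heard--Mathew--Stojanoska formula, since that result already describes the first nontrivial differential out of this spot in the Picard spectral sequence.
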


\begin{proof}
    The differential is the first possible outside of the exponentiable range from \cref{comp-tool}.
    In the descent spectral sequence for $\pi_*\TMF$ the corresponding differential is an isomorphism
    (see~\cite[(8-4)]{mathew-stojanoska}). In contrast,~\cite[Theorem~7.1]{heard-mathew-stojanoska} implies that we can write the
    differential $d_9\colon \Oscr/(3,j)\rightarrow\Oscr/(3,j)$ in the Picard spectral sequence as $$x\mapsto
    x+\zeta\beta\Pscr^2(x),$$ where $\zeta$ is a unit in $\FF_3$ and
    $\beta\Pscr^2$ is certain power operation on $\EE_\infty$-rings in which
    $2$ is invertible. Moreover, $x\mapsto\zeta\beta\Pscr^2(x)$ is
    Frobenius-semilinear in the sense that
    $zx\mapsto\zeta\beta\Pscr^2(zx)=z^3\zeta\beta\Pscr^2(x)$.

    We know that our $d_9$ must be zero on global sections.by ~\cite[Sec.~8.1]{mathew-stojanoska}
    (as else $\Pic(\TMF)_{(3)}$ could have at most $3$ elements). Thus, $1 +
    \zeta\beta\Pscr^2(1)=d_9(1)$ is zero on global sections and thus also everywhere, and hence
    $\zeta\beta\Pscr^2(1) = -1$. By Frobenius-semilinearity, we see that $d_9(z) = z +
    \zeta\beta\Pscr^2(z\cdot 1) = z-z^3$.
    It follows from Artin--Schreier theory that this differential is a
    surjective map of \'etale sheaves and that the kernel is $b_*\ZZ/3$ (cf.\ \cite[Example 2.18c]{milne-etale}).   
\end{proof}

As \cref{fig:e5tmfss3} on \cpageref{fig:e5tmfss3} proves, the lemma shows that Part (5) of \cref{thm:PicTMF} holds with $2$ inverted and all the other graded pieces
vanish with $2$ inverted. Thus it remains to analyze the $2$-torsion and we will implicitly \emph{work $2$-locally everywhere}. We will compute two further differentials
affecting the zeroth column of the Picard spectral sequence and then give an outlook on what remains
to be done to compute all remaining differentials.

\begin{figure}[!p]
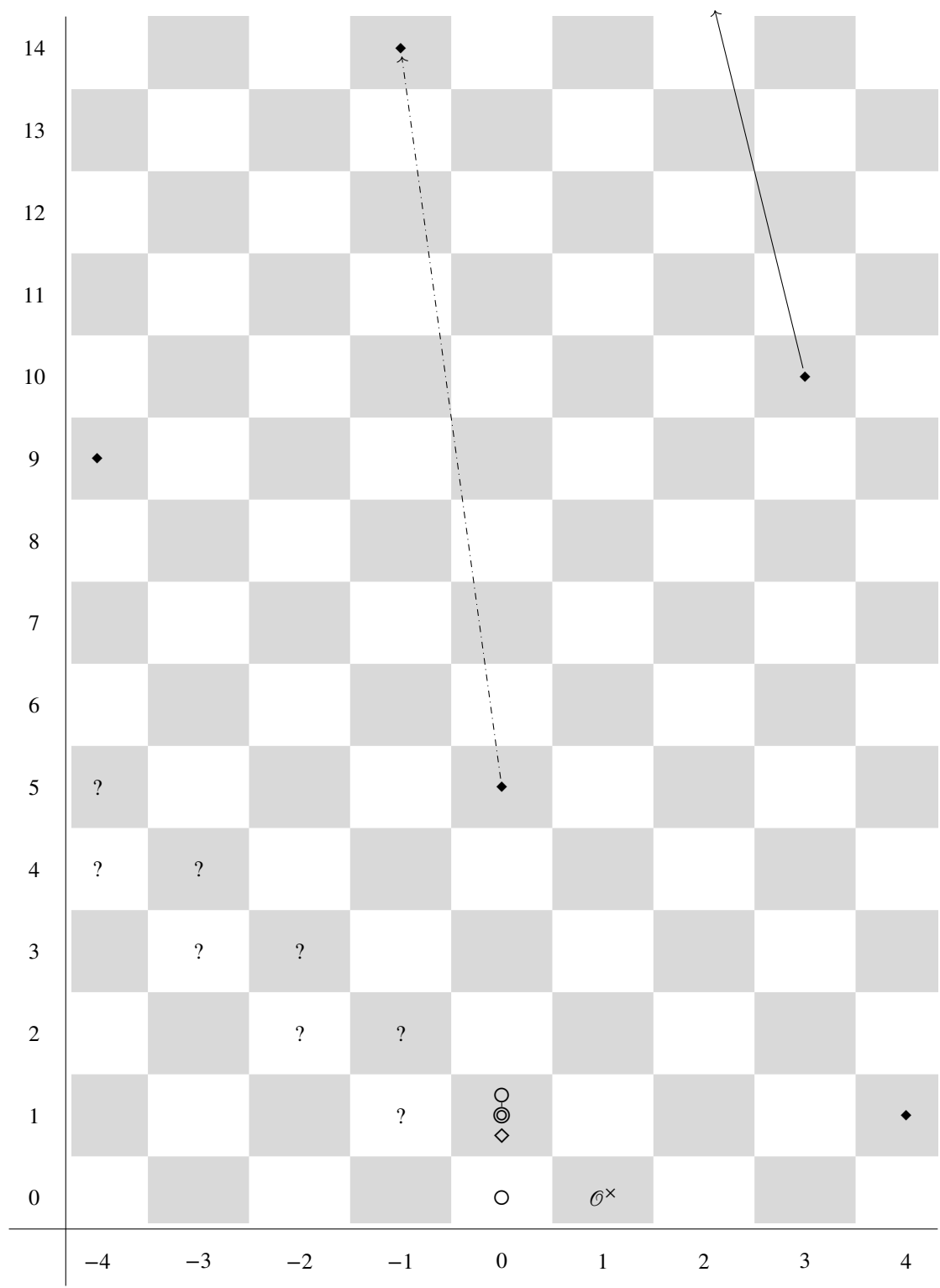

\printpage[ name = tmfss3, grid=chess, page =
0,xscale=1.6,yscale=1.3,x range = {-4}{4},y range = {0}{14},
class pattern = linear, class placement transform = { rotate = 90 }]
\caption{The $\E_5$-page of the sheafy spectral sequence~\eqref{eq:tmfss} for $\Pic$ of the moduli stack. Above the $i+j=1$ diagonal, only $3$-primary torsion information is shown.}
\label{fig:e5tmfss3}
\end{figure}

\subsection{Row 3}\label{sec:Row3}

There is a $d_3$-differential,
$$d_3\colon\R^3j_*\pi_3\Shpic_{\Oscr_\Mscr}\iso\R^3j_*\pi_2\Oscr_\Mscr\iso\Oscr/2\rightarrow\R^6j_*\pi_5\Shpic_{\Oscr_\Mscr}\iso\R^6j_*\pi_4\Oscr_\Mscr\iso\Oscr/2.$$
This differential is of the form $$x\mapsto
x+jx^2=x(1+jx),$$ as shown in~\cite[Sec.~8.2]{mathew-stojanoska}.
Recall from \cite[Corollary II.3.11]{milne-etale} that $k_*$ for $k\colon \Spec \FF_2[j] \to \Spec \ZZ[j]$ induces an exact equivalence between \'etale sheaves on
$\FF_2[j]$ and \'etale sheaves on $\ZZ[j]$ supported at the prime $(2)$; hence, we can work directly on the \'etale site of $\FF_2[j]$.
Under this equivalence we have $k_*\Oscr\iso\Oscr/2$.
We claim that the $d_3\colon \Oscr \to \Oscr$ is surjective (viewed as \'etale sheaves on
$\Spec\FF_2[j]$). Indeed, given any \'etale morphism
$\FF_2[j]\rightarrow R$ and element $c\in R$,
the extension $R \to R[x]/(jx^2+x-c)$ is \'etale and surjective on geometric points:
base-changing along any morphism $R\to K$ to a field, $j$ becomes either invertible or zero and in
either case $K[x]/(jx^2+x-c)$ is nonzero. Thus, $\Spec R[x]/(jx^2+x-c)\to \Spec R$ is an \'etale
cover and $c$ has per construction a preimage under $d_3$ on $R[x]/(jx^2+x-c)$.

Any nonzero element in the stalk of the kernel must be of the form $x=\tfrac{1}{j}$
(since all stalks of $\Oscr$ on $\FF_2[j]$ are integral domains);
thus the kernel corresponds to the \'etale sheaf $v_!\ZZ/2$ on $\FF_2[j]$
(with $v\colon \Spec \FF_2[j^{\pm 1}] \to \Spec \FF_2[j]$ being the inclusion) and is $k_*v_!\ZZ/2$ as an \'etale sheaf on $\ZZ[j]$. 
There will be no further differentials from this spot because all possible further targets are supported at $(2,j)$. 

\begin{figure}[!p]
\printpage[ name = tmfss, grid=chess, page =
5,xscale=1.6,yscale=1.3,x range = {-4}{4},y range = {0}{14},
class pattern = linear, class placement transform = { rotate = 90 }]
\caption{The $\E_5$-page of the sheafy spectral sequence~\eqref{eq:tmfss} for $\Pic$ of the moduli stack. Above the $i+j=1$ diagonal, only $2$-primary torsion information is shown.}
\label{fig:e5tmfss}
\end{figure}

\subsection{$2$-torsion in Row 5}

The next differential is the $d_5$-differential, 
$$d_{5}\colon\R^5j_*\pi_5\Shpic_{\Oscr_\Mscr}\iso\R^5j_*\pi_4\Oscr_\Mscr\iso\Oscr/(4,j)\rightarrow\R^{10}j_*\pi_{9}\Shpic_{\Oscr_\Mscr}\iso\R^{10}j_*\pi_{8}\Oscr_\Mscr\iso\Oscr/(2,j),$$
which factors through a map $\Oscr/(2,j)\rightarrow\Oscr/(2,j)$. This differential
is just outside of the exponentiable range and is given
by~\cite[Thm.~6.1.1]{mathew-stojanoska}. The map
$\Oscr/(2,j)\rightarrow\Oscr/(2,j)$ is given by $x\mapsto x+x^2$. This is a
surjective map of \'etale sheaves by Artin--Schreier theory: given
$y\in\Oscr/2$, the extension defined by $y=x+x^2$ is \'etale. The kernel is
$a_*\ZZ/2$, where $a\colon\Spet\FF_2\rightarrow\Spet\ZZ[j]$ is the
inclusion at $2=j=0$.

\subsection{Long differentials}\label{sec:LongDiff}

As already established in \cref{prop:thingsvanishhighabove}, in column 0 everything above row 7 must
be zero on the $\E_{\infty}$-page.  
As \cref{fig:e7tmfss} on \cpageref{fig:e7tmfss} and the preceding discussion shows, the only
remaining possible differentials are a $d_{13}$ and $d_{25}$ originating in row 5 and a $d_{11}$ and
a $d_{23}$ originating in row 7. We can show the vanishing of one of these differentials.

\begin{figure}[H]
    \printpage[ name = tmfss, grid=chess, page =
    7--25,xscale=1.6,yscale=0.6858,x range = {-4}{4},y range = {0}{30},
    class pattern = linear, class placement transform = { rotate = 90 }]
    \caption{The $\E_7$-page of the sheafy spectral sequence~\eqref{eq:tmfss} for $\Pic$ of the moduli stack. Above the $i+j=1$ diagonal, only $2$-primary torsion information is shown.}
    \label{fig:e7tmfss}
\end{figure}

\begin{lemma}
    The differential $$d_{11}\colon\E_{4}^{7,7}\iso\E_{11}^{7,7}\iso\Oscr/(2,j)\rightarrow\E_{11}^{18,17}\iso\Oscr/(2,j).$$ is zero.
\end{lemma}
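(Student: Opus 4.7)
Since both the source and target $\Oscr/(2,j)$ are supported on the closed point $(2,j) = \Spec\FF_2 \hookrightarrow \AA^1_{\ZZ}$, \cref{thm:closedimmersion} lets me view $d_{11}$ as a morphism of \'etale sheaves on $\Spec\FF_2$, determined by a $\widehat{\ZZ}$-equivariant map $\overline{\FF}_2 \to \overline{\FF}_2$ of geometric stalks. Note that this differential lies well outside the exponentiable range: for $(s,t) = (7,7)$ the Comparison Tool (\cref{comp-tool}) requires $r \leq t-1 = 6$, so $d_{11}$ need not match the corresponding differential in the additive descent spectral sequence for $\pi_*\TMF$.

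The plan is to identify $d_{11}$ as a power operation formula, in the spirit of the earlier differentials treated in the paper. By the pattern of \cref{lem:3d9} (and of the $d_3$ and $d_5$ analyzed in the previous subsections), and more systematically via the methods of Heard--Mathew--Stojanoska, such a differential should take the form
\[
d_{11}(x) = c\cdot x + \zeta\cdot Q(x),
\]
where $c$ is read off from the additive spectral sequence (so $c=0$ if the corresponding additive $d_{11}$ already vanishes at this spot on Bauer's chart for $\TMF$), $\zeta$ is a unit, and $Q$ is a Frobenius-semilinear power operation on $\EE_\infty$-rings in characteristic $2$ (a suitable higher Dyer--Lashof-type operation at this filtration). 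The first task is to pin this formula down. The second task is to combine it with the condition that $d_{11}(1) = 0$ on global sections, which I would obtain by comparison with the Mathew--Stojanoska computation of $\Pic(\TMF)_{(2)}$: the filtration-$7$ class on global sections is known to be a permanent cycle there, so $d_{11}$ must vanish on the global section $1 \in \FF_2 = \Gamma(\Spec\FF_2,\Oscr/(2,j))$. Frobenius-semilinearity of $Q$ then extends this vanishing from $\FF_2$ to all of $\overline{\FF}_2$, and hence to a vanishing of $d_{11}$ as a map of \'etale sheaves.

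The main obstacle is the explicit control of $Q$: while for $d_3$, $d_5$, and $d_9$ the power operations are supplied directly by \cite{mathew-stojanoska} and \cite{heard-mathew-stojanoska}, a $d_{11}$ requires more delicate iterated-operation bookkeeping, and without it the semilinearity argument might a priori yield only $d_{11}(z) = z + z^{2^k}$ rather than identically $0$. If that step proves too intricate, a cleaner fallback would be either to construct an explicit invertible $\TMF$-module whose Picard class lies in filtration exactly $7$ and whose image in $\Oscr/(2,j)$ generates a permanent cycle on the nose (making all of $\E_{11}^{7,7}$ into permanent cycles), or to compare via the localization $\TMF \to L_{K(2)}\TMF$ to the $K(2)$-local Picard spectral sequence at height $2$ over the supersingular locus, where the relevant differential is accessible by height-$2$ chromatic techniques.
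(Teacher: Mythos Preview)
Your main line of attack has a genuine gap. The power-operation formulas you invoke (Mathew--Stojanoska Theorem~6.1.1 and Heard--Mathew--Stojanoska) govern the differential $d_r$ at $r=t$, i.e.\ the \emph{first} differential beyond the comparison range $r\leq t-1$. For $(s,t)=(7,7)$ that is $d_7$; the differential $d_{11}$ is four pages further, and there is no established formula of the shape $d_{11}(x)=cx+\zeta Q(x)$ available at this spot. You acknowledge this (``more delicate iterated-operation bookkeeping''), but offer no mechanism to produce it, and none is known. Worse, even granting such a formula, your own computation shows that $d_{11}(1)=0$ together with Frobenius-semilinearity only yields $d_{11}(z)=c(z-z^{2^k})$, which is nonzero on $\overline{\FF}_2$ unless $c=0$. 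You cannot read off $c=0$ from the additive spectral sequence here because $d_{11}$ lies outside the comparison range. Your first fallback (an explicit invertible module in filtration~7) likewise does not suffice: a single permanent cycle on global sections $\FF_2$ does not force all of the sheaf $\Oscr/(2,j)$, with stalk $\overline{\FF}_2$, to consist of permanent cycles.

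The paper's argument is entirely different and does not use power operations. It completes at $(2,j)$, reducing to the $\GL_2(\FF_3)$-homotopy-fixed-point Picard spectral sequence for height-$2$ Lubin--Tate theory (your second fallback points in this direction but stops short of the key idea). The decisive step is to use the \emph{restriction map} $\res_{C_4}^{G_{48}}$ to the subgroup $C_4\subset G_{48}=\GL_2(\FF_3)$: explicit computations from Bobkova--Goerss and Behrens--Ormsby show that this restriction is \emph{zero} on the source $\E_4^{7,7}$ (the class $\Delta^{-1}\eta^3\overline\kappa$ restricts to a $d_3$-boundary at the $C_4$ level) and an \emph{isomorphism} on the target $\E_4^{18,17}$. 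Naturality $\res\circ d_{11}=d_{11}\circ\res$ then forces $d_{11}=0$. This restriction-to-a-subgroup technique, together with the explicit class identifications, is the substantive content you are missing.
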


\begin{proof}
    We use
    the sequence of maps of derived stacks
    $$\widehat{\Mscr}(3)^{\mathrm{ss}}\rightarrow\Mscr(3)\rightarrow\Mscr[\tfrac{1}{3}],$$
    where $\Mscr(3)$ is the moduli stack of elliptic curves with full level-3-structure (see e.g.\ \cite{stojanoska}) and $\widehat{\Mscr}(3)^{\mathrm{ss}}$ is
    the completion of $\Mscr(3)$ at the ideal $(2,j)$. We write $j$ for the map
    from any of these stacks to $\Spet\ZZ[j]$. Pushing the Picard sheaves down to
    $\Spet\ZZ[j]$ we obtain $$j_*\Shpic_{\Oscr_{\Mscr}}\rightarrow
    j_*\Shpic_{\Oscr_{\Mscr[1/3]}}\rightarrow
    j_*\Shpic_{\Oscr_{\widehat{\Mscr}(3)^{\mathrm{ss}}}}.$$
    Since $\Mscr(3)\rightarrow\Mscr[\tfrac{1}{3}]$ is a Galois cover with group
    $\GL_2(\ZZ/3)$,
    the first of these maps induces an equivalence
    \begin{equation}\label{eq:gl2}
        \tau_{\geq 0}j_*\Shpic_{\Oscr_{\Mscr[1/3]}}\rightarrow\tau_{\geq
    0}\left(j_*\Shpic_{\Oscr_{\Mscr(3)}}^{h\GL_2(\ZZ/3)}\right);
    \end{equation}
    this kind of Galois descent follows from the sheaf property of $\Shpic$. 
    The $2$-local Picard spectral sequence for $\TMF$ is for $t>1$ the same as
    the $\GL_2(\ZZ/3)$-based relative descent spectral sequence for~\eqref{eq:gl2} (cf.\ \cref{rem:relative-descent}).

    Completing $\MM(3)$ at the ideal $(2,j) \subset \ZZ[j]$ results in the formal deformation space
    of a supersingular elliptic curve $C$ over $\FF_4$, which can be coordinatized as $\Spet
    W(\FF_4)\llbracket u\rrbracket$. Thus completing $\MM$ itself at $(2,j)$ becomes identified with
    the stack quotient of $\Spet W(\FF_4)\llbracket u\rrbracket$ by $\GL_2(\ZZ/3)$. As source and
    target of the $d_{11}$-differential we care about are supported at $(2,j)$, completion at
    $(2,j)$ does not lose information; more precisely, the composition of $(2,j)$-completion and
    pushing forward along $\Spf \ZZ_2\llbracket j\rrbracket \to \Spet \ZZ[j]$ is an isomorphism at
    the relevant spots in the sheafy Picard spectral sequence.
    Note that the \'etale topos of $\Spf \ZZ_2\llbracket j\rrbracket$ agrees with that of $\FF_2$.
    Thus the $(2,j)$-completed sheafy Picard spectral sequence is the sheafification of the
    collection of Picard spectral sequences of the higher real K-theories, assigning
    $E_2(\widehat{C}, \FF_4\tensor_{\FF_2}k)^{h\GL_2(\ZZ/3)}$ to each finite extension $\FF_2\subset
    k$.

    Denote the corresponding spectral sequence for $E_2(\widehat{C}, \FF_4\tensor_{\FF_2}k)^{hH}$ with
    $H\subset \GL_2(\FF_3)$ by $\E_{*, k, H}^{**}$, where we consider $k=\FF_{2^n}$ or
    $\overline{\FF}_2$. We will deduce from \cite{bobkova-goerss} that the restriction map
    $\res_{C_4}^{G_{48}}\colon \E_{2, \FF_2, \GL_2(\ZZ/3)}^{s,t} \to \E_{2, \FF_2, C_4}^{s,t}$ is zero
    for $(s,t) = (7,7)$ and an isomorphism for $(18,17)$ on the $\E_{4}$-page (which agrees in this
    range in descent and Picard spectral sequence). Hence the same is true for all $k$ in place of
    $\FF_2$ (since the restriction maps are module maps). Moreover, we have established above that
    in the Picard spectral sequence $\E_4 = \E_{11}$ in these spots. As $d_{11}\res_{C_4}^{G_{48}} =
    \res_{C_4}^{G_{48}} d_{11}$, this implies the vanishing of our $d_{11}$.

    To read off our claim about the restriction map from \cite{bobkova-goerss}, we will use their
    notation (noting that they write $G_{48}$ for  $\GL_2(\ZZ/3)$). Section 2.3 of op.cit.\ implies
    that the generator of $\E_{2, \FF_2, \GL_2(\ZZ/3)}^{7,7}$ is
    $\Delta^{-1}\eta^3\overline{\kappa}$. Since $\Delta$ is a $d_3$-cycle, it will suffice for our vanishing claim about the restriction to show
    that $\res_{C_4}^{G_{48}}(\eta^3\overline{\kappa})$ is hit by a $d_3$. This restriction equals
    $\eta^3\delta\xi^2$ by the Table above Section 2.3 of op.cit. We have the differential $d_3(\xi) =
    \delta^{-1}\eta\xi^2$ by \cite[Proposition 2.3.1]{behrens-ormsby},\footnote{As in \cite{bobkova-goerss}, we will use \cite{behrens-ormsby} for information about differentials on the $C_4$-level, using that $\TMF_1(5)$ becomes a Lubin--Tate theory after $K(2)$-localization.} so
    $d_3(\delta^2\eta^2\xi) =\delta\eta^3\xi^2 =\res_{C_4}^{G_{48}}(\eta^3\overline{\kappa})$. Now
    we turn to the generator of $\E_{4, \FF_2, \GL_2(\ZZ/3)}^{18,17}$, which is
    $\Delta^{-4}\kappa\overline{\kappa}^4$. Using \cite[Lemma 2.2.4, Corollary
    2.3.2]{behrens-ormsby}, $\res_{C_4}^{G_{48}}(\Delta)$ acts like $\delta^3$ on a torsion
    class like $\res_{C_4}^{G_{48}}(\kappa\overline{\kappa}^4) = \delta^5\nu^2\xi^8$. Thus,
    $\Delta^{-4}\kappa\overline{\kappa}^4$ restricts to $\delta^{17}\nu^2\xi^8$. This can be used to show that the restriction is an isomorphism on
    $\E_4^{18,17}$. Moreover, the class in $\E_{4, \FF_2, \GL_2(\ZZ/3)}^{18,17}$ is only hit by a
    $d_{15}$ and its restriction by a $d_{13}$ (namely from $(\delta\nu^2)\delta^{20} (\delta \nu
    \xi)$; cf.\ \cite[Proposition 2.3.9]{behrens-ormsby}). In particular, in these spots the
    $\E_4$-page equals the $\E_{11}$-page.
\end{proof}

We included this lemma not primarily for its intrinsic importance, but rather to demonstrate that
the remaining computational mysteries of the sheafy Picard spectral sequence are purely $K(2)$-local
phenomena and might thus potentially be resolved purely in the setting of Lubin--Tate spectra.

\section{Applications to Picard groups}

We can use \cref{thm:PicTMF} to compute Picard groups of various spectra related to $\TMF$. 

\begin{example}
    Using \cref{thm:PicTMF}, we want to compute $\Pic(\TMF[c_4^{-1}])$. Noting $\pi_0\TMF[c_4^{-1}]
    \cong \ZZ[j^{\pm 1}]$, the relevant part of the exact sequence from \cref{prop:omni} is
    \[0 \to \H^1(\Spec \Z[j^{\pm 1}]; \GG_m) \to \Pic(\TMF[c_4^{-1}]) \to \H^0(\Spec \ZZ[j^{\pm 1}],
    \pi_0 \sPic{\TMF}) \to \H^2(\Spec \Z[j^{\pm 1}]; \GG_m) \to \cdots .\]
    The groups \[\H^1(\Spec \ZZ[j^{\pm 1}]; \Gm) = \Pic(\Spec \ZZ[j^{\pm 1}]) \subset \Pic(\Spec
    \ZZ[j]) \cong \Pic(\ZZ)\] and \[\H^2(\Spec \ZZ[j^{\pm 1}]; \Gm) = \Br(\Spec \ZZ[j^{\pm 1}])\]
    vanish since $\ZZ$ is a PID and $\Pic$ is $\AA^1$-invariant by \cite[Proposition II.6.6]{hartshorne}, and by \cref{cor:Brauerjpm}.
    Thus \[\Pic(\TMF[c_4^{-1}]) \to \H^0(\Spec \ZZ[j^{\pm 1}]; \pi_0 \sPic{\TMF})\] is an isomorphism.
    By \cref{thm:PicTMF}, the restriction of $\pi_0 \sPic{\TMF}$ to $\Spec \ZZ[j^{\pm 1}]$ has a
    filtration with associated graded $\ZZ/2$, $\ZZ/2$, $(i_{1728})_*\ZZ/2$ and $k_*\ZZ/2$, where
    $k\colon \Spec \FF_2[j^{\pm 1}] \to \Spec \ZZ[j^{\pm 1}]$ is the inclusion. We obtain directly
    that $\Pic(\TMF[c_4^{-1}])$ is $2$-power torsion.

    Let $\Qscr$ be the quotient of $(\pi_0 \sPic{\TMF})_{(2)}$ by everything of filtration at least $2$. We obtain a short exact sequence 
    \begin{equation}\label{eq:sesQ}0\to k_*\ZZ/2 \to u^*\pi_0 \sPic{\TMF} \to u^*\Qscr \to 0,\end{equation}
    where $u\colon \Spec \ZZ[j^{\pm 1}] \to \Spec \ZZ[j]$ is the inclusion. Applying the long
    exact sequence in cohomology to this short exact sequence and to the analogous one for $\TMF$,
    we obtain a diagram
    \[
    \xymatrix{
    0 \ar[r] & \ZZ/8\ar[d] \ar[r] & \Pic(\TMF)_{(2)}\ar[d] \ar[r] & \H^0(\ZZ[j]; \Qscr)\ar[d] \\
    0 \ar[r] & \ZZ/2 \ar[r] & \Pic(\TMF[c_4^{-1}])\ar[r] & \H^0(\ZZ[j^{\pm1}]; u^*\Qscr).
    }
    \]
    Investigating the associated graded pieces of the Picard sheaves, we see that the first vertical map
    is zero. The rightmost vertical map is an injection by the four-lemma since it is an
    isomorphism on global sections of graded pieces. Looking at the global sections of the graded
    pieces, we also obtain that source and target of this map have at most $8$ elements. Since
    $\Pic(\TMF)_{(2)} \cong \ZZ/64$, we see that $\H^0(\ZZ[j]; \Qscr)$ must be $\ZZ/8$ and thus the
    same is true for $\H^0(\ZZ[j^{\pm1}]; u^*\Qscr)$. As $\TMF[1]$ is sent to a generator of
    $\H^0(\ZZ[j]; \Qscr)$, we see that $\TMF[c_4^{-1}][1]$ is sent to a generator of
    $\H^0(\ZZ[j^{\pm1}]; u^*\Qscr)$. Moreover, $\TMF[c_4^{-1}][1]$ generates a group of order $8$
    inside $\Pic(\TMF[c_4^{-1}])$. Thus, the lower exact is sequence is split short exact and
    $\Pic(\TMF[c_4^{-1}]) \cong \ZZ/2 \oplus \ZZ/8$. The extra $\ZZ/2$ provides an example of an
    exotic Picard group element.
\end{example}

The following proposition is a higher (but less explicit) analogue of \cref{cor:PicKOR}. 

\begin{proposition}
    Denote by $\Qscr$ the quotient of $\pi_0\sPic{\TMF}$ corresponding to (0) and (1) in
    \cref{thm:PicTMF} and by $\Iscr$ the kernel of the quotient map. Let further $R$ be an \'etale
    extension of $\ZZ$. Then there is a short exact sequence
    \[0 \to \Pic(R) \to \Pic(\TMF_R) \to \H^0(\AA^1_R; \pi_0\sPic{\TMF}) \to 0.\]
    If $\Spec R$ is connected, the last term fits into a short exact sequence
    \[0 \to \H^0(\AA^1_R; \Iscr) \to \H^0(\AA^1_R; \pi_0\sPic{\TMF}) \to \ZZ/24 \to 0.\]
\end{proposition}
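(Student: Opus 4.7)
The plan is to apply the exact sequence from \cref{prop:omni} to $\TMF_R$, where $\pi_0 \TMF_R \cong R[j]$, and then show that the boundary map to $\Br(R[j])$ vanishes, while computing the piece of $\H^0(\AA^1_R; \pi_0\sPic{\TMF})$ corresponding to $\Qscr$. Both statements reduce immediately to the case where $\Spec R$ is connected, which I assume from now on. Since $\TMF_R$ is the \'etale extension realizing $\ZZ[j] \to R[j]$, the base change $(\Mscr_R, \Oscr_R)$ is identified with $(\Mscr, \Oscr) \times_{\Spet \TMF} \Spet \TMF_R$, so by \'etale base change $\pi_0\sPic{\TMF_R}$ is the pullback of $\pi_0\sPic{\TMF}$ along $f\colon \AA^1_R \to \AA^1$. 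Combined with $\H^1(\AA^1_R; \GG_m) = \Pic(R[j]) = \Pic(R)$ (by $\AA^1$-invariance for the regular noetherian ring $R$), \cref{prop:omni} yields
\[0 \to \Pic(R) \to \Pic(\TMF_R) \to \H^0(\AA^1_R; \pi_0\sPic{\TMF}) \xrightarrow{\partial_R} \Br(R[j]).\]

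To show $\partial_R$ vanishes on $\H^0(\AA^1_R; \Iscr)$, I will mimic the argument from \cref{cor:PicKOR}. By parts (3), (5), (7) of \cref{thm:PicTMF}, each graded piece of $\Iscr$ is supported in characteristic $2$ or $3$, so $\Iscr$ pulls back to zero on $\AA^1_{R[\frac{1}{6}]}$. Since $R$ has generic characteristic zero, $\Spec R[\tfrac16][j]$ is dense in $\Spec R[j]$, so \cref{thm:BrauerProperties}(1) supplies the injection $\Br(R[j]) \hookrightarrow \Br(R[\tfrac16][j])$. Naturality of $\partial$ under $R \to R[\tfrac16]$ then forces $\partial_R|_{\H^0(\AA^1_R; \Iscr)} = 0$.

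For the remaining step, I aim to show that the composition $\Pic(\TMF_R) \to \H^0(\AA^1_R; \pi_0\sPic{\TMF}) \to \H^0(\AA^1_R; \Qscr)$ is surjective with target $\ZZ/24$. Together with the previous paragraph this forces $\partial_R \equiv 0$ on all of $\H^0(\AA^1_R; \pi_0\sPic{\TMF})$ and identifies the quotient by $\H^0(\AA^1_R; \Iscr)$ as $\ZZ/24$, yielding both parts of the proposition. Applying $\H^*(\AA^1_R; -)$ to $0 \to \R^1j_*\GG_m \to \Qscr \to \ZZ/2 \to 0$, I compute $\H^0(\AA^1_R; \R^1j_*\GG_m) \cong \ZZ/12$: the exact sequence from \cref{lem:R1Gm} bounds $|\H^0(\AA^1_R; \R^1j_*\GG_m)|$ above by $12$, while the pullback to $\Mscr_R$ of the Hodge bundle $\omega$ has order $12$ (verified by further pulling back to the fraction field $K$ of $R$, where Fulton--Olsson gives $\Pic(\Mscr_K) = \ZZ/12$). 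Then $\Sigma \TMF_R \in \Pic(\TMF_R)$ lifts the generator of $\H^0(\AA^1_R; \ZZ/2) = \ZZ/2$, so $\H^0(\AA^1_R; \Qscr)$ has order exactly $24$; non-splitness follows because $2 \Sigma\TMF_R = \Sigma^2\TMF_R$ corresponds to $\omega$ and hence has order $12$ in the $\R^1j_*\GG_m$-subgroup.

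The main obstacle is the vanishing of $\partial_R$ on $\H^0(\AA^1_R; \Iscr)$: this rests on the crucial fact that \emph{every} graded piece of the filtration on $\pi_0\sPic{\TMF}$ above filtration $1$ is supported in characteristic $2$ or $3$, which one has to extract carefully from parts (3), (5), (7) of \cref{thm:PicTMF}, since some pieces are only identified as subsheaves of sheaves with those supports.
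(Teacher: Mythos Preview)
Your argument is correct and follows essentially the same route as the paper: invoke \cref{prop:omni} for $\TMF_R$, kill the restriction of $\partial_R$ to $\H^0(\AA^1_R;\Iscr)$ by inverting $6$ exactly as in \cref{cor:PicKOR}, and then use suspensions of $\TMF_R$ to surject onto the $\Qscr$-part, which forces $\partial_R\equiv 0$ and yields both short exact sequences. The only place you deviate from the paper is in identifying $\H^0(\AA^1_R;\Qscr)\cong\ZZ/24$: the paper argues that $\H^0(\AA^1;\Qscr)\to\H^0(\AA^1_R;\Qscr)$ is an isomorphism by comparing on graded pieces and applying the five lemma, then reads off $\ZZ/24$ from the known case $R=\ZZ$, whereas you compute $\H^0(\AA^1_R;\R^1j_*\Gm)\cong\ZZ/12$ directly by bounding its order via \cref{lem:R1Gm} and exhibiting $\omega$ as an element of order $12$ (via the fraction field and Fulton--Olsson), then use $\Sigma\TMF_R$ to resolve the extension by $\ZZ/2$. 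One small point worth making explicit: your order-$12$ claim is about the image of $\omega$ in $\H^0(\AA^1_R;\R^1j_*\Gm)$, not just in $\Pic(\Mscr_R)$; this follows because over the fraction field $K$ the Leray edge map $\Pic(\Mscr_K)\to\H^0(\AA^1_K;\R^1j_*\Gm)$ is injective (as $\Pic(\AA^1_K)=0$), so the image of $\omega$ survives with order $12$ there and hence already over $\AA^1_R$.
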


\begin{proof}
    We begin by proving the second claim. Applying the long exact sequence of cohomology to the extension
    \[0\to\Iscr\to\pi_0\sPic{\TMF} \to \Qscr \to 0,\] we 
    obtain an exact sequence
    \begin{equation}\label{eq:exseqIPQTMF}0 \to \H^0(\AA^1_R; \Iscr) \to \H^0(\AA^1_R; \pi_0\sPic{\TMF}) \to \H^0(\AA^1_R, \Qscr).\end{equation}
    The composition 
    \begin{align*} \ZZ &\to \Pic(\TMF_R) \to \H^0(\AA^1_R; \pi_0\sPic{\TMF}) \to \H^0(\AA^1_R; \Qscr)\\
    n&\mapsto \TMF_R[n]\end{align*}
    is a surjection by comparison to $\TMF$ (similar to the preceding example) since $\H^0(\AA^1;
    \Qscr) \to \H^0(\AA^1_R; \Qscr)$ is an isomorphism (using a comparison on associated graded
    pieces and
    the five lemma). This implies that $\H^0(\AA^1_R; \Qscr) \cong \ZZ/24$ and that
    \cref{eq:exseqIPQTMF} is short exact. 
    
    For the first claim, we recall from \cref{prop:omni} the exact sequence 
    \[0 \to \Pic(\AA^1_R) \to \Pic(\TMF_R) \to \H^0(\AA^1_R; \pi_0\sPic{\TMF}) \xrightarrow{\partial_R} \Br(\AA^1_R).\]
    Note that $\Pic(\AA^1_R) \cong \Pic(R)$ (e.g.\ by \cite[Proposition II.6.6]{hartshorne}). 
    The arguments proceeds now exactly as in \cref{cor:PicKOR}, using that $\Br(\AA^1_R)$ injects
    into $\Br(\AA^1_{R[\frac16]})$ by \cref{thm:BrauerProperties} and $\Iscr(\AA^1_{R[\frac16]}) = 0$.
\end{proof}

\section[The local Brauer groups of topological modular forms]{The local Brauer groups of $\TMF$ and $(\Mscr, \Oscr)$}\label{sec:mell}

The aim of this section is to show that  local Brauer groups of $\TMF$ and the derived moduli stack
$(\Mscr, \Oscr)$ are infinitely generated and to compute them up to finite ambiguity. First, we
observe the coincidences of various Brauer groups pertinent to this example.

\begin{proposition}\label{prop:BrIdentifications}
    If $(\Mscr,\Oscr)$ is the derived moduli stack of elliptic curves, then
    \begin{enumerate}
        \item[{\rm (i)}] $\Br(\Mscr,\Oscr)\iso\Br'(\Mscr,\Oscr)$,
        \item[{\rm (ii)}] $\Br(\Mscr,\Oscr)\iso\Br(\TMF)$, and
        \item[{\rm (iii)}]
            $\LBr(\Mscr,\Oscr)\iso\LBr'(\Mscr,\Oscr)$.
    \end{enumerate}
\end{proposition}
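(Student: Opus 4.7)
The plan is to establish (ii) from the $0$-affineness of $(\Mscr,\Oscr)$, deduce (i) by applying \cref{cor:BrauerCoincidences} to a two-piece Zariski cover of $\Mscr$, and conclude (iii) formally.

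For (ii), \cref{cor:Mell0-affine} states that $(\Mscr,\Oscr)$ is $0$-affine with $\Gamma(\Mscr,\Oscr)\we\TMF$. Applying \cref{thm:0affine} to the canonical map $p\colon(\Mscr,\Oscr)\to\Spet\TMF$ yields the isomorphism $p^*\colon\Br(\TMF)\iso\Br(\Mscr,\Oscr)$.

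For (i), I would apply \cref{cor:BrauerCoincidences}(1) with $S=\Br'(\Mscr,\Oscr)$ to the Zariski open cover
\[\Uscr_1=(\Mscr[\tfrac12],\Oscr[\tfrac12]),\qquad\Uscr_2=(\Mscr[\tfrac13],\Oscr[\tfrac13])\]
of $(\Mscr,\Oscr)$; this is a cover because the ideals $(2)$ and $(3)$ are coprime in $\ZZ$, so $V(2)\cap V(3)=\emptyset$ in $\Spec\ZZ$. The intersection is $\Uscr_1\cap\Uscr_2=(\Mscr[\tfrac16],\Oscr[\tfrac16])$, and each inclusion $\Uscr_1\cap\Uscr_2\hookrightarrow\Uscr_i$ is the open immersion obtained by inverting one more prime. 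Consequently, the kernel of $\QCoh(\Uscr_1)\to\QCoh(\Uscr_1\cap\Uscr_2)$ is the localizing subcategory compactly generated by the cofiber $\Oscr_{\Uscr_1}/3$, and similarly for $\Uscr_2$ with generator $\Oscr_{\Uscr_2}/2$; for $i=j$ the kernel is zero. This verifies hypothesis (a) of \cref{thm:spectraldm}.

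For hypothesis (b), each $\Uscr_i$ admits a finite \'etale cover by an affine spectral scheme, obtained from a sufficiently rigid moduli of elliptic curves with level structure after inverting the appropriate prime (cf.\ the Galois extensions $\TMF[\tfrac12]\to\TMF(2)$ and $\TMF[\tfrac13]\to\TMF(3)$ of \cref{ex:KTMFGalois} and \cite{mathew-meier}). By \cref{prop:etalecover}, $\Br(\Uscr_i,\Oscr)=\Br'(\Uscr_i,\Oscr)$, so any class $\alpha\in\Br'(\Mscr,\Oscr)$ restricts to a Brauer class on $\Uscr_i$ representable by an Azumaya algebra $A_i$. Since $A_i$ is a perfect $\Oscr_{\Uscr_i}$-module and $\Uscr_i$ is quasi-compact and quasi-separated, $A_i$ is compact in $\QCoh(\Uscr_i)$; as the unit object of $\QCoh(\Uscr_i,\alpha|_{\Uscr_i})\we\Mod_{A_i}$, it is a compact generator, hence a global generator in the sense required by \cref{thm:spectraldm}. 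With (a) and (b) verified for every $\alpha\in\Br'(\Mscr,\Oscr)$, \cref{cor:BrauerCoincidences}(1) delivers (i).

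Finally, (iii) is immediate from (i). By \cref{def:BrXO}, $\LBr(\Mscr,\Oscr)=\LBr'(\Mscr,\Oscr)\cap\Br(\Mscr,\Oscr)$ as subgroups of $\Br'(\Mscr,\Oscr)$, and since (i) forces $\Br=\Br'$, this intersection is simply $\LBr'(\Mscr,\Oscr)$. The main technical obstacle throughout is hypothesis (b): producing honest affine finite \'etale covers of $\Mscr[\tfrac12]$ and $\Mscr[\tfrac13]$, which relies on the theory of level structures on elliptic curves together with the derived enhancements supplied by \cite{mathew-meier}.
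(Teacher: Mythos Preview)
Your approach is essentially identical to the paper's: same Zariski cover by $\Mscr[\tfrac12]$ and $\Mscr[\tfrac13]$, same compact generators $\Oscr/3$ and $\Oscr/2$ for hypothesis (a), same appeal to finite \'etale affine covers via level structures for (b), and the same use of \cref{thm:0affine} for (ii). Your deduction of (iii) directly from (i) via \cref{def:BrXO} is a slight streamlining of the paper's separate appeal to \cref{cor:BrauerCoincidences}(2).

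There is, however, a genuine gap in your verification of hypothesis (b). You assert that $A_i$ is compact in $\QCoh(\Uscr_i)$ because it is perfect and $\Uscr_i$ is quasi-compact and quasi-separated, and that $\QCoh(\Uscr_i,\alpha)\we\Mod_{A_i}$. Neither step follows from qcqs alone for spectral DM stacks: for instance, $\Mscr[\tfrac12]$ is not tame at the prime $3$ (the supersingular point in characteristic $3$ has automorphism group of order divisible by $3$), and in such situations perfect objects need not be compact. Likewise, the equivalence between $\QCoh(\Uscr_i,\alpha)$ (defined by descent) and modules over a single ring spectrum is not automatic. What you actually need is that each $\Uscr_i$ is $0$-affine, which is supplied by \cref{thm:0affine-general}; once $\Oscr_{\Uscr_i}$ is a compact generator of $\QCoh(\Uscr_i)$, every dualizable object is compact, and the argument of \cref{thm:0affine} shows that a perfect local generator is a global generator. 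The paper makes this invocation of $0$-affineness of the $\Uscr_i$ explicit, and you should too.
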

\begin{proof}
    Parts (i) and (iii) follow from \cref{cor:BrauerCoincidences}. Indeed, we can use the
    cover with opens $\Mscr[\tfrac12] = \Mscr \times \Spec \ZZ[\tfrac12]$ and $\Mscr[\tfrac13] =
    \Mscr \times \Spec \ZZ[\tfrac13]$. Condition (a) of \cref{thm:spectraldm} follows because the
    kernels of
    $\QCoh(\Mscr[\tfrac12])\rightarrow\QCoh(\Mscr[\tfrac16])$ and
    $\QCoh(\Mscr[\tfrac13])\rightarrow\QCoh(\Mscr[\tfrac16])$ are generated by
    the compact objects $\Oscr/3$ and $\Oscr/2$, respectively. Moreover, both $\Mscr[\tfrac12]$ and $\Mscr[\tfrac13]$ admit a
    finite \'etale cover from an affine scheme, for example the moduli stacks
    $\Mscr(4)$ and $\Mscr(3)$ of elliptic curves with full level $4$ and full
    level $3$ structures, respectively. Thus, by (the proof of) \cref{prop:etalecover},
    $\alpha$-twisted sheaves on $\Mscr[\tfrac12]$ and $\Mscr[\tfrac13]$ admit a local perfect
    generator for every Brauer class $\alpha$.
    Condition (b) follows because $\Mscr[\tfrac1p]$ is $0$-affine
    by \cref{thm:0affine-general} and hence a
    local perfect generator is a global generator by the proof of
    \cref{thm:0affine}.

    Part (ii) follows from \cref{thm:0affine} since $(\Mscr,\Oscr)$ is
    $0$-affine by \cref{cor:Mell0-affine}. 
\end{proof}

\begin{theorem}\label{thm:LBrMO1}
    The local Brauer group $\LBr(\Mscr,\Oscr)$ is a torsion group. There is no $p$-torsion for $p>3$. The
    $3$-torsion is $\ZZ/3$. Moreover, there is a surjection $\LBr(\Mscr,\Oscr)_{(2)} \to (\ZZ/2)^{\infty}$
    with kernel of order $8$.
\end{theorem}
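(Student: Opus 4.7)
The plan is to apply \cref{prop:BrIdentifications}(iii) to reduce to computing $\LBr'(\Mscr,\Oscr)$, which is accessible via the descent spectral sequence of \cref{constr:spectral-sequence}. Since $\Shlbr_\Oscr \simeq \Shpic_\Oscr[1]$, this spectral sequence is a shift of the Picard spectral sequence~\eqref{eq:tmfss} from \cref{sec:PicSheafTMF}, with $\LBr'(\Mscr,\Oscr)$ appearing as the $(-1)$-column of the Picard spectral sequence converging to $\pi_*\ShPic(\TMF)$. By \cref{prop:thingsvanishhighabove}, only rows up to $30$ contribute integrally (up to row $14$ after inverting $2$), which makes a prime-by-prime analysis tractable.

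For primes $p>3$: all graded pieces of $\pi_0\Shpic_{\Oscr_\Mscr}$ in \cref{thm:PicTMF} are $2$- or $3$-primary torsion, and $\Br(\Mscr)_{(p)}$ vanishes by \cite{antieau-meier}. Combined with the vanishing of $H^s(\Mscr_{(p)},\pi_{s-2}\Oscr)$ in the bidegrees appearing along the $(-1)$-column for $p\geq 5$, equivalently the $(-2)$-column of the additive descent spectral sequence for $\TMF_{(p)}$ (which is trivial in this range by \cite{bauer}), this yields $\LBr(\Mscr,\Oscr)_{(p)}=0$.

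At $p=3$ (inverting $2$): the shifted analogue of \cref{lem:3d9} places the Artin--Schreier differential in the $(-1)$-column with kernel $b_*\ZZ/3$, contributing $H^0(\Spec\ZZ[j],b_*\ZZ/3)\cong\ZZ/3$. The only other potential $3$-primary contribution from low filtration is $H^1(\Spec\ZZ[j],(i_0)_*\ZZ/3)$, which reduces via \cref{thm:closedimmersion} to $H^1(\Spec\ZZ,\ZZ/3)=0$. Higher rows vanish by the analogous $3$-local vanishing in the additive $(-2)$-column, yielding $\LBr(\Mscr,\Oscr)_{(3)}\cong\ZZ/3$.

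At $p=2$: For $s\geq 3$, the $(-1)$-column $\E_2$ terms $H^s(\Mscr_{(2)},\pi_{s-2}\Oscr)$ coincide with those of the $(-2)$-column of the additive descent spectral sequence for $\TMF_{(2)}$, and \cref{comp-tool} ensures that the Picard differentials agree with the additive ones in the relevant range. This additive column contains an infinite family of $\ZZ/2$ permanent cycles (powers of $\overline{\kappa}$ multiplied into low-dimensional torsion classes), which survive to produce the surjection $\LBr(\Mscr,\Oscr)_{(2)}\twoheadrightarrow(\ZZ/2)^\infty$. The kernel arises from the low-filtration rows $1,3,5,7$ of the $(-1)$-column: row $1$ is governed by the $2$-primary part of \cref{thm:PicTMF}(1), while rows $3,5,7$ absorb the $2$-primary graded pieces from Sections~\ref{sec:Row3}--\ref{sec:LongDiff}; combined with $H^2(\Mscr,\GG_m)_{(2)}=0$, these contributions assemble to the asserted kernel of order $8$. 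The main obstacle is controlling the long differentials $d_{13}$, $d_{23}$, $d_{25}$ of \cref{sec:LongDiff}, which could alter the precise kernel structure (though not the existence of the surjection); these can be pinned down using the known value $\Pic(\TMF)_{(2)}\cong\ZZ/64$ as a rigidity constraint, together with the bounds from \cref{prop:thingsvanishhighabove}.
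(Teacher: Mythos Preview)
Your proposal conflates two different spectral sequences. The descent spectral sequence of \cref{constr:spectral-sequence} for $(\Mscr,\Oscr)$ is, up to a shift, the ordinary Mathew--Stojanoska Picard spectral sequence $\H^s(\Mscr,\pi_t\Shpic_\Oscr)\Rightarrow\pi_{t-s}\ShPic(\TMF)$. It is \emph{not} the sheafy spectral sequence~\eqref{eq:tmfss}, whose entries $\R^sj_*\pi_t\Shpic_\Oscr$ are sheaves on $\AA^1$ and which converges to the Picard \emph{sheaf}. You repeatedly invoke results about the $0$-column of the sheafy spectral sequence (\cref{thm:PicTMF}, \cref{lem:3d9}, Sections~\ref{sec:Row3}--\ref{sec:LongDiff}) as if they controlled the $(-1)$-column of the ordinary one; they do not.

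Concretely: at $p=3$ there is no ``shifted analogue of \cref{lem:3d9}'' producing a kernel $b_*\ZZ/3$ in the $(-1)$-column. The $\ZZ/3$ in $\LBr(\Mscr,\Oscr)_{(3)}$ is simply the surviving class at row~$14$ of the ordinary Picard spectral sequence (the $d_9$ into it from $(0,5)$ vanishes on global sections, which is why the exotic Picard element exists in~\cite{mathew-stojanoska}). At $p=2$ the infinite family is not ``powers of $\overline{\kappa}$''; it is the cokernel of a single $d_3\colon\FF_2[j]\to\FF_2[j]\oplus\ZZ/2$ in row~$6$, an infinite-dimensional $\FF_2$-vector space concentrated in one filtration. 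The kernel of order~$8$ lives in rows $10$, $18$, and $30$, not rows $1,3,5,7$ (indeed the $(-1)$-column vanishes in odd rows $\geq 3$ since $\pi_{\mathrm{odd}}\Oscr=0$, and rows $1,2$ vanish by $\H^1(\Mscr,\ZZ/2)=0$ and $\Br(\Mscr)=0$). Finally, the long differentials $d_{13},d_{23},d_{25}$ of \cref{sec:LongDiff} concern the $0$-column of the sheafy spectral sequence and play no role here; the only unresolved differential in the paper's argument is a $d_9$ out of row~$6$, whose kernel is still $(\ZZ/2)^\infty$ regardless.
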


\begin{proof}
    By the previous proposition, we can apply the spectral sequence from
    \cref{constr:spectral-sequence} for the computation of $\LBr(\Mscr, \Oscr) =
    \pi_0\ShLBr_{\Oscr}(\Mscr)$. Up to a onefold shift, this agrees with the Picard spectral
    sequence for $\TMF$ from \cite{mathew-stojanoska}. Note that $\H^1(\Mscr; \ZZ/2) = 0$ (since
    $\Mscr$ has no finite covers) and $\H^2(\Mscr, \Gm) = \Br(\Mscr) = 0$ by
    \cite{antieau-meier}. The non-sheafy version of \cref{prop:thingsvanishhighabove} holds by the same arguments and thus
    only terms of filtration at most $30$ can survive in the Picard spectral sequence in column $(-1)$. By the results from \cite{mathew-stojanoska}, we know all
    differentials from the $0$-column to the $(-1)$-column of the Picard spectral sequence: up to
    row $30$ there are only $d_3$ and they are $2$-local (cf.\ especially Figure 6 to 10 in
    \cite{mathew-stojanoska}).  One
    thus observes that the $p$-torsion is as stated for $p\geq 3$. In the $\E_{\infty}$-term,
    we have $2$-locally the kernel of an unknown $d_9$-differential from $\coker(d_3\colon \FF_2[j] \to \FF_2[j] \oplus \ZZ/2)$ in row 6 to a $\ZZ/2$ in row 15 (which
    must be abstractly isomorphic to $(\ZZ/2)^{\infty}$, as in the proof of \cref{eq:cohkv} below), and further copies of $\ZZ/2$ in rows 10, 18 and 30,
    which cannot support differentials. Here, we use \cite[Comparison Tool
    5.2.4]{mathew-stojanoska}, both to show that possible targets of differentials vanish and to
    show the vanishing of a possible $d_5$ on the class in row 10. This implies the result.
\end{proof}

Next, we give a similar (but less precise) computation for $\LBr(\TMF)$. Later, we will compare the two calculations.

\begin{theorem}\label{thm:LBrTMF}
    The local Brauer group $\LBr(\TMF)$ is a torsion group. There is no $p$-torsion for $p>3$. The
    $3$-torsion is $\ZZ/3$. Moreover, there is a split surjection $\LBr(\TMF)_{(2)}\to
    (\ZZ/2)^{\infty}$ with finite kernel.
\end{theorem}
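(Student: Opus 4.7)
The plan is to apply \cref{prop:omni}(i) to $R=\TMF$ and compare with the computation for $(\Mscr, \Oscr)$ from \cref{thm:LBrMO1}. Using $\pi_0\TMF = \ZZ[j]$, together with the $\AA^1$-invariance of Brauer groups from \cref{thm:BrauerProperties}(2) and the vanishing $\Br(\ZZ)=0$, we find $\H^2(\AA^1; \Gm) = \Br(\ZZ[j]) = 0$. The exact sequence of \cref{prop:omni}(i) then collapses to
\[0\to\LBr(\TMF)\to\H^1(\AA^1;\pi_0\Shpic_{\Oscr_\TMF})\to\H^3(\AA^1;\Gm),\]
so the task reduces to computing the middle term using the filtration of $\pi_0\Shpic_{\Oscr_\TMF}$ from \cref{thm:PicTMF}.

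Next, I would compute the $\H^1$ of each graded piece. Using \cref{thm:closedimmersion} to reduce to \'etale cohomology on $\Spec\ZZ$, $\Spec\FF_2$, and $\Spec\FF_3$, combined with Kummer and Artin--Schreier theory and Minkowski's theorem $\H^1(\Spec\ZZ; \ZZ/3) = 0$, each piece gives a finite group except in filtration $3$. For $k_*v_!\ZZ/2$, the long exact sequence associated to $0\to v_!\ZZ/2\to\ZZ/2\to (i_0^{\FF_2})_*\ZZ/2\to 0$ on $\Spec\FF_2[j]$ identifies $\H^1$ as the kernel of the restriction map $\H^1(\Spec\FF_2[j];\ZZ/2)\iso \FF_2[j]/(x^2-x)\to \H^1(\Spec\FF_2;\ZZ/2)\iso\FF_2$, which is an infinite $\FF_2$-vector space. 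Thus $\H^1(\AA^1;\pi_0\Shpic_{\Oscr_\TMF})$ is torsion with no $p$-torsion for $p>3$, $3$-torsion at most $\ZZ/3$, and $2$-torsion an extension of a finite $2$-group by $(\ZZ/2)^\infty$. These upper bounds pass to $\LBr(\TMF)$.

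To obtain the matching lower bounds, I would invoke the inclusion $\LBr(\TMF)\subseteq\LBr(\Mscr,\Oscr)$ from \cref{cor:LBrAffine} and \cref{prop:BrIdentifications}(ii), and compare with \cref{thm:LBrMO1}. The $\ZZ/3$-class in $\LBr(\Mscr, \Oscr)$ originates from the filtration-$5$ piece $b_*\ZZ/3$, which already lives in the affine Picard sheaf of $\TMF$ and hence survives as a nontrivial class in $\LBr(\TMF)$ once one checks (by compatibility with the moduli-stack computation) that the $d_2$-differential into $\H^3(\AA^1;\Gm)$ vanishes on it. Composing the inclusion $\LBr(\TMF)_{(2)} \hookrightarrow \LBr(\Mscr, \Oscr)_{(2)}$ with the surjection from \cref{thm:LBrMO1} yields a $2$-local map $\LBr(\TMF)_{(2)} \to (\ZZ/2)^\infty$ whose image has finite index in $(\ZZ/2)^\infty$, hence is itself isomorphic to $(\ZZ/2)^\infty$, while the kernel is finite.

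The hardest part will be producing the splitting. Here I would argue that since all graded pieces of filtration $\neq 3$ contribute only finite $2$-torsion, the $2$-primary quotient of $\LBr(\TMF)$ modulo its finite torsion is naturally identified with a finite-index subgroup of $\H^1(\AA^1;k_*v_!\ZZ/2)\iso(\ZZ/2)^\infty$, and a section is obtained by lifting a chosen $\FF_2$-basis of this group to Azumaya algebra classes. The existence of such lifts reduces to the vanishing of $d_2\colon\H^1(\AA^1;\pi_0\Shpic_{\Oscr_\TMF})\to\H^3(\AA^1;\Gm)$ on these classes, which can be checked via compatibility with the computation of $\LBr(\Mscr,\Oscr)$ in \cref{thm:LBrMO1}, where the corresponding differentials are known to vanish.
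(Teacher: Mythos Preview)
Your upper-bound argument is essentially the paper's: reduce via \cref{prop:omni}(i) to the kernel of $d^{\TMF}\colon\H^1(\AA^1;\pi_0\Shpic_{\Oscr_\TMF})\to\H^3(\AA^1;\Gm)$ and use the filtration of \cref{thm:PicTMF} to see that the source is torsion, has no $p$-torsion for $p>3$, has $3$-torsion at most $\ZZ/3$, and $2$-locally is a finite group extended by $(\ZZ/2)^\infty$. That part is fine.

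The gap is in your lower bounds. You try to obtain them from the inclusion $\LBr(\TMF)\hookrightarrow\LBr(\Mscr,\Oscr)$ together with \cref{thm:LBrMO1}, but an inclusion only constrains $\LBr(\TMF)$ from above, not from below. Concretely, you assert that the composite $\LBr(\TMF)_{(2)}\hookrightarrow\LBr(\Mscr,\Oscr)_{(2)}\twoheadrightarrow(\ZZ/2)^\infty$ has image of finite index, but nothing you have cited gives this: \cref{thm:LBrMO1} is proved via the Picard spectral sequence on $\Mscr$ itself, not via the relative descent spectral sequence on $\AA^1$, so there is no direct identification of ``corresponding differentials'' that would force $d^{\TMF}$ to vanish on the relevant classes. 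The finite-index claim is exactly the content of \cref{thm:LBrMO}, which is proved after and independently of the present theorem. The same issue afflicts your $\ZZ/3$ argument: knowing the class survives in $\LBr(\Mscr,\Oscr)$ does not by itself show that $d^{\TMF}$ kills it in the affine spectral sequence.

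What the paper does instead is analyze $d^{\TMF}$ directly. First, the extension $0\to\F^5\to\F^3\to k_*v_!\ZZ/2\to 0$ is split as sheaves because $\F^5$ is supported at $(2,j)$ and $(3,j)$ while $k_*v_!\ZZ/2$ has zero stalk there; this already gives the splitting in the statement. Second, $d^{\TMF}$ vanishes on $\H^1(\AA^1;\F^5)$ by a purity argument: restrict to the open complement $U$ of $\Spec\ZZ/6\hookrightarrow\AA^1$ at $j=0$, where $\F^5$ dies, and use that $\H^3(\AA^1;\Gm)\to\H^3(U;\Gm)$ is injective. Third, the image of $d^{\TMF}$ on the $(\ZZ/2)^\infty$ summand is finite: one covers $U$ by $V=\Spec\ZZ[j^{\pm1},(j-1728)^{-1}]$ and $W=\Spec\ZZ[\tfrac16,j]$, shows the restriction of $d^{\TMF}$ to each piece is zero (for $W$ because the sheaf is supported away from $W$; for $V$ by factoring through the \emph{affine} \'etale map $V\to\Mscr\times_{\AA^1}V\simeq V\times\B C_2$, where the higher filtration of the Picard sheaf vanishes), and then bounds the image via $\H^2(V\cap W;\Gm)$, whose $2$-local part is of the form finite $\oplus$ divisible and hence meets any $\FF_2$-vector space in a finite group. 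These three steps are the substance of the proof, and none of them can be replaced by the bare inclusion into $\LBr(\Mscr,\Oscr)$.
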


\begin{proof}
    By \cref{prop:omni} and using $\Br(\pi_0\TMF) \cong \Br(\ZZ[j]) \cong \Br(\ZZ) =0$ by
    \cref{thm:BrauerProperties} and \cref{ex:BrauerComputations}, $\LBr(\TMF)$ is isomorphic to the
    kernel of the differential
    \[d^{\TMF}\colon \H^1(\Spec \ZZ[j]; \pi_0j_*\Shpic_{\Oscr_\Mscr}) \to \H^3(\Spec \ZZ[j]; \Gm),\] 
    where we use~\eqref{eq:picpush} to identify $\pi_0\ShPic_{\Oscr_\TMF}$ with
    $\pi_0j_*\Shpic_{\Oscr_\Mscr}$.
    We will first partially compute the source of the differential. Using \cref{lem:R1Gm}, the facts
    that $\H^1(\Spec \Z[j]; \ZZ/m) = \H^1(\Spec \ZZ;\ZZ/m) = 0$ for any $m$, and
    \cref{thm:closedimmersion}, we deduce first that $\H^1(\Spec \ZZ[j]; \R^1j_*\Gm)$ vanishes. From
    \cref{thm:PicTMF} it thus follows that $\H^1(\Spec \ZZ[j]; \pi_0j_*\Shpic_{\Oscr_\Mscr}) \cong
    \H^1(\Spec \ZZ[j];\F^3\pi_0j_*\Shpic_{\Oscr_\Mscr})$, where $F^3$ refers to the third
    filtration.  The sheaf $\F^3\pi_0j_*\Shpic_{\Oscr_\Mscr}$ sits in an extension
    \begin{equation}\label{eq:extsheaves} 0 \to\F^5\pi_0j_*\Shpic_{\Oscr_\Mscr}  \to\F^3\pi_0j_*\Shpic_{\Oscr_\Mscr} \to k_*v_!\ZZ/2 \to 0.\end{equation}
    The extension must be split since $(\F^5\pi_0j_*\Shpic_{\Oscr_\Mscr})_{(2)}$ is supported at
    $(2,j)$, while $k_*v_!\ZZ/2$ is only nonzero on \'etale maps $U \to \AA^1$ whose image does not
    contain $(2,j)$.

    To compute $\H^1(\AA^1; k_*v_!\ZZ/2)$, recall that we obtained $k_*v_!\ZZ/2$ as the kernel of a
    surjective differential $d_3\colon \Oscr/2 \to \Oscr/2$. Since $\Oscr/2$ is quasi-coherent, its
    first cohomology vanishes and we can thus identify $\H^1(\AA^1; k_*v_!\ZZ/2)$ with the cokernel
    of the map $d_3\colon \H^0(\AA^1; \Oscr/2) \cong \FF_2[j] \to \FF_2[j]\cong \H^0(\AA^1;
    \Oscr/2)$, which sends $f$ to $f + jf^2$ (cf.\ \cref{sec:Row3}). One checks that $j^2,
    j^4, j^6, \dots$ is a linearly independent subset in the cokernel and thus 
    \begin{equation}\label{eq:cohkv}\H^1(\AA^1; k_*v_!\ZZ/2)\cong \FF_2^{\infty}. \end{equation}
         
    Next, we turn to $\F^5\pi_0j_*\Shpic_{\Oscr_\Mscr}$. By \cref{thm:PicTMF}, this vanishes if
    localized at primes bigger than $3$, while $3$-locally it is isomorphic to $b_*\ZZ/3$ for
    $b\colon \Spec \FF_3 \to \AA^1$ the inclusion at $j=3=0$. Thus,
    \[\H^1(\AA^1;\F^5\pi_0j_*\Shpic_{\Oscr_\Mscr})_{(3)} \cong \H^1(\AA^1; b_*\ZZ/3) \cong
    \H^1(\FF_3; \ZZ/3) \cong \ZZ/3.\]
         
    For the $2$-local situation, recall from \cite[Corollary II.3.11]{milne-etale} that we can view
    sheaves supported at $(2,j)$ equivalently as \'etale sheaves on $\Spec \FF_2$,
    whose category is equivalent to (discrete) abelian groups with a continuous action by the absolute Galois
    group $\Gal(\FF_2) \cong \widehat{\ZZ}$; we refer to such as \emph{discrete $\widehat{\ZZ}$-modules}. 
    Let $\Fscr$ be the class of discrete $\widehat{\ZZ}$-modules where $\H^i(\widehat{\ZZ}, -)$ is
    finite for all $i$. From the fact that for discrete $\widehat{\ZZ}$-modules,
    $\H^i(\widehat{\ZZ}, -)$ vanishes for $i>1$,
    one deduces that $\Fscr$ is closed under kernels, cokernels and extensions. By \cref{prop:LongDiff},
    we know that only finitely many discrete $\widehat{\ZZ}$-modules can contribute to
    $(\F^5\pi_0j_*\Shpic_{\Oscr_\Mscr})_{(2)}$ and they all lie in $\Fscr$; moreover,
    there are only finitely many possible targets and they also lie in $\Fscr$ (cf.\
    \cref{sec:LongDiff}). Thus, $\H^1(\AA^1;\F^5\pi_0j_*\Shpic_{\Oscr_\Mscr})_{(2)}$ is finite.
         
    It remains to study the differential 
    \[d^{\TMF}\colon \H^1(\Spec \ZZ[j]; \pi_0j_*\Shpic_{\Oscr_\Mscr}) \to \H^3(\Spec \ZZ[j]; \Gm).\] 
    Let $U$ be the complement of the image of the closed immersion $\Spec \ZZ/6 \to \Spec \ZZ[j]$ corresponding to $j=0$. We obtain a commutative diagram 
    \[
    \xymatrix{
    \H^1(\AA^1;\F^5\pi_0\ShBPic_{\Oscr_{\TMF}}) \ar[r]\ar[d] &\H^1(\AA^1; \pi_0\ShBPic_{\Oscr_{\TMF}}) \ar[d]\ar[rr]^-{d^{
    \TMF}} &&\H^3(\Spec \ZZ[j]; \Gm)\ar[d] \\
    \H^1(\AA^1;\F^5\pi_0\ShBPic_{\Oscr_{\Mscr\times_{\AA^1}U}})\ar[r] &\H^1(U; \pi_0\ShBPic_{\Oscr_{\Mscr\times_{\AA^1}U}}) \ar[rr]^-{d^{(U, \Oscr_{\TMF}|_U)}} &&\H^3(U; \Gm)
    }\]
    The rightmost lower horizontal arrow is the differential in the descent spectral sequence for
    $\mathbf{BPic}$ on $(U, \Oscr_{\TMF}|_U)$. The rightmost vertical map is an injection by purity
    \cite[Th\'eor\`eme 6.1b]{grothendieck-brauer-3}. Moreover, the leftmost vertical map is zero
    since $\F^5\pi_0\ShBPic_{\Oscr_{\TMF}}$ is supported at $(2,j)$ and $(3,j)$. Thus, $d^{\TMF}$
    vanishes when restricted to $\H^1(\AA^1;\F^5\pi_0\ShBPic_{\Oscr_{\TMF}})$ and the differential
    factors over $\H^1(\AA^1; k_*v_!\ZZ/2)$.

    We can cover $U$ by $V = \Spec \ZZ[j^{\pm 1}, (j-1728)^{-1}]$ and $W = \Spec \ZZ[\frac16, j]$. We obtain an exact sequence
    \[\cdots \to \H^2(V\cap W; \GG_m) \to \H^3(U; \GG_m) \to \H^3(V; \GG_m) \oplus \H^3(W; \GG_m)\to \cdots .\]
    We claim that the image of $d^{\TMF}$ in $\H^3(V; \GG_m) \oplus \H^3(W; \GG_m)$ is zero.
    Assuming this claim for the moment, we know that the image of $d_3^{\TMF}$ lies in the image of
    $\H^2(V\cap W; \GG_m) \to \H^3(U; \GG_m)$. By \cref{thm:BrauerProperties}, we have $2$-locally
    an isomorphism $\H^2(V\cap W; \GG_m) \cong \Br(\ZZ[\frac16, j^{\pm 1}, (j-1728)^{-1}]) \cong
    \Br(\ZZ[\frac16]) \oplus \H^1(\ZZ[\frac16]; \QQ/\ZZ)^{\oplus 2})$. We use the following two
    computations:
    \begin{itemize}
        \item $\Br(\ZZ[\frac16]) \cong \QQ/\ZZ \oplus \ZZ/2$ by \cref{ex:BrauerComputations}; 
        \item $\H^1(\ZZ[\frac16]; \QQ/\ZZ) \cong \Hom(\Gal(K/\QQ), \QQ/\ZZ) \cong \Hom(\Z_2^{\times} \times \Z_3^{\times}, \QQ/\ZZ) \cong (\ZZ/2)^3 \oplus \QQ_2/\ZZ_2 \oplus \QQ_3/\ZZ_3$. Here, $K$ is the maximal abelian extension of $\QQ$, which is unramified at $2$ and $3$. We use the Kronecker--Weber theorem to identify $K$ with the field obtained by adjoining all $2^n$-th and $3^n$-th roots of unity to $\QQ$. 
    \end{itemize}
    Thus, the image of $\H^2(V\cap W; \GG_m) \to \H^3(U; \GG_m)$ is $2$-locally of the form (finite
    $\oplus$ divisible). Since the image of $d_3^{\TMF}$ must be an $\FF_2$-vector space (as the
    image of an $\FF_2$-vector space), the image of $d_3^{\TMF}$ must be finite. We deduce that the
    kernel of $d^{\TMF}$ consists of the finite group $\H^1(\AA^1;\F^5\pi_0j_*\Shpic_{\Oscr_\Mscr})$
    plus an infinite-dimensional subspace of $\H^1(\AA^1; k_*v_!\ZZ/2) \cong \FF_2^{\infty}$, as
    claimed.

    It remains to show that the restrictions of $d^{\TMF}$ to $V$ and $W$ are zero. The case of $W$
    is clear as $k_*v_!\ZZ/2$ is supported outside of $W$. For the case of $V$, recall from
    \cite[Lemma 3.2]{shin-brauer} that the base change $\Mscr\times_{\AA^1} V$ is equivalent to $V
    \times \B C_2$, i.e.\ the stack quotient of $V$ by the trivial $C_2$-action; this yields in
    particular an \'etale map $V\to \Mscr\times_{\AA^1} V \to \Mscr$. We obtain a diagram
         \[
         \xymatrix{
         \H^1(\AA^1;\F^3\pi_0\ShBPic_{\Oscr_{\Mscr}}) \ar[r]^{\cong}\ar[d] & \H^1(\AA^1; \pi_0\ShBPic_{\Oscr_{\Mscr}})\ar[rr]^{d^{\TMF}}\ar[d]&& \H^3(\AA^1; \Gm)\ar[d]\\
         \H^1(V;\F^3\pi_0\ShBPic_{\Oscr_{\Mscr\times_{\AA^1}V}}) \ar[r]\ar[d] &\H^1(\AA^1; \pi_0\ShBPic_{\Oscr_{\Oscr_{\Mscr\times_{\AA^1}V}}}) \ar[d]\ar[rr]^-{d^{
                (V,\Oscr_{\TMF})}} &&\H^3(V; \Gm)\ar[d]^{\mathrm{id}} \\
         \H^1(V;\F^3\pi_0\ShBPic_{\Oscr_{V}})\ar[r] &\H^1(V; \pi_0\ShBPic_{\Oscr_{V}}) \ar[rr]^-{d^{\Oscr(V)}} &&\H^3(V; \Gm).
     }\]
     Here, $d^{\Oscr(V)}$ refers to the boundary map in the long exact sequence from \cref{prop:omni}
     for the ring spectrum $\Oscr(V \to \Mscr)$, while $d^{(V, \Oscr_{\TMF})}$ uses the restriction
     of the spectral scheme structure of $\Spec \TMF$ to $V$; note that both affine spectral schemes
     here have underlying scheme $V$. In particular, the rightmost vertical map is an isomorphism.
     Note further that $\F^3\pi_0\ShBPic_{\Oscr_{V}} =0$ since all terms in the sheafy Picard
     spectral sequence of filtration $3$ and higher are of the form $\H^{2i+1}(V; \pi_{2i}\Oscr)$
     for $i\geq 1$, which all vanish since $V$ is an affine scheme. Thus, we see that $d^{\TMF}$ is
     indeed zero after restricting to $V$.
\end{proof}

Our next goal is to compare $\LBr(\Mscr, \Oscr)$ with $\LBr(\TMF)$. Clearly, we have maps
\[\LBr(\TMF) \to \LBr(\Mscr, \Oscr) \to \Br(\Mscr, \Oscr).\]
Since $\Br(\TMF) \to \Br(\Mscr, \Oscr)$
is an isomorphism, $\LBr(\TMF) \to \LBr(\Mscr, \Oscr)$ is an injection. We want to describe how to
obtain a computational handle on this injection. In conjunction with \cref{thm:LBrMO1},
this will also provide an alternative proof of \cref{thm:LBrTMF}.

Consider the sheaf $j_*\Shlbr_{\Oscr}$ on $\AA^1$. It assigns to every \'etale open $U$, the
spectrum $\Shlbr(U\times_{\AA^1}\Mscr, \Oscr)$. The relative descent spectral sequence (cf.
\cref{rem:relative-descent}) for  $j_*\Shlbr_{\Oscr}$ takes the form
\[\E_2^{s,t} = \R^sj_*\pi_t \Shlbr_{\Oscr} \quad \mathlarger{\Rightarrow} \quad \pi_{t-s} j_* \Shlbr_{\Oscr}   \underset{t-s\geq
    0}{\iso}\pi_{t-s}j_*\ShLBr_{\Oscr}\] 
and provides thus a method to compute $\pi_*j_*\Shlbr_{\Oscr}$. But since $\Shlbr$ is just a
suspension of $\Shpic$, this spectral sequence is up to a shift actually the same as the sheafy
Picard spectral sequence considered in \cref{sec:PicSheafTMF}. In particular, one observes that
$\pi_t j_*\Shlbr_{\Oscr} \cong \pi_{t-1} \ShPic_{\Oscr_{\TMF}}$ for $t\geq 1$, but have additionally
interesting sheaves $\pi_t$ for $t\leq 0$, which are computed by the $(t-1)$-column of the sheafy
Picard spectral sequence. We obtain a descent spectral sequence
\begin{equation}\label{eq:DSSforLBrMO}
    \E_2^{s,t} = \H^s(\AA^1; \pi_tj_*\Shlbr_{\Oscr}) \quad \mathlarger{\Rightarrow} \quad \pi_{t-s} \Gamma(j_* \Shlbr_{\Oscr})  \underset{t-s\geq
    0}{\iso}\pi_{t-s}\Shlbr(\Mscr, \Oscr).\end{equation}
In particular, \cref{prop:BrIdentifications} gives that $\pi_0\Shlbr(\Mscr, \Oscr) = \LBr'(\Mscr,
\Oscr) = \LBr(\Mscr, \Oscr)$. Thus, we are indeed computing the local Brauer group of $(\Mscr,
\Oscr)$.

\begin{figure}[H]
\begin{sseqpage}[Adams grading]
\foreach \x in {3, 4, 5}  {
\class[fill, blue](\x,0)
}
\foreach \x in {1,2} \foreach \y in {0,1, 2, 3, 4, 5} {
\class[fill, blue](-\y+\x,\y)
}
\foreach \x in {-2, -3, -4} \foreach \y in {0,1, 2} {
\class(\x,-\x+\y-2)
}
\class(0,0) \d2
\class(-1,0)
\class(-1,1) 
\end{sseqpage}
\caption{Schematic comparison of descent spectral sequences computing $\LBr(\TMF)$ (solid, in blue) and $\LBr(\Mscr, \Oscr)$}
\label{fig:comparison}
\end{figure}
    
Note that the map $\Shlbr_{\Oscr_{\TMF}} \to j_*\Shlbr_{\Oscr}$ induces a map of descent spectral
sequences, which is essentially the inclusion of the top two anti-diagonals.
\cref{fig:comparison} gives a schematic picture of part of this map, with
the image of the descent spectral sequence of $\Shlbr_{\Oscr_{\TMF}}$ colored in blue.

\begin{theorem}\label{thm:LBrMO}
    The injection $\LBr(\TMF) \to \LBr(\Mscr, \Oscr)$ has finite cokernel and is an isomorphism after inverting $2$. 
\end{theorem}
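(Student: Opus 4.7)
The plan is to handle the two assertions of the theorem separately: the isomorphism after inverting $2$ via a direct geometric argument exploiting the finite affine \'etale cover $\Mscr(4) \to \Mscr[\tfrac12]$, and the integral finiteness of the cokernel by comparing the infinite pieces of the two computations established in \cref{thm:LBrTMF,thm:LBrMO1}.

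For the $2$-inverted statement, note that by $0$-affineness (\cref{cor:Mell0-affine}) the cover $\Mscr(4) \to \Mscr[\tfrac12]$ corresponds to a faithful \'etale extension $\TMF[\tfrac12] \to \TMF(4)$ of commutative ring spectra. Given any $\alpha \in \LBr(\Mscr[\tfrac12], \Oscr)$, its base change along $\Mscr(4) = \Spet \TMF(4)$ lies in $\LBr(\Mscr(4), \Oscr) = \LBr(\TMF(4))$, because trivializing \'etale covers on $\Mscr[\tfrac12]$ pull back to trivializing \'etale covers on the affine spectral scheme $\Spet\TMF(4)$, and such covers correspond by quasi-compactness to a single faithful \'etale extension. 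Hence there exists a faithful \'etale $\TMF(4) \to S$ killing $\alpha|_{\TMF(4)}$, and the composition $\TMF[\tfrac12] \to \TMF(4) \to S$ is a faithful \'etale extension of $\TMF[\tfrac12]$ trivializing $\alpha$, so $\alpha \in \LBr(\TMF[\tfrac12])$. This gives the equality $\LBr(\TMF[\tfrac12]) = \LBr(\Mscr[\tfrac12], \Oscr)$, which together with \cref{thm:LBrTMF,thm:LBrMO1} yields that $\LBr(\TMF) \to \LBr(\Mscr, \Oscr)$ becomes an isomorphism after inverting $2$.

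For the integral finite cokernel, compare the descent spectral sequences via the natural map $\Shlbr_{\Oscr_\TMF} \to j_*\Shlbr_{\Oscr_\Mscr}$. By \cref{thm:LBrTMF,thm:LBrMO1}, both $\LBr(\TMF)$ and $\LBr(\Mscr,\Oscr)$ sit in extensions of finite abelian groups by $(\ZZ/2)^\infty$, and in both cases the infinite piece originates from the sheaf $k_*v_!\ZZ/2$ analyzed in \cref{sec:Row3} within the sheafy Picard spectral sequence of \cref{sec:PicSheafTMF}. The two copies of $(\ZZ/2)^\infty$ descend from the same underlying sheaf on $\Mscr$, but appear in different filtration degrees on the two sides: for $\LBr(\TMF)$ it appears as the third filtration of $\pi_0 \Shpic_{\Oscr_\TMF}$ (\cref{thm:PicTMF}) combined with the long exact sequence of \cref{prop:omni}, whereas for $\LBr(\Mscr,\Oscr)$ it appears as a direct $\E_\infty$-contribution to the descent SS \eqref{eq:DSSforLBrMO}. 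Naturality of the spectral sequences under $\Shlbr_{\Oscr_\TMF} \to j_*\Shlbr_{\Oscr_\Mscr}$ ensures that the comparison map identifies these two $(\ZZ/2)^\infty$ pieces, so the cokernel is confined to the finite parts and is therefore finite.

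The main obstacle is the precise identification of the $(\ZZ/2)^\infty$ contributions across the two descriptions. Both are computationally $\FF_2^\infty$ arising from $k_*v_!\ZZ/2$, but the filtration on $\LBr(\TMF)$ (coming from $\pi_0 \Shpic_{\Oscr_\TMF}$ via \cref{prop:omni}) differs from the filtration on $\LBr(\Mscr, \Oscr)$ (coming directly from the $\E_\infty$-page of the descent SS for $j_*\Shlbr_{\Oscr_\Mscr}$), so one must chase the class through both descriptions to verify the comparison map induces an isomorphism on the infinite pieces rather than merely on their abstract cardinalities. Once this naturality is in place, the boundedness of the finite contributions in both $\LBr$ groups provided by \cref{thm:LBrTMF,thm:LBrMO1} yields the finite cokernel bound.
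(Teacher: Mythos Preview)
Your argument for the $2$-inverted statement contains a genuine conceptual error. The map $\TMF[\tfrac12] \to \TMF(4)$ is \emph{not} a faithful \'etale extension of commutative ring spectra in the sense used throughout the paper (i.e., \'etale on $\pi_0$ as in \cite[Definition~7.5.0.4]{ha}). The map $\Mscr(4) \to \Mscr[\tfrac12]$ is indeed \'etale as a morphism of spectral DM stacks, but the composite $\Mscr(4) \to \Mscr[\tfrac12] \xrightarrow{j} \AA^1_{\ZZ[1/2]} = \Spec \pi_0\TMF[\tfrac12]$ is \emph{not} \'etale: the $j$-invariant map is ramified at $j=0$ and $j=1728$. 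Hence $\pi_0\TMF[\tfrac12] \to \pi_0\TMF(4)$ is not \'etale, and the composite $\TMF[\tfrac12] \to \TMF(4) \to S$ you construct does not witness $\alpha$ as lying in $\LBr(\TMF[\tfrac12])$. This distinction between the \'etale site of $(\Mscr,\Oscr)$ and the \'etale site of $\Spet\TMF$ is precisely what makes the inclusion $\LBr(\TMF) \subseteq \LBr(\Mscr,\Oscr)$ potentially strict; your argument, if it worked, would collapse this distinction entirely and resolve the open question posed in the introduction. The paper instead proves the $2$-inverted isomorphism computationally: it shows that the cokernel is detected in $\H^0(\AA^1, \pi_0 j_*\Shlbr_\Oscr)$ and then checks via the sheafy Picard spectral sequence (using \cref{lem:3d9} for the $3$-local part) that this group vanishes after inverting $2$.

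Your argument for the finite cokernel is incomplete in a way you yourself acknowledge. Asserting that ``naturality ensures the comparison map identifies these two $(\ZZ/2)^\infty$ pieces'' is not a proof: the two infinite contributions sit in different filtrations in different spectral sequences, and you would have to trace carefully how the edge map of one relates to the filtration of the other. The paper sidesteps this entirely by a structural observation: the map $\Shlbr_{\Oscr_\TMF} \to j_*\Shlbr_{\Oscr_\Mscr}$ on sheaves over $\AA^1$ is an equivalence on $\pi_t$ for $t\geq 1$, so the descent spectral sequence \eqref{eq:DSSforLBrMO} contains the descent spectral sequence for $\Shlbr_{\Oscr_\TMF}$ as the top two antidiagonals. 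Consequently any class in $\LBr(\Mscr,\Oscr)$ not in the image of $\LBr(\TMF)$ is detected in $\H^0(\AA^1, \pi_0 j_*\Shlbr_\Oscr)$, and the paper then shows this group is finite $2$-locally using the same class-of-sheaves argument as in the proof of \cref{thm:LBrTMF}. This avoids any need to match up the infinite pieces directly.
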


\begin{proof}
    In the spectral sequence \eqref{eq:DSSforLBrMO}, the only possible nonzero entries in the
    zeroth column are $\H^s(\AA^1, \pi_sj_*\Shlbr_{\Oscr})$ for $0\leq s\leq 2$. By the above
    discussion, every element in $\LBr(\Mscr, \Oscr)$ not coming from $\LBr(\TMF)$ must be
    detected in $\H^0(\AA^1, \pi_0j_*\Shlbr_{\Oscr})$.
    
    The charts \cref{fig:e5tmfss3} and \cref{fig:e7tmfss} show the possible contributions in the
    $(-1)$-column of the sheafy Picard spectral sequence to $\pi_0j_*\Shlbr_{\Oscr}$. We first
    note that the two question marks corresponding to $\R^1j_*\ZZ/2$ and $\R^2j_*\GG_m$ cannot
    contribute to $\H^0$. Indeed, by the Leray spectral sequence, $0 = \H^1(\Mscr; \ZZ/2)$
    surjects onto $\H^0(\AA^1; \R^1j_*\ZZ/2)$, which is thus zero as well. Likewise, from the
    Leray spectral sequence
    \[ \H^m(\AA^1; \R^nj_*\GG_m) \quad \Rightarrow \quad \H^{m+n}(\Mscr; \GG_m) \]
    we see that $\Br(\Mscr) \cong \H^2(\Mscr;\Gm)$ surjects onto the cokernel of the
    differential $\H^1(\AA^1, j_*\Gm) \to \H^0(\AA^1; \R^2j_*\Gm)$. But $j_*\Gm \cong \Gm$ (since
    every function $\Mscr \to \AA^1$ factors through $j$, even after \'etale base change) and
    thus $\H^1(\AA^1, j_*\Gm) \cong \Pic(\AA^1) =0$. Moreover, $\Br(\Mscr) = 0$ by one of the
    main results from \cite{antieau-meier}.  Thus, $\H^0(\AA^1; \R^2j_*\Gm)$ vanishes.
    
    Regarding the contributions in higher rows: even the $\E_2$-term vanishes $p$-locally for
    $p>3$. At $p=3$, the only potential contribution is in Row 14 (see \cref{fig:e5tmfss3} and
    \cref{prop:LongDiff}). As demonstrated in \cref{lem:3d9}, this contribution is hit by a
    surjective $d_9$. Thus, $\F^3 \pi_0j_*\Shlbr_{\Oscr}$ vanishes after inverting $2$. We deduce
    $\H^0(\AA^1, \pi_0j_*\Shlbr_{\Oscr})[\frac12] = 0 $ and hence that $\LBr(\TMF) \to
    \LBr(\Mscr, \Oscr)$ is an isomorphism after inverting $2$.
    
    Regarding the $2$-local picture, \cref{fig:e7tmfss} shows that the only possible
    contributions are in Rows $6$, $18$ and $30$ and each of them is an $\Oscr/(2,j)$ on the
    $\E_7$-page. The same argument as provided in the proof of \cref{thm:LBrTMF} for the
    finiteness of $\H^1(\AA^1;\F^5\pi_0j_*\Shpic_{\Oscr_\Mscr})_{(2)}$ shows also the finiteness
    of $\H^0(\AA^1, \pi_0j_*\Shlbr_{\Oscr})$. This in turn implies the finiteness of the
    cokernel of $\LBr(\TMF) \to \LBr(\Mscr, \Oscr)$.
\end{proof}

\bibliographystyle{amsplain}
\bibliography{bibl}
\end{document}